\definecolor{unbleu}{rgb}{0.03, 0.15, 0.4}
\definecolor{monvert}{rgb}{0.0,.5,0.0}
\definecolor{britishracinggreen}{rgb}{0.0, 0.26, 0.15}
\definecolor{monbleu}{rgb}{0,.2,.8}
\definecolor{monautrebleu}{rgb}{0,0.4,.75}
\definecolor{applegreen}{rgb}{0.55, 0.71, 0.0}
\definecolor{monrouge}{rgb}{0.8, 0.0, 0.0} 
\definecolor{cadmiumgreen}{rgb}{0.0, 0.42, 0.24}
\definecolor{royalblue(traditional)}{rgb}{0.0, 0.14, 0.4}
\definecolor{black}{rgb}{0.0, 0.0, 0.0}
\definecolor{sepia}{rgb}{0.44, 0.26, 0.08}
\definecolor{teagreen}{rgb}{0.82, 0.94, 0.75}
\definecolor{yellow-green}{rgb}{0.6, 0.8, 0.2}
\definecolor{azure(colorwheel)}{rgb}{0.0, 0.5, 1.0}
\definecolor{awesome}{rgb}{1.0, 0.13, 0.32}
\definecolor{cadmiumyellow}{rgb}{1.0, 0.96, 0.0}
\definecolor{carrotorange}{rgb}{0.93, 0.57, 0.13}
\definecolor{green-yellow}{rgb}{0.68, 1.0, 0.18}
\definecolor{huntergreen}{rgb}{0.21, 0.37, 0.23}
\definecolor{darkorange}{rgb}{0.85, 0.4, 0.0}
\definecolor{carrotorange}{rgb}{0.93, 0.57, 0.13}
\definecolor{turquoise}{rgb}{0.25, 0.88, 0.82}
\pgfplotsset{compat=1.18}
\numberwithin{equation}{section}
\newcommand{\widesim}{\scaleobj{1.3}{\sim}}
\newcommand{\uset}[3][0ex]{%
  \mathrel{\mathop{#2}\limits_{
    \vbox to#1{\kern-6\ex@
    \hbox{$\scriptstyle#3$}\vss}}}}
\newcommand{\id}{\operatorname{id}}
\newcommand{\lip}{\operatorname{Lip}}
\newcommand{\var}{\operatorname{var}}
\newcommand{\dd}{\mathrm{d}}
\newcommand{\fPp}{\operatorname{FPP}}
\newcommand{\cfPp}{\operatorname{CFPP}}
\newcommand{\geo}{\operatorname{Geo}}
\newcommand{\RPP}{\operatorname{RPP}}
\newcommand{\DRPP}{\operatorname{DRPP}}
\newcommand{\PPP}{\operatorname{PPP}}
\newcommand{\CPPP}{\operatorname{CPPP}}
\newcommand{\Par}{\operatorname{Par}}
\newcommand{\lr}{\scaleto{r}{5pt}}
\newcommand{\pr}{\operatorname{pr}}
\newcommand{\RV}{\mathrm{RV}}
\newcommand{\isEquivTo}[1]{\underset{#1}{\sim}} 
	\theoremstyle{definition}
	\newtheorem{defn}{Definition}[section]
	\theoremstyle{plain}
	\newtheorem{lem}{Lemma}[section]
	\newtheorem{thm}{Theorem}[section]
	\newtheorem{cor}{Corollary}[section]
    \newtheorem{prop}{Proposition}[section]
	\newtheorem{assumption}{Assumption}[section]
	\theoremstyle{plain}
	\newtheorem{rem}{Remark}[section]
   \def\MR#1{}
\patchcmd{\tableofcontents}{\@starttoc{toc}}{\vspace*{-2em}\@starttoc{toc}}{}{}
\begin{document}

\title{Recurrence to rare events for all points in countable Markov shifts}


\author{Dylan Bansard-Tresse \vspace{0.2cm}\\ 
CPHT, CNRS, École polytechnique, \\ Institut Polytechnique de Paris, 91120 Palaiseau, France\\
\texttt{dylan.bansard-tresse@polytechnique.edu}\\
\small{\url{https://sites.google.com/view/bansard-tresse}}}


\maketitle
\begin{abstract}
    We study quantitative recurrence to rare events in Countable Markov Shifts (CMS) with recurrent potentials, focusing on return-time statistics to natural target sets for every point. In the positive recurrent case, return-time processes associated with nonperiodic points converge to a standard Poisson process, while those for periodic points converge to a compound Poisson limit. In the null-recurrent regime, three distinct behaviors arise. Points satisfying certain combinatorial conditions—in particular, the class is generic in the measure-theoretic sense— exhibit fractional Poisson limits, whereas periodic points yield compound fractional Poisson limits. For all remaining points, we describe the full family of possible limit laws, each Pareto-dominated. This classification is sharp: every limit behavior in this family can be realized by an explicit system. For canonical families of null-recurrent CMS, we identify the limit process for every point, providing a complete description of return-time statistics in these systems.
\end{abstract}

\textbf{Keywords:} infinite ergodic theory, rare events, point processes, fractional Poisson process, renewal processes, Pareto law

{\small
\tableofcontents
}

\section{Introduction}

\subsection{Context}


\paragraph{Returns to rare events in dynamical systems.} The study of hittings and returns in dynamical systems is an active research domain. The goal is to study limiting behaviors of the distribution of returns to small targets in the phase space. More explicitly, for a dynamical system $(X, \mathscr{B},\mu, T)$ and a sequence of asymptotically rare events $(B_n)_{n\geq 1}$ included in $X$ (\textit{i.e.} $\mu(B_n) \to 0$), one wishes to understand the behavior of 
\[r_{B_n} (=: r_{B_n}^{(1)}) : x \mapsto \inf \{j \geq 1 \;|\; T^j(x) \in B_n\} \]
and more generally of the successive returns
\[r_{B_n}^{(k)} : x \mapsto \inf \{j > r_{B_n}^{(k-1)}(x) \;|\; T^j(x) \in B_n\} \quad \text{for $k\geq 2$}.\]
In the late 1990's, it was shown that for uniformly expanding systems of the interval endowed with their unique absolutely continuous invariant probability (a.c.i.p.), the limiting behavior of $\mu(B_n)\,r_{B_n}$ is exponential provided that $B_n$ is a sequence of nested ``natural'' targets (\textit{i.e.} metric balls or dynamically defined cylinders) centered at a generic point $x \in X$ (in the sense that it works for $\mu$-a.e. $x\in X$). This property has been shown to still hold for more probability preserving systems including non-uniformly expanding and hyperbolic ones \cite{CampaninoIsola95,Col02, AS11, ChazottesCollet13, AS16}. When we keep track of the successive returns, via the study of the following random measure, called Rare Event Point Process (REPP)
\begin{align*}
    N_{B_n}^{\id} := \sum_{k\geq 1} \delta_{\mu(B_n)\,r_{B_n}^{(k)}},
\end{align*}
the limit obtained is the standard Poisson point process on the half-line (see \cite{ChazottesCollet13, LFFF16} and the references therein). \\

However, for many applications, it is often important to understand the behavior of the system at specific points $x\in X$, rather than inside a set of full measure. In particular, early work revealed that for periodic points, convergence to a standard Poisson point process is impossible \cite{Hirata93}. This failure is due to clustering of rare events arising from the shorts returns that may appear from the special periodic behavior of the orbit of $x$. In such cases, the limit of the rare event point process is instead a compound Poisson process, whose multiplicity distribution is determined by a parameter $\theta$ depending solely on the periodic point $x$ considered. By analogy with extreme value theory, $\theta$ is known as the extremal index (see \cite{LFFF16} for a more detailed introduction). \\

Over the past decade, a central question has been to determine whether limit laws could be proven for all points in a given dynamical system. When a system allows the derivation of such limit laws at every point, it is said to satisfy the \textit{all-point REPP} property. Remarkably, for some \textit{chaotic} systems, it has been shown that periodic points are the only ones exhibiting non-standard Poissonian behavior: all other points give rise to a standard Poisson point process in the limit and thus it provides an \textit{all-point REPP} property with a \textit{dichotomy} between periodic and non-periodic points (see \cite{AFV15, LFFF16} and the references therein). It has been first established for uniformly expanding systems \cite{AFV15} and latter proven for some non-uniformly expanding maps, such as the Manneville–Pomeau map \cite{FFTV16} or quadratic maps with Misiurewicz parameters \cite{BF23}.\\ 

To date, several approaches coexist in the literature to prove such results for probability preserving systems: 
\begin{itemize}
    \item Direct mixing-based methods exploit explicit decay of correlations to compare the dynamical system with the i.i.d. setting, providing sufficient conditions for convergence (see \cite{Haydn13_EntryAndReturnTimesDistribution, Sau09_Survey_AnIntroductionToQuantitativeRecurrenceInDynamicalSystems} and references therein).
    \item Connection with extreme value theory where sufficient conditions can be adapted from the theory of maxima of stochastic processes to the dynamical context \cite{FFT10, FFT13, LFFF16}.
    \item Spectral methods where we rely on good spectral properties of the transfer operator (typically a spectral gap) and stability under small perturbations, to deduce Poissonian limits \cite{KL09, Kel12, Zha21, atnip2023compound}.
    \item Leverage of the equivalence between hitting time statistics and return time statistics \cite{HLV05, Mar17, Zwe16} to build sufficient conditions for the convergence towards (compound) Poisson point processes \cite{Zwe22}.
\end{itemize}

In general, these methods are directly applicable primarily to systems exhibiting pronounced chaotic characteristics. Within the domain of ergodic theory, a pivotal instrument is the use of inducing. Regarding the issue of returns, it has been established that convergence for the induced dynamical system is equivalent to the convergence for the original system \cite{HWZ14, FFTV16}, assuming that the inducing process is executed via a first return. This approach was subsequently modified and implemented to demonstrate the \textit{all-point REPP} property for certain non-uniformly hyperbolic systems \cite{FFTV16, BF23}.  \\

Nevertheless, all these results require the dynamical system to be probability preserving while there are a lot of natural dynamical systems where the meaningful preserved measure is infinite (while remaining $\sigma$-finite). Examples encompass null-recurrent Markov chains, interval maps with indifferent fixed points, $\mathbb{Z}^d$-extensions of probability preserving systems or billiards with cusps. The question of hittings and returns for such systems is more limited but is of growing interest. Contrary to the finite measure case where the scaling $\mu(B_n)$ is computed from Kac's theorem, this is no longer the case in the infinite setting. Hence, the REPP is adapted with a suitable scaling $\gamma$ that may depend on the system and it is thus defined in the following way:
\[N_{B_n}^{\gamma} := \sum_{k\geq 1} \delta_{\gamma(\mu(B_n))\,r_{B_n}^{(k)}}.\]
The convergence of first hitting and return times has been investigated, revealing non-exponential limiting laws for various infinite measure preserving dynamical systems and natural targets \cite{BZ01, PeneSaussol10_BackToBallsInBilliards, RZ20, Yas18, 
Yas24_quantitativerecurrencezextensionthreedimensional}, with fewer studies considering 
the whole sequence of successive returns \cite{PSZ13, BT24_FractionalPoisson}. For the natural class of pointwise dual ergodic systems (see Definition \ref{defn:pointwise_dual_ergodicity}) satisfying some regular variation hypothesis, the Fractional Poisson Process (FPP) was shown to emerge as the pivotal limit distribution, acting as the analog of the standard Poisson point process in the infinite measure preserving setting \cite{BT24_FractionalPoisson}. Note that the fractional Poisson process was first defined, far from its emergence in ergodic theory, as a generalization of the Poisson point process to model systems presenting long-term temporal correlations, preventing exponential behavior \cite{Las03}. The study of its properties and applications in probability theory is also an active domain of research (see \textit{e.g.} \cite{MS19_Stochastic_models_for_fractional_calculus} and the references therein). \cite{BT24_FractionalPoisson} is also the first to show an \textit{all-point REPP} property in the infinite setting with a proof adapted to the specific example of the Manneville-Pomeau family and targets that are only dynamically defined cylinders, sometimes only left or right neighborhoods of the point considered. 

\paragraph{Symbolic dynamics and Countable Markov Shifts.}
Symbolic dynamics arise naturally in statistical physics, where the phase space is represented as a subset of sequences over an alphabet. For instance, one may model a one-dimensional lattice gas by letting the alphabet encode the possible local states. This symbolic viewpoint is equally powerful in deterministic dynamics, as it allows one to encode chaotic systems and deduce their statistical properties from the corresponding symbolic representation. This idea was popularized by Bowen, who showed that Axiom A diffeomorphisms are conjugate to subshifts of finite type via Markov partitions, and thereby proving strong stochastic properties for such systems \cite{Bowen75}. Subshifts of finite type are now well understood. However, since they rely on a finite alphabet, they cannot capture many natural non-uniformly hyperbolic systems. This motivated the study of the broader class of topological Markov shifts, which allow a countable alphabet and encompass fundamental examples \cite{tdfsurvey}. Recently, such shifts have proved to be a key ingredient in the thermodynamical formalism for surface diffeomorphisms \cite{BCS22}. Furthermore, they also are a natural generalization of Markov chains, accommodating stronger—potentially infinite—dependencies between successive symbols. General topological Markov shifts may even display invariant measures with an infinite mass, which lies far beyond what is possible for subshifts of finite type and lead to completely different behaviors.\\

Despite the importance of this class of dynamical systems, the question of quantitative recurrence for topological Markov shifts remains largely open, except in the simpler setting of subshifts of finite type \cite{Coelho00}. This gap motivates the present work, where we investigate recurrence statistics and their limiting laws in this broader symbolic setting.

\subsection{Contributions}
    This paper provides an in-depth study of quantitative recurrence for topological Markov shifts. First, we show that topological Markov shifts associated to a positive recurrent potential satisfy the standard \textit{all-point REPP} with a dichotomy between periodic and non-periodic points, generalizing the result known for subshifts of finite type and showing that no new behavior emerges even though some properties are relaxed (Theorem \ref{thm:all-point_REPP_positive_recurrent_CMS}). The method of proof is based on the techniques developed in \cite{Zwe22} and the use of inducing. \\
    
    Then, we study the more delicate case of countable Markov shifts endowed with a null recurrent potential. We obtain the following contributions:
\begin{itemize}
    \item We get combinatorial conditions for the convergence towards the fractional Poisson process. If a point $x$ of the phase space is non periodic and is infinitely recurrent for at least one letter of the alphabet, then the REPP associated to cylinders shrinking to $x$ converges in distribution towards a fractional Poisson process (Theorem \ref{thm:HTS_infinite_FPP_infinitely_recurrent}). In particular, this properties is more general than having the result for a set of full measure as it makes explicit the points that have such a property. 
    \item If the point considered is periodic, then the associated REPP converges in law towards a compound fractional Poisson process which is driven by an explicit extremal index (Theorem \ref{thm:HTS_infinite_CFPP_periodic_points}).
    \item For the remaining points, we are able to get necessary and sufficient conditions for the convergence of the REPP. These conditions are express in terms of limit distribution of the first hit of a \textit{non}-rare event (Theorem \ref{thm:HTS_infinite_points_that_never_come_back_statements}). Furthermore, we are able to characterize the family of reachable limits in this case (Proposition \ref{prop:only_possible_limits_for_waiting_times_in_G}). We even show that this family is sharp in the sense for every law, it is always possible to build a CMS that have this law that emerges at least for neighborhoods of a point in the phase space (Proposition \ref{prop:Every_waiting_time_achievable_when_CMS_well_chosen}). It shows that new behaviors may emerge in the CMS context, breaking the analogy with the finite measure case, where such limits are forbidden.  
    \item For the most studied examples of null-recurrent systems in the literature, like $\mathbb{Z}$-extensions over subshifts of finite type or tower shifts, we are able to prove the convergence at every point and explicitly identify the limit point processes, hence proving the \textit{all-point REPP}. In particular, it shows distinct behaviors. While for $\mathbb{Z}$-extensions, we get a dichotomy between non-periodic points---with a fractional Poisson process---and periodic point---with a compound fractional Poisson process (in particular generalizing the result of \cite{PS23} that only holds for a set of full-measure), a third class of limit point processes emerges for a countable set of points in the case of tower shifts (Theorem \ref{thm:all_points_HoC_structure} for the tower shift and Theorem \ref{thm:all_points_Z_extension} for $\mathbb{Z}$-extensions). 
    \item Finally, we also bring some contributions to the study of other families of asymptotically rare events that are not necessarily cylinders shrinking towards a point. In particular, we show that returns close to an embedded subshift of finite type converges in distribution towards a compound fractional Poisson process with an extremal index driven by the relative induced pressure (Theorem \ref{thm:REPP_subshift_CFPP}). 
\end{itemize}

Along the way, we explicit abstract sufficient conditions for the convergence towards new point processes, different from (compound) fractional Poisson process (Theorem \ref{thm:sufficient_conditions_convergence_other_point_processes}). This abstract theorem works in higher generality than Markov shifts and could be used to derive limit point process for other examples of infinite measure preserving systems and other classes of asymptotically rare events in the future.

\paragraph{Organization of the paper.} The article is organized as follows. Section \ref{section:preliminaries} defines the main objects from the theory of topological Markov shifts and quantitative recurrence in infinite ergodic theory. In Section \ref{section:positive_recurrent_CMS}, we tackle the case of probability preserving topological Markov shift, \textit{i.e} positive recurrent potentials. Section \ref{section:quatitative_recurrence_null-recurrent_CMS} is devoted to the null recurrent case where we exhibit sufficient combinatorial conditions for convergence towards fractional Poisson process as well as necessary and sufficient conditions for the convergence to other point processes for shrinking cylinders. In Section \ref{section:possible_limit_laws} we treat the question of the possible limit laws and build examples where they emerge. Section \ref{section:all-point_REPP_for_examples} is dedicated to the study of paradigmatic families of null-recurrent CMS for which we establish the \textit{all-point REPP} property. Finally the study of visits close to an embedded subshift of finite type is presented in Section \ref{section:convergence_embedded_SFT}. The technical proofs are postponed to the appendix.

\paragraph{Acknowledgements.} The present article is part of the author PhD thesis prepared at École Polytechnique. The author was partially financed by Portuguese public funds through FCT  -- Funda\c{c}\~ao para a Ci\^encia e a Tecnologia, I.P., in the framework of the projects PTDC/MAT-PUR/4048/2021 and CMUP's project with reference UID/00144/2025 (Doi: https://doi.org \\/10.54499/UID/00144/2025).

\section{Preliminaries}
\label{section:preliminaries}

Let $(X,\mathscr{B},\mu, T)$ be a measure-theoretic dynamical system. This means that $(X, \mathscr{B}, \mu)$ is a measure space and the self-map $T : X \circlearrowleft$ leaves the measure $\mu$ invariant (\textit{i.e.} the push forward $T_{\#}\mu$ is equal to $\mu$). We assume that $\mu$ is finite or $\sigma$-finite. The transfer operator $\widehat{T} : L^1(\mu) \to L^1(\mu)$ of the system is defined via the following identity: 
\[\int f\cdot(g\circ T)\,\dd\mu = \int (\widehat{T}f)\cdot g\,\dd\mu \quad \forall f\in L^1(\mu), \; \forall g\in L^{\infty}(\mu).\] 
We say that $(X,\mathscr{B},\mu, T)$ is a conservative ergodic measure preserving 
transformation (CEMPT for short) if $\sum_{k\geq 0} \widehat{T}^ku = +\infty$ $\mu$-a.e. for all $u \in L^1_+(\mu) := \big\{ u\in L^1(\mu)\;|\; u\geq 0,\, \int u\,\dd\mu > 0\big\}$ or, equivalently, if this is true for all
$u \in \mathcal{D}(\mu) := \big\{u \in L^1(\mu)\;|\; u\geq 0,\, \int u\,\dd\mu = 1\big\}$ \cite[Propostion 1.3.2]{Aar97}. It is easy to see that ergodic probability preserving systems are CEMPT, while the classical definition of ergodicity (\textit{i.e.} for all $A\in \mathscr{B}$, $T^{-1}A = A$ implies $\mu(A) = 0$ or $\mu(A^c) = 0$) is necessary but not sufficient to be a CEMPT when $\mu(X) = +\infty$. 

\paragraph{Countable Markov Shifts.} We now formally define countable Markov shifts and present the classification of potentials. For a more detailed introduction to countable Markov shifts and their properties, we refer to the survey \cite{tdfsurvey} and the references therein. Let $G(V, E)$ be a direct graph with the set of vertices $V$ being at most countable ($V$ is also called the alphabet and an element $v\in V$ is called a letter or a state). We define the usual transition matrix $A : V \times V \to \{0,1\}$ by $A(v,v') = 1$ if $(v,v') \in E$ and $A(v,v') = 0$ else and write $v \to v'$ if $A(v,v') = 1$. More generally, we say $v \xrightarrow[]{n} v'$ if it exists $w_1, \dots, w_{n-1} \in V$ such that $v \to w_1 \to \dots \to w_{n-1} \to v'$. In such case, we say that $(vw_1^{n-1}v')$ is an admissible sequence or word. Then, a topological Markov shift (TMS) associated to the graph $G$ is the topological dynamical system $(\Omega, T)$ where 
\begin{align*}
    \Omega := \Omega_G := \big\{x \in V^{\mathbb{N}}\; \big|\; A(x_n,x_{n+1}) = 1\quad \forall n\geq 0\big\}
\end{align*}
and $T : \Omega \circlearrowleft$ is the usual shift map defined by $T((x_n)_{n\geq 0}) = (x_{n+1})_{n\geq 0}$. We endow $\Omega$ with the geometric distance $d_{\eta}(x,y) = \eta^{\min\{n \geq 0\;|\; x_n \neq y_n\}}$ for some fixed $\eta \in (0,1)$. Then, $(\Omega, d_{\eta})$ is a bounded complete metric space and its topology is generated by the cylinders 
\begin{align*}
    [v_0^{n-1}] := \{x\in \Omega \;|\; x_0^{n-1} = v_0^{n-1}\}
\end{align*}
defined for all $n\geq 1$ and every admissible sequence $(v_0^{n-1})$. In this case, we say that $[v_0^{n-1}]$ is an $n$-cylinder and we call $n$ its depth (or length). By convention, if $(v_0^{n-1})$ is non admissible, $[v_0^{n-1}] := \emptyset$. Let $\mathcal{C}(n)$ be the collection of admissible $n$-cylinders. By construction, $\mathcal{C}(n)$ is a (generating) partition of $\Omega$ and we have
\begin{align*}
    \mathcal{C}(n) := \bigvee_{k=0}^{n-1} T^{-k}\mathcal{C}(1)\,.
\end{align*}
The phase space $(\Omega, d_{\eta})$ is compact if and only if the alphabet $V$ is finite and the TMS is called a subshift of finite type (SFT) in this case. When $V$ is infinite, the phase space is not compact and might not even be locally compact and we say that it is a countable Markov shift (CMS). \\

We will assume that our TMS are topologically mixing (\textit{i.e.} for every $v, w \in V$ there exists some $N \geq 0$ (that might depend on $v$ and $w$) such that for all $n\geq N$, $v\xrightarrow[]{n} w$). By spectral decomposition, topologically transitive (\textit{i.e.} for all $v, w \in V$, there exists $n\geq 1$ such that $v \xrightarrow[]{n} w$) should be enough to obtain similar results but, for simplicity, we will always topological mixing.\\ 



To define weights on our system and potentially characterize invariant measures, we need to endow our TMS with a potential which is a measurable function $\phi : \Omega \to \mathbb{R}$. Its regularity is characterized through its variations.

\begin{defn}[Variation]
    Let $\phi$ be a potential on $\Omega$. For all $n\geq 1$, we call the $n$-th variation of $\phi$ the quantity $\var_n(\phi) := \sup \big\{|\phi(x) - \phi(y)|\;\big|\; x_0^{n-1} = y_0^{n-1}\big\}$. Furthermore, if we assume some regularity on the potential, we say that $\phi$ is
    \begin{enumerate}[label = $(\roman*)$]
        \item Markovian if $\var_2(\phi) = 0$,
        \item weakly Hölder if there exists $C > 0$ and $0 < \eta < 1$ such that $\var_n(\phi) \leq C\eta^n$ for all $n\geq 2$,
        \item with summable variations if $\sum_{n\geq 2} \var_n(\phi) < +\infty$,
        \item satisfying Walters' condition (or simply is Walters) if for all $k\geq 1$,
        \begin{align*}
            W_k(\phi) := \sup_{n\geq 1} \var_{n+k}(S_n\phi) < +\infty \; \text{and}\; W_k(\phi) \xrightarrow[k \to +\infty]{} 0.
        \end{align*}
        where $S_n \phi := \sum_{k = 0}^{n-1} \phi \circ T^k$ is the classical Birkhoff sum.
    \end{enumerate}
\end{defn}

\begin{rem}
    We have the following implications concerning the regularity of the potential: $(i) \xRightarrow[]{} (ii) \xRightarrow[]{} (iii) \xRightarrow[]{} (iv) \xRightarrow[]{} \text{continuity}$.
\end{rem}

We can see the effect of the potential of the dynamics through its associated Ruelle's Perron-Frobenius operator.

\begin{defn}[Ruelle's Perron-Frobenius operator (RPF operator)] 
    For a continuous potential $\phi$, we define its associated Ruelle's Perron-Frobenius operator (RPF) operator $L_{\phi}$ by
    \begin{align*}
        L_{\phi}f(x) := \sum_{Ty = x} e^{\phi(y)}f(y)\,, \quad x \in \Omega.
    \end{align*}
\end{defn}


We say that the potential has a finite Gurevich pressure if the quantity $P_G(\phi):= \lim_{n\to +\infty} \allowbreak n^{-1}\log(Z_n(\phi, v))$ is finite, where for some $v \in V$, $Z_n(\phi, v) = \sum_{T^n x = x} e^{S_n\phi(x)} \mathbf{1}_{[v]}(x)$. Note the limit does not depend on the choice of the state $v$. Then, for such potentials, one is able to give a classification, generalizing the one existing for Markov chains.

\begin{defn}[Classification of potentials]
\label{defn:CMS_classification_of_potentials}
Assume $\phi$ is a potential satisfying Walters' condition on a (topologically mixing) TMS such that $P_G(\phi) < +\infty$. Furthermore, for all $v \in V$, set $Z_n^*(\phi, v) := \sum_{T^nx = x} e^{S_n\phi(x)}\mathbf{1}_{\{r_{[v]} = n\}}(x)\mathbf{1}_{[v]}(x)$.
 Let $\lambda_{\phi} := e^{P_G(\phi)}$ and fix $v\in V$. We say that
 \begin{enumerate}[label = (\roman*)]
     \item $\phi$ is positive recurrent if 
     $
         \sum_{n\geq 1} \lambda_{\phi}^{-n}Z_n(\phi, v) = +\infty$ and $\sum_{n\geq 1} n \lambda_{\phi}^{-n}Z_n^*(\phi, v) < +\infty.
     $
     \item $\phi$ is null recurrent if 
     $
         \sum_{n\geq 1} \lambda_{\phi}^{-n}Z_n(\phi, v) = +\infty$ and $\sum_{n\geq 1} n \lambda_{\phi}^{-n}Z_n^*(\phi, v) = +\infty.
     $
     \item $\phi$ is transient if
     $
         \sum_{n\geq 1} \lambda_{\phi}^{-n}Z_n(\phi, v) < +\infty.
     $
 \end{enumerate}
\end{defn}

Such a classification can be equivalently characterized through spectral properties of the RPF operator, more suitable to the ergodic theory context (see \cite[Section 4]{tdfsurvey} for a survey on this result).

\begin{thm}[Generalized Ruelle-Perron-Frobenius (GRPF) Theorem \textup{\cite[Theorem 1]{Sar01_NullRecurrentPotentials}, \cite[Theorem 4.9]{tdfsurvey}}] 
    \label{thm:GRPF}
	Let $(\Omega, T)$ be a topologically mixing TMS and $\phi : \Omega \to \mathbb{R}$ a Walters potential with $P_G(\phi) < +\infty$. Then,
	\begin{enumerate}[label = (\roman*)]
	 	\item $\phi$ is positive recurrent if and only if there exist $\lambda_{\phi}$, $h_{\phi} : \Omega \to \mathbb{R}$ continuous strictly positive and $\nu_{\phi}$ a conservative measure finite on cylinders such that $L_{\phi}h_{\phi} = \lambda_{\phi}h_{\phi}$, $L_{\phi}^*\nu_{\phi} = \lambda_{\phi}\nu_{\phi}$ and $\int h_{\phi}\,\dd\nu_{\phi} = 1$. In this case, the measure $\mu_{\phi} \ll \nu_{\phi}$ with $\dd \mu_{\phi}/\dd\nu_{\phi} = h_{\phi}$ is a ergodic shift invariant probability. 
     
        \item $\phi$ is null recurrent if and only if there exist $\lambda_{\phi}$, $h_{\phi} : \Omega \to \mathbb{R}$ continuous strictly positive and $\nu_{\phi}$ a conservative measure finite on cylinders such that $L_{\phi}h_{\phi} = \lambda_{\phi}h_{\phi}$, $L_{\phi}^*\nu_{\phi} = \lambda_{\phi}\nu_{\phi}$ and $\int h_{\phi}\,\dd\nu_{\phi} = +\infty$. In this case, the measure $\mu_{\phi} \ll \nu_{\phi}$ with $\dd \mu_{\phi}/\dd\nu_{\phi} = h_{\phi}$ is a conservative ergodic shift invariant $\sigma$-finite measure. 
        \item $\phi$ is transient if there are no conservative measure $\nu$ such that $L_{\phi}^*\nu = \lambda\nu$ for some $\lambda > 0$.
	\end{enumerate} 
\end{thm}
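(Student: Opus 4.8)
The plan is to reduce everything to the classical Ruelle--Perron--Frobenius theorem for a countable \emph{full} shift by inducing on a single state, and then to transport the conclusions back to $(\Omega,T)$ via a tower (Kac) construction and an elementary renewal identity. First I would fix a state $a\in V$ and replace $\phi$ by $\phi-P_G(\phi)$, which only rescales $L_\phi$ by $e^{-P_G(\phi)}$ and changes none of the conditions in Definition \ref{defn:CMS_classification_of_potentials}; so one may assume $P_G(\phi)=0$, i.e. $\lambda_\phi=1$, whence $Z_n(\phi,a)^{1/n}\to 1$. Splitting a loop based at $a$ at its first return to $a$ gives the renewal identity $Z_n(\phi,a)=\sum_{k=1}^{n}Z_k^{*}(\phi,a)Z_{n-k}(\phi,a)$ (with $Z_0:=1$), equivalently $\sum_{n\ge0}Z_n(\phi,a)z^{n}=\bigl(1-\sum_{n\ge1}Z_n^{*}(\phi,a)z^{n}\bigr)^{-1}$ for $|z|<1$. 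Thus recurrence is equivalent to $\sum_{n\ge1}Z_n^{*}(\phi,a)=1$, and within the recurrent regime the positive/null dichotomy is precisely the finiteness versus divergence of $\sum_{n}nZ_n^{*}(\phi,a)$, the left derivative at $z=1$ of the first-return generating function.

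Assume now $\phi$ recurrent. I would pass to the first-return map $T_a:=T^{r_{[a]}}$ on $[a]$, conjugate to the full shift over the countable alphabet of first-return words, with induced potential $\phi_a:=S_{r_{[a]}}\phi|_{[a]}$. A regularity-transfer lemma shows that the Walters condition on $\phi$ makes $\phi_a$ of summable variations, and via the renewal identity recurrence of $\phi$ becomes $P_G(\phi_a)=0$ together with $\sum_mZ_m^{*}(\phi,a)=1$, i.e. $\phi_a$ is \emph{positive} recurrent (automatic on a full shift for a summable-variation potential of finite pressure). The classical Ruelle--Perron--Frobenius theorem for full shifts---via a Doeblin--Fortet/Ionescu-Tulcea--Marinescu inequality giving a spectral gap for $L_{\phi_a}$ on locally H\"older functions---then provides $h_a>0$ continuous and a conformal measure $\nu_a$ with $L_{\phi_a}h_a=h_a$, $L_{\phi_a}^{*}\nu_a=\nu_a$, $\int h_a\,\dd\nu_a=1$, a Gibbs bound $\nu_a[w]\asymp e^{S_{|w|}\phi(x)}$ on return-word cylinders $[w]\ni x$, and ergodicity of the probability-preserving system $(T_a,\mu_a)$, $\mu_a:=h_a\nu_a$. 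I would then set, for Borel $A$, $\mu_\phi(A):=\sum_{n\ge0}\mu_a(\{x\in[a]:r_{[a]}(x)>n,\ T^nx\in A\})$, and similarly $\nu_\phi$ with $\nu_a$ in place of $\mu_a$. Then $\mu_\phi$ is $T$-invariant and $\sigma$-finite with $\mu_\phi(\Omega)=\int_{[a]}r_{[a]}\,\dd\mu_a$; telescoping the excursion sums against $L_{\phi_a}^{*}\nu_a=\nu_a$ gives $L_\phi^{*}\nu_\phi=\nu_\phi$, the Gibbs bound makes $\nu_\phi$ finite on cylinders, one has $\mu_\phi\ll\nu_\phi$, and $h_\phi:=\dd\mu_\phi/\dd\nu_\phi$ satisfies $L_\phi h_\phi=h_\phi$ by the standard fact that the density of a $T$-invariant measure with respect to a $1$-conformal one is a positive eigenfunction; continuity and strict positivity of $h_\phi$ (the ``towered'' $h_a$) come from the Walters control of the Birkhoff sums $S_n\phi$ over cylinders. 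Finally $\int h_\phi\,\dd\nu_\phi=\mu_\phi(\Omega)=\int_{[a]}r_{[a]}\,\dd\mu_a=\sum_{m\ge1}m\,\mu_a(\{r_{[a]}=m\})\asymp\sum_{m\ge1}m\,Z_m^{*}(\phi,a)$ (using $\mu_a(\{r_{[a]}=m\})\asymp\nu_a(\{r_{[a]}=m\})\asymp Z_m^{*}(\phi,a)$), which by the first paragraph is finite iff $\phi$ is positive recurrent---rescale then so $\int h_\phi\,\dd\nu_\phi=1$ and $\mu_\phi$ is a probability---and is $+\infty$ iff $\phi$ is null recurrent. Conservativity of $\mu_\phi$ is automatic, being a tower over the probability-preserving $(T_a,\mu_a)$, and ergodicity of $(\Omega,T,\mu_\phi)$ follows from that of $(T_a,\mu_a)$ using conservativity and topological mixing.

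For the converse implications: any conservative conformal $\nu$ with $L_\phi^{*}\nu=\lambda\nu$ has $\lambda=1$ (the bound $\lambda\ge1$ from $\int_{[a]}L_\phi^n\mathbf{1}_{[a]}\,\dd\nu\le\lambda^n\nu[a]$ together with the Gibbs lower bound $L_\phi^n\mathbf{1}_{[a]}|_{[a]}\asymp Z_n(\phi,a)$; the reverse because $\lambda>1$ would make $\sum_n\lambda^{-n}L_\phi^n\mathbf{1}_{[a]}$ converge on $[a]$, contradicting conservativity), and then, $\nu_\phi$ being ergodic, $(\nu,h)$ is a scalar multiple of $(\nu_\phi,h_\phi)$, so $\int h\,\dd\nu<+\infty$ iff $\phi$ is positive recurrent; this gives the reverse directions of (i) and (ii). For (iii): if $\phi$ is transient, any conservative conformal $\nu$ would again have eigenvalue $1$, whence $\sum_nL_\phi^n\mathbf{1}_{[a]}\asymp\sum_nZ_n(\phi,a)<+\infty$ on $[a]$, contradicting conservativity; conversely a recurrent $\phi$ does admit the conservative conformal measure $\nu_\phi$ built above. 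I expect the hard part to be the recurrent case, specifically (a) the regularity and pressure transfer to the induced potential $\phi_a$---where the Walters hypothesis and the finiteness of the Gurevich pressure are genuinely used, through uniform Gibbs control of Birkhoff sums over cylinders---and (b) the underlying full-shift Ruelle--Perron--Frobenius theorem; the renewal bookkeeping and the uniqueness of the conformal eigendata are comparatively routine once those are in hand.
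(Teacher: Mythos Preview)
The paper does not prove this theorem; it is quoted from the literature (Sarig's paper on null recurrent potentials and the survey \cite{tdfsurvey}), so there is no ``paper's own proof'' to compare against. Your sketch is a reasonable outline of the inducing approach to GRPF and is broadly in the spirit of how such results are established.

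One point to tighten: you write that ``the Walters condition on $\phi$ makes $\phi_a$ of summable variations''. This is not what the paper records, and in general it is not true. Proposition~\ref{prop:regularity_implies_induced_regularity} of the present paper shows only that Walters on $\phi$ gives Walters on the induced potential $\phi_Y$, and the subsequent remark explicitly warns that the summable-variations property need \emph{not} pass to the induced potential. Your argument does not actually need summable variations: on the induced full shift you only need enough regularity to run a Doeblin--Fortet/ITM argument for $L_{\phi_a}$, and the Walters bound $W_k(\phi_a)\le W_k(\phi)\to 0$ (which is exactly what the paper proves) suffices for this. So the fix is simply to replace ``summable variations'' by ``Walters'' and invoke the full-shift RPF theorem in that generality. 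The rest of your sketch---the renewal identity linking $Z_n$ and $Z_n^{*}$, the Kac tower to transport $(\nu_a,h_a)$ back to $(\nu_\phi,h_\phi)$, the computation $\int h_\phi\,\dd\nu_\phi\asymp\sum_m mZ_m^{*}(\phi,a)$ giving the positive/null dichotomy, and the conservativity argument for the transient case---is the standard architecture and matches what one finds in the cited references.
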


\begin{rem}
    \label{rem:regularity_log_h}
    In the recurrent case, the proof of the generalized Ruelle-Perron-Frobenius theorem gives more information on the regularity of $\log(h_{\phi})$, and in particular,
    \begin{align*}
        \var_k\log(h_{\phi}) \leq W_k(\phi) \quad \forall k \geq 1.
    \end{align*}
\end{rem}

\begin{rem}
    \label{rem:definition_phi_*}
    In the recurrent case, we have $\widehat{T}_{\mu_{\phi}} = L_{\phi_*}$ with $\phi_* := \phi + \log h_{\phi} - \log h_{\phi}\circ T  - P_{G}(\phi)$. In particular, $L_{\phi_*}1 = 1$. By the previous remark, $\phi_*$ also satisfies Walters' condition and $P_G(\phi_*) = 0$. This particular potential $\phi_*$ associated to $\phi$ will often be more suitable for the properties we will be presenting later on.
\end{rem}

\noindent \textbf{Notations.} To alleviate notations, we will drop the $\phi$ in the indices when the potential is clear from the context. Furthermore, when restricting to a one-cylinder $[v]$ for $v \in V$, we will sometimes omit the brackets in the indices.

\paragraph{Returns to rare events and infinite ergodic theory.} For a subset $B \in \mathscr{B}$ and $x \in X$, we set $r_B(x) := r_B^{(1)}(x) := \inf \{n \geq 1 \;|\; T^nx \in B\}$ the first return time to $A$ of $x$. By induction, we also define $r_B^{(k+1)}(x) := \inf \{n > r_B^{(k)}(x) \;|\;T^nx \in B\}$ for all $k\geq 1$ and with the convention that $\inf \emptyset = +\infty$ and $r_B^{(k')}(x) = +\infty$ if $r_B^{(k)}(x) = +\infty$ for some $k \leq k'$. Note that if $(X, \mathscr{B}, \mu, T)$ is a CEMPT, that $r_B^{(k)}$ is finite almost surely for all $k\geq 1$ provided that $\mu(B) > 0$. One way of keeping track of every (scaled) return at the same time is to look at the counting measure $N_{B_n}^{\gamma}$ on $\mathbb{R}_+$ defined as 
\begin{align}
    \label{eq:defn_EPP}
        N_{B}^{\gamma} := \sum_{k \geq 1} \delta_{\gamma(\mu(B))\,r^{(k)}_{B}}.
\end{align}
The measure is called (by a slight abuse of notation) the event point process (EPP) associated to $B$ and $\gamma : \mathbb{R}_+ \to \mathbb{R}_+$ is called the scaling function. However, our goal is to understand the evolution of the EPP when the measure of $B$ gets smaller and smaller. We say that a sequence $(B_n)_{n\geq 1} \in \mathscr{B}^{\mathbb{N}}$ is a sequence of asymptotically rare events if $\mu(B_n) > 0$ for all $n \geq 0$ and $\mu(B_n) \to 0$ as $n \to +\infty$.

\begin{defn}[Rare Event Point Processes]
    \label{defn:REPP}
    Let $(B_n)_{n\geq 0}$ be a sequence of asymptotically rare events. We call Rare Event Point Processes (REPP) (with scaling $\gamma$) the sequence of EPP associated to $(B_n)_{n\geq 0}$ (with scaling $\gamma$). 
\end{defn}

As we aim to understand the limit behavior of the REPP, from a statistic point of view, we need to properly choose our starting measured space. There are two main ways of doing it. Either we take a point randomly chosen in the whole phase space, either we can consider only the points that started directly in the targets and thus driven by the induced measure $\mu_{B} := \mu(\cdot \cap B)/\mu(B)$. This is the reason why we introduce the following definition.

\begin{defn}[Hitting and return REPP]
    \label{defn:hitting_and_return_REPP}
    The hitting REPP for $(B_n)_{n\geq 1}$ is the sequence of random variables $N_{B_n}^{\gamma}$ on the probability space $(X, \mathscr{B}, \nu)$ for some $\nu \ll \mu$ with $\nu(X) = 1$. \\
    The hitting REPP for $(B_n)_{n\geq 1}$ is the sequence of random variables $N_{B_n}^{\gamma}$ on the (changing with $n$) probability space $(B_n, \mathscr{B} \cap B_n, \mu_{B_n})$.
\end{defn}

\begin{rem}
    \label{rem:REPP_random_variables_vs_probabilities}
    In fact, as we are only interested by convergence in law in this work, we will confuse the random variables with their associated distribution, \textit{i.e} we may talk of the hitting REPP as the sequence of probabilities $(N_{B_n}^{\gamma})_{\#} \nu$ and the return REPP as the sequence of probabilities $(N^{\gamma}_{B_n})_{\#}\mu_{B_n}$.
\end{rem}

\begin{rem}
    For the hitting REPP, it can seem arbitrary to choose a specific probability $\nu \ll \mu$ because for a fixed set $B$, it will change the distribution of the EPP. However, \cite[Corolarry 6]{Zwei07} ensures that if a limit distribution exists for a specific probability $\nu \ll \mu$, it exists for every probability $\nu' \ll \mu$ and the limit is the same. Of course, when $\mu(X) = 1$, we can simply choose $\nu = \mu$ in the previous definition. 
\end{rem}

Now comes the question of the right choice of the scaling $\gamma$. When the system is probability preserving, the scaling directly comes from Kac's Theorem which ensures that for all $B \in \mathscr{B}$ with $\mu(B) > 0$, $\mathbb{E}_{\mu_B}[r_B] = +\infty$. Thus, the scaling $\gamma := \id$ is the natural universal choice. Nevertheless, when $\mu(X) = +\infty$, the answer to that question is trickier. Kac's Theorem only states the infiniteness of the expected value and it turns out that no universal scaling can be found \cite[Theorem 2.1]{RZ20}. In order to find adapted scaling, we need to introduce more properties on the system. We briefly present the main properties needed but for a more detail introduction of such systems and properties in infinite ergodic theory, we refer to \cite{Aar97}. First, we will assume that our systems are pointwise dual ergodic (PDE).

\begin{defn}[Pointwise dual ergodicity (PDE)]
    \label{defn:pointwise_dual_ergodicity}
    A CEMPT $(X, \mathscr{B}, \mu, T)$ is said to be pointwise dual ergodic (PDE) if there exists a sequence $(a_n)_{n\geq 0}$ such that 
    \begin{align*}
        \frac{1}{a_n} \sum_{k = 0}^{n-1} \widehat{T}^k f \xrightarrow[n \to +\infty]{} \int f\,\dd\mu, \quad \mu-a.e. ,\quad \forall f \in L^1(\mu).
    \end{align*}
    In this case, we call $(a_n)_{n\geq 1}$ a normalizing sequence.
\end{defn}

Note that, although it seems like an equivalent of Birkhoff's Theorem in the infinite context (with iterations of the transfer operator instead), it is a property that is not satisfied for every CEMPT. Furthermore, due to the infiniteness of $\mu$, we necessarily have $a_n = o(n)$. The PDE property is equivalent to the existence of uniform sets, which are of paramount importance for our theory.

\begin{defn}[Uniform set]
    A set $Y \in \mathscr{B}$ with $\mu(Y) > 0$ is said to be uniform if there exists a function $f \in L^1(\mu)$ with $f\geq 0$ and $\int_Y f\,\dd\mu$ and a sequence $(a_n)_{n\geq 1}$ such that 
    \begin{align*}
        \bigg\| \;\frac{1}{a_n} \sum_{k = 0}^{n-1} \widehat{T}^k f - \int f\,\dd\mu\;\bigg\|_{L^{\infty}(\mu_Y)} \xrightarrow[n \to +\infty]{} 0.
    \end{align*}
    We also say that the set $Y$ is $f$-uniform. In particular, if $Y$ is uniform for $\mathbf{1}_Y$, one says that $Y$ is a Darling-Kac set. 
\end{defn}

In fact, the existence of uniform set and the PDE property are equivalent. The  implication follows from Egorov's theorem, while the reciprocal can be found in \cite[Proposition 3.7.5]{Aar97}. \\

Together with the PDE property, we will assume that our normalization satisfies some regular variation hypothesis. We say that a function $f : \mathbb{R}_+ \to \mathbb{R}_+$ is regularly varying (at $+\infty$) of index $\alpha \in \mathbb{R}$ if for every $\lambda > 0$, $f(\lambda z)/f(z) \xrightarrow[z\to +\infty]{} \lambda^{\alpha}$. This notion is generalized for sequences where we say that $(u_n)_{n\geq 1}$ is regularly varying if $u : z \mapsto u_{\lfloor z\rfloor}$ is regularly varying. In our context, we will always assume that the normalizing sequence $(a_n)_{n\geq 1}$ coming from the PDE property is regularly varying for some index $0 < \alpha \leq 1$. When both properties (PDE $\&$ regular variation) are verified, the scaling 
\begin{align}
    \label{eq:defn_gamma}
    \gamma(s) := \frac{1}{a^{\leftarrow}\big(\frac{1}{s}\big)}
\end{align}
is the right scaling for sequence of asymptotically rare events $(B_n)_{n\geq 1}$ remaining inside a uniform set $Y$ \cite[Theorem 2.2]{RZ20}. Here $f^{\leftarrow}$ stands for the generalized inverse of the function $f$ and is defined by $f^{\leftarrow}(x) := \inf \{y\geq 0\;|\; f(y) > x\}$ (again $(a_{n})_{n\geq 1}$ is only a sequence but it can be seen as a function by $a : s \mapsto a_{\lfloor s\rfloor}$ and we keep the same notation $a$ for both). Thus, in the definition of the point process \eqref{eq:defn_EPP}, we will always assume that $\gamma$ is defined from \eqref{eq:defn_gamma}. Note that by the standard theory of regular variation, if $(a_n)_{n\geq 1} \in \RV(\alpha)$ then $\gamma \in \RV_0(1/\alpha)$ where $f \in \RV_0(\beta)$ if $s \mapsto f(1/s)^{-1} \in \RV(\beta)$ \cite[Theorem 1.5.12]{Bingham89_RegularVariation}. \\

In this context, it has been established that the convergence of the hitting REPP and the return REPP are equivalent \cite[Theorem 2.1 and Corollary 2.1]{BT24_FractionalPoisson}. \\

Going back to the generalized Ruelle's Perron-Frobenius theorem (Theorem \ref{thm:GRPF}), we are able to state more properties for null recurrent CMS.

\begin{thm}
    \label{thm:GRPF_null_recurrent}
    Let $(\Omega, T)$ be a topologically mixing TMS and $\phi : \Omega \to \mathbb{R}$ a potential satisfying Walters' condition and with $P_G(\phi) < +\infty$. Assume that $\phi$ is null-recurrent and consider $\mu_{\phi}$ defined from Theorem \ref{thm:GRPF}-$(ii)$. Then, $(\Omega, T, \mu_{\phi})$ is PDE with normalizing sequence $(a_n)_{n\geq 1}$ such that for every $v \in V$,
        \begin{align*}
            a_n \uset{\widesim}{n\to +\infty} \frac{1}{\mu_{\phi}[v]} \sum_{k = 1}^{n} \lambda_{\phi}^{-k}Z_k(\phi, v)\,.
        \end{align*}
    Furthermore, for all $k\geq 1$ and  $(a_0^{k-1}) \in V^k$ admissible, $[a_0^{k-1}]$ is a Darling-Kac set.
\end{thm}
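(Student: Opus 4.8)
First I would pass to the normalised potential $\phi_*$ of Remark~\ref{rem:definition_phi_*}, so that $\widehat T:=\widehat T_{\mu_\phi}=L_{\phi_*}$ with $L_{\phi_*}\mathbf{1}=\mathbf{1}$, $L_{\phi_*}^*\mu_\phi=\mu_\phi$, $P_G(\phi_*)=0$ and $\lambda_{\phi_*}=1$; along any $T^n$-periodic orbit the Birkhoff sums of $\phi$ and $\phi_*$ differ only through the telescoping $\log h_\phi$-term and the pressure, so $Z_n(\phi_*,v)=\lambda_\phi^{-n}Z_n(\phi,v)$ for every $v$. Fix an admissible word and set $Y:=[a_0^{m-1}]$. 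Since $\nu_\phi$ is finite and positive on cylinders and $\var_m\log h_\phi\le W_m(\phi)\le W_1(\phi)<\infty$ by Remark~\ref{rem:regularity_log_h}, the density $h_\phi$ is comparable on $Y$ to any of its values there, so $0<\mu_\phi(Y)=\int_Y h_\phi\,\dd\nu_\phi<\infty$. The plan is then: (i) show $Y$ is a Darling--Kac set; (ii) deduce that $(\Omega,T,\mu_\phi)$ is PDE; (iii) identify the return sequence.

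For (i) and (ii) I would induce on $Y$. As $\phi$ is recurrent, $(\Omega,T,\mu_\phi)$ is a CEMPT, so the first-return map $T_Y$ is defined $\mu_\phi$-a.e., is ergodic and preserves the \emph{probability} $(\mu_\phi)_Y$; moreover $T_Y$ is a topologically mixing full shift over the countable alphabet of first-return words, and the induced potential $\overline{\phi_*}:=S_{r_Y}\phi_*|_Y$ inherits, via the standard estimates for Birkhoff sums of Walters potentials, enough regularity on this induced full shift for its Ruelle operator $L_{\overline{\phi_*}}=\widehat{(T_Y)}_{(\mu_\phi)_Y}$ to be quasi-compact with a spectral gap on a Banach space $\mathcal B\ni\mathbf{1}_Y$, with leading eigendata $(1,\mathbf{1}_Y,(\mu_\phi)_Y)$ (the induced system being positive recurrent, indeed probability preserving). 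Writing $T_j:=\mathbf{1}_Y L_{\phi_*}^j(\mathbf{1}_{Y\cap\{r_Y=j\}}\cdot)|_Y$ and $R_j:=\mathbf{1}_Y L_{\phi_*}^j(\mathbf{1}_Y\cdot)|_Y$, one has the operator renewal equation $R_n=\sum_{j=1}^n T_j R_{n-j}$ with $\sum_{j\geq 1}T_j=L_{\overline{\phi_*}}$, and feeding the spectral gap into the operator renewal theorems of Garsia--Lamperti, Sarig, Gou\"ezel and Melbourne--Terhesiu gives $R_n\to 0$ together with $\tfrac1{a_n}\sum_{j=0}^{n-1}R_j\to\mu_\phi(Y)\,\Pi$ in $\mathcal B$, where $\Pi f=(\int_Y f\,\dd\mu_\phi)\,\mathbf{1}_Y$ and $(a_n)$ is the intrinsic return sequence attached to the tail of $r_Y$. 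In particular $\tfrac1{a_n}\sum_{j=0}^{n-1}\widehat T^j\mathbf{1}_Y\to\mu_\phi(Y)$ in $L^\infty(\mu_Y)$, i.e.\ $Y$ is $\mathbf{1}_Y$-uniform, hence a Darling--Kac set; this being valid for every cylinder $[a_0^{m-1}]$ gives the last assertion, and by \cite[Proposition 3.7.5]{Aar97} the existence of a uniform set yields the PDE property with normalizing sequence $(a_n)$, unique up to asymptotic equivalence.

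For (iii) take $Y=[v]$. Integrating the $L^\infty(\mu_Y)$-convergence against $\mu_\phi|_{[v]}$ gives $\sum_{j=0}^{n-1}\mu_\phi([v]\cap T^{-j}[v])=\int_{[v]}\bigl(\sum_{j<n}\widehat T^j\mathbf{1}_{[v]}\bigr)\dd\mu_\phi\sim a_n\,\mu_\phi[v]^2$. Partitioning $[v]\cap T^{-j}[v]$ into the cylinders $[\omega_0\cdots\omega_{j-1}v]$ indexed by the $j$-loops $\omega$ at $v$, the transfer identity gives $\mu_\phi[\omega_0\cdots\omega_{j-1}v]=\int_{[v]}e^{S_j\phi_*(\iota_\omega x)}\,\dd\mu_\phi(x)$ for the corresponding inverse branch $\iota_\omega$ of $T^j$, and since $\iota_\omega x$ and the periodic point $\overline\omega$ both lie in the depth-$(j+1)$ cylinder $[\omega_0\cdots\omega_{j-1}v]$ one has $|S_j\phi_*(\iota_\omega x)-S_j\phi_*(\overline\omega)|\le\var_{j+1}(S_j\phi_*)\le W_1(\phi_*)$; summing over $\omega$ yields $e^{-W_1(\phi_*)}\mu_\phi[v]\,Z_j(\phi_*,v)\le\mu_\phi([v]\cap T^{-j}[v])\le e^{W_1(\phi_*)}\mu_\phi[v]\,Z_j(\phi_*,v)$, and hence at once the two-sided bound $a_n\asymp\tfrac1{\mu_\phi[v]}\sum_{j=1}^n\lambda_\phi^{-j}Z_j(\phi,v)$. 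The main obstacle is to promote this $\asymp$ to the asserted $\sim$: the $W_1$-distortion is a genuine, non-vanishing feature of the term-by-term comparison and can only be washed out by the spectral/renewal structure. The cleanest route I see is the trace identity $Z_j(\phi_*,v)=\tr(\mathbf{1}_{[v]}L_{\phi_*}^j)=\tr(R_j)$: passing to traces in the Cesàro convergence $\tfrac1{a_n}\sum_{j<n}R_j\to\mu_\phi(Y)\,\Pi$ and using $\tr\Pi=\mu_\phi[v]$ gives $\sum_{j=1}^n Z_j(\phi_*,v)\sim a_n\,\mu_\phi[v]$ directly; alternatively one invokes a local limit / first-order mixing theorem for the infinite-measure system — the analog of $\mu(A\cap T^{-j}A)\to\mu(A)^2$ in the finite-measure case — to obtain $\mu_\phi([v]\cap T^{-j}[v])\sim\mu_\phi[v]\,\lambda_\phi^{-j}Z_j(\phi,v)$ in the Cesàro sense and then conclude from the first displayed asymptotic. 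Either way $a_n\sim\tfrac1{\mu_\phi[v]}\sum_{j=1}^n\lambda_\phi^{-j}Z_j(\phi,v)$, and independence of $v$ is automatic since the normalizing sequence of a PDE system is unique up to asymptotic equivalence. In short, the heart of the argument is CMS thermodynamic formalism under inducing (to get the spectral gap of the induced operator) combined with operator renewal theory; the only step I do not expect to reduce to a black box is the passage from $\asymp$ to $\sim$ in (iii), for which the trace-convergence argument seems to be the right tool.
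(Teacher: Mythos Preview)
The paper does not supply a proof of this theorem: it is stated in the preliminaries as a complement to the GRPF theorem and is used throughout as a black box, the content being essentially that of Sarig's null-recurrent paper and the survey cited for Theorem~\ref{thm:GRPF}. So there is no argument in the paper to compare yours with. That said, your outline has two genuine gaps that would have to be repaired before it becomes a proof.

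\textbf{Operator renewal without regular variation.} The Ces\`aro convergence $\tfrac{1}{a_n}\sum_{j<n}R_j\to\mu_\phi(Y)\,\Pi$ in $\mathcal B$ that you extract from ``Garsia--Lamperti, Sarig, Gou\"ezel, Melbourne--Terhesiu'' is not available under the hypotheses of Theorem~\ref{thm:GRPF_null_recurrent}: those operator-renewal theorems are all proved under a regular-variation assumption on the tail $\mu_{[v]}(r_{[v]}>n)$, and your phrase ``the intrinsic return sequence attached to the tail of $r_Y$'' has no meaning without such a hypothesis. Theorem~\ref{thm:GRPF_null_recurrent} deliberately does \emph{not} assume regular variation (that hypothesis is added only from Theorem~\ref{thm:HTS_infinite_FPP_infinitely_recurrent} onwards), so this route does not close. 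The argument the paper itself relies on elsewhere (see the proof of Theorem~\ref{thm:REPP_subshift_CFPP}) is the right substitute: the first-return map on any cylinder is a full shift with a Walters potential, hence $\psi$-mixing, and then \cite[Lemma~3.7.4]{Aar97} gives the Darling--Kac property directly, after which \cite[Proposition~3.7.5]{Aar97} yields PDE. No spectral gap and no renewal theory are needed for this step.

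\textbf{The trace identity.} Your passage from $\asymp$ to $\sim$ in (iii) via $Z_j(\phi_*,v)=\tr R_j$ and continuity of the trace under the Ces\`aro limit is not rigorous: the $R_j$ act on an infinite-dimensional Banach space and are not trace-class, so neither the identity nor the limit-interchange is available. You correctly flag this as the non-black-box step, but I do not see how to rescue it. The way this is handled in Sarig's work is more elementary and does not leave the scalar world: one proves directly the uniform Ces\`aro asymptotic $\sum_{k\le n}\lambda_\phi^{-k}L_\phi^k\mathbf 1_{[v]}\sim h_\phi\,\nu_\phi[v]^{-1}\sum_{k\le n}\lambda_\phi^{-k}Z_k(\phi,v)$ on $[v]$ by combining the depth-$m$ distortion bound $W_m(\phi_*)\to0$ (not just $W_1$) with a scalar renewal argument and the fact that $Z_n(\phi_*,v)\big/\sum_{k\le n}Z_k(\phi_*,v)\to0$ in the null-recurrent case; the claimed $a_n\sim\mu_\phi[v]^{-1}\sum_{k\le n}\lambda_\phi^{-k}Z_k(\phi,v)$ then drops out. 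Your $\asymp$ computation is correct and is exactly the $W_1$-level of this argument; the promotion to $\sim$ comes from letting the depth go to infinity, not from operator traces.
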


\paragraph{Point Processes.} We present now few families of point processes that will appear in the following section of this paper. For a more detailed introduction to point processes, we refer to \cite{DaleyVereJones03_VolumeI, DaleyVereJones08_VolumeII} or \cite{LastPenrose18_LecturesOnPoissonProcess}. On $\mathbb{R}_+$, we define the set of boundedly finite counting measure $\mathcal{N}_{\mathbb{R}_+}^{\#}$ as the subset of boundedly finite measure on $\mathbb{R}_+$ (\textit{i.e.} $m(A) < +\infty$ for all $A$ bounded) with the additional property that $m(A) \in \{0, 1, 2, \dots\}$ for all $A$ bounded. Endowed with the weak$^*$ topology, $\mathcal{N}_{\mathbb{R}_+}^{\#}$ is a Polish space. In probability theory, we call a point process (on $\mathbb{R}_+$) a random variable taking values in $\mathcal{N}_{\mathbb{R}_+}^{\#}$. By Definition \ref{defn:hitting_and_return_REPP}, both the hitting and the return REPP are sequences of point processes taking values in $\mathcal{N}_{\mathbb{R}_+}^{\#}$.\\

Most of the processes that will appear can be expressed as renewal point processes.

\begin{defn}[Renewal point process (RPP)] 
\label{defn:renewal_point_process}
Let $W$ be a non negative random variable such that $\mathbb{P}(W > 0) > 0$ and $\mathbb{P}(W < +\infty) = 1$. Let 
$(W_i)_{i\geq 1}$ be i.i.d. random variables having the same law as $W$. The renewal 
point process with waiting time $W$ is defined by
\[
\RPP(W) \overset{\text{(law)}}{=} \sum_{i = 1}^{+\infty} \delta_{T_i},
\]
where $T_{i+1} - T_{i} = W_{i+1}$ for all $i\geq 0$ and with the convention $T_0 = 0$.
Note that $\mathbb{P}(\RPP(W) \in \mathcal{N}_{\mathbb{R}_+}^{\#\infty}) = 1$ where $\mathcal{N}_{\mathbb{R}_+}^{\#\infty}$ is the set of counting measures $m$ such that $m(\mathbb{R}_+) = +\infty$.
\end{defn}

Sometimes, the first waiting time $W_1$ is particular and it can have a law different from the other waiting times $(W_i)_{i\geq 2}$ and we also introduce the notion of delayed renewal processes.

\begin{defn}[Delayed renewal point process (DRPP)]
\label{defn:delayed_RPP}
Let $V,W$ be two non negative random variables such that $\mathbb{P}(V > 0), 
\mathbb{P}(W > 0) > 0$ and $W, V < +\infty$ almost surely. Let $(W_i)_{i\geq 1}$ be i.i.d. random variables with the same law as $W$ and independent from $V$. The delayed renewal point process with delay $V$ and waiting time $W$ 
is defined by
\[
\DRPP(V,W) \overset{\mathrm{(law)}}{=} \sum_{i=1}^{+\infty} \delta_{T_i},
\]
where $T_{i+1} - T_i = W_{i+1}$ for all $i\geq 1$ and $T_1 = V$. Note that $\DRPP(V,W) \in \mathcal{N}_{\mathbb{R}_+}^{\#\infty}$ almost surely.
\end{defn}

Finally, the last class of point process that we define here are compound Point process that are a common way to add mass to a specific atom. This construction is particularly important when one wants to take into account clusters that may occur at the same time.

\begin{defn}[Compound point process]
\label{defn:compound_point_process}
For a simple point process $P = \sum_{i=1}^{+\infty}\delta_{T_i}$, we define the 
associated compound point process $c(P)(\pi)$ of multiplicity $\pi$ (where $\pi$ is a 
probability distribution on $\mathbb{N}$) as
\[
c(P)(\pi) \overset{\mathrm{(law)}}{=} \sum_{i=1}^{+\infty}X_i\delta_{T_i},
\]
where $(X_i)_{i\geq 1}$ are i.i.d. random variables distributed according to $\pi$ and
independent of $(T_i)_{i\geq 1}$. 
\end{defn}

\begin{rem}
Note that this is not the standard approach to defining a compound process. Here, we utilize 
the structure of \(\mathbb{R}_+\) and the fact that the multiplicity \(\pi\) takes 
integer values and this will be sufficient in the following. However, compound point processes are typically constructed in a more general framework, particularly the compound Poisson process, as developed in \cite[Chapter 15]{LastPenrose18_LecturesOnPoissonProcess} for example. 
\end{rem}

\paragraph{All-point REPP property.} In this article, we are interested by finding the asymptotic behavior of the hitting REPP and the return REPP when targets shrink towards a point $x$ in the phase space. Since, we work with TMS, the natural targets are the cylinders defining the point $x$, \textit{i.e.} we consider $B_n = [x_0^{n-1}]$ for all $n\geq 1$. By the GRPF Theorem (Theorem \ref{thm:GRPF}, we know that, in the recurrent case, the invariant measures are non-atomic and thus $B_n$ are well defined asymptotically rare events. Beyond having results that would be true only for points $x$ in a set of full measure, we focus on finding a behavior for \textit{every} point $x$ in the system that we call the all-point REPP.

\begin{defn}[All-point REPP]
    We say that a TMS satisfies the \textit{all-point REPP} property if for every point $x$ of the phase space the hitting REPP and the return REPP associated to the sequence of asymptotically rare events $B_n := [x_0^{n-1}]$ converge weakly as $n$ goes to $+\infty$.
\end{defn}

Of course, beyond proving the existence of such limits, a more interesting point is to actually find the explicit limiting laws. Furthermore, by Theorem \ref{thm:GRPF_null_recurrent} and \cite[Corollary 2.1]{BT24_FractionalPoisson}, having the convergence for the hitting REPP or the return REPP is sufficient to have the convergence for both.





\section{Convergence at all points for positive recurrent potentials} 
\label{section:positive_recurrent_CMS}
In the positive recurrent case, we obtain the \textit{all-point REPP} with a dichotomy between non-periodic and periodic points, hence a result similar to the one for expanding systems \cite{LFFF16}. Except for the SFT case, this result for general positive recurrent TMS is new, so we provide a detailed treatment here. Even if the proof are easier and more standard, the positive recurrent case will serve as a preparatory step, offering insights for tackling the more delicate null recurrent case. Here, we observe convergence towards the Poisson point process for non periodic points and the compound Poisson point process for periodic points. We start by properly defining these point processes.

\begin{defn}[(Compound) Poisson point process ((C)PPP)]
    \label{defn:(compound)_Poisson_point_process}
    The standard Poisson point process $\PPP(\lambda)$ (of parameter $\lambda > 0$) is defined as a special renewal point process where the waiting times are following an exponential law (of parameter $\lambda$), \textit{i.e.} for $\mathcal{E}(\lambda)$ the exponential law of parameter $\lambda$,
    \[\PPP(\lambda) \overset{\text{(law)}}{=} \RPP(\mathcal{E}(\lambda)).\]
    The compound Poisson process $\CPPP(\lambda, \pi)$ of parameter $\lambda$ and multiplicity $\pi$ is simply $c(\PPP(\lambda))(\pi)$ with the notations from Definition \ref{defn:compound_point_process}. \\Note that in the particular case where $\pi = \geo(\theta)$ for some $0 < \theta \leq 1$ (\textit{i.e.} $\pi(k) = \theta (1 - \theta)^{k-1}$ for $k\geq 1$), $\CPPP(\lambda, \geo(\theta))$ can also be defined as a delayed renewal point process. Indeed, let $W_{1, \theta}(\lambda)$ be the non-negative random variable with distribution function
    \begin{align*}
        \mathbb{P}(W_{1, \theta}(\lambda) \leq t) = 1 - \theta + \theta(1 - e^{-\lambda t}),
    \end{align*}
    then, 
    \begin{align*}
        \CPPP(\lambda, \geo(\theta)) \overset{\text{(law)}}{=} \DRPP( \mathcal{E}(\lambda), W_{1, \theta}(\lambda)). 
    \end{align*}
\end{defn}

The following theorem establishes the \textit{all-point REPP} property for positive recurrent TMS with a dichotomy between periodic and non-periodic points.

\begin{thm}
    \label{thm:all-point_REPP_positive_recurrent_CMS}
    Let $(\Omega, T)$ be a topologically mixing TMS endowed with a positive recurrent potential $\phi$ where $\phi$ is Walters and $P_G(\phi) < +\infty$. Then, for all $x \in \Omega$ and $B_n = [x_0^{n-1}]$ for $n\geq 1$,
    \begin{enumerate}
        \item If $x$ is non periodic,
        \begin{align*}
            N_{B_n}^{\id} \xRightarrow[n \to +\infty]{\mu} \PPP \quad \text{and} \quad N_{B_n}^{\id} \xRightarrow[n \to +\infty]{\mu_{B_n}} \PPP.
        \end{align*}

        \item If $x$ is periodic of prime period $q$,
        \begin{align*}
            N_{B_n}^{\id} \xRightarrow[n \to +\infty]{\mu} \CPPP(\theta, \geo(\theta)) \quad \text{and} \quad N_{B_n}^{\id} \xRightarrow[n \to +\infty]{\mu_{B_n}} \RPP(W_{1,\theta}),
        \end{align*}
        where $\theta := 1 - \exp(S_q\phi(x) - qP_G(\phi))$ is the extremal index.
    \end{enumerate}
\end{thm}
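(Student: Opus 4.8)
\textbf{Overall strategy.} The plan is to reduce the positive-recurrent TMS to a probability-preserving setting where standard tools apply, and then invoke the known equivalence between convergence for an induced system and convergence for the original system. Concretely, I would replace $\phi$ by the normalized potential $\phi_* = \phi + \log h_\phi - \log h_\phi \circ T - P_G(\phi)$ of Remark \ref{rem:definition_phi_*}, so that $\mu_\phi$ is $T$-invariant, $\widehat T_{\mu_\phi} = L_{\phi_*}$, and $L_{\phi_*}1 = 1$. Since $\phi_*$ is again Walters with $P_G(\phi_*)=0$ and positive recurrent, the system $(\Omega, \mathscr B, \mu_\phi, T)$ is a mixing probability-preserving TMS. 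The cylinders $[x_0^{n-1}]$ are non-atomic (GRPF), so $(B_n)$ is genuinely asymptotically rare and, by Kac's theorem, the correct scaling is $\gamma = \id$.

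\textbf{Reduction by inducing.} I would fix a state $v \in V$ with $x \in [v]$ after an initial shift (or, if $x$'s orbit avoids a convenient recurrent set, first replace $x$ by $T^m x$ — this only changes the REPP by finitely many atoms and does not affect the weak limit, cf. the remark that the limit is unaffected by a finite modification). Induce on $Y = [v]$: by the Abramov/first-return construction, the induced map $T_Y$ on $(Y, \mu_{\phi,Y})$ is conjugate to the TMS built from the induced potential, which is a mixing SFT-like (loop-graph) system with a Walters, in fact summable-variation, potential; in particular it enjoys exponential decay of correlations against Hölder observables, a spectral gap for its transfer operator, and therefore falls squarely within the scope of the results of \cite{Zwe22} (or, alternatively, \cite{AFV15, LFFF16}) guaranteeing the all-point REPP dichotomy: PPP for non-periodic points of $T_Y$, CPPP with geometric multiplicity for periodic points. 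The cylinders $[x_0^{n-1}]$, intersected with $Y$ and re-expressed in the induced alphabet, form exactly the natural shrinking targets for $T_Y$ at the induced point. By the equivalence of \cite{HWZ14, FFTV16} (return-time statistics for a first-return induced system $\iff$ return-time statistics for the original system, with the limit process unchanged up to the explicit time-change that is trivial here because $\gamma=\id$ in both cases), the limit of $N_{B_n}^{\id}$ for $T$ exists and equals that for $T_Y$.

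\textbf{Identifying the extremal index.} For part (1), a point $x$ is non-periodic for $T$ iff its induced image is non-periodic for $T_Y$ (a periodic orbit of $T_Y$ lifts to a periodic orbit of $T$ and conversely), so the induced limit is $\PPP$ and the hitting and return REPP both converge to $\PPP$; the equality of hitting and return limits is automatic by \cite[Corollary 2.1]{BT24_FractionalPoisson} (here in the probability-preserving special case), or classically by \cite{HLV05}. For part (2), if $x$ is periodic of prime period $q$ under $T$, its induced orbit is periodic under $T_Y$, and the extremal index is computed, exactly as in the finite-alphabet case, from the contraction of the transfer operator along the periodic orbit: the return map's invariant density and the Jacobian cocycle give $\theta = 1 - \lim_{n} \frac{\mu_\phi(B_n \cap T^{-q} B_n)}{\mu_\phi(B_n)}$, and a direct computation with the Gibbs property of $\mu_\phi$ (bounded distortion: $\mu_\phi[x_0^{n-1}] \asymp e^{S_n \phi(x) - n P_G(\phi)} h_\phi(x) \nu_\phi$-mass, using $\var_k \log h_\phi \le W_k(\phi) \to 0$) yields $\mu_\phi(B_n \cap T^{-q}B_n)/\mu_\phi(B_n) \to e^{S_q \phi(x) - q P_G(\phi)}$ for $n$ large, since for deep enough cylinders $B_n \cap T^{-q}B_n = [x_0^{q-1} x_0^{n-1}]$ is itself a cylinder sitting periodically inside $B_n$. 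Hence $\theta = 1 - \exp(S_q\phi(x) - qP_G(\phi)) \in (0,1]$, the multiplicity is $\geo(\theta)$, and the compound/delayed-renewal identity of Definition \ref{defn:(compound)_Poisson_point_process} gives the return REPP limit $\RPP(W_{1,\theta})$.

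\textbf{Main obstacle.} The delicate point is \emph{not} the finite-alphabet input but the verification that inducing on a one-cylinder of a countable Markov shift with a merely-Walters potential produces an induced system to which the existing all-point machinery genuinely applies — i.e.\ that the induced potential has good enough regularity (summable variations and a spectral gap on a suitable Banach space) and that the inducing is by \emph{first return} so the equivalence theorems of \cite{HWZ14, FFTV16} apply verbatim. This requires controlling the distortion of the Birkhoff sums of $\phi_*$ over excursions out of $[v]$, which is where the Walters condition and Remark \ref{rem:regularity_log_h} do the work; once the induced system is placed in the right functional-analytic framework, the rest is the translation of known statements. The secondary technical care is the bookkeeping of the finitely many atoms lost when passing between $x$ and a shifted point $T^m x$, and checking this is invisible in the weak-$*$ limit on $\mathcal N_{\mathbb R_+}^{\#}$.
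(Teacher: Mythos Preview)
Your high-level strategy---induce on a one-cylinder, establish the REPP limit for the induced system, then transfer back via the first-return equivalence of \cite{FFTV16}---is exactly the paper's route. Your computation of the extremal index as the limit of $\mu(B_n\cap T^{-q}B_n)/\mu(B_n)$ is also what the paper does (Lemma~\ref{lem:calcul_EI}).

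The gap is in the middle step: there is no black-box all-point REPP theorem for the induced system that you can cite. Inducing on $[x_0]$ produces a \emph{full shift on a countable alphabet} $V_{[x_0]}=\{[x_0 w x_0]:w\in(V\setminus\{x_0\})^n,\ n\ge 0\}$, not an SFT, so the interval-map/finite-alphabet results of \cite{AFV15,LFFF16} do not apply. What \cite{Zwe22} actually provides is a list of abstract conditions (Assumption~\ref{assum:Roland_Compound_measure_finie} in the paper) that must be verified for the specific targets; this verification \emph{is} the content of the paper's proof. Moreover, the targets $B_n=[x_0^{n-1}]$ are not single cylinders for the induced alphabet: $B_n$ is only $\mathcal C_{[x_0]}(\tau_n)$-measurable, sitting inside one $(\tau_n-1)$-cylinder but decomposing as a (possibly infinite) union $B_n=\bigsqcup_{k\in I_n}B_n^{(k)}$ of induced $\tau_n$-cylinders. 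Handling this union is where the bounded-distortion compactness (Lemma~\ref{lem:bounded_distortion_CMS_living_in_compact}) does real work, and it is not covered by the statement ``the cylinders $[x_0^{n-1}]$\dots form exactly the natural shrinking targets for $T_Y$.''

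There is also a case you miss. If $x$ visits every state of $V$ only finitely often---which can happen for a specific point in a countable TMS, even positive recurrent---then there is no $v$ with $|\mathcal O(x)\cap[v]|=\infty$, and replacing $x$ by $T^m x$ does not produce one. The paper treats this separately (Case~2 in the proof of Theorem~\ref{thm:all-point_REPP_positive_recurrent_CMS}-1): one still induces on $[x_0]$, but now takes $\tau_n$ equal to the constant $p=|\mathcal O(x)\cap[x_0]|$, decomposes $B_n$ into induced $p$-cylinders (which is meaningful because $\mu$-a.e.\ point in $B_n$ \emph{does} return infinitely often, even if $x$ itself does not), and checks the conditions directly. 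Your ``shift by $T^m$'' remedy does not cover this.
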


The proof of the theorem can be found in Section \ref{section:proof_of_thm_all_point_REPP_positive_recurrent}. It takes advantage of \cite[Theorem 3.6 and 3.8]{Zwe22} giving sufficient conditions for a convergence towards Poisson point process and compound Poisson point processes (see Theorem \ref{thm:Roland_CPP_convergence_when_conditions_are_satisfied} for the formal statement) and we are able to show that these conditions are indeed satisfied for our targets. The crucial advantage in the finite measure case is that we are able to use inducing on a nice set---here it will be on one cylinders---to have even nicer properties on the system, such a fast mixing and bounded distortion to prove more easily the conditions to apply the previous theorems.

\section{Conditions of convergence for null-recurrent potentials} 
\label{section:quatitative_recurrence_null-recurrent_CMS}

While the positive recurrent case is more standard, the main focus of our work is on the null-recurrent case. As stated in \cite{BT24_FractionalPoisson} for cylinders in the Manneville-Pomeau map, the pivotal point process emerging as a limit is the fractional Poisson process.

\begin{defn}[Fractional Poisson Process]
\label{defn:Fractional_Poisson_Process}\leavevmode
The Fractional Poisson Process $\fPp_{\alpha}(\lambda)$ of parameters $\alpha \in 
(0,1]$ and $\lambda > 0$ is the renewal point process $\RPP(H_{\alpha}(\lambda))$ 
where $H_{\alpha}(\lambda)$ is a Mittag-Leffler law of the first type characterized 
by its Laplace transform
\[
\mathbb{E}[\exp(-sH_{\alpha}(\lambda))]
:= \frac{\lambda}{\lambda + s^{\alpha}} \,,\; \forall s \geq 0.
\]
\end{defn}

\begin{rem}
    There exists another equivalent definition of the fractional Poisson process through a random time reparameterization of a standard Poisson point process \cite{MNV11_FFPAndTheInverseStableSubordinator}. If $D_{\alpha}$ is an $\alpha$-stable subordinator (\textit{i.e.} it is a non-negative Lévy process with $\mathbb{E}[e^{-sD_{\alpha}(t)}] = \exp(-ts^{\alpha})$ for all $s, t \geq 0$) and $M_{\alpha}$ is an inverse stable subordinator (or Mittag-Leffler process) defined by 
    \begin{align*}
        M_{\alpha}(t) = D_{\alpha}^{\leftarrow}(t) := \inf \{u > 0 \;|\; D_{\alpha}(u) > t\} \quad \text{for $t \geq 0$,}
    \end{align*}
    Then $\fPp_{\alpha}(\lambda) \overset{\text(law)}{=} \PPP(\lambda) \circ M_{\alpha}$ where $\PPP(\lambda)$ and $M_{\alpha}$ are chosen independent.
\end{rem}

\begin{rem}
    Note that in the particular case $\alpha = 1$ we recover a standard Poisson point process.
\end{rem}

Going towards the \textit{all-point REPP} property for null-recurrent CMS, we are able to show that the fractional Poisson process is the asymptotic behavior for the hitting and return REPP associated to points that are infinitely recurrent for at least one letter of the alphabet $V$.

\begin{thm}
    \label{thm:HTS_infinite_FPP_infinitely_recurrent}
    Let $(\Omega, T)$ be a topologically mixing CMS endowed with a null recurrent potential $\phi$ where $\phi$ is Walters and $P_G(\phi) < +\infty$. Assume that the (non-decreasing) normalizing sequence $(a_n)_{n\geq 1}$ belongs to $\RV(\alpha)$ for some $0 < \alpha \leq 1$. Let $x \in \Omega$ and take $B_n := [x_0^{n-1}]$ for $n \geq 1$. Assume that
    \begin{itemize}
        \item $x$ is non periodic and there exists $v \in V$ such that $|\mathcal{O}(x) \cap [v]| = +\infty$.
    \end{itemize}
    Then,  
    \begin{align*}
        N_{B_n}^{\gamma} \xRightarrow[n \to +\infty]{\mathcal{L}(\mu)} \fPp_{\alpha}(\Gamma(1+\alpha)) \quad \text{and} \quad N_{B_n}^{\gamma} \xRightarrow[n \to +\infty]{\mu_{B_n}} \fPp_{\alpha}(\Gamma(1+\alpha)).
    \end{align*}
\end{thm}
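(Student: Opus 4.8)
The plan is to reduce Theorem \ref{thm:HTS_infinite_FPP_infinitely_recurrent} to an abstract return-time criterion for convergence to the fractional Poisson process, and then verify the combinatorial hypothesis suffices. By Theorem \ref{thm:GRPF_null_recurrent}, any one-cylinder $[v]$ is a Darling–Kac set, so $Y := [v]$ is uniform and the scaling $\gamma$ from \eqref{eq:defn_gamma} is the correct one; moreover the equivalence of hitting and return REPP convergence (\cite[Corollary 2.1]{BT24_FractionalPoisson}) means it suffices to establish one of the two convergences. I would work with the \emph{return} REPP on $(B_n, \mathscr{B}\cap B_n, \mu_{B_n})$ and exploit the fact that in a CMS the induced map on a one-cylinder $[v]$ has good mixing and bounded-distortion properties (via the potential $\phi_*$ of Remark \ref{rem:definition_phi_*}, for which $L_{\phi_*}1 = 1$), just as in the positive-recurrent case.

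The key steps, in order, are: (1) fix $v\in V$ with $|\mathcal{O}(x)\cap[v]|=+\infty$ and induce on $Y=[v]$; since $x$ visits $[v]$ infinitely often, the forward orbit structure of $x$ relative to $Y$ is ``spread out'' and — crucially — because $x$ is non-periodic, the overlaps $B_n\cap T^{-j}B_n$ for small $j$ vanish for $n$ large, so there is no clustering. (2) Translate the REPP for $(B_n)$ into a return-time point process for the induced system: visits of $T$ to $B_n$ correspond, after the random time change given by the return-time cocycle to $Y$, to visits of the induced map $T_Y$ to $B_n\cap Y$ (or to the trace of $B_n$ near $x$), which is now a shrinking target in a probability-preserving Gibbs–Markov-like system. (3) Apply the machinery establishing Poissonian return statistics for such induced shrinking targets — i.e. verify the no-clustering / short-return-negligibility condition (analogue of condition $Ð$ or ``no escape of mass'') using non-periodicity of $x$ and summable variations of $\phi_*$, plus the long-range mixing estimate — to get that the induced REPP converges to a standard $\PPP$. (4) Transfer back: by the general principle that return-time convergence for a first-return-induced system is equivalent to that of the original system \cite{HWZ14, FFTV16}, combined with the regular-variation hypothesis $(a_n)\in\RV(\alpha)$, the random time change turns the limiting standard Poisson process into $\PPP(\lambda)\circ M_\alpha$, which by the remark following Definition \ref{defn:Fractional_Poisson_Process} is exactly $\fPp_\alpha$; tracking the normalization pins the parameter to $\Gamma(1+\alpha)$.

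The main obstacle I expect is step (3) together with the passage in step (4). The subtlety is that inducing on $[v]$ is \emph{not} a first-return induction with respect to the targets $B_n$ (the targets $B_n$ shrink to $x$, which need not lie in $[v]$ at all, and even when it does the returns to $B_n$ interleave with returns to $[v]$ in a genuinely $n$-dependent way). One must therefore carefully set up the correspondence between the successive returns $r_{B_n}^{(k)}$ under $T$ and the return times of the induced map, control the error terms coming from the ``last incomplete excursion'' outside $Y$, and show these are negligible after scaling by $\gamma(\mu(B_n))$. The hypothesis $|\mathcal{O}(x)\cap[v]|=+\infty$ is exactly what is needed here: it guarantees that the trace of the orbit of $x$ on $Y$ is infinite, so the induced-system target does not degenerate, and combined with non-periodicity it rules out the compound behaviour that Theorem \ref{thm:HTS_infinite_CFPP_periodic_points} handles separately. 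Once the no-clustering condition is verified, the convergence to $\fPp_\alpha(\Gamma(1+\alpha))$ follows from the general infinite-ergodic-theory transfer results already in place (essentially \cite{BT24_FractionalPoisson}), so the novelty and the work are concentrated in making the induction-on-a-letter argument uniform in $n$.
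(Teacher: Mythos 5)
Your plan hits a wall at step (4), and it is exactly the wall the paper itself points out as the reason a new argument is needed. The transfer principle you invoke from \cite{HWZ14, FFTV16} — that convergence of the return REPP for the first-return-induced system is equivalent to convergence for the original system, with the \emph{same} limit — is a finite-measure theorem. In the null-recurrent setting the excursions outside the inducing base $Y=[v]$ are no longer integrable, and that equivalence genuinely fails: the limit for the original system is \emph{not} the limit for the induced system (it is a standard Poisson process for the induced system, but the time-change destroys it). You gesture at patching this with ``the random time change turns the limiting $\PPP$ into $\PPP(\lambda)\circ M_\alpha$,'' which is morally the right picture (it is the functional Darling--Kac route the author discusses and discards in the paragraph after the statement), but this is not a corollary of the transfer principle; it requires proving that the normalized local time at $Y$ converges to the Mittag--Leffler process and, more delicately, that this local-time process is \emph{asymptotically independent} of the induced return process. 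You never address that independence, and the paper's author explicitly says that even if one could, this route seems unable to isolate the combinatorial condition $|\mathcal{O}(x)\cap[v]|=+\infty$ from a full-measure statement, which is the whole point of the theorem.

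The paper's actual proof avoids all of this by using the delay-time criterion $(A)_\alpha$ from \cite{BT24_FractionalPoisson} together with Theorem \ref{thm:sufficient_conditions_convergence_compound_FPP}, which is built on the \emph{hitting/return duality} Theorem \ref{thm:HTS-REPP_vs_RTS-REPP_infinite_measure_renormalized_measure} rather than on induced-system transfer. Concretely, one sets $\tau_n := n-1 + e_{[v]}\circ T^{n-1}$ (wait until after depth $n$, then wait for the first entry to $[v]$) and verifies $(A2)_\alpha$ and $(A4)_\alpha$ by bounded distortion (Lemmas \ref{lem:bounded_distortion_CMS_living_in_compact} and \ref{lem:bounded_distortion_CMS}) and the exponential decay of induced-cylinder measures; the entire weight of the theorem then rests on verifying $(A3)_\alpha$, i.e.\ $\gamma(\mu(B_n))\,\tau_n\Rightarrow 0$ under $\mu_{B_n}$. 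That is where infinite recurrence enters: it reduces to the statement $n\,\gamma(\mu[x_0^{n-1}])\to 0$, which is Lemma \ref{lem:negligeable_up_to_time_n_x_infinitely_recurrent} and is proved by a case split on whether the orbit returns faster or slower than $\log n$, balancing exponential cylinder decay against regular variation of $\gamma$. None of this appears in your proposal; your step (3) ``no-clustering condition'' is the easy $(A4)_\alpha$ part and was never the bottleneck. You correctly identify that the issue is uniformity in $n$ of the relationship between returns to $B_n$ and returns to $[v]$, but the mechanism you propose for resolving it (inducing then transferring back) is the one the paper rules out.

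Two smaller points. First, you say ``since $x$ visits $[v]$ infinitely often, the forward orbit structure of $x$ relative to $Y$ is spread out'' — this is in fact the opposite intuition from what matters: the hypothesis imposes no control on \emph{how fast} or \emph{how often} $x$ returns to $[v]$, and the proof must cope with arbitrarily sparse returns (that is what the $\log n$ dichotomy in Lemma \ref{lem:negligeable_up_to_time_n_x_infinitely_recurrent} handles). Second, the hypothesis is also needed to make $\gamma(\mu(B_n))\,r_{[v]}$ under $\mu_{[x_{\ell(n)}^{n-1}]}$ go to zero (Lemma \ref{lem:second_part_reformulation_proving_delay_time_converges_to_0}); again, your sketch does not touch this.
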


The proof of can be found in Section \ref{section:proof_of_thm_FPP_infinite_recurrent_and_periodic}. Of course, the hypothesis on the point $x$ holds for a set of full measure (by conservativity it it even true that $|\mathcal{O}(x) \cap [v]| = +\infty$ \textit{for all} $v \in V$ $\mu$-almost everywhere), but its significance extends beyond this measure-theoretic statement. It is a combinatorial rather than a probabilistic property, which enables us to explicitly identify the points satisfying it. It is also worth emphasizing that in the assumption $|\mathcal{O}(x) \cap [v]| = +\infty$, no condition is imposed on the rate or frequency of returns. The fractional Poisson process nevertheless emerges as a limit, despite the fact that the returns to the symbol $v$ may become increasingly sparse. \\

The heuristic behind this result is the following. As in the finite measure case, one wishes to use inducing to recover nicer properties on the system. We can do the same procedure as in the finite case---induce on a $1$-cylinder---and have a Poisson point process as a limit for the induced system (preserving a probability). However, results from \cite{HLV05, FFTV16} ensuring equivalence between convergence for the induced system and the original system (and with the same limit) are not true in the infinite setting because the excursions are not integrable anymore. In this new setting, one would need to keep track of of the occupation time or local time at the induced set. This can be dealt with the functional Darling-Kac Theorem \cite[Theorem 6.1]{OwadaSamorodnitsky15} which ensures that the suitably normalized occupation time converges in law towards the Mittag-Leffler process. Then, the fractional Poisson process is obtained by the concatenation of the two previous limits, provided that the mechanisms are asymptotically independent. This would probably be the most natural way to tackle the problem and, in this approach, the main difficulty is to prove that we have the asymptotic independence. However, although this method could be a foreseeable way of proving the results, it seems that it could lack the crucial gap between sets of full-measure and sets satisfying combinatorial conditions.\\

For that, we are going to use the method developed in \cite{BT24_FractionalPoisson} (originated from \cite{RZ20} for the first return time) where the equivalence between convergence of the hitting REPP and convergence of the return REPP is used to construct sufficient conditions for convergence towards the fractional Poisson process. For that, we introduce a delay time $\tau_n$ that we have to wait before finding back a ``good measure''. Our proof method shows that we can choose our delay time in a universal way, by taking the first instant, after $n$, where a return towards a specific letter occurs. However, the delay time needs to be negligible in the limit and this is where some difficulty appears because we need to control it more precisely how it evolves as $n$ grows. The combinatorial conditions appears to tackle this issue at this point of the proof.\\

As it is also the main objective of this paper, we also aim to understand the behavior of the hitting and return REPP when $x$ is periodic or $|\mathcal{O}(x) \cap [v]| < +\infty$ for all $v \in V$. As first observed in \cite{BT24_FractionalPoisson} in the special easier case of the Manneville-Pomeau map, the behavior at periodic points changes from the behavior of the first class of points in the same way as in the probability preserving case, \textit{i.e.} there is a creation of clusters of returns due to the periodicity where the multiplicity is driven by a geometric law, itself driven by the extremal index. The compound fractional Poisson process $\cfPp_{\alpha}(\lambda, \pi)$ is hence defined as the compound point process $c(\fPp_{\alpha}(\lambda))(\pi)$. Again, as in Definition \ref{defn:(compound)_Poisson_point_process}, in the particular case where $\pi$ is $\geo(\theta)$, the compound fractional Poisson process can be expressed as a delayed renewal point process associated to the law $W_{\alpha, \theta}(\lambda)$ with distribution function
\begin{align*}
    \mathbb{P}(W_{\alpha, \theta}(\lambda) \leq t) = 1 - \theta + \theta \,\mathbb{P}(H_{\alpha}(\lambda) \leq t) \quad \text{for $t\geq 0$,}
\end{align*}
that is to say $\cfPp_{\alpha}(\lambda) \overset{\text{(law)}}{=} \DRPP(H_{\alpha}(\lambda), W_{\alpha, \theta}(\lambda))$. Thus, we obtain the following limits for periodic points.

\begin{thm}
    \label{thm:HTS_infinite_CFPP_periodic_points}
    Let $(\Omega, T)$ be a topologically mixing CMS endowed with a null recurrent potential $\phi$ where $\phi$ is Walters and $P_G(\phi) < +\infty$. Assume that the (non-decreasing) normalizing sequence $(a_n)_{n\geq 1}$ belongs to $\RV(\alpha)$ for some $0 < \alpha \leq 1$. Let $x \in \Omega$ and take $B_n := [x_0^{n-1}]$ for $n \geq 1$. Assume that
    \begin{itemize}
        \item $x$ is periodic of prime period $q$.
    \end{itemize}
    Then,  
    \begin{align*}
                N_{B_n}^{\gamma} \xRightarrow[n \to +\infty]{\mathcal{L}(\mu)} \cfPp_{\alpha}(\theta\Gamma(1+\alpha), \geo(\theta)) \quad \text{and} \quad N_{B_n}^{\gamma} \xRightarrow[n\to +\infty]{\mu_{B_n}} \RPP(W_{\alpha,\theta}(\theta\Gamma(1+\alpha)))\,.
    \end{align*}
    where $\theta := 1 - \exp(S_q\phi(x) - q P_G(\phi))$.
\end{thm}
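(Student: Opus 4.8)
The plan is to reduce the periodic case to the already-established fractional Poisson result (Theorem~\ref{thm:HTS_infinite_FPP_infinitely_recurrent}) by isolating the clustering that the periodicity forces. Note first that a point $x$ of prime period $q$ is infinitely recurrent for every letter occurring in its cycle, so the hypotheses behind Theorem~\ref{thm:HTS_infinite_FPP_infinitely_recurrent} are all met except non-periodicity; the only genuinely new phenomenon is the deterministic self-similarity of the cylinders $B_n=[x_0^{n-1}]$. The key combinatorial facts, valid for all $n$ large enough, are: (i) $B_n\cap T^{-q}B_n=B_{n+q}$, and more generally $\bigcap_{i=0}^{j}T^{-iq}B_n=B_{n+jq}$, by periodicity of $x$; and (ii) $B_n\cap T^{-\ell}B_n=\emptyset$ for $1\le \ell<q$, since a long prefix of a prime-period-$q$ point cannot be $\ell$-periodic for $\ell<q$. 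Together these say that, along any orbit, the hits of $B_n$ group into maximal runs (``clusters'') at spacing exactly $q$, and a hit is the terminal one of its cluster precisely when it lands in the annulus $A_n:=B_n\setminus T^{-q}B_n=B_n\setminus B_{n+q}$; moreover $A_n=\{r_{B_n}>q\}\cap B_n$ for $n$ large.

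Next I would extract the extremal index and the ratios needed from the Gibbs property of $\nu_\phi$ provided by Theorem~\ref{thm:GRPF}. Using the Walters condition together with $\var_k\log h_\phi\le W_k(\phi)$ (Remark~\ref{rem:regularity_log_h}) and continuity of $h_\phi$, one gets $\mu_\phi(B_m)\big/\bigl(h_\phi(x)\,\nu_\phi[x_0^{m-1}]\bigr)\to 1$ and $\nu_\phi[x_0^{m+q-1}]/\nu_\phi[x_0^{m-1}]\to \lambda_\phi^{-q}e^{S_q\phi(T^mx)}$; since $T^mx$ runs through the cycle of $x$, $S_q\phi(T^mx)=S_q\phi(x)$, whence $\mu(B_{n+q})/\mu(B_n)\to 1-\theta$ with $\theta=1-\exp(S_q\phi(x)-qP_G(\phi))$, and, iterating with a modest uniformity in $j$, $\mu(B_{n+jq})/\mu(B_n)\to(1-\theta)^j$. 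In particular $\mu(A_n)\sim\theta\,\mu(B_n)$, and the number of forward hits of $B_n$ from a $\mu_{B_n}$-typical point converges in law to a zero-modified geometric, so an individual cluster has length converging to $\geo(\theta)$.

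The heart of the proof is then to show that the point process of cluster-ends — that is, the hitting/return REPP for the annulus sequence $(A_n)$ — converges, after rescaling by $\gamma(\mu(B_n))$, to $\fPp_\alpha(\theta\Gamma(1+\alpha))$. Since $A_n$ shrinks inside the Darling--Kac cylinder $[x_0]$ (Theorem~\ref{thm:GRPF_null_recurrent}) and inherits the recurrence/combinatorial features of $B_n$, I would verify for $(A_n)$ exactly the sufficient conditions that power Theorem~\ref{thm:HTS_infinite_FPP_infinitely_recurrent}: run the delay-time scheme of \cite{BT24_FractionalPoisson} with the universal delay $\tau_n$ equal to the first return after time $n$ to a fixed letter of the cycle, check that $\gamma(\mu(B_n))\,\tau_n\to 0$, and deduce that the first-return law of $A_n$ converges to a Mittag--Leffler law $H_\alpha(\Gamma(1+\alpha))$ at scale $\gamma(\mu(A_n))$, hence to $H_\alpha(\theta\Gamma(1+\alpha))$ at scale $\gamma(\mu(B_n))$, because $\gamma\in\RV_0(1/\alpha)$ and $\mu(A_n)\sim\theta\mu(B_n)$. (One can instead set up a single abstract convergence lemma for rare-set sequences in a Darling--Kac set satisfying these combinatorial conditions, of which both Theorem~\ref{thm:HTS_infinite_FPP_infinitely_recurrent} and the statement for $(A_n)$ are instances.)

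Finally I would reassemble the picture. Each atom of the limiting $\fPp_\alpha(\theta\Gamma(1+\alpha))$ is a cluster-end carrying its whole cluster, whose internal spacing $q=O(1)$ collapses to $0$ after rescaling; so in the weak$^*$ limit on $\mathcal{N}_{\mathbb{R}_+}^{\#}$ the cluster contributes an integer multiplicity at that point. Showing those multiplicities are asymptotically i.i.d.\ $\geo(\theta)$ and independent of the cluster-end positions — the anti-clustering/decorrelation estimate ``beyond the period $q$'', for which one uses the exponential mixing of the associated probability-preserving induced system attached to $\phi_*$ (Remark~\ref{rem:definition_phi_*}) since distinct clusters are separated by distances of order $\gamma(\mu(B_n))^{-1}\to\infty$ — yields the hitting limit $c\bigl(\fPp_\alpha(\theta\Gamma(1+\alpha))\bigr)(\geo(\theta))=\cfPp_\alpha(\theta\Gamma(1+\alpha),\geo(\theta))$. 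For the return REPP, starting from $\mu_{B_n}$ one sits with asymptotic probability $1-\theta$ inside a cluster (first return at time $q\to0$) and with probability $\theta$ at a cluster-end (first return $\sim H_\alpha(\theta\Gamma(1+\alpha))$), which is precisely the law $W_{\alpha,\theta}(\theta\Gamma(1+\alpha))$ both for the delay and for all subsequent waiting times, giving $\RPP(W_{\alpha,\theta}(\theta\Gamma(1+\alpha)))$; alternatively this transfer follows from the equivalence of hitting and return REPP convergence recalled after Theorem~\ref{thm:GRPF_null_recurrent} together with the delayed-renewal description of $\cfPp_\alpha$ in the $\geo(\theta)$ case. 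The step I expect to be the main obstacle is precisely this decorrelation, making the cluster lengths asymptotically independent of each other and of the positions — i.e.\ controlling short returns beyond the period $q$ uniformly in $n$ — along with the care needed to justify, in the weak$^*$ topology, that the within-cluster atoms genuinely merge into a single multiple point in the limit.
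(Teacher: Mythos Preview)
Your approach is viable but takes a genuinely different route from the paper. The paper does not reduce to Theorem~\ref{thm:HTS_infinite_FPP_infinitely_recurrent} for the annulus and then reassemble; instead it invokes directly the abstract compound-FPP criterion of \cite{BT24_FractionalPoisson} (stated here as Theorem~\ref{thm:sufficient_conditions_convergence_compound_FPP}), whose conditions $(A1)_\alpha$--$(A6)_\alpha$ already encode both the cluster-end FPP and the asymptotic independence of the geometric multiplicities. Concretely, with $U(B_n)=B_n\cap T^{-q}B_n=[x_0^{n+q-1}]$ and $Q(B_n)=B_n\setminus U(B_n)$, the paper decomposes $Q(B_n)=\bigsqcup_{k=0}^{q-1}Q_k(B_n)$ with $Q_k(B_n)=[x_0^{n+k-1}]\cap T^{-(n+k)}[x_{n+k}]^c$, takes the very simple delay $\tau_n:=n+k-1$ on $Q_k(B_n)$ (no first-return-to-a-letter scheme is needed here, since periodicity forces $S_n\mathbf 1_{[x_0]}(x)\ge n/q$ and hence exponential decay of $\mu(B_n)$), and checks $(A2)_\alpha$ via the bounded-distortion compactness Lemma~\ref{lem:bounded_distortion_CMS_living_in_compact}, $(A3)_\alpha$ from the exponential decay just mentioned, and $(A4)_\alpha$ by a short bounded-distortion computation. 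The decorrelation you single out as the ``main obstacle'' is precisely condition $(A6)_\alpha$, and in the paper it is a two-line consequence of Walters' condition: for $z,z'\in B_n$ one has $\widehat{T_{B_n}}(\mathbf 1_{U(B_n)}/\mu(U(B_n)))(z)=e^{\pm W_n(\phi_*)}\widehat{T_{B_n}}(\mathbf 1_{U(B_n)}/\mu(U(B_n)))(z')$, so this density converges uniformly to $\mathbf 1_{B_n}/\mu(B_n)$. Thus your proposed mixing argument for independence, while correct in spirit, is replaced by a single transfer-operator estimate.

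Two minor points. First, there is no Gibbs property of $\nu_\phi$ available in the null-recurrent setting (that requires BIP); the ratio $\mu(B_{n+q})/\mu(B_n)\to 1-\theta$ should instead be read off directly from $L_{\phi_*}^q\mathbf 1_{[x_0^{n+q-1}]}(x)=e^{S_q\phi_*(x)}$ together with $W_{n-q}(\phi_*)\to 0$, as in Lemma~\ref{lem:calcul_EI}. Second, your step ``verify for $(A_n)$ the conditions that power Theorem~\ref{thm:HTS_infinite_FPP_infinitely_recurrent}'' cannot literally invoke that theorem, since $A_n$ is not a cylinder; you would have to check the abstract $(A)_\alpha$ conditions with $U=\emptyset$ for the sequence $(A_n)$, which is exactly the $Q(B_n)$ part of what the paper does anyway---so at that point it is simpler to keep $U(B_n)$ in the picture and apply the compound version in one stroke.
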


The proof can be found in Section \ref{section:proof_of_thm_FPP_infinite_recurrent_and_periodic}. Finally, to get every point, it remains to study points which encounter each letter a finite number of time. Note that, depending on the graph structure of the CMS, this class of points may not even exist (like for the renewal shift, see the end of Section \ref{section:other_examples}) but in many meaningful examples they still exist (see Section \ref{section:all-point_REPP_for_examples}). For these particular points, we recognize that other limit laws, different from the fractional Poisson process or its compound versions, can appear as limits. Let us first define this class of new point process. They will be characterized as (delayed) renewal point process and thus we only need to define there respective waiting times.

\begin{defn}[Waiting random variables]
    \label{defn:waiting_random_variables_J_alpha}
    Let $\nu$ be a probability measure on $\mathbb{R}_+$ and $W$ be a random variable distributed according to $\nu$. We define the waiting random variables $J_{\alpha}(\nu)$ and $\widetilde{J}_{\alpha}(\nu)$ throughout their Laplace transform by 
    \begin{align*}
        \mathbb{E}[\exp(-s J_{\alpha}(\nu))] := \Big(\mathbb{E}[e^{-sW}] + s^{\alpha}\Gamma(1+\alpha)^{-1}\Big)^{-1}\,.
    \end{align*}
    and 
    \begin{align*}
        \mathbb{E}[\exp(-s \widetilde{J}_{\alpha}(\nu))] := \frac{\mathbb{E}[e^{-sW}]}{\mathbb{E}[e^{-sW}] + s^{\alpha}\Gamma(1+\alpha)^{-1}}\,.
    \end{align*}
\end{defn}

Note that, if $W \sim \nu$ is independent of $\widetilde{J}_{\alpha}(\nu)$, then $W + J_{\alpha}(\nu) \overset{\text{(law)}}{=} \widetilde{J}_{\alpha}(\nu)$. Furthermore, in the special case $W = 0$, we have $J_{\alpha}(\nu) \overset{\text{(law)}}{=} \widetilde{J}_{\alpha}(\nu) \overset{\text{(law)}}{=} H_{\alpha}(\Gamma(1+\alpha))$.\\

The next theorem provides a necessary and sufficient conditions to characterize the limit point process for both the hitting and return REPP.

\begin{thm}
    \label{thm:HTS_infinite_points_that_never_come_back_statements}
    Let $(\Omega, T)$ be a topologically mixing CMS endowed with a null recurrent potential $\phi$ where $\phi$ is Walters and $P_G(\phi) < +\infty$. Assume that the (non-decreasing) normalizing sequence $(a_n)_{n\geq 1}$ belongs to $\RV(\alpha)$ for some $0 < \alpha \leq 1$. Let $x \in \Omega$ and take $B_n := [x_0^{n-1}]$ for $n \geq 1$. Assume that
    \begin{itemize}
        \item $|\mathcal{O}(x) \cap [x_0] | < +\infty$.
    \end{itemize}
    Set $j_{x_0} := \max \{k \geq 0\;|\; T^kx \in [x_0]\}$. Then,
    \begin{align}
        \label{eq:convergence_law_tau_n_in_thm_statements}
        \gamma(\mu(B_n))\,r_{[x_0]} \circ T^{j_{x_0}} \xRightarrow[n \to +\infty]{\mu_{B_n}} W \sim \nu
    \end{align}
    if and only if 
    \begin{align*}
         N_{B_n}^{\gamma} \xRightarrow[n\to +\infty]{\mu_{B_n}} \RPP(\widetilde{J}_\alpha(\nu))\quad \text{and} \quad N_{B_n}^{\gamma} \xRightarrow[n\to +\infty]{\mathcal{L}(\mu)} \DRPP(J_{\alpha}(\nu), \widetilde{J}_\alpha(\nu))
    \end{align*}
    where $J_{\alpha}(\nu)$ and $\widetilde{J}_{\alpha}(\nu)$ are defined in Definition \ref{defn:waiting_random_variables_J_alpha}. 
\end{thm}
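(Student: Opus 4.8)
The plan is to reduce the statement to the abstract conditions for convergence established earlier (the ones underlying Theorems \ref{thm:HTS_infinite_FPP_infinitely_recurrent} and \ref{thm:HTS_infinite_CFPP_periodic_points}, i.e.\ the method from \cite{BT24_FractionalPoisson}/\cite{RZ20}), and then to carry the extra ``waiting-time'' information through the same machinery. The key structural observation is that, since $|\mathcal{O}(x)\cap[x_0]|<+\infty$, the point $j_{x_0}:=\max\{k\ge 0\mid T^kx\in[x_0]\}$ is well defined and finite, so the orbit of $x$ visits $[x_0]$ only up to time $j_{x_0}$. Because the targets $B_n=[x_0^{n-1}]$ eventually have depth exceeding $j_{x_0}$, for $n$ large a point starting in $B_n$ cannot return to $B_n$ before first returning to $[x_0]$ \emph{after} the prefix of length $j_{x_0}$ has been shifted away; concretely, $r_{B_n}\circ\mathrm{id}$ is, on $B_n$, bounded below by $j_{x_0}$ plus a recurrence to $[x_0]$ of the shifted point, so the quantity $r_{[x_0]}\circ T^{j_{x_0}}$ governs the ``delay'' before the good inducing set $[x_0]$ is hit. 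This is exactly the universal choice of delay time $\tau_n$ in the proof scheme: instead of being negligible (as in the infinitely-recurrent case), here it has a nondegenerate limit $W\sim\nu$, and this $\nu$ is precisely what enters the limit law.

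First I would set up inducing on the Darling--Kac set $Y:=[x_0]$ (Theorem \ref{thm:GRPF_null_recurrent}), so that the first-return map $T_Y$ on $(Y,\mu_Y)$ is a probability-preserving mixing system with good spectral/distortion properties, and the sequence $B_n$, after waiting the delay $\tau_n$, sits inside $Y$. Second, I would invoke the equivalence between convergence of the hitting REPP and the return REPP \cite[Corollary 2.1]{BT24_FractionalPoisson}, so that it suffices to prove the return-REPP statement $N_{B_n}^{\gamma}\Rightarrow\RPP(\widetilde J_\alpha(\nu))$ under $\mu_{B_n}$; the hitting statement $N_{B_n}^{\gamma}\Rightarrow\DRPP(J_\alpha(\nu),\widetilde J_\alpha(\nu))$ then follows, the delay $J_\alpha(\nu)$ being the first-hitting analog (it arises from the first excursion from a $\mu$-typical point, where one must wait a Mittag-Leffler-type occupation time composed with the $\nu$-delay, matching the Laplace identity $(\mathbb{E}[e^{-sW}]+s^\alpha\Gamma(1+\alpha)^{-1})^{-1}$). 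Third, for the return REPP: decompose a return to $B_n$ starting from $B_n$ as (delay $\tau_n$ to come back near $Y$) followed by (a hitting time inside the induced, essentially probability-preserving picture) controlled by the functional Darling--Kac theorem \cite[Theorem 6.1]{OwadaSamorodnitsky15} for the occupation time of $Y$. The successive returns $r_{B_n}^{(k)}$ then split as a sum of i.i.d.\ blocks, each block being $(\text{an occupation/Mittag-Leffler piece})+(\text{an independent }\nu\text{-delay piece})$ in the scaling limit, which is exactly a renewal process with waiting time whose Laplace transform is $\mathbb{E}[e^{-sW}]/(\mathbb{E}[e^{-sW}]+s^\alpha\Gamma(1+\alpha)^{-1})$, i.e.\ $\widetilde J_\alpha(\nu)$. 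I would verify asymptotic independence between the delay mechanism and the occupation-time mechanism by a mixing estimate on $T_Y$ (the prefix of length $\sim j_{x_0}$ is fixed and finite, so it decouples from the long-run occupation statistics), and then identify the limit via Laplace transforms of the sums, using the scaling $\gamma$ from \eqref{eq:defn_gamma} and $(a_n)\in\RV(\alpha)$.

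For the converse direction, I would argue by uniqueness: if $\gamma(\mu(B_n))\,r_{[x_0]}\circ T^{j_{x_0}}$ does \emph{not} converge to some fixed $\nu$, then along a subsequence it either fails to be tight or converges to two different limits; in the former case the REPP cannot converge (the first atom of the REPP would not be tight), and in the latter the forward implication already proved applied along each subsequence yields two distinct limit REPPs, contradicting the assumed convergence $N_{B_n}^{\gamma}\Rightarrow\RPP(\widetilde J_\alpha(\nu))$ since the map $\nu\mapsto\widetilde J_\alpha(\nu)$ is injective (again visible on Laplace transforms: $\mathbb{E}[e^{-s\widetilde J_\alpha(\nu)}]$ determines $\mathbb{E}[e^{-sW}]$ hence $\nu$). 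Tightness of $\gamma(\mu(B_n))\,r_{[x_0]}\circ T^{j_{x_0}}$ under $\mu_{B_n}$ should follow from the finiteness a.s.\ of return times in a CEMPT together with the scaling being the correct one on the Darling--Kac set $Y$ (Theorem \cite[Theorem 2.2]{RZ20}).

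The main obstacle I expect is the asymptotic independence step together with the precise bookkeeping of \emph{where} the orbit is when the delay clock stops: one must show that, after the (negligibly short but nontrivial-in-limit-scaled) delay $\tau_n$, the conditional law of the point in $Y$ converges to the ``good'' reference density (the normalized density $h_\phi$ restricted to $Y$), uniformly enough that the subsequent hitting statistics behave as in the typical/infinitely-recurrent case; this is the analog of, and the genuinely new part beyond, the argument of Theorem \ref{thm:HTS_infinite_FPP_infinitely_recurrent}, where $\tau_n$ was simply shown negligible. Controlling the joint distribution of $(\tau_n,\text{position in }Y)$ — rather than $\tau_n$ alone — and showing the position equilibrates while the scaled delay retains its limit law $\nu$ is where the bulk of the technical work (distortion bounds, Walters regularity of $\phi$, mixing of $T_Y$) will go, and is the part I would expect to push to the appendix.
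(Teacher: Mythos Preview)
Your converse direction is essentially the paper's: tightness of $\gamma(\mu(B_n))\,r_{[x_0]}\circ T^{j_{x_0}}$ under $\mu_{B_n}$ together with injectivity of $\nu\mapsto\widetilde J_\alpha(\nu)$ on Laplace transforms pins down $\nu$ once the REPP converges. One correction: tightness does not follow from a.s.\ finiteness of returns in a CEMPT (that gives no control on the \emph{scaled} variable); it comes from the quantitative tail bound $\mu_{B_n}(\gamma(\mu(B_n))\,r_{[x_0]}\circ T^{j_{x_0}}\ge t)\lesssim t^{-\alpha}$, which is Lemma~\ref{lem:equivalence_queue_renormalisation_gamma} combined with $T$-invariance (this is Lemma~\ref{lem:tightness_gamma_tau_n}).

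For the forward direction you take the functional Darling--Kac route (occupation time $\Rightarrow$ Mittag--Leffler, then compose with the delay). The paper explicitly mentions this as a plausible alternative but does not use it. Instead it sets $\tau_n:=j_{x_0}+r_{[x_0]}\circ T^{j_{x_0}}$ and verifies the abstract conditions $B_\alpha(\nu)$ of Theorem~\ref{thm:sufficient_conditions_convergence_other_point_processes}: (i) for every $k$ with $B_n\cap\{\tau_n=k\}\ne\emptyset$, the density $\widehat T^{k}\big(\mathbf 1_{B_n\cap\{\tau_n=k\}}/\mu(B_n\cap\{\tau_n=k\})\big)$ lies in a single $L^1$-compact set (pure bounded distortion, Lemma~\ref{lem:bounded_distortion_CMS_living_in_compact}); (ii) $\gamma(\mu(B_n))\tau_n\Rightarrow W$ (the hypothesis); (iii) $\mu_{B_n}(r_{B_n}\le\tau_n)\to 0$. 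The limit law is then identified not by reading off a product of Laplace transforms, but by combining the hitting/return relation~\eqref{eq:relation_hitting_return_infinite_measure} with $\widetilde\Phi\overset{\mathrm{law}}{=}\Phi+W$ and solving the resulting fixed-point equation dimension by dimension.

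The genuine gap in your sketch is exactly the step you flag as the main obstacle, and your proposed justification for it does not work. Saying ``the prefix of length $\sim j_{x_0}$ is fixed and finite, so it decouples'' is a non-sequitur: the delay is $j_{x_0}+r_{[x_0]}\circ T^{j_{x_0}}$, and it is the \emph{second}, unbounded summand that carries the nondegenerate limit $W$. A mixing estimate on $T_Y$ is not directly applicable either, since during the delay the orbit has not yet entered $Y=[x_0]$ (that is what $\tau_n$ measures). The paper's resolution is that condition (i) already controls the conditional density of the position at time $\tau_n$, \emph{level set by level set}, and Zweim\"uller's $L^1$-compactness lemma (Lemma~\ref{lem:convergence_Birkhoff_transfer_operator_compact}) then makes the subsequent hitting REPP asymptotically insensitive to which density in that compact set one starts from. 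This \emph{is} the asymptotic independence (Lemma~\ref{lem:independence_waiting_time_and_shift_process}), obtained from distortion alone, with no occupation-time limit theorem and no mixing of the induced map invoked at that step.
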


We provide a detailed proof in Section \ref{section:proof_points_that_never_come_back}. Note that in Theorem \ref{thm:HTS_infinite_points_that_never_come_back_statements}, we only assumed $|\mathcal{O}(x) \cap [x_0] | < +\infty$ and not $|\mathcal{O}(x) \cap [v] | < +\infty$ for all $v \in V$ and thus the equivalence is more general. Furthermore, this is not incompatible with Theorem \ref{thm:HTS_infinite_FPP_infinitely_recurrent} because $\fPp_{\alpha}(\Gamma(1+\alpha)) \overset{\text{(law)}}{=} \RPP(\widetilde{J}_{\alpha}(0)) \overset{\text{(law)}}{=} \DRPP(J_{\alpha}(0), \widetilde{J}_\alpha(0))$ and, hence, if $|\mathcal{O}(x) \cap [v]| = +\infty$ for some $v \in V \,\backslash\, \{x_0\}$, then we have $W = 0$ almost surely. \\

The proof of this result, relies on a generalization of the sufficient conditions so that we allow for non-negligible delay times. We thus need to define new assumptions to be able to tackle these new phenomenons.\\

\noindent  \textbf{Assumptions $B_{\alpha}(\nu)$.} A sequence $(B_n)_{n\geq 1} \in \mathscr{B}^{\mathbb{N}}$ of asymptotically rare events satisfies $B_{\alpha}(\nu)$ if it satisfies the following conditions : \index{conditions!$B_{\alpha}(\nu)$}
\begin{enumerate}[label = $B\arabic*_{\alpha}(\nu)$, leftmargin=*]
\item \label{cond_OtherLimits:good_density_after_tau_n} There exist a sequence of 
measurable functions $\tau_n : B_n \to \mathbb{N}$ and a compact subset $\mathcal{U}$ 
of $L^1(\mu)$ such that for all $n \geq 1$, for all $k \geq 0$ such that $B_n \cap \{\tau_n = k\} \neq \emptyset$, 
\[
\widehat{T}^{k}\left(\frac{\mathbf{1}_{B_n \cap \{\tau_n = k\}}}{\mu(B_n \cap \{\tau_n = k\})}\right)\in 
\mathcal{U}.
\]
\item \label{cond_OtherLimits:tau_n_small_enough} The sequence $(\tau_n)_{n\geq 0}$
satisfies $\gamma(\mu(B_n))\,\tau_n \xRightarrow[n\to +\infty]{\mu_{B_n}} W$, where 
$\gamma$ is the scaling for $T$ is defined from the normalizing sequence $(a_n)_{n\geq 0}$ by \eqref{eq:defn_gamma}.
\item \label{cond_OtherLimits:cluster_compatible_tau_n_no_cluster_from_Q} The sequence 
$(B_n)_{n\geq 0}$ is such that $\mu_{B_n}(\lr_{B_n} \leq \tau_n) \xrightarrow[n\to 
+\infty]{} 0.$
\end{enumerate}

Then, from these new conditions, we can prove an abstract theorem proving convergence towards the associated renewal processes.

\begin{thm}
\label{thm:sufficient_conditions_convergence_other_point_processes}
Let $(X,\mathscr{B},\mu, T)$ be a PDE CEMPT with its normalizing sequence $(a_n)_{n\geq1}$ belonging to $\RV(\alpha)$ for some $0 < \alpha \leq 1$. Let $Y \in \mathscr{B}$ be a uniform set with $\mu(Y) < +\infty$. Let $(B_n)_{n\geq 1}$ be a sequence of asymptotically rare events with $B_n \subset Y$ for all $n\geq 1$ and $B_{\alpha}(\nu)$ for some non negative random variable $W$. Then,
    \begin{align*}
        N_{B_n}^{\gamma} \xRightarrow[n\to +\infty]{\mathcal{L}(\mu)} \DRPP(J_{\alpha}(\nu), \widetilde{J}_\alpha(\nu)) \quad \text{and} \quad 
        N_{B_n}^{\gamma} \xRightarrow[n\to +\infty]{\mu_{B_n}} \RPP(\widetilde{J}_\alpha(\nu)).
    \end{align*}
\end{thm}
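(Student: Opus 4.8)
The plan is to reduce the statement to a computation of Laplace functionals of the limit point processes and to identify these via the characterization of $J_\alpha(\nu)$ and $\widetilde J_\alpha(\nu)$ through their Laplace transforms (Definition \ref{defn:waiting_random_variables_J_alpha}). By the equivalence between convergence of the hitting REPP and of the return REPP in the PDE + regular variation setting (\cite[Theorem 2.1, Corollary 2.1]{BT24_FractionalPoisson}), it suffices to prove convergence of the return REPP under $\mu_{B_n}$, say towards $\RPP(\widetilde J_\alpha(\nu))$; the hitting-side statement $\DRPP(J_\alpha(\nu),\widetilde J_\alpha(\nu))$ should then follow either from that equivalence directly or by the same argument run from $\mathcal L(\mu)$ with an extra initial excursion accounting for the delay. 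The heart of the matter is to show that the successive scaled return times $\gamma(\mu(B_n))\, r_{B_n}^{(k)}$, started from $\mu_{B_n}$, converge jointly in law to the partial sums $T_k = \sum_{i\le k} W_i$ of an i.i.d.\ sequence with common law $\widetilde J_\alpha(\nu)$.

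The key steps, in order. First I would decompose the $k$-th return time as $r_{B_n}^{(k)} = \tau_n + \big(r_{B_n}^{(k)} - \tau_n\big)$, using the delay time $\tau_n$ from condition \ref{cond_OtherLimits:good_density_after_tau_n}. Condition \ref{cond_OtherLimits:cluster_compatible_tau_n_no_cluster_from_Q} guarantees that with $\mu_{B_n}$-probability tending to $1$ there is no return to $B_n$ before time $\tau_n$, so the first return counted is genuinely after the delay; condition \ref{cond_OtherLimits:tau_n_small_enough} says the scaled delay $\gamma(\mu(B_n))\tau_n$ converges to $W\sim\nu$. Second, I would use condition \ref{cond_OtherLimits:good_density_after_tau_n}: on the event $\{\tau_n = k\}$, the normalized pushforward $\widehat T^{k}\big(\mathbf 1_{B_n\cap\{\tau_n=k\}}/\mu(B_n\cap\{\tau_n=k\})\big)$ lies in a fixed compact $\mathcal U\subset L^1(\mu)$. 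This is precisely the uniformity one needs to apply the Darling--Kac / Thaler--Zweimüller machinery on the uniform set $Y$: after the delay, the process ``forgets'' its starting distribution up to membership in $\mathcal U$, and the remaining return-time structure to $B_n\subset Y$ behaves as in the already-understood case where the limit is the fractional Poisson process $\fPp_\alpha(\Gamma(1+\alpha)) = \RPP(H_\alpha(\Gamma(1+\alpha)))$. Concretely, conditionally on $\tau_n$ and on the post-delay density being in $\mathcal U$, the scaled vector $\big(\gamma(\mu(B_n))(r_{B_n}^{(k)}-\tau_n)\big)_{k\ge 1}$ converges to the renewal partial sums of $H_\alpha(\Gamma(1+\alpha))$, uniformly over starting densities in the compact $\mathcal U$ (this uniformity is what compactness buys us, via an Arzelà--Ascoli / equicontinuity argument on the relevant functionals, exactly as in \cite{BT24_FractionalPoisson}). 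Third, I would combine: the first scaled return is $W + H_\alpha(\Gamma(1+\alpha))$-distributed in the limit (delay plus first post-delay renewal), with $W$ and $H_\alpha$ asymptotically independent because $H_\alpha$ comes from the post-$\tau_n$ evolution which only depends on the starting point through its $\mathcal U$-membership; its Laplace transform is $\mathbb E[e^{-sW}]\cdot \frac{\Gamma(1+\alpha)}{\Gamma(1+\alpha)+s^\alpha} = \mathbb E[e^{-sW}]\big/\big(\mathbb E[e^{-sW}] + s^\alpha\Gamma(1+\alpha)^{-1}\big)$ after dividing numerator and denominator by $\Gamma(1+\alpha)$ --- which is exactly $\mathbb E[e^{-s\widetilde J_\alpha(\nu)}]$. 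The subsequent inter-arrival gaps each consist, again, of a fresh delay-type excursion back to a good density plus a post-delay renewal, so by the Markov renewal structure they are i.i.d.\ copies of $\widetilde J_\alpha(\nu)$, giving $\RPP(\widetilde J_\alpha(\nu))$ for the return REPP. For the hitting REPP under $\mathcal L(\mu)$, the only change is the very first inter-arrival: starting from an absolutely continuous measure rather than from inside $B_n$, the first hit does not carry the initial ``$W\sim\nu$'' delay in the same way, and one obtains $J_\alpha(\nu)$ (Laplace transform $\big(\mathbb E[e^{-sW}]+s^\alpha\Gamma(1+\alpha)^{-1}\big)^{-1}$) for the delay and $\widetilde J_\alpha(\nu)$ thereafter --- i.e.\ $\DRPP(J_\alpha(\nu),\widetilde J_\alpha(\nu))$; this can also be read off directly from the return-REPP result using the general delay/size-biasing relation between hitting and return processes and the identity $W + J_\alpha(\nu) \eqlaw \widetilde J_\alpha(\nu)$ noted after Definition \ref{defn:waiting_random_variables_J_alpha}.

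The main obstacle I anticipate is establishing the \emph{uniformity} in step two and the \emph{asymptotic independence} of the delay $\gamma(\mu(B_n))\tau_n$ from the post-delay renewal structure. Compactness of $\mathcal U$ in $L^1(\mu)$ (condition \ref{cond_OtherLimits:good_density_after_tau_n}) is the technical substitute for the mixing/distortion estimates available in the finite-measure case; one must show that the convergence $\frac1{a_m}\sum_{j<m}\widehat T^j f \to \int f\,d\mu$ and the associated return-time asymptotics to $B_n$ hold uniformly as $f$ ranges over $\mathcal U$, and that the error is small uniformly in the conditioning on $\{\tau_n = k\}$ for all relevant $k$. This is where the bulk of the analytic work sits and where the argument must be careful, since $\tau_n$ itself is of macroscopic (non-negligible, order $\gamma(\mu(B_n))^{-1}$) size --- unlike in Theorems \ref{thm:HTS_infinite_FPP_infinitely_recurrent} and \ref{thm:HTS_infinite_CFPP_periodic_points} where $\tau_n$ was negligible --- so one cannot simply absorb it into an error term and must genuinely track its contribution to the limiting Laplace functional. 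I expect this to be handled by conditioning on the pair $(\tau_n, \widehat T^{\tau_n}(\text{normalized indicator}))$, using that the second coordinate lives in the compact $\mathcal U$ to extract a convergent subsequence of ``post-delay dynamics'' whose limit does not depend on the first coordinate, and then invoking the functional Darling--Kac theorem (in the form \cite[Theorem 6.1]{OwadaSamorodnitsky15}) to get the Mittag--Leffler/fractional-Poisson limit for the post-delay part, independently of $W$.
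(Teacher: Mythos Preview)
Your decomposition and the use of compactness of $\mathcal U$ to get asymptotic independence between the scaled delay $\gamma(\mu(B_n))\tau_n$ and the post-delay process are right on target, and match the paper's Lemmas in the proof. But Step~3 contains a genuine error that makes the rest circular. You claim that, started from any density $v\in\mathcal U$, the scaled hitting vector $\big(\gamma(\mu(B_n))(r_{B_n}^{(k)}-\tau_n)\big)_{k\ge 1}$ converges to the renewal partial sums of $H_\alpha(\Gamma(1+\alpha))$, i.e.\ to the standard FPP. This is false whenever $W\neq 0$: after each hit of $B_n$ the process is back \emph{inside} $B_n$ and therefore incurs a \emph{new} delay $\tau_n$ of macroscopic size before reaching a good density again. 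So the hitting limit from $v\in\mathcal U$ is not FPP but exactly the (as yet unknown) hitting limit $\Phi$, which is $\DRPP(J_\alpha(\nu),\widetilde J_\alpha(\nu))$---the very object you are trying to identify. Your Laplace computation then masks this with an algebra slip: $\mathbb E[e^{-sW}]\cdot\frac{\Gamma(1+\alpha)}{\Gamma(1+\alpha)+s^\alpha}=\frac{\mathbb E[e^{-sW}]}{1+s^\alpha\Gamma(1+\alpha)^{-1}}$, which is \emph{not} $\frac{\mathbb E[e^{-sW}]}{\mathbb E[e^{-sW}]+s^\alpha\Gamma(1+\alpha)^{-1}}$ unless $\mathbb E[e^{-sW}]=1$, i.e.\ $W=0$.

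The paper avoids the circularity by arguing as follows. Take any subsequential limit $\widetilde\Phi$ of the return RESP (tightness in $(\overline{\mathbb R}_+)^{\mathbb N}$) and let $\Phi$ be the corresponding hitting limit via the hitting/return equivalence (Theorem~\ref{thm:HTS-REPP_vs_RTS-REPP_infinite_measure_renormalized_measure}). The compactness argument plus \ref{cond_OtherLimits:cluster_compatible_tau_n_no_cluster_from_Q} give $\widetilde\Phi\overset{\text{law}}{=}\Phi+W$ with $W$ independent of $\Phi$. This, together with the integral relation~\eqref{eq:relation_hitting_return_infinite_measure} between $F^{[d]}$ and $\widetilde F^{[d]}$, is a \emph{self-consistent fixed-point system}; for $d=1$ it reads $1-s^\alpha\Gamma(1+\alpha)^{-1}\mathbb E[e^{-s\phi_1}]=\mathbb E[e^{-s\phi_1}]\mathbb E[e^{-sW}]$, which solves to $\mathbb E[e^{-s\phi_1}]=(\mathbb E[e^{-sW}]+s^\alpha\Gamma(1+\alpha)^{-1})^{-1}$, i.e.\ $\phi_1\overset{\text{law}}{=}J_\alpha(\nu)$ and hence $\widetilde\phi_1\overset{\text{law}}{=}\widetilde J_\alpha(\nu)$. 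For $d\ge 2$ an induction on the fixed-point equation (a fractional-integral Volterra equation whose only solution is $0$) forces uniqueness of all marginals. Existence of the fixed point is provided externally by Markov chain examples. The missing ingredient in your outline is precisely this fixed-point step: you cannot read off the post-delay limit from Darling--Kac alone, you must couple it with the hitting/return relation and solve.
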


\begin{rem}
    In what follows, we apply this abstract theorem will be applied to the particular case of null-recurrent CMS. Nonetheless, it remains valid in the broader setting of PDE dynamical systems and may provide a fruitful framework for the study of other classes of systems. Moreover, while our main focus will be on rare events corresponding to cylinder sets, the theorem can be applied to more general types of rare events, some of which will be discussed in Section \ref{section:convergence_embedded_SFT}. 
\end{rem}

Theorem \ref{thm:sufficient_conditions_convergence_other_point_processes} allows us to prove Theorem \ref{thm:HTS_infinite_points_that_never_come_back_statements} and is itself proven in Section \ref{section:proof_CMS_allowing_for_non_zero_times}. Nevertheless, the necessary and sufficient conditions does not provide any information on the limits of the return or hitting REPP. It reformulates it into whether we are able to identify the limit \eqref{eq:convergence_law_tau_n_in_thm_statements} in the CMS context. 

\section{Possible limit laws}
\label{section:possible_limit_laws}

Here we focus on the possible limits for \eqref{eq:convergence_law_tau_n_in_thm_statements} and thus of the special point processes that can emerge as limits for the convergence in law of the hitting and return REPP. It turns out that the regular variation hypothesis constraints the class of potential limits, in the sense that they must be dominated in some sense by the Pareto distribution of index $\alpha$, where $\alpha$ is the regularly varying parameter.

\begin{defn}[Pareto distribution] \index{Pareto distribution}
    A non negative random variable $W$ is said to follow a Pareto law of index $\alpha > 0$ and parameter $\lambda > 0$ (we will write $W \sim \Par_{\alpha}(\lambda)$) if 
    \begin{align*}
        \mathbb{P}(W > t) = 1 \wedge \lambda t^{-\alpha} \quad \forall t \geq 0.
    \end{align*}
\end{defn}

\noindent Then, we define the following set of distribution $\mathcal{G}_{\alpha}$ whose tails are dominated by the Pareto law (of parameter $\sin(\pi \alpha)/(\pi \alpha)$). Letting $\mathbb{P}(\mathbb{R}_+)$ be the set of probability measures on $\mathbb{R}_+$, we set
\begin{align*}
    \mathcal{G}_{\alpha} := \bigg\{ \nu \in &\,\mathbb{P}(\mathbb{R}_+) \;|\; \forall s>t>0, \; \nu(]t, +\infty[) \leq 1 \wedge \frac{\sin(\pi \alpha)}{\pi\alpha}\, t^{-\alpha}\; \\
    &\text{and}\; \nu(]t, s]) \leq \frac{\sin(\pi\alpha)}{\pi\alpha}\big(t^{-\alpha} - s^{-\alpha}\big)\bigg\}.
\end{align*}

\begin{rem}
    In particular, for $\nu \in \mathcal{G}_{\alpha}$, if $\nu(\{x\}) > 0$ for some $x \geq 0$, then $x = 0$, \textit{i.e.} the only possible atom is $0$.
\end{rem}

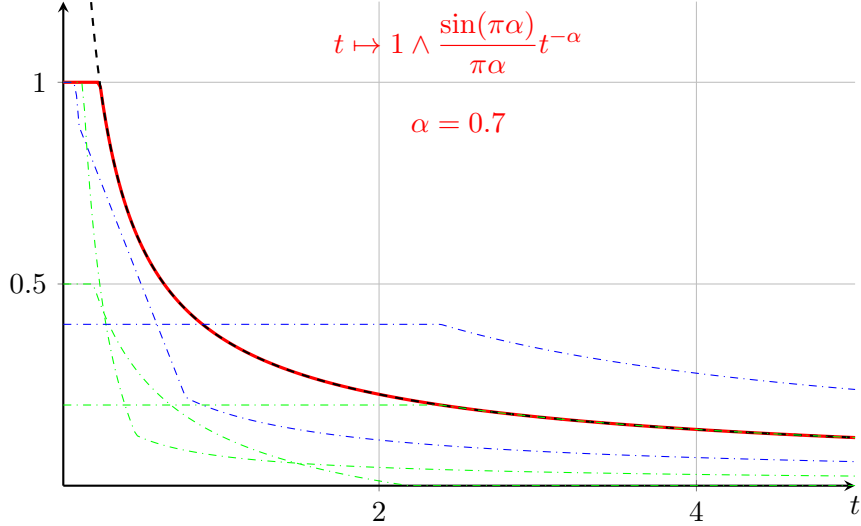
\begin{figure}[h]
    \centering
    \begin{tikzpicture}
      \begin{axis}[
        width=12cm,
        height=8cm,
        domain=0.01:5,
        samples=300,
        xlabel={$t$},
        ylabel={},
        axis lines=middle,
        ymin=0,
        ymax=1.2,
        xtick={0,2,4,6,8},
        ytick={0,0.5,1},
        thick,
        grid=both,
        every axis x label/.style={at={(current axis.right of origin)}, anchor=north},
        every axis y label/.style={at={(current axis.above origin)}, anchor=east},
      ]
        \def\zeta{0.7}
        \addplot[
          red,
          very thick,
        ]
        {min(1, (sin(deg(pi*\zeta))/(pi*\zeta))*x^(-\zeta))};
        
        \node at (axis cs:2.5,1.1) {\textcolor{red}{$t \mapsto 1 \wedge \dfrac{\sin(\pi\alpha)}{\pi\alpha} t^{-\alpha}$}};
        \node at (axis cs:2.5,0.9) {\textcolor{red}{$\alpha = \zeta$}};
        
        \addplot[
          black,
          thick,
          dashed,
        ]
        {(sin(deg(pi*\zeta))/(pi*\zeta))*x^(-\zeta))};

        \addplot[
          green,
          thin,
          dashdotted,
        ]
        {min(1, max( (sin(deg(pi*\zeta))/(pi*\zeta))*x^(-\zeta) - 0.5, (sin(deg(pi*\zeta))/(pi*\zeta))*x^(-\zeta)*0.2))};
        \addplot[
          green,
          thin,
          dashdotted,
        ]
        {min(0.2, (sin(deg(pi*\zeta))/(pi*\zeta))*x^(-\zeta)};
        
        \addplot[
          green,
          thin,
          dashdotted,
        ]
        {max(0, min(0.5, (sin(deg(pi*\zeta))/(pi*\zeta))*(x+ 0.2)^(-\zeta) - 0.2))};
        
        \addplot[
          blue,
          thin,
          dashdotted,
        ]
        {min(0.4, 2*(sin(deg(pi*\zeta))/(pi*\zeta))*x^(-\zeta)};

        \addplot[
          blue,
          thin,
          dashdotted,
        ]
        {min( min(1,(sin(deg(pi*\zeta))/(pi*\zeta))*x^(-\zeta) ), max(1-x, 0.5 *(sin(deg(pi*\zeta))/(pi*\zeta))*x^(-\zeta)))};
      \end{axis}
    \end{tikzpicture}
\caption{Examples of tails of laws belonging to $\mathcal{G}_{\alpha}$ (in green) and tails of laws that are not in $\mathcal{G}_{\alpha}$ (in blue). To be valid, they must be dominated (in some sense) by the tail of the Pareto law of index $\alpha$ and parameter $\sin(\pi \alpha)/(\pi\alpha)$ (in red).}
\label{fig:enter-label}
\end{figure}

It turns out that the set of distributions $\mathcal{G}_{\alpha}$ is the only set of reachable distributions for the limit \eqref{eq:convergence_law_tau_n_in_thm_statements}.

\begin{prop}
    \label{prop:only_possible_limits_for_waiting_times_in_G}
    Let $x \in X$ and $B_n := [x_0^{n-1}]$ for $n\geq 1$. Assume that $|\mathcal{O}(x) \cap [x_0]| < +\infty$ and set $j_{x_0} := \max \{k\geq 0\;|\; T^kx\in[x_0]\}$.  If 
    \begin{align*}
        \gamma(\mu(B_n))\, r_{[x_0]} \circ T^{j_{x_0}} \xRightarrow[n \to +\infty]{\mu_{B_n}} W \sim \nu \,,
    \end{align*}
    for some probability $\nu$, then $\nu \in \mathcal{G}_{\alpha}$.
\end{prop}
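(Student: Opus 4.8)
The plan is to exploit the equivalence between hitting and return REPP together with the general lower bounds on tails of limit distributions that come from the regular variation of the normalizing sequence. The key observation is that $r_{[x_0]}\circ T^{j_{x_0}}$, although it is the hitting time of a \emph{non}-rare event $[x_0]$, still controls the spacings between successive returns to the shrinking cylinders $B_n$: between two consecutive visits to $B_n = [x_0^{n-1}]$ the orbit must in particular return to $[x_0]$, and conversely the first return to $B_n$ after time $j_{x_0}$ decomposes as $r_{[x_0]}\circ T^{j_{x_0}}$ plus a (longer) excursion. This means the limit law $\nu$ is, up to the scaling $\gamma(\mu(B_n))$, a lower bound for the waiting-time law of the return REPP, and the latter must be of Mittag–Leffler type / Pareto-dominated because the system is PDE with index $\alpha$.

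Concretely, first I would recall from \cite{RZ20} (or \cite{BT24_FractionalPoisson}) that the first return time $r_{B_n}$ itself, suitably scaled by $\gamma(\mu(B_n))$, always has limiting tails bounded by those of $\Par_\alpha(\sin(\pi\alpha)/(\pi\alpha))$: this is the universal Pareto domination for return times in PDE systems with $\RV(\alpha)$ normalization, and it is exactly the content of the Dynkin–Lamperti / Darling–Kac asymptotics on the occupation time of a Darling–Kac set (here $[x_0]$, which is Darling–Kac by Theorem \ref{thm:GRPF_null_recurrent}). Second, I would compare $r_{[x_0]}\circ T^{j_{x_0}}$ with $r_{B_n}$: for $n$ large, on $B_n$ one has $j_{x_0} < n$ so that $T^{j_{x_0}}$ is still ``inside'' the prefix, and $r_{[x_0]}\circ T^{j_{x_0}} \le r_{B_n}$ pointwise on $B_n$ (any return to $B_n$ forces a return to $[x_0]$ after position $j_{x_0}$, since after $j_{x_0}$ the orbit of $x$ itself never re-enters $[x_0]$, so the only way to realign with the prefix of $x$ is through a genuinely new visit). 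Hence $\gamma(\mu(B_n))\,r_{[x_0]}\circ T^{j_{x_0}}$ is stochastically dominated by $\gamma(\mu(B_n))\,r_{B_n}$ under $\mu_{B_n}$, and passing to the limit gives that $\nu$ has tails dominated by the Pareto$(\alpha,\sin(\pi\alpha)/(\pi\alpha))$ tail, which is the first inequality in the definition of $\mathcal{G}_\alpha$. Third, for the second (``increment'') inequality $\nu(]t,s]) \le \tfrac{\sin(\pi\alpha)}{\pi\alpha}(t^{-\alpha}-s^{-\alpha})$, I would use the sharper Darling–Kac control: not only is the tail of the scaled return time Pareto-dominated, but its restriction to any interval is dominated by the corresponding increment of the Pareto law. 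This follows because the scaled return time converges (along with the whole REPP machinery) to a waiting time of the form $\widetilde J_\alpha(\nu)$ whose law, by the explicit Laplace transform in Definition \ref{defn:waiting_random_variables_J_alpha}, is an absolutely continuous mixture dominated on each interval by $H_\alpha(\Gamma(1+\alpha))$, whose density is the Mittag–Leffler density, bounded above by the Pareto density $\tfrac{\sin(\pi\alpha)}{\pi}\,t^{-\alpha-1}$.

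The main obstacle is the second inequality: the pure tail bound is a soft stochastic-domination argument, but the interval bound requires genuinely quantitative control of how mass of the scaled return time can concentrate on a bounded window, and the cleanest route is probably to reduce it to a known property of the Mittag–Leffler law $H_\alpha$ via the following route. By Theorem \ref{thm:HTS_infinite_points_that_never_come_back_statements} (applied in the contrapositive sense as an a priori constraint, or more precisely by re-running its proof), if $\gamma(\mu(B_n))\,r_{[x_0]}\circ T^{j_{x_0}}$ converges to $W\sim\nu$ then the return REPP converges to $\RPP(\widetilde J_\alpha(\nu))$; but \emph{any} point process arising as a limit of a return REPP of a PDE system with $\RV(\alpha)$ normalization on a uniform set has first waiting time dominated, on every interval $]t,s]$, by $\tfrac{\sin(\pi\alpha)}{\pi\alpha}(t^{-\alpha}-s^{-\alpha})$ — this is the general Pareto-domination of renewal densities in the $\alpha$-stable universality class, which I would cite or prove from the Karamata-type estimate $a^{\leftarrow}(1/\gamma(\mu(B_n))s) \sim \gamma(\mu(B_n))^{-\,?}$ combined with the Darling–Kac law for the occupation time of $[x_0]$. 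Thus both constraints defining $\mathcal{G}_\alpha$ are forced, and $\nu \in \mathcal{G}_\alpha$. The delicate point to get right in the write-up is making the inequality $r_{[x_0]}\circ T^{j_{x_0}}\le r_{B_n}$ on $B_n$ precise for all large $n$ (it uses crucially that $j_{x_0}$ is \emph{finite} and fixed, so for $n > j_{x_0}$ the definition of $B_n$ freezes the first $j_{x_0}$ coordinates to match $x$), and then tracking the scaling through the limit without losing the constant $\sin(\pi\alpha)/(\pi\alpha)$.
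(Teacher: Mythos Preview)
Your approach takes an unnecessarily long detour and, for the second inequality in the definition of $\mathcal{G}_\alpha$, has a genuine gap. The paper's proof is a three-line direct computation: since $B_n \subset T^{-j_{x_0}}[x_0]$ and $\mu$ is $T_{[x_0]}$-invariant on $[x_0]$, for any $0<t<s\le+\infty$ one has
\[
\mu_{B_n}\big(t<\gamma(\mu(B_n))\,r_{[x_0]}\circ T^{j_{x_0}}\le s\big)
\le \mu(B_n)^{-1}\,\mu\big([x_0]\cap\{t<\gamma(\mu(B_n))\,r_{[x_0]}\le s\}\big),
\]
and the right-hand side converges to $\tfrac{\sin(\pi\alpha)}{\pi\alpha}(t^{-\alpha}-s^{-\alpha})$ by the tail asymptotics $\mu([x_0]\cap\{r_{[x_0]}>m\})\sim \tfrac{\sin(\pi\alpha)}{\pi\alpha}\,a_m^{-1}$ (Lemma~\ref{lem:equivalence_queue_renormalisation_gamma} with $u_n=\mu(B_n)$). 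Both defining inequalities of $\mathcal{G}_\alpha$ fall out of the \emph{same} estimate; there is no need to compare with $r_{B_n}$ at all, nor to invoke any limit theorem for the REPP.

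Your route for the interval bound is circular: you appeal to Theorem~\ref{thm:HTS_infinite_points_that_never_come_back_statements} to conclude that the return REPP converges to $\RPP(\widetilde J_\alpha(\nu))$, and then want to deduce a constraint on $\nu$ from a claimed universal Pareto domination of the first waiting time of that limit. But the waiting time of the limit \emph{is} $\widetilde J_\alpha(\nu)$, which already depends on $\nu$; you cannot extract an a~priori constraint on $\nu$ this way without first proving an interval bound on $\widetilde J_\alpha(\nu)$ independently of $\nu$, which you have not done (and which in any case would require essentially the direct computation above). The vague reference to ``Pareto-domination of renewal densities in the $\alpha$-stable universality class'' is not a result available in the cited references. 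Even your tail bound via the domination $r_{[x_0]}\circ T^{j_{x_0}}\le r_{B_n}$, while the pointwise inequality is essentially correct for large $n$, still leaves you needing the Pareto tail bound for the \emph{limit} of $\gamma(\mu(B_n))r_{B_n}$, which is neither asserted nor proved in the references you cite in the generality needed here. Drop the comparison with $r_{B_n}$ entirely and compare directly with $r_{[x_0]}$ on $[x_0]$: the inclusion $B_n\subset T^{-j_{x_0}}[x_0]$ and invariance give everything in one stroke.
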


Hence, we get a class of reachable limit distributions. The next result shows that this class of distributions in sharp. Indeed, for every $\nu \in \mathcal{G}_{\alpha}$, it is always possible to find a null-recurrent CMS that will see this distribution emerge for at least one point in the phase space. 

\begin{prop}
    \label{prop:Every_waiting_time_achievable_when_CMS_well_chosen}
    Let $\nu \in \mathcal{G}_{\alpha}$ and a normalizing sequence $(a_n)_{n \geq 1} \in \RV(\alpha)$ with $0 <\alpha < 1$. Then, there exists a null-recurrent CMS $(\Omega, T)$ endowed with a null-recurrent potential $\phi$ that is Walters and with $P_G(\phi) < +\infty$ such that for some $x \in \Omega$ and $B_n := [x_0^{n-1}]$ for $n\geq 1$, 
    \begin{align*}
        \gamma(\mu(B_n))\, r_{[x_0]} \circ T^{j_{x_0}}\xRightarrow[n \to +\infty]{\mu_{B_n}} W \sim \nu \,.
    \end{align*}
\end{prop}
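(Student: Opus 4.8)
\textbf{Proof proposal for Proposition \ref{prop:Every_waiting_time_achievable_when_CMS_well_chosen}.}

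The plan is to build, for a given target law $\nu \in \mathcal{G}_{\alpha}$, a tailor-made null-recurrent CMS together with a distinguished point $x$ whose symbolic neighborhoods ``see'' exactly $\nu$ as the limit law in \eqref{eq:convergence_law_tau_n_in_thm_statements}. The natural candidate is a \emph{renewal-type} (or tower) shift over a single ``base'' state, say state $0$, with a countable collection of ``spokes'' indexed so that the combinatorics of the first return to $[0]$ encodes the tail of $\nu$. Concretely, I would take a graph with vertices $\{0\} \cup \{(i,j) : i \geq 1,\, 1 \leq j \leq \ell_i\}$ where, from $0$, one can enter the $i$-th loop (of length $\ell_i + 1$) and come back to $0$, and I would choose a Markovian (locally constant) potential $\phi$ so that the weight of the $i$-th excursion of length $\ell_i+1$ is $\lambda_\phi^{-(\ell_i+1)} p_i$ for a prescribed probability vector $(p_i)_i$. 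Tuning $(\ell_i)$ and $(p_i)$, one arranges that $\sum_n \lambda_\phi^{-n} Z_n(\phi,0)$ is regularly varying of the index forcing $(a_n)\in \RV(\alpha)$ (this is the standard renewal-sequence/Karamata computation, giving null recurrence precisely when $0<\alpha<1$), so by Theorem \ref{thm:GRPF_null_recurrent} the system is PDE with a normalizing sequence of the right regular variation. The distinguished point $x$ is then the unique point that enters the base once and afterwards \emph{never returns} to $[0]$: take one further countable family of ``escape'' branches attached to $0$, and let $x$ be the infinite admissible ray that leaves along the escape branches, so that $|\mathcal{O}(x)\cap[0]| = 1$ and $j_{x_0} = 0$; along $x$ the quantity $r_{[0]}\circ T^{j_{x_0}} = r_{[0]}$ is simply the first return time to the base from $x$ itself, which we are free to tune.

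The second step is the limit-law computation. After the reduction, $\gamma(\mu(B_n))\, r_{[0]}$ under $\mu_{B_n}$ must converge to $W\sim\nu$. Here $B_n = [x_0^{n-1}]$ is a nested sequence of cylinders along $x$, so $\mu_{B_n}$ concentrates on longer and longer pieces of the escape ray; the conditional law of $r_{[0]}$ given $B_n$ is, up to controlled distortion coming from the Walters/Markov regularity (Remark \ref{rem:regularity_log_h}), governed by the design of the escape branches. The point is that one has complete freedom in shaping the ``escape-length'' distribution: I would attach to $x$, at each depth $n$, side-branches of controlled lengths so that, conditionally on having followed $x$ for $n$ steps, the law of the remaining time to fall back into $[0]$ is (after scaling by $\gamma(\mu(B_n))$) an approximation of $\nu$. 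The precise constraint that the approximating laws be consistent as $n$ grows and that their scaled limit be exactly $\nu$ is \emph{exactly} the content of the tail inequalities defining $\mathcal{G}_{\alpha}$: the Pareto-domination bounds are what must hold for such a family of side-branches to fit inside a genuine shift-invariant $\sigma$-finite measure with normalizing sequence in $\RV(\alpha)$ (this is the converse direction already proved in Proposition \ref{prop:only_possible_limits_for_waiting_times_in_G}, run backwards to guide the construction). So I would first handle the case where $\nu$ has a density bounded by the critical Pareto tail with room to spare, build the branches explicitly (lengths chosen by inverting the tail of $\nu$ against the scaling $\gamma$), verify the scaled convergence by a direct renewal estimate, and then obtain the general $\nu\in\mathcal{G}_{\alpha}$ by an approximation argument (truncation plus a diagonal extraction), using that weak convergence of the scaled return times is stable under the approximations because all the relevant tails stay uniformly Pareto-dominated.

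Finally, I would check the remaining hypotheses: that the constructed $\phi$ is Walters with $P_G(\phi)<+\infty$ (immediate for a locally constant potential with summable excursion weights), that the shift is topologically mixing (add, if necessary, a direct self-loop or a short cycle at $0$ to kill periodicity while leaving the asymptotics of $Z_n$ unchanged), and that the potential is genuinely null recurrent in the sense of Definition \ref{defn:CMS_classification_of_potentials} (this follows from $0<\alpha<1$ via the classical dichotomy for renewal sequences: recurrence from $\sum \lambda_\phi^{-n} Z_n = +\infty$, nullity from $\sum n\lambda_\phi^{-n} Z_n^* = +\infty$, which is where $\alpha<1$ enters). The main obstacle, I expect, is the limit-law computation in the second step: getting the \emph{exact} law $\nu$ in the limit (not merely Pareto-domination) requires carefully coordinating the excursion/escape-branch lengths across all depths $n$ and controlling the distortion uniformly, and the approximation argument for general $\nu\in\mathcal{G}_{\alpha}$ must be set up so that the limit is identified without ambiguity --- this is the technical heart of the proposition, and is presumably where the explicit inversion of the tail of $\nu$ against $\gamma$, together with the sharp constants $\sin(\pi\alpha)/(\pi\alpha)$ appearing in $\mathcal{G}_\alpha$, does all the work.
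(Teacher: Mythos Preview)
Your broad strategy---build a tailor-made null-recurrent Markov chain with a distinguished escaping point and shape the conditional return-time law through auxiliary branches---is exactly the right intuition, and the paper follows the same general philosophy. However, there is a genuine gap in your execution, and the paper's approach differs in one essential technical respect.

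The gap is your approximation step. You propose to first handle $\nu$ with ``room to spare'' under the Pareto bound, then reach general $\nu\in\mathcal{G}_\alpha$ by ``truncation plus a diagonal extraction''. But the statement asks for a \emph{single fixed} CMS (with a single potential and a single point $x$) realizing the given $\nu$; a diagonal argument across a sequence of different CMS's does not produce one system. There is no ambient space in which to take a limit of Markov shifts, and the convergence in \eqref{eq:convergence_law_tau_n_in_thm_statements} is a statement internal to one fixed system. So the general case cannot be reduced to the strict-inequality case this way.

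The paper avoids this entirely by giving a direct construction valid for every $\nu\in\mathcal{G}_\alpha$. The key device is a hybrid \emph{House of Cards--Renewal} graph on the alphabet $\mathbb{N}\cup i\mathbb{N}$: the point $x$ climbs the imaginary tower $(0,i,2i,\dots)$, and from each level one can fall onto the real renewal axis and deterministically walk back to $0$. A preparatory lemma shows that for \emph{any} prescribed probability vector $(p_k)$ and any array $(c_{k,n})$ with $0\le c_{k,n}\le 1$ nonincreasing in $n$ and vanishing for $k<n$, one can choose transition probabilities so that $\mu(B_n\cap\{r_{[0]}=k\})=c_{k,n}p_k$ exactly. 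This is the concrete mechanism your sketch lacks: it converts the analytic problem (realize $\nu$) into a purely combinatorial one (choose $(c_{k,n})$). The paper then encodes $\nu$ into $(c_{k,n})$ at a sequence of well-separated scales $s_n\to\infty$, using dyadic discretizations of $\overline{F_\nu}$ and the uniform regular-variation estimate for $q(ts_n)/q(s_n)$; the $\mathcal{G}_\alpha$ inequalities are used only to guarantee $c_{k,n}\le 1$, not as an approximation device. A separate computation (their Lemma relating $\mu(B_n)$ to $q_{s_n}$) pins down the scaling $\gamma(\mu(B_n))^{-1}\sim s_n$, after which the convergence to $W\sim\nu$ follows by direct estimation on the dyadic grid. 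No approximation of $\nu$ by nicer laws is needed.
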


In particular, for the point considered, Theorem \ref{thm:HTS_infinite_points_that_never_come_back_statements} implies 
\begin{align*}
    N_{B_n}^{\gamma} \xRightarrow[n \to +\infty]{\mu_{B_n}} \RPP(\widetilde{J}_{\alpha}(\nu)) \quad \text{and} \quad N_{B_n}^{\gamma} \xRightarrow[n \to +\infty]{\mathcal{L}(\mu)} \DRPP(J_{\alpha}(\nu), \widetilde{J}_{\alpha}(\nu)).
\end{align*}
In fact, the examples we construct are even null recurrent Markov chains (\textit{i.e.} the potential $\phi$ is Markovian, see more about the connection between CMS with Markovian potential and Markov chains in Section \ref{section:Connection_with_Markov_Chains}). The proof relies on a specific construction of tower-renewal based graph that allows us to build any particular example (see Figure \ref{fig:HoC-Renewal_structure} in Section \ref{section:proof_possible_limit_laws}). Propositions \ref{prop:only_possible_limits_for_waiting_times_in_G} and \ref{prop:Every_waiting_time_achievable_when_CMS_well_chosen} are proven in Section \ref{section:proof_possible_limit_laws}.

\begin{rem}[Non convergence]
    In fact, our construction can give a more general result than Proposition \ref{prop:only_possible_limits_for_waiting_times_in_G} as, keeping the same construction as in the proof, it is possible to build Markov Chains such that for a point $x$ there is no convergence of the REPP $N_{B_n}^{\gamma}$ associated to $B_n = [x_0^{n-1}]$. Furthermore, fixing $\mathcal{G}' \subset \mathcal{G}_{\alpha}$ we can construct a point $x$ such that the accumulation laws of $\gamma(\mu(B_n))r_{[0]}$ under $\mu_{B_n}$ are exactly the laws $W'$ such that $\overline{F_{W'}} \in \mathcal{G}'$.\\
    In particular, in the context of null recurrent CMS, it can be possible to find shrinking cylinders to a point such that there is no convergence of the REPP which is impossible for positive recurrent CMS.
\end{rem}

Propositions \ref{prop:only_possible_limits_for_waiting_times_in_G} and \ref{prop:Every_waiting_time_achievable_when_CMS_well_chosen} show that in all generality of null-recurrent CMS, it is impossible to obtain a universal \textit{all-point REPP} property and the best results achievable in general are Theorems \ref{thm:HTS_infinite_CFPP_periodic_points} and \ref{thm:HTS_infinite_CFPP_periodic_points}.

\section{All-point REPP property for paradigmatic examples} 
\label{section:all-point_REPP_for_examples}
Although a general \textit{all-point REPP} property cannot be formulated for arbitrary null-recurrent CMS, our framework enables the analysis of specific systems—namely, null-recurrent CMS that have been more extensively studied in the literature. We present the results for different paradigmatic examples of null-recurrent CMS, frequently appearing in the literature. The two main ones are the ones built on an House of Cards type graph and $\mathbb{Z}$-extensions of subshifts of finite type. While both satisfy the \textit{all-point REPP} property, the behaviors and the limit obtained are different.

\subsection{House of Cards type CMS} 

We start by the House of Cards type CMS. We define the CMS on the alphabet $V = \mathbb{N}$ and associated to the transition matrix
\begin{align*}
    A(v,w) =  \bigg\{\begin{array}{cc}
        1 & \text{if} \; w = 0 \; \text{or}\; w = v + 1 \\
        0 & \text{else}. 
    \end{array} 
\end{align*}

\begin{figure}[h]
\centering
\begin{tikzpicture}[
  state/.style={circle, draw, minimum size=7mm, inner sep=1pt, font=\small},
  >=Stealth,
  node distance=10mm and 12mm
]

\node[state] (H0) {0};
\node[state, right=of H0] (V1) {\(1\)};
\node[state, right=of V1] (V2) {\(2\)};
\node[right=1cm of V2] (Vdots) {\(\cdots\)};
\node[state, right=1cm of Vdots] (Vn) {\(n\)};
\node[right=1cm of Vn] (Vdots2) {\(\cdots\)};

\draw[->, bend right=40] (V1) to (H0);
\draw[->, bend right=40] (V2) to (H0);
\draw[->, bend right=40] (Vn) to (H0);
\draw[->, bend right=45] (Vdots2) to (H0);

\draw[->] (H0) edge[loop left]  (H0);

\foreach \x/\y in {H0/V1, V1/V2, V2/Vdots, Vdots/Vn, Vn/Vdots2} {
  \draw[->] (\x) -- (\y);
}


\end{tikzpicture}
\caption{The House of Cards graph.}
\label{fig:HoC_structure}
\end{figure}
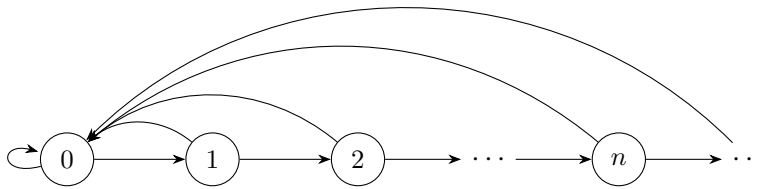

\noindent In this dynamical system, the only way to avoid coming back to $0$ is to keep climbing the tower. Hence, the set of points $\mathcal{D}$ such that their orbit encounters $0$ only a finite number of time is 
\begin{align*}
    \mathcal{D} := \mathcal{O}(x^{up}) \cup \bigcup_{k\geq 1} T^{-k}\{x^{up}\}
\end{align*}
where $x^{up} := (k)_{k\geq 0}$ is the point starting at $0$ and climbing indefinitely the tower. Then, such a system endowed with a null-recurrent potential (with some regular variation hypothesis) satisfy an \textit{all-point REPP} with a \textit{trichotomy} between periodic points, points in $\mathcal{D}$ and the remaining points.

\begin{thm}
    \label{thm:all_points_HoC_structure}
    Let $(\Omega, T)$ be a House of Cards shift and $\phi$ a potential satisfying Walters' condition and such that $P_G(\phi) < +\infty$. Assume furthermore that for some $v \in V$ there exists $\alpha \in (0,1]$ such that 
    \begin{align*}
        \frac{1}{\mu_{\phi}[v]} \sum_{k = 1}^{n} \lambda_{\phi}^{-k}Z_k(\phi, v) \in \RV(\alpha).
    \end{align*}
    \noindent Let $\nu$ be the Pareto law of index $\alpha$ and parameter $\sin(\pi \alpha)/(\pi\alpha)$. Then, for every $x \in \Omega$ and $B_n := [x_0^{n-1}]$
    \begin{enumerate}
        \item if $x$ is non periodic and $x \notin \mathcal{D}$,
        \begin{align*}
            N_{B_n}^{\gamma} \xRightarrow[n \to +\infty]{\mathcal{L}(\mu)} \fPp_{\alpha}(\Gamma(1+\alpha)) \quad \text{and} \quad N_{B_n}^{\gamma} \xRightarrow[n \to +\infty]{\mu_{B_n}} \fPp_{\alpha}(\Gamma(1+\alpha)).
        \end{align*}
        \item if $x$ is periodic of prime period $p$, 
        \begin{align*}
                N_{B_n}^{\gamma} \xRightarrow[n \to +\infty]{\mathcal{L}(\mu)} \cfPp_{\alpha}( \theta\Gamma(1+\alpha), \geo(\theta)) \quad \text{and} \quad N_{B_n}^{\gamma} \xRightarrow[n \to +\infty]{\mu_{B_n}} \RPP(W_{\alpha,\theta}(\theta\Gamma(1+\alpha))).
        \end{align*}
        with $\theta = 1 - \exp(S_p\phi(x) - pP_G(\phi))$.
        \item if $x \in \mathcal{O}(x^{up})$ (\textit{i.e.}, $x = (j + k)_{k\geq 0}$ for some $j\geq 0$),
        \begin{align*}
            N_{B_n}^{\gamma} \xRightarrow[n \to +\infty]{\mu_{B_n}} \RPP(\widetilde{J}_{\alpha}(\nu)) \quad \text{and} \quad N_{B_n}^{\gamma} \xRightarrow[n \to +\infty]{\mathcal{L}(\mu)} \DRPP(J_{\alpha}(\nu), \widetilde{J}_{\alpha}(\nu)).
        \end{align*}
        \item if $x \in T^{-k}\{x^{up}\}$ for some $k \geq 0$, we set $\nu_x$ the Pareto law of parameter $\exp(S_k\phi_*(x))\, \allowbreak \sin(\pi \alpha)/(\pi\alpha)$. Then,
        \begin{align*}
            N_{B_n}^{\gamma} \xRightarrow[n \to +\infty]{\mu_{B_n}} \RPP(\widetilde{J}_{\alpha}(\nu_x)) \quad \text{and} \quad N_{B_n}^{\gamma} \xRightarrow[n \to +\infty]{\mathcal{L}(\mu)} \DRPP(J_{\alpha}(\nu_x), \widetilde{J}_{\alpha}(\nu_x)).
        \end{align*}
    \end{enumerate}
\end{thm}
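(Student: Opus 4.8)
The plan is to treat the four cases by invoking the general machinery established in Sections \ref{section:quatitative_recurrence_null-recurrent_CMS} and \ref{section:possible_limit_laws}, reducing everything to combinatorial facts about the House of Cards graph. First I would record the structural observations: the only ways to return to the state $0$ are via the self-loop, and every vertex $v \geq 1$ has $0$ as its unique "exit" target besides $v+1$; hence for any $x \in \Omega$, either $x$ visits $[0]$ infinitely often, or $x$ eventually follows the unique climbing path, i.e.\ $x \in \mathcal{D} = \mathcal{O}(x^{\mathrm{up}}) \cup \bigcup_{k\ge1} T^{-k}\{x^{\mathrm{up}}\}$. Moreover, every periodic orbit must pass through $0$ (a periodic point cannot be on the climbing path), so periodic points have $|\mathcal{O}(x)\cap[0]| = +\infty$. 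The regular variation hypothesis on $\tfrac{1}{\mu_\phi[v]}\sum_{k\le n}\lambda_\phi^{-k}Z_k(\phi,v)$, combined with Theorem \ref{thm:GRPF_null_recurrent}, says precisely that $(a_n) \in \RV(\alpha)$, so all hypotheses of Theorems \ref{thm:HTS_infinite_FPP_infinitely_recurrent}, \ref{thm:HTS_infinite_CFPP_periodic_points} and \ref{thm:HTS_infinite_points_that_never_come_back_statements} are met.

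Cases (1) and (2) are then immediate. If $x$ is non-periodic and $x \notin \mathcal{D}$, then $x$ visits $[0]$ infinitely often, so $|\mathcal{O}(x)\cap[0]| = +\infty$ and Theorem \ref{thm:HTS_infinite_FPP_infinitely_recurrent} applies verbatim (taking $v = 0$), giving the fractional Poisson limit. If $x$ is periodic of prime period $p$, Theorem \ref{thm:HTS_infinite_CFPP_periodic_points} applies directly with $\theta = 1 - \exp(S_p\phi(x) - pP_G(\phi))$. For cases (3) and (4), the point $x$ satisfies $|\mathcal{O}(x)\cap[x_0]| < +\infty$ (it eventually climbs forever and never returns to $x_0$, since on the climbing path all symbols are distinct and increasing), so Theorem \ref{thm:HTS_infinite_points_that_never_come_back_statements} reduces the problem to identifying the limit law $\nu$ of $\gamma(\mu(B_n))\,r_{[x_0]}\circ T^{j_{x_0}}$ under $\mu_{B_n}$, where $j_{x_0}$ is the last visit of the orbit of $x$ to $[x_0]$.

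The heart of the argument — and the main obstacle — is computing this waiting-time limit. After applying $T^{j_{x_0}}$ we are looking, under the measure $\mu_{B_n}$ conditioned to the cylinder $[x_0^{n-1}]$ and pushed forward, at a point which (for $n$ large) lies in a long initial segment of the climbing path; the quantity $r_{[0]}$ is then the time to fall back to $0$, which for a point at "height $h$ on the tower" is exactly $h+1$. The key computation is to evaluate, using $L_{\phi_*}1 = 1$ and the Markov/Walters regularity (Remarks \ref{rem:regularity_log_h} and \ref{rem:definition_phi_*}), the conditional $\nu_{\phi}$- or $\mu_\phi$-measure of the event "drop to $0$ occurs at scaled time in $]t,s]$", and to show it converges to the Pareto increment $\tfrac{\sin(\pi\alpha)}{\pi\alpha}(t^{-\alpha} - s^{-\alpha})$. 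This is where the regular variation of $(a_n)$ enters quantitatively: the tail of the first-return-to-$0$ time from high up the tower is governed by the same regularly varying sequence that defines $a_n$, and a Tauberian/Karamata argument converts $\RV(\alpha)$ of $a_n$ into the Pareto tail with the stated constant $\sin(\pi\alpha)/(\pi\alpha)$ (this constant is exactly the one appearing in $\mathcal{G}_\alpha$ and in Proposition \ref{prop:only_possible_limits_for_waiting_times_in_G}, confirming the limit is the extremal case). In case (3), with $x \in \mathcal{O}(x^{\mathrm{up}})$, the relevant weight factor is trivial and one gets the plain Pareto law $\nu$; in case (4), with $x \in T^{-k}\{x^{\mathrm{up}}\}$, the first $k$ symbols of $x$ are off the climbing path and contribute an extra Birkhoff-sum weight $\exp(S_k\phi_*(x))$ to the density $\widehat{T}^{j_{x_0}}(\mathbf{1}_{B_n}/\mu(B_n))$, which rescales the Pareto parameter to $\exp(S_k\phi_*(x))\sin(\pi\alpha)/(\pi\alpha)$, yielding $\nu_x$. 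Once $\nu$ (resp.\ $\nu_x$) is identified, substitution into Theorem \ref{thm:HTS_infinite_points_that_never_come_back_statements} gives the stated $\RPP(\widetilde{J}_\alpha(\cdot))$ and $\DRPP(J_\alpha(\cdot),\widetilde{J}_\alpha(\cdot))$ limits, completing the proof. The remaining care is bookkeeping: checking that $j_{x_0}$ is finite and that the push-forward densities live in a fixed compact subset of $L^1(\mu)$ (verifying condition \ref{cond_OtherLimits:good_density_after_tau_n}), which follows from the uniform Walters bounds on $\log h_\phi$ and the Darling-Kac property of $1$-cylinders from Theorem \ref{thm:GRPF_null_recurrent}.
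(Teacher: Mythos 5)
Your treatment of cases (1) and (2) matches the paper exactly: both are immediate consequences of Theorems \ref{thm:HTS_infinite_FPP_infinitely_recurrent} and \ref{thm:HTS_infinite_CFPP_periodic_points}, after observing that any point with $x \notin \mathcal{D}$ visits $[0]$ infinitely often and that periodic orbits must pass through $0$.

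For cases (3) and (4) your route diverges from the paper's and is less precise in two places where precision actually matters. The paper first handles the single point $x^{\mathrm{up}}$ by exploiting the clean combinatorial identity
\[
B_n = [(x^{\mathrm{up}})_0^{n-1}] = [0]\cap\{r_{[0]}\geq n\},
\]
which makes the delay law an \emph{immediate} consequence of Lemma~\ref{lem:equivalence_queue_renormalisation_gamma}: under $\mu_{B_n}$, $\gamma(\mu(B_n))\,r_{[0]}\Rightarrow 1\wedge\frac{\sin(\pi\alpha)}{\pi\alpha}t^{-\alpha}$, i.e.\ the Pareto law $\nu$. Your proposal instead gestures at a ``Tauberian/Karamata argument'' and a computation of a conditional measure of a drop event but never pins down this identity, without which the connection to the constant $\sin(\pi\alpha)/(\pi\alpha)$ is not actually established. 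Once $x^{\mathrm{up}}$ is done, the paper transfers to the rest of $\mathcal{D}$ through the ready-made Propositions~\ref{prop:convergence_REPP_preimages} and~\ref{prop:convergence_REPP_images}, rather than redoing the waiting-time estimate at each point as you propose; this avoids reproving the compactness/distortion estimates case by case.

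The second gap is the justification of the ``trivial weight factor'' in case (3). Proposition~\ref{prop:convergence_REPP_images} gives the factor $Q_{y,m}(x^{\mathrm{up}})^{-1/\alpha}$ for $y = T^m x^{\mathrm{up}}$, and this is $1$ \emph{only because} $T^{-m}\{y\} = \{x^{\mathrm{up}}\}$ is a singleton in the House of Cards graph (the only arrow into state $m$ comes from $m-1$), so the probability density $Q_{y,m}(\cdot)$ is supported on a single point and equals $1$ there. Your claim that ``the relevant weight factor is trivial'' omits this, and it is exactly the reason why images of $x^{\mathrm{up}}$ keep the unmodified Pareto parameter while preimages pick up the factor $e^{S_k\phi_*(x)}$. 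Your description of case (4) does produce the correct rescaled parameter, but it is stated as a heuristic about densities rather than derived; the cleanest route is the one in Proposition~\ref{prop:convergence_REPP_preimages}, which shows the delay scales as $Q_{x^{\mathrm{up}},k}(x)^{1/\alpha}W$, and a Pareto$(\alpha,p)$ variable scaled by $c^{1/\alpha}$ is Pareto$(\alpha,cp)$.
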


We observe that points in the positive orbit of $x$ exhibit the same asymptotic behavior as $x^{up}$ itself, whereas for its preimages, the parameter of the Pareto distribution governing the point process varies according to the specific point $x$ considered. Both exhibit a limit distribution different from the fractional Poisson process. It lies in the local behavior near $x^{up}$. Indeed, when the system is close to $x^{up}$, the entrance into the target set displays a very specific structure—trajectories tend to climb the tower for a very long time, which in turn generates a distinct limiting point process.\\

Now, if $x$ belongs to the positive orbit of $x^{up}$, maintaining this prolonged climbing of the tower requires following the sequence of symbols associated to $x$ for a sufficiently long time. In this case, the difference between starting at floor $k$ or at floor $0$ becomes negligible in the limit.\\

In contrast, if $x$ is a preimage of $x^{up}$, following the orbit of $x$ for a long time still induces a climb up the tower, but this can occur without returning to a neighborhood of $x$, since the system may enter the tower through another preimage. Consequently, the contribution of $x$ becomes diluted among its preimages, which explains the emergence of the additional parameter $\exp(S_m\phi_*(x)) < 1$ in this case.

\subsection{$\mathbb{Z}$-extensions} 

A paradigmatic family of infinite measure preserving systems are $\mathbb{Z}$-extensions over a probability-preserving dynamical system. This notion extends beyond our scope of symbolic dynamics and have various applications in different directions in ergodic theory \cite{Pene_livre_infini}. Let us start by defining a $\mathbb{Z}$-extension in the most general way.

\begin{defn}[$\mathbb{Z}$-extension]
    \label{defn:Z_extension}
    Let $(\Omega, \sigma, \nu)$ be a probabilistic dynamical system and $h : \Omega \to \mathbb{Z}$ be an observable ($h$ is called the jump function). We define the dynamical system on $X = \Omega \times \mathbb{Z}$ associated with the dynamics
    \begin{align*}
        T : (\omega, z) \mapsto (\sigma \omega, z + h(\omega)).
    \end{align*}
    Note that the (infinite) measure $\mu := \nu \otimes \mathfrak{m}$, where $\mathfrak{m}$ is the counting measure on $\mathbb{Z}$, is preserved by the map $T$.
\end{defn}

For us, we will consider $\mathbb{Z}$-extensions over TMS. Note first that by definition, a $\mathbb{Z}$-extension over a TMS is not directly constructed as a real TMS. However, it is topologically conjugated to one and thus we will still be able to study it as one (see Appendix \ref{section:proofs_all-point_REPP_examples}). \\

Here, we will restrict ourselves to $\mathbb{Z}$-extension over strong positive recurrent TMS with a Gibbs measure and nice jump functions, which is mandatory to fall in the scope of our study, \textit{i.e.} to have PDE CEMPT with a regularly varying normalizing sequence. Thus, we consider a topologically mixing TMS with the Big Image and Preimages (BIP) property, \textit{i.e.} there exists $N > 0$ and a finite set of letters $(v_i)_{1\leq i \leq N} \in V^N$ such that for all $v \in V$, there exist $1\leq i,j \leq N$ such that $A(v_i,v) = A(v,v_j) = 1$. In particular, this property is trivially satisfied for SFTs. We will also assume that our jump function $h$ is non-arithmetic, \textit{i.e.} $h$ is not cohomologous in $L^2(\nu)$ to a sub-lattice valued function. This ensures that the CMS build for the $\mathbb{Z}$-extension is topologically mixing and avoid some technicalities.

\begin{thm}
    \label{thm:all_points_Z_extension}
    Let $(X, T)$ be a $\mathbb{Z}$-extension over a TMS $(\Omega, \sigma)$. Assume that $(\Omega, \sigma)$ has the BIP property and is associated to a weakly Hölder potential $\phi$ with $\var_1(\phi) < +\infty$ and $P_G(\phi) < +\infty$ (note $\nu$ the associated invariant probability). Assume that the jump function $h \in L^1(\nu)$, $\mathbb{E}_{\nu}[h] = 0$, non-arithmetic, locally Hölder and is in one of the following cases:
    \begin{itemize}
        \item $h \in L^2(\nu)$. In this case, set $\alpha = 1/2$.
        \item $t \mapsto \nu(|h| > t) \in \RV(-\beta)$ with $\beta \in (1, 2]$. In this case, set $\alpha = 1 - 1/\beta$.
    \end{itemize}
    Let $x = (\omega, z) \in X$ and consider $B_n := [\omega_0^{n-1}] \times \{z\}$ for $n\geq 1$. Then,
    \begin{enumerate}
        \item If $x$ is non periodic, 
        \begin{align*}
            N_{B_n}^{\gamma} \xRightarrow[n \to +\infty]{\mathcal{L}(\mu)} \fPp_{\alpha}(\Gamma(1+\alpha)) \quad \text{and} \quad N_{B_n}^{\gamma} \xRightarrow[n \to +\infty]{\mu_{B_n}} \fPp_{\alpha}(\Gamma(1+\alpha)).
        \end{align*}
        \item If $x$ is periodic of prime period $p$,
        \begin{align*}
                N_{B_n}^{\gamma} \xRightarrow[n \to +\infty]{\mathcal{L}(\mu)} \cfPp_{\alpha}( \theta\Gamma(1+\alpha), \geo(\theta)) \quad \text{and} \quad N_{B_n}^{\gamma} \xRightarrow[n \to +\infty]{\mu_{B_n}} \RPP(W_{\alpha,\theta}(\theta\Gamma(1+\alpha))).
        \end{align*}
        where $\theta := 1 - \exp(S_p\phi(\omega)- pP_G(\phi))$.
    \end{enumerate}
\end{thm}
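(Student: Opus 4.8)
The plan is to reduce Theorem \ref{thm:all_points_Z_extension} to the general results of Sections \ref{section:quatitative_recurrence_null-recurrent_CMS}, namely Theorems \ref{thm:HTS_infinite_FPP_infinitely_recurrent} and \ref{thm:HTS_infinite_CFPP_periodic_points}. The first step is to realize the $\mathbb{Z}$-extension $(X,T)$ as a topologically mixing CMS endowed with a null-recurrent Walters potential whose normalizing sequence is regularly varying of the announced index $\alpha$. Concretely, I would take the symbolic coding of $X = \Omega\times\mathbb{Z}$: since $h$ is locally Hölder, it is constant on $1$-cylinders up to controlled variation, so one may refine the alphabet of $\Omega$ into $V\times\mathbb{Z}$ (or, more economically, a countable alphabet tracking the current fiber coordinate together with the $\Omega$-symbol), obtaining a genuine CMS $(\widehat\Omega,\widehat T)$ topologically conjugate to $(X,T)$ via a map carrying cylinders to cylinders; this is exactly the point alluded to in Appendix \ref{section:proofs_all-point_REPP_examples}. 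The BIP property and $\var_1(\phi)<+\infty$ guarantee that the lifted potential $\widehat\phi$ is weakly Hölder with finite Gurevich pressure, and the non-arithmeticity of $h$ ensures $(\widehat\Omega,\widehat T)$ is topologically mixing (without it the CMS would decompose over a sublattice of $\mathbb{Z}$).

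The second step is to verify that $\widehat\phi$ is \emph{null recurrent} and to compute the regular variation index. Here I would invoke the local limit theorem for $\mathbb{Z}$-extensions over BIP shifts with a Gibbs measure: by the results on local limit theorems in this setting, when $h\in L^2(\nu)$ with $\mathbb{E}_\nu[h]=0$ one has $\nu\otimes\delta_0(T^{-n}(\Omega\times\{0\}))\sim c/\sqrt{n}$, giving return-sum asymptotics of order $\sqrt n$, hence $a_n\in\RV(1/2)$; when $\nu(|h|>t)\in\RV(-\beta)$ with $\beta\in(1,2]$, $h$ lies in the domain of attraction of a $\beta$-stable law and the local limit theorem gives $\nu\otimes\delta_0(T^{-n}(\Omega\times\{0\}))$ of order $n^{-(1-1/\beta)}$, hence $a_n\in\RV(1-1/\beta)=\RV(\alpha)$ by Theorem \ref{thm:GRPF_null_recurrent}. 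The divergence of $\sum \lambda^{-k}Z_k$ together with the divergence of $\sum k\lambda^{-k}Z_k^*$ (which follows since $a_n=o(n)$ forces the heavier second sum to diverge) places $\widehat\phi$ in the null-recurrent class of Definition \ref{defn:CMS_classification_of_potentials}. At this point $(\widehat\Omega,\widehat T,\mu)$ is a topologically mixing CMS with a null-recurrent Walters potential and $(a_n)\in\RV(\alpha)$, so Theorems \ref{thm:HTS_infinite_FPP_infinitely_recurrent} and \ref{thm:HTS_infinite_CFPP_periodic_points} apply.

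The third step is the combinatorial check deciding which theorem governs a given point $x=(\omega,z)$. For the non-periodic case I must verify the hypothesis of Theorem \ref{thm:HTS_infinite_FPP_infinitely_recurrent}: that $|\mathcal O(x)\cap[v]|=+\infty$ for \emph{some} letter $v$ of the symbolic alphabet. This is where the structure of the $\mathbb{Z}$-extension is essential: the orbit $(\sigma^n\omega, z+S_nh(\omega))$ projects to the orbit of $\omega$ in a positive-recurrent (hence probability-preserving, conservative) shift, so $\sigma^n\omega$ returns infinitely often to any cylinder $[w]$ of $\Omega$ with $\nu[w]>0$; but returning to a \emph{fixed symbolic letter} of $\widehat\Omega$ additionally requires the fiber coordinate $z+S_nh(\omega)$ to revisit a fixed integer. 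Since $S_nh(\omega)$ is a zero-mean random walk along a conservative orbit, it is recurrent, so indeed $z+S_nh(\omega)=z$ infinitely often while $\sigma^n\omega\in[w]$ — this is the heart of the argument and the step I expect to be the main obstacle, because it requires a \emph{pointwise} (not almost-everywhere) recurrence statement for the cocycle $S_nh$ along \emph{every} non-periodic orbit; one resolves it by observing that conservativity of the CMS $(\widehat\Omega,\widehat T)$ itself (which holds at \emph{every} point of a conservative ergodic system, since wandering sets are null and cylinders have positive measure) forces the orbit of every point to return to its initial cylinder infinitely often, and $x$ non-periodic means this cylinder can be taken arbitrarily deep. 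Granting this, Theorem \ref{thm:HTS_infinite_FPP_infinitely_recurrent} yields case (1). For case (2), $x=(\omega,z)$ periodic of prime period $p$ forces $\omega$ periodic of period $p$ in $\Omega$ \emph{and} $S_ph(\omega)=0$; then $x$ is periodic of prime period $p$ in $\widehat\Omega$ as well, and Theorem \ref{thm:HTS_infinite_CFPP_periodic_points} gives the compound fractional Poisson limit with extremal index $\theta=1-\exp(S_p\widehat\phi(x)-pP_G(\widehat\phi))$; the final cosmetic step is to check that under the conjugacy $S_p\widehat\phi(x)-pP_G(\widehat\phi)=S_p\phi(\omega)-pP_G(\phi)$, which holds because the lifted potential agrees with $\phi$ up to a coboundary supported on the fiber coordinate (the fiber part contributes $0$ to the Birkhoff sum over a closed orbit) and $P_G$ is conjugacy-invariant, recovering the stated value of $\theta$.
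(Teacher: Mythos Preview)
Your first two steps (realizing the $\mathbb{Z}$-extension as a topologically mixing CMS with a null-recurrent Walters potential, and identifying the regular-variation index $\alpha$ via local limit asymptotics) are correct and match the paper's approach. The periodic case is likewise handled correctly, and your computation of the extremal index via $S_p\widehat\phi(x)-pP_G(\widehat\phi)=S_p\phi(\omega)-pP_G(\phi)$ is exactly what the paper does.

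The gap is in your treatment of non-periodic points. You assert that conservativity of $(\widehat\Omega,\widehat T)$ ``forces the orbit of \emph{every} point to return to its initial cylinder infinitely often.'' This is false: conservativity is an almost-everywhere statement, and there is no pointwise upgrade. Concretely, take $\Omega$ to be the full shift on $\{0,1\}$ with $h(\omega)=2\omega_0-1$, and let $\omega$ be any non-periodic sequence whose Ces\`aro frequency of $1$'s exceeds $1/2$; then $S_nh(\omega)\to+\infty$, so $x=(\omega,0)$ visits each fiber $\{z\}$ only finitely often and hence visits \emph{every} letter of the extended alphabet only finitely often. The paper even says explicitly (just after the theorem statement) that ``it is straightforward to construct points that fail to satisfy the combinatorial condition of Theorem~\ref{thm:HTS_infinite_FPP_infinitely_recurrent}.'' So your reduction to Theorem~\ref{thm:HTS_infinite_FPP_infinitely_recurrent} cannot cover all non-periodic $x$.

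What the paper does instead for these exceptional non-periodic points is to invoke Proposition~\ref{prop:sufficient_condition_for_0_delay_time_finitely_recurrent}: it suffices to check $S_n\phi_*'(x')\to-\infty$, equivalently (Remark~\ref{rem:another_sufficient_condition_delay_time_0_imeges})
\[
\lim_{m\to+\infty}\lim_{n\to+\infty}\frac{\mu'[(x')_0^{n-1}]}{\mu'[(x')_m^{n-1}]}=0.
\]
This ratio equals $\nu[\omega_0^{n-1}]/\nu[\omega_m^{n-1}]$, and the paper bounds it by $C\nu[\omega_0^{m-1}]\to0$ using the \emph{strengthened} bounded distortion estimate $\nu[a_0^{n-1}b_0^{j-1}]\le C\,\nu[a_0^{n-1}]\nu[b_0^{j-1}]$ (Lemma~\ref{lem:improved_bounded_distortion_CMS}), which is available precisely because of the BIP property and $\var_1(\phi)<+\infty$. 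This is the step your proposal is missing, and it is where the hypotheses on $(\Omega,\sigma,\phi)$ are genuinely used beyond setting up the CMS structure.
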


Note that Theorem~\ref{thm:all_points_Z_extension} provides a substantial generalization of~\cite[Theorem~3.10]{PS23}, which was restricted to points in a set of full measure and concerned only $\mathbb{Z}$-extensions over subshifts of finite type.

\begin{rem}
    Note that $\gamma$ is not made explicit in the statement of the Theorem but is computed from the underlying normalizing sequence coming from the PDE property that holds for the $\mathbb{Z}$-extension. Explicit formulas of the normalizing sequences can be find in \cite[Table 3.1]{Thomine_These}.
\end{rem}

\begin{rem}
    The periodic points are the periodic points for the $\mathbb{Z}$-extension $(X, T)$ and not the periodic point for the TMS we are building the extension from. Indeed, if $x = (\omega, z)$ is a periodic point for $(X, T)$, then $\omega$ is a periodic point for $(\Omega, \sigma)$ but the converse is not true. However, the extremal index $\theta$ is still computed only from $\omega \in \Omega$.
\end{rem}

We observe that, in the case of $\mathbb{Z}$-extensions, we obtain an \textit{all-point REPP} exhibiting a \textit{dichotomy}, analogous to that observed in the finite setting (Theorem \ref{thm:all-point_REPP_positive_recurrent_CMS}) and distinct from the behavior arising in the previous House of Cards example (Theorem \ref{thm:all_points_HoC_structure}). Although it is straightforward to construct points that fail to satisfy the combinatorial condition of Theorem \ref{thm:HTS_infinite_FPP_infinitely_recurrent}, and one might therefore expect new limiting behavior for such points, this is in fact not the case.\\

The reason is that there exist ``too many'' of these points, in the sense that a large variety of admissible paths visit each symbol only finitely many times. Consequently, the relative contribution of any individual point becomes negligible in the limit. As a result, the long-term behavior induced by visits to these exceptional targets disappears asymptotically, and the system exhibits once again the standard fractional Poisson process limit observed for the other points. A more straightforward application of this phenomenon can be observed in the case of the tree House of Cards in Section \ref{section:other_examples}. In the $\mathbb{Z}$-extension case, our method is more subtle. We build upon Proposition \ref{prop:only_possible_limits_for_waiting_times_in_G} to make a proof by contradiction which provides a new sufficient condition for convergence towards fractional Poisson process in the case where every state is visited a finite number of times.

\begin{prop}
    \label{prop:sufficient_condition_for_0_delay_time_finitely_recurrent}
    Let $x \in X$ be such that $|\mathcal{O}(x) \cap [v]| <+\infty$ for all $v \in V$ and $B_n := [x_0^{n-1}]$ for all $n\geq 1$. Assume that 
    \begin{align}
        \label{eq:suff_cond_for_0_delay_time}
        S_n \phi_*(x) \xrightarrow[n \to +\infty]{} - \infty.
    \end{align}
    Then,
    \begin{align*}
                N_{B_n}^{\gamma} \xRightarrow[n \to +\infty]{\mathcal{L}(\mu)} \fPp_{\alpha}(\Gamma(1+\alpha)) \quad \text{and} \quad N_{B_n}^{\gamma} \xRightarrow[n \to +\infty]{\mu_{B_n}} \fPp_{\alpha}(\Gamma(1+\alpha)).
    \end{align*}
\end{prop}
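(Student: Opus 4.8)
The plan is to apply the abstract criterion from Theorem~\ref{thm:HTS_infinite_points_that_never_come_back_statements} (equivalently the conditions $B_{\alpha}(\nu)$ from Theorem~\ref{thm:sufficient_conditions_convergence_other_point_processes}) with the limiting waiting law $\nu = \delta_0$, since we know that $\fPp_{\alpha}(\Gamma(1+\alpha)) \overset{\text{(law)}}{=} \RPP(\widetilde{J}_{\alpha}(0)) \overset{\text{(law)}}{=} \DRPP(J_{\alpha}(0), \widetilde{J}_{\alpha}(0))$. Concretely, under the hypothesis $|\mathcal{O}(x) \cap [v]| < +\infty$ for all $v \in V$, we are precisely in the regime of Theorem~\ref{thm:HTS_infinite_points_that_never_come_back_statements} with $j_{x_0} := \max\{k \geq 0 \;|\; T^k x \in [x_0]\}$, and it suffices to prove
\begin{align*}
    \gamma(\mu(B_n))\, r_{[x_0]} \circ T^{j_{x_0}} \xRightarrow[n \to +\infty]{\mu_{B_n}} 0.
\end{align*}
So the entire content reduces to showing that this universal delay time is negligible under the assumption $S_n \phi_*(x) \to -\infty$. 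Here I would use the transfer-operator identity $\widehat{T}_{\mu} = L_{\phi_*}$ (Remark~\ref{rem:definition_phi_*}) and the fact that $[x_0]$ is a Darling–Kac set by Theorem~\ref{thm:GRPF_null_recurrent}.

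\textbf{The key computation.} Fix a large threshold $M$. Since $x$ visits $[x_0]$ only finitely often, for $n$ large the first return to $[x_0]$ after time $j_{x_0}$ along a point $y \in B_n = [x_0^{n-1}]$ is constrained: the orbit of $y$ has no choice but to follow the prescribed symbols $x_0, x_1, \dots, x_{n-1}$. I would estimate $\mu_{B_n}(r_{[x_0]} \circ T^{j_{x_0}} > \gamma(\mu(B_n))^{-1} t)$ directly. The point is that for $y \in B_n$, the event $\{r_{[x_0]} \circ T^{j_{x_0}}(y) > m\}$ forces $y$ to stay in a cylinder that — after the forced prefix — avoids $[x_0]$ for $m$ more steps; the $\mu_{B_n}$-measure of such a set is controlled by Birkhoff sums of $\phi_*$, and the weight of staying out of $[x_0]$ for a long initial stretch decays because $S_k\phi_*(x) \to -\infty$ makes the Gibbs-type mass of the relevant sub-cylinder of $B_n$ small relative to $\mu(B_n)$ itself. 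More precisely, one compares $\mu([x_0^{n-1}] \cap \{\text{no return to }[x_0]\text{ in }(j_{x_0}, j_{x_0}+m]\})$ with $\mu([x_0^{n-1}])$ via the bounded-distortion/Walters estimates (Remark~\ref{rem:regularity_log_h}), and the ratio is bounded by $C\exp(S_{j}\phi_*(x) - S_{j'}\phi_*(x))$ for appropriate indices tied to where $x$ last visits and first would revisit the return set. Since $\gamma \in \RV_0(1/\alpha)$ and $a_n = o(n)$, $\gamma(\mu(B_n))^{-1} \to \infty$, so for any fixed $t>0$ the relevant $m = \gamma(\mu(B_n))^{-1} t$ diverges; combined with $S_n\phi_*(x)\to-\infty$ this gives the convergence to $0$.

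\textbf{The main obstacle} is making the geometric picture in the previous paragraph quantitative: one must translate "the orbit is forced to climb/follow $x$ and hence accumulates a Birkhoff weight $S_k\phi_*(x)$ that tends to $-\infty$" into a clean upper bound on $\mu_{B_n}(r_{[x_0]}\circ T^{j_{x_0}} > m)$ that is uniform enough to pass to the limit. The subtlety is that $m$ depends on $n$ through $\gamma(\mu(B_n))$, so one needs a two-parameter estimate: split according to whether the return to $[x_0]$ happens within the forced prefix $\{x_0,\dots,x_{n-1}\}$ (impossible for $k \leq n - 1 - j_{x_0}$ by definition of $j_{x_0}$, modulo the shift index bookkeeping) or afterwards (where standard Darling–Kac / return-time estimates on the cylinder $[x_0]$ apply, using that $r_{[x_0]}$ is $\mu_{[x_0]}$-a.s.\ finite and the scaling $\gamma$ is the correct one). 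I would handle the "afterwards" part by the same machinery already used to prove Theorem~\ref{thm:HTS_infinite_FPP_infinitely_recurrent}, and the "forced prefix" part by the deterministic observation that the orbit cannot return before it has exhausted the discrepancy imposed by $j_{x_0}$, with the Birkhoff-weight decay $S_n\phi_*(x)\to-\infty$ ensuring the contribution of long forced climbs is asymptotically void. Once $\gamma(\mu(B_n)) r_{[x_0]}\circ T^{j_{x_0}} \Rightarrow 0$ is established, Theorem~\ref{thm:HTS_infinite_points_that_never_come_back_statements} delivers both stated convergences immediately.
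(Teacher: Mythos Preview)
Your reduction is exactly right: by Theorem~\ref{thm:HTS_infinite_points_that_never_come_back_statements}, everything reduces to proving
\[
\gamma(\mu(B_n))\,r_{[x_0]}\circ T^{j_{x_0}} \xRightarrow[n\to+\infty]{\mu_{B_n}} 0.
\]
However, your direct estimation of $\mu_{B_n}\big(r_{[x_0]}\circ T^{j_{x_0}} > t/\gamma(\mu(B_n))\big)$ has a genuine gap. The crude inclusion $B_n \subset T^{-j_{x_0}}[x_0]$ only gives the tightness bound $\lesssim \tfrac{\sin(\pi\alpha)}{\pi\alpha}\,t^{-\alpha}$ (this is exactly Lemma~\ref{lem:tightness_gamma_tau_n}), which does \emph{not} tend to $0$. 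To improve this you would need to exploit the full prefix $[x_0^{n-1}]$, but after time $n-1$ the orbit is no longer constrained by $x$, and the hypothesis $S_n\phi_*(x)\to-\infty$ says nothing about what happens \emph{after} the forced prefix. Your appeal to ``the same machinery already used to prove Theorem~\ref{thm:HTS_infinite_FPP_infinitely_recurrent}'' cannot work: that proof hinges on infinite recurrence to some symbol $v$ (Lemma~\ref{lem:negligeable_up_to_time_n_x_infinitely_recurrent} and Lemma~\ref{lem:second_part_reformulation_proving_delay_time_converges_to_0} both use $\ell(n)\to+\infty$), which is precisely what is assumed to \emph{fail} here. The claimed bound ``$C\exp(S_j\phi_*(x)-S_{j'}\phi_*(x))$'' is never made concrete, and there is no apparent choice of $j,j'$ that connects the delay probability to a quantity tending to $-\infty$.

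The paper's proof is completely different and indirect. It argues by contradiction: suppose some subsequential limit $W$ satisfies $\mathbb{P}(W>\varepsilon)\geq\delta>0$. Then by Lemma~\ref{lem:preimages_CMS_delay_time} (or Proposition~\ref{prop:convergence_REPP_images}), the delay for the image point $T^mx$ has law $Q_{T^mx,m}(x)^{-1/\alpha}W = e^{-S_m\phi_*(x)/\alpha}W$. But Proposition~\ref{prop:only_possible_limits_for_waiting_times_in_G} forces every such limit to lie in $\mathcal{G}_\alpha$, i.e.\ to have a Pareto-dominated tail. Since $e^{-S_m\phi_*(x)/\alpha}\to+\infty$ by hypothesis, the rescaled variable $e^{-S_m\phi_*(x)/\alpha}W$ eventually violates the uniform tail bound $\nu(]s,\infty[)\leq \tfrac{\sin(\pi\alpha)}{\pi\alpha}s^{-\alpha}$, a contradiction. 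This is where $S_n\phi_*(x)\to-\infty$ actually enters: not as a Gibbs weight on a sub-cylinder of $B_n$, but as the scaling factor linking the delay at $x$ to the delay at $T^mx$.
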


\begin{rem}
    By definition, we have $\phi_* \leq 0$ and thus $S_{n}\phi_* \leq 0$ is non increasing. This is due to the identity $1 = L_{\phi_*}1 = \sum_{Ty = x}e^{\phi_*(y)}$.
\end{rem}

\begin{rem}
\label{rem:another_sufficient_condition_delay_time_0_imeges}
Note that hypothesis \eqref{eq:suff_cond_for_0_delay_time} can also be rewritten as
\begin{align}
        \label{eq:rem:another_sufficient_condition_delay_time_0_imeges}
        \lim_{m \to +\infty} \lim_{n \to +\infty} \frac{\mu[x_0^{n-1}]}{\mu[x_m^{n-1}]} = 0.
\end{align}
\end{rem}

In the $\mathbb{Z}$-extension case, we are able to show that indeed this condition is satisfied for all points such that $|\mathcal{O}(x) \cap [v]| <+\infty$ for all $v \in V$. However, the condition is not necessary and the next case of null-recurrent CMS based on a graph with multiple towers exhibits some examples.

\subsection{Other examples}
\label{section:other_examples}

\paragraph{Multiple towers.}
\label{subsubsection:multiple_towers}
Our analysis also extends to generalizations of the House of Cards structure in which multiple (possibly countably many) towers may originate from the same symbol. This construction also reflects the broader architecture of Markov-AFN maps, which may feature several indifferent fixed points \cite{Zwe98}.\\

In the House of Cards structure, the only possibility is to climb over one tower. However, we can generalize the model to including multiple (possibly a countable number) of towers starting from $0$. So we define the CMS $(\Omega, T)$ on the countable alphabet $V := \{0\} \cup \{i.j = (i,j),\; i\geq 1, \, j\geq 1\}$ associated with the transition matrix for
\begin{align*}
    A(v,w) =  \left\{\begin{array}{cc}
        1 & \text{if} \; w = 0\\
        1 & \text{if} \; v = 0 \; \text{and} \; w = (i, 1)\\
        1 & \text{if} \; v = (i,j)\; \text{and} \; w = (i, j+1)\\
        0 & \text{else}. 
    \end{array} \right.
\end{align*} \index{CMS!multiple towers}
The CMS thus defined can be visualized on Figure \ref{fig:Multiple_towers_structure}. We endow it with a null recurrent potential $\phi$ (in particular $\phi$ is Walters and $P_G(\phi) < +\infty$) and we assume that
\begin{align*}
    \frac{1}{\mu[0]} \sum_{k = 1}^n e^{-kP_G(\phi)}Z_k(\phi, 0) \in \RV(\alpha).
\end{align*}
In particular, it implies that 
\begin{align*}
    t \mapsto \mu_{[0]}(r_{[0]} > n) \in \RV(-\alpha). 
\end{align*}
To be able to state a result for every point $x \in \Omega$, we will need a bit more properties on each tower. For all $i\geq 1$, let $\ell_i := \mathbf{1}_{T^{-1}[i.1]}\,r_{[0]}$ be the return time to $0$ through the tower $i$. We assume that there exists a sequence $(p_i)_{i\geq 1}$ such that 
\begin{align}
    \label{eq:hypothesis_regular_variation_on_each_tower}
    \frac{\mu_{[0]}(\ell_i > n)}{\mu_{[0]}(r_{[0]} > n)} \xrightarrow[n \to +\infty]{} p_i.
\end{align}
Note that we necessarily have $\sum_{i\geq 1} p_i = 1$ and it is possible to have $p_i = 0$. If $p_i > 0$, it implies that $\mu_{[0]}(\ell_i > n) \in \RV(-\alpha)$.

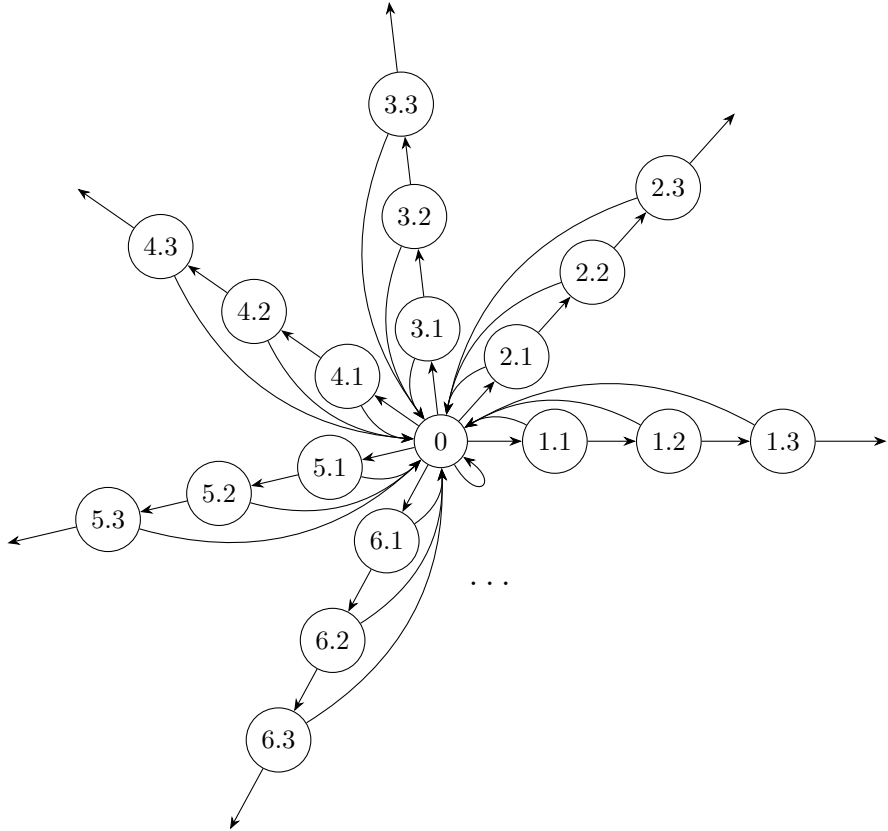
\begin{figure}[h]
\centering
\begin{tikzpicture}[
  state/.style={circle, draw, minimum size=7mm, font=\small},
  >=Stealth
]

\node[state] (O) at (0,0) {0};
\draw[->] (O) to[out=300, in=325, looseness=10] (O);

\def\nTowers{6} 
\def\towerHeight{3} 
\def\radiusStep{1.5} 
\def\angleStep{290/\nTowers}

\foreach \k in {1,...,\nTowers} {
  \pgfmathsetmacro{\angle}{(\k-1)*\angleStep}
  \foreach \l in {1,...,\towerHeight} {
    \node[state] (T\k\l) at ($(O) + (\angle:\l*\radiusStep)$) {\(\k.\l\)};
    \draw[->, bend right=30] (T\k\l) to (O);
  }
  \node[] (T\k fin) at ($(O) + (\angle:4*\radiusStep)$) { };
    \draw[->] (T\k\towerHeight) -- (T\k fin);
  \draw[->] (O) -- (T\k1);
  \foreach \l in {1,...,\numexpr\towerHeight-1} {
    \pgfmathtruncatemacro\nextl{\l+1}
    \draw[->] (T\k\l) -- (T\k\nextl);
  }
}


\node at ($(O) + (-70:2)$) {\Large{\ldots}};


\end{tikzpicture}
\caption{A structure with multiple towers.}
\label{fig:Multiple_towers_structure}
\end{figure}

\noindent For all $i\geq 1$, let $x^{(i), up}= (0(i.1)(i.2)\cdots(i.n)\cdots)$ be the point climbing the $i$-th tower. The set $\mathcal{D}$ of points that are not infinitely recurrent for $[0]$ is 
\begin{align*}
    \mathcal{D} = \bigcup_{i \geq 1} \bigg(\mathcal{O}(x^{(i),up}) \cup \bigcup_{m\geq 0} T^{-m}\{x^{(i),up}\}\bigg). 
\end{align*}

\noindent Thus, we obtain the next result for the asymptotic behavior of the REPP.

\begin{prop}
    \label{prop:all_points_multiple_tower_structure}
    Let $\nu$ be the Pareto law of index $\alpha$ and parameter $\sin(\pi \alpha)/(\pi\alpha)$. Then, for every $x \in \Omega$ and $B_n := [x_0^{n-1}]$,
    \begin{itemize}
        \item if $x$ is non periodic and $x \notin \mathcal{D}$,
        \begin{align*}
            N_{B_n}^{\gamma} \xRightarrow[n \to +\infty]{\mu_{\phi}} \fPp_{\alpha}(\Gamma(1+\alpha)) \quad \text{and} \quad N_{B_n}^{\gamma} \xRightarrow[n \to +\infty]{\mu_{B_n}} \fPp_{\alpha}(\Gamma(1+\alpha)).
        \end{align*}
        \item if $x$ is periodic of prime period $p$,  
        \begin{align*}
                N_{B_n}^{\gamma} \xRightarrow[n \to +\infty]{\mathcal{L}(\mu)} \cfPp_{\alpha}( \theta\Gamma(1+\alpha), \geo(\theta)) \quad \text{and} \quad N_{B_n}^{\gamma} \xRightarrow[n \to +\infty]{\mu_{B_n}} \RPP(W_{\alpha,\theta}(\theta\Gamma(1+\alpha))).
        \end{align*}
        where $\theta = 1 - \exp(S_p\phi(x) - pP_G(\phi))$.
    \end{itemize}
    For points in $\mathcal{D}$, we split according to the values of $p_i$.
    \begin{itemize}
        \item If $p_i = 0$ and $x\in \mathcal{O}(x^{(i),up}) \cup \bigcup_{m\geq 0}T^{-m}\{x^{(i), up}\}$, then
        \begin{align*}
            N_{B_n}^{\gamma} \xRightarrow[n \to +\infty]{\mu_{\phi}} \fPp_{\alpha}(\Gamma(1+\alpha)) \quad \text{and} \quad N_{B_n}^{\gamma} \xRightarrow[n \to +\infty]{\mu_{B_n}} \fPp_{\alpha}(\Gamma(1+\alpha)).
        \end{align*}
        \item If $p_i > 0$ and $x \in \mathcal{O}(x^{(i),up})$ for some $i \geq 0$, denote $\nu_{p_i}$ be the Pareto law of index $\alpha$ and parameter $p_i \sin(\pi\alpha)/(\pi\alpha)$ the law of $p_i^{1/\alpha}W$. Then,
        \begin{align*}
            N_{B_n}^{\gamma} \xRightarrow[n \to +\infty]{\mu_{B_n}} \RPP(\widetilde{J}_{\alpha}(\nu_{p_i})) \quad \text{and} \quad N_{B_n}^{\gamma} \xRightarrow[n \to +\infty]{\mathcal{L}(\mu)} \DRPP(J_{\alpha}(\nu_{p_i}), \widetilde{J}_{\alpha}(\nu_{p_i})).
        \end{align*}
        \item If $p_i > 0$ and $x \in T^{-k}\{x^{(i), up}\}$ for some $k \geq 0$, denote $\nu_{p_i, x}$ the Pareto law of index $\alpha$ and parameter $\exp(S_k\phi_*(x))\,p_i\, \sin(\pi\alpha)/(\pi\alpha)$. Then,
        \begin{align*}
            N_{B_n}^{\gamma} \xRightarrow[n \to +\infty]{\mu_{B_n}} \RPP(\widetilde{J}_{\alpha}((\nu_{p_i, x})) \quad \text{and} \quad N_{B_n}^{\gamma} \xRightarrow[n \to +\infty]{\mathcal{L}(\mu)} \DRPP(J_{\alpha}((\nu_{p_i, x}), \widetilde{J}_{\alpha}((\nu_{p_i, x})).
        \end{align*}
    \end{itemize}
\end{prop}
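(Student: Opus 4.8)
The plan is to split $\Omega$ into three groups and apply, to each, one of the three classification theorems of Section~\ref{section:quatitative_recurrence_null-recurrent_CMS}. By the stated hypotheses and Theorem~\ref{thm:GRPF_null_recurrent}, $(\Omega,T,\mu_\phi)$ is a PDE CEMPT with $a_n\in\RV(\alpha)$ and $[0]$ is a Darling--Kac set; set $\bar{F}(n):=\mu_{[0]}(r_{[0]}>n)\in\RV(-\alpha)$. If $x$ is non periodic and $x\notin\mathcal{D}$, then by definition of $\mathcal{D}$ the orbit of $x$ meets $[0]$ infinitely often, so Theorem~\ref{thm:HTS_infinite_FPP_infinitely_recurrent} (with $v=0$) gives the fractional Poisson limit. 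If $x$ is periodic, Theorem~\ref{thm:HTS_infinite_CFPP_periodic_points} gives the compound fractional Poisson limit with the announced extremal index. The substantial case is $x\in\mathcal{D}$: then $x$ lies in $\mathcal{O}(x^{(i),up})$ or in $T^{-k}\{x^{(i),up}\}$ for a unique tower index $i$, and in every such case the orbit of $x$ is eventually a climb of tower $i$, hence meets $[x_0]$ only finitely often, so $|\mathcal{O}(x)\cap[x_0]|<+\infty$ and Theorem~\ref{thm:HTS_infinite_points_that_never_come_back_statements} reduces the problem to identifying the limit law $\nu$ of $\gamma(\mu(B_n))\,r_{[x_0]}\circ T^{j_{x_0}}$ under $\mu_{B_n}$.

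For such $x$, the graph structure forces the trajectory of $T^{j_{x_0}}y$ (for $y\in B_n=[x_0^{n-1}]$) to climb tower $i$ for $\approx n$ steps, after which it is free; moreover the only way back to the cylinder $[x_0]$ is to finish that climb, fall back to $0$ and re-ascend tower $i$ to the rung of $x_0$. I would make this precise by identifying the law of $r_{[x_0]}\circ T^{j_{x_0}}$ under $\mu_{B_n}$, up to a correction that is bounded in probability (coming from the geometrically many extra excursions needed to reach again the rung of $x_0$, hence negligible once multiplied by $\gamma(\mu(B_n))\to0$), with the length of that forced tower-$i$ excursion conditioned to exceed $\approx n$; by shift-invariance and the fact that each rung of tower $i$ has a single predecessor, this conditioned length is distributed as $\ell_i$ under $\mu_{[0]}(\,\cdot\mid\ell_i\ge n+O(1))$. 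When $p_i>0$, hypothesis~\eqref{eq:hypothesis_regular_variation_on_each_tower} gives $\mu_{[0]}(\ell_i>\cdot)\sim p_i\bar{F}(\cdot)\in\RV(-\alpha)$, whence $n^{-1}r_{[x_0]}\circ T^{j_{x_0}}$ converges in law, under $\mu_{B_n}$, to the Pareto law of index $\alpha$ and parameter $1$, supported on $[1,+\infty)$.

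It remains to compute the scaling. For $x$ a $k$-th preimage of $x^{(i),up}$, I would write $B_n=[x_0^{k-1}]\cap T^{-k}[x_k^{n-1}]$, where $[x_k^{n-1}]$ shrinks to $x^{(i),up}$, and use $\widehat{T}_{\mu}=L_{\phi_*}$ with $L_{\phi_*}1=1$ (Remark~\ref{rem:definition_phi_*}) together with the Walters regularity of $\phi_*$ (bounded distortion of $S_k\phi_*$) to obtain $\mu(B_n)=\int (L_{\phi_*}^k\mathbf{1}_{[x_0^{k-1}]})\,\mathbf{1}_{[x_k^{n-1}]}\,\dd\mu\sim e^{S_k\phi_*(x)}\,\mu[0]\,\mu_{[0]}(\ell_i\ge n)\sim e^{S_k\phi_*(x)}\,p_i\,\mu[0]\,\bar{F}(n)$; when $x$ lies on the forward orbit of $x^{(i),up}$ the same asymptotics hold without the factor $e^{S_k\phi_*(x)}$ (equal to $1$ there since $S_0\phi_*=0$), directly by shift-invariance. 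Combining this with the Karamata/Darling--Kac relation $a_n\,\mu[0]\,\bar{F}(n)\to\frac{\sin(\pi\alpha)}{\pi\alpha}$ (a consequence of Theorem~\ref{thm:GRPF_null_recurrent} and regular variation, using $\frac{1}{\Gamma(1+\alpha)\Gamma(2-\alpha)}=\frac{\sin(\pi\alpha)}{\pi\alpha(1-\alpha)}$ and the wandering-rate asymptotics) and $\gamma(s)=1/a^{\leftarrow}(1/s)$, one gets $\gamma(\mu(B_n))\sim\bigl(\frac{\sin(\pi\alpha)}{\pi\alpha}\,p_i\,e^{S_k\phi_*(x)}\bigr)^{1/\alpha}n^{-1}$. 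Multiplying the two limits shows that $\gamma(\mu(B_n))\,r_{[x_0]}\circ T^{j_{x_0}}$ converges, under $\mu_{B_n}$, to $\nu_{p_i,x}$ (resp.\ $\nu_{p_i}$ when $x\in\mathcal{O}(x^{(i),up})$), and Theorem~\ref{thm:HTS_infinite_points_that_never_come_back_statements} then delivers the announced $\RPP$ and $\DRPP$ limits for the return and hitting REPP.

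When $p_i=0$, the tail $\mu_{[0]}(\ell_i>\cdot)$ is only $o(\bar{F}(\cdot))$ and need not be regularly varying, so the argument above does not apply directly; instead I would show $\nu=\delta_0$, i.e.\ $\gamma(\mu(B_n))\,r_{[x_0]}\circ T^{j_{x_0}}\to0$ in $\mu_{B_n}$-probability, and conclude via Theorem~\ref{thm:HTS_infinite_points_that_never_come_back_statements} with $\nu=\delta_0$ (the fractional Poisson limit). Setting $k_n:=1/\gamma(\mu(B_n))=a^{\leftarrow}(1/\mu(B_n))$, the computation of $\mu(B_n)$ and the Karamata relation give $\bar{F}(k_n)\asymp\frac{\sin(\pi\alpha)}{\pi\alpha}\mu(B_n)/\mu[0]\asymp\mu_{[0]}(\ell_i>n)$; then, for fixed $\varepsilon>0$, $\mu_{B_n}(r_{[x_0]}\circ T^{j_{x_0}}>\varepsilon k_n)\le\mu_{[0]}(\ell_i>\varepsilon k_n)/\mu_{[0]}(\ell_i>n+O(1))$ and $\mu_{[0]}(\ell_i>\varepsilon k_n)=o(\bar{F}(\varepsilon k_n))=o(\varepsilon^{-\alpha}\bar{F}(k_n))=o(\mu_{[0]}(\ell_i>n))$, so this probability tends to $0$. (This is exactly the situation, noted after Proposition~\ref{prop:sufficient_condition_for_0_delay_time_finitely_recurrent}, where that sufficient condition fails: the single-predecessor structure forces $\mu[x_0^{n-1}]/\mu[x_m^{n-1}]=1$, hence $S_n\phi_*(x)\not\to-\infty$.) I expect the main obstacle to be the simultaneous control, with the correct constants, of the conditional law of $r_{[x_0]}\circ T^{j_{x_0}}$ under $\mu_{B_n}$ and of $\gamma(\mu(B_n))$: showing that the bounded-in-probability correction is genuinely negligible and that the conditioned excursion has the stated Pareto$(\alpha,1)$ scaling limit uniformly in $n$; the transfer-operator estimate $\mu(B_n)\sim e^{S_k\phi_*(x)}\mu([x_k^{n-1}])$ from Walters regularity alone; and pinning down the constant $\sin(\pi\alpha)/(\pi\alpha)$ through the Karamata/Darling--Kac link between $a_n$, $\gamma$ and $\bar{F}$ — with the $p_i=0$ case forming the other delicate point, since one cannot appeal to the generic sufficient condition and must argue directly from $o(\cdot)$-domination.
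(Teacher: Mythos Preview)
Your proposal is correct and follows essentially the same route as the paper: apply Theorems~\ref{thm:HTS_infinite_FPP_infinitely_recurrent} and~\ref{thm:HTS_infinite_CFPP_periodic_points} for the first two bullets, and for $x\in\mathcal{D}$ identify the delay law in Theorem~\ref{thm:HTS_infinite_points_that_never_come_back_statements} via the key identity $B_n=[0]\cap\{\ell_i\ge n\}$ (for the base point $x^{(i),up}$) together with hypothesis~\eqref{eq:hypothesis_regular_variation_on_each_tower}.

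The only notable difference is in how you handle images and preimages. The paper packages this step into Propositions~\ref{prop:convergence_REPP_preimages} and~\ref{prop:convergence_REPP_images} (which rely on the more general Theorem~\ref{thm:HTS_infinite_points_that_never_come_back} allowing $v\neq x_0$, and on the single-predecessor structure to get $Q_{T^m x^{(i),up},m}(x^{(i),up})=1$ for images). You instead re-derive the relevant asymptotics by hand: the transfer-operator estimate $\mu(B_n)\sim e^{S_k\phi_*(x)}\mu[x_k^{n-1}]$ is exactly the content of~\eqref{eq:computation_measure_cylindre_images} in the paper's appendix, and your $O_P(1)$ ``re-ascension'' correction plays the role of those propositions. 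One small imprecision: for a preimage $x\in T^{-k}\{x^{(i),up}\}$ the first symbol $x_0$ need not lie on tower $i$, so the phrase ``re-ascend tower $i$ to the rung of $x_0$'' is not literally correct; the re-ascension is to whatever rung $x_0$ is, possibly on another tower. This does not affect your argument since that part is $O_P(1)$ regardless, but the paper's route via $v=0$ (the first letter of the \emph{base} point) sidesteps this entirely. Your treatment of the $p_i=0$ case is correct and more explicit than the paper's one-line mention.
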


\begin{rem}
    A consequence of Proposition \ref{prop:all_points_multiple_tower_structure}, is that we can have points that are not recurrent for any symbol but still exhibit a standard fractional Poisson process limit for their associated REPP (the case $p_i = 0$). Indeed, if $p_i = 0$, it implies that the tail of $\ell_i$ is negligible with respect to the tail of $r_{[0]}$. Then, following for $n$ step the point $x^{(i), up}$ will give us an excursion through the tower $i$ but this excursion will be much smaller than excursion through other towers. Thus, in the limit, it is like excursions in tower $i$ do not count and the point $x^{(i),up}$ behaves like every other infinitely recurrent point. 
\end{rem}

\begin{proof}[Proof (of Proposition \ref{prop:all_points_multiple_tower_structure})]
    The proof is similar to the House of Cards case and Theorem \ref{thm:all_points_HoC_structure}. The only change is to study the returns from the points $x^{(i), up}$. However, in this case, we have $B_n = [0]\cap \{\ell_i \geq n\}$ and \eqref{eq:hypothesis_regular_variation_on_each_tower} give the additional scaling factor $p_i$. When $p_i = 0$, the process is the fractional Poisson process (of parameter $\Gamma(1+\alpha)$). 
\end{proof}

\paragraph{The tree House of Cards structure.}

In both the House of Cards structure and its multiple-tower generalizations, a characteristic \textit{trichotomy} emerges, with a distinct limit law governing points that ``climb'' the towers. In this section, we present a simple example of a system in which many points exhibit such climbing behavior, yet the system still satisfies the \textit{all-point REPP} property with only a \textit{dichotomy} between periodic and non-periodic points. This provides an elementary example of a setting where numerous points of comparable significance escape toward infinity, while the system nonetheless displays a standard limiting behavior and gives insights for the more delicate $\mathbb{Z}$-extension case presented earlier.\\

We define the (topologically mixing) CMS $(\Omega, T)$ on the countable alphabet $V := \{ 0\mathbf{a} \; |\; \mathbf{a} \in \{0, 1\}^{k}, \, k \geq 0\}$ and with the transition matrix \index{CMS!tree house of cards}
\begin{align*}
    A(0\mathbf{a},0\mathbf{b}) =  \bigg\{\begin{array}{cc}
        1 & \text{if} \; \mathbf{b} = \emptyset,\; \mathbf{b} = \mathbf{a}0 \; \text{or} \; \mathbf{b} = \mathbf{a}1 \\
        0 & \text{else}. 
    \end{array} 
\end{align*}

For simplicity of the exposition of this example, we present it in the Markov chain context. Thus we consider the probability kernel $P(0\mathbf{a}, 0\mathbf{a}i) = (1 - \alpha/n)/2$ and $P( 0 \mathbf{a},0) = \alpha/n$ for $i \in \{0,1\}$ and $|\mathbf{a}| = n-1$ (see Figure \ref{fig:HoC_tree_structure}). This probability kernel is exactly chosen such that the Markov chain is null-recurrent and the invariant Markov measure $\mu$ satisfies $\mu( [0] \cap \{r_{[0]}\geq n\}) \in \RV(-\alpha)$. Then, for such a Markov chain, we obtain the \textit{all-point REPP} with a \textit{dichotomy} between periodic and non periodic points.

\begin{prop}
    \label{prop:dichotomy_tree_HoC}
    For every $x \in \Omega$ and $B_n := [x_0^{n-1}]$
    \begin{itemize}
        \item If $x$ is non-periodic,
         \begin{align*}
            N_{B_n}^{\gamma} \xRightarrow[n \to +\infty]{\mathcal{L}(\mu)} \fPp_{\alpha}(\Gamma(1+\alpha)) \quad \text{and} \quad N_{B_n}^{\gamma} \xRightarrow[n \to +\infty]{\mu_{B_n}} \fPp_{\alpha}(\Gamma(1+\alpha)).
        \end{align*}
        \item If $x$ is periodic with prime period $q$,
        \begin{align*}
                N_{B_n}^{\gamma} \xRightarrow[n \to +\infty]{\mathcal{L}(\mu)} \cfPp_{\alpha}( \theta\Gamma(1+\alpha), \geo(\theta)) \quad \text{and} \quad N_{B_n}^{\gamma} \xRightarrow[n \to +\infty]{\mu_{B_n}} \RPP(W_{\alpha,\theta}(\theta\Gamma(1+\alpha))).
        \end{align*}
        where $\theta = 1 - \prod_{i=0}^{q-1} P(x_i, x_{i+1})$.
    \end{itemize}
\end{prop}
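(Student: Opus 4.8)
The plan is to reduce the statement to the three general theorems already established. First I would record the structural facts that make them applicable: the chain is topologically mixing (from any state one steps to the root $0$, and from $0$ one reaches every state, the loop at $0$ absorbing any length constraint), it is null recurrent and $\mu([0]\cap\{r_{[0]}\geq n\})\in\RV(-\alpha)$ by construction; by standard infinite ergodic theory (see \cite{Aar97} and Theorem \ref{thm:GRPF_null_recurrent}) this forces $(a_n)_{n\geq 1}\in\RV(\alpha)$. The potential $\phi=\log P(\cdot,\cdot)$ is Markovian, hence Walters, and $P_G(\phi)=0$. With this in hand the \textbf{periodic case} is immediate from Theorem \ref{thm:HTS_infinite_CFPP_periodic_points}: since $P_G(\phi)=0$ and $\phi(T^ix)=\log P(x_i,x_{i+1})$, the extremal index there equals $1-\exp(S_q\phi(x)-qP_G(\phi))=1-\prod_{i=0}^{q-1}P(x_i,x_{i+1})$, which is the claimed $\theta$. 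Likewise, if $x$ is non-periodic and visits the root infinitely often, then $|\mathcal{O}(x)\cap[0]|=+\infty$ and Theorem \ref{thm:HTS_infinite_FPP_infinitely_recurrent} applies verbatim, yielding the fractional Poisson limit for both REPP.

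The remaining, and only substantial, case is a non-periodic point $x$ that visits $0$ only finitely often. In the tree structure this is rigid: after its last passage $t:=\max\{k: x_k=0\}$ through $0$ the orbit must climb a strictly ascending ray of the binary tree, so $x$ visits \emph{every} state finitely often and in particular $|\mathcal{O}(x)\cap[x_0]|<+\infty$. Thus Theorem \ref{thm:HTS_infinite_points_that_never_come_back_statements} applies with a finite, $n$-independent constant $j_{x_0}=\max\{k: T^kx\in[x_0]\}$, and my goal becomes to show that $\gamma(\mu(B_n))\,r_{[x_0]}\circ T^{j_{x_0}}\xRightarrow[n\to+\infty]{\mu_{B_n}}0$, i.e.\ $\nu=\delta_0$; the remark following that theorem then identifies both limits, since $\RPP(\widetilde J_\alpha(\delta_0))\eqlaw\DRPP(J_\alpha(\delta_0),\widetilde J_\alpha(\delta_0))\eqlaw\fPp_\alpha(\Gamma(1+\alpha))$. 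Note that the convenient criterion of Proposition \ref{prop:sufficient_condition_for_0_delay_time_finitely_recurrent} is \emph{not} available: along such a ray $S_n\phi_*(x)$ is eventually constant (the branching factor $2^{-k}$ enters identically in $\mu[x_k]$ and in the product of transition probabilities), so it does not diverge to $-\infty$, and a direct argument is needed.

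To establish $\nu=\delta_0$ directly I would combine two estimates. (i) Cylinders along the ray are \emph{exponentially} small: the Markov property gives $\mu(B_n)=\mu[x_0]\prod_{k=0}^{n-2}P(x_k,x_{k+1})$, and each climbing step from generation $\ell-1$ to $\ell$ contributes a factor $(1-\alpha/\ell)/2$, so $\mu(B_n)\asymp c(x)\,2^{-n}\,n^{-\alpha}$ --- the polynomial part being the same $\RV(-\alpha)$ behaviour as $\mu([0]\cap\{r_{[0]}\geq n\})$, the extra $2^{-n}$ being the cost of selecting one branch among $2^{n}$. (ii) The delay grows only polynomially: for $y\in B_n$ the orbit of $T^{j_{x_0}}y$ first follows the ray of $x$ for $n-1-j_{x_0}$ steps and only then moves freely, so $r_{[x_0]}\circ T^{j_{x_0}}(y)=(n-1-j_{x_0})+\widetilde r(y)$, where under $\mu_{B_n}$ the free part $\widetilde r$ is the first return time to $[x_0]$ of the chain started at generation $\asymp n$ --- a first descent to $0$ with tail $\asymp (n/(n+m))^{\alpha}$, followed, when $x_0\neq 0$, by a geometric number of full excursions through $0$, each with an index-$\alpha$ tail. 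In every case $r_{[x_0]}\circ T^{j_{x_0}}$ is of order $n$ times a random variable tight under $\mu_{B_n}$. Finally, since $\gamma\in\RV_0(1/\alpha)$ one has $\gamma(\mu(B_n))=\mu(B_n)^{1/\alpha}\,\ell(1/\mu(B_n))\asymp 2^{-n/\alpha}\,n^{-1}\,\ell(\cdot)$ with $\ell$ slowly varying, so multiplying by $r_{[x_0]}\circ T^{j_{x_0}}=O_{\mathbb{P}}(n)$ leaves a quantity $\asymp 2^{-n/\alpha}\,\ell(\cdot)\to 0$ in $\mu_{B_n}$-probability, the exponential decay overwhelming any slowly varying factor. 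This yields $\nu=\delta_0$ and finishes the proof.

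The hard part is step (ii): pinning down $r_{[x_0]}\circ T^{j_{x_0}}$ with enough uniformity --- in particular, when $x_0$ is not the root, controlling the geometric number of excursions through $0$ needed to re-enter the \emph{specific} cylinder $[x_0]$ --- so that it is certified to be merely polynomial in $n$ and is therefore annihilated by the exponentially small scaling. The contrast with Theorem \ref{thm:all_points_HoC_structure} explains the different conclusions: in the single-tower shift the ray cylinders carry only polynomial mass $\asymp n^{-\alpha}$ (no branching), so there $\gamma(\mu(B_n))\,r_{[x_0]}\circ T^{j_{x_0}}$ converges to a non-degenerate Pareto law instead of $\delta_0$; the branching of the tree is precisely what dilutes every escaping orbit into negligibility and restores the plain fractional Poisson limit.
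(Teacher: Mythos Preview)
Your argument is correct and the overall strategy matches the paper's: apply Theorems \ref{thm:HTS_infinite_FPP_infinitely_recurrent} and \ref{thm:HTS_infinite_CFPP_periodic_points} for the infinitely recurrent and periodic cases, and for the escaping points show that the delay law in Theorem \ref{thm:HTS_infinite_points_that_never_come_back_statements} is $\delta_0$. Your observation that Proposition \ref{prop:sufficient_condition_for_0_delay_time_finitely_recurrent} fails here (indeed $S_n\phi_*(x)\equiv 0$ along a ray) is exactly right and is what makes this example instructive.

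Where your route differs from the paper is in how the delay is shown to vanish. The paper does it in two moves. First, for $x\in\mathcal{D}$ (so $x_0=0$ and $j_{x_0}=0$) it exploits the exact symmetry of the kernel: the $2^{n-1}$ rays of length $n-1$ out of $0$ are equiprobable, so for $m\geq n$ one has the identity $\mu(B_n\cap\{r_{[0]}\geq m\})=2^{-(n-1)}\mu([0]\cap\{r_{[0]}\geq m\})$, and then Lemma \ref{lem:equivalence_queue_renormalisation_gamma} gives $\mu_{B_n}(\gamma(\mu(B_n))r_{[0]}>t)\sim 2^{-(n-1)}\tfrac{\sin(\pi\alpha)}{\pi\alpha}t^{-\alpha}\to 0$ in a single line. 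Second, for all remaining escaping points (images $T^k\mathcal{D}$ and preimages $T^{-m}\mathcal{D}$) it invokes Propositions \ref{prop:convergence_REPP_preimages} and \ref{prop:convergence_REPP_images}: once $\nu=\delta_0$ for one point, every image and preimage has limit $Q^{\pm 1/\alpha}\cdot W=0$ as well. This completely sidesteps what you identify as the ``hard part'' (controlling the geometric number of excursions through $0$ when $x_0\neq 0$).

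Your direct computation---$\mu(B_n)\asymp 2^{-n}n^{-\alpha}$, $r_{[x_0]}\circ T^{j_{x_0}}=O_{\mathbb P}(n)$ under $\mu_{B_n}$, hence $\gamma(\mu(B_n))\cdot O_{\mathbb P}(n)\asymp 2^{-n/\alpha}\to 0$---is also valid and has the merit of making the mechanism (exponential dilution by branching versus polynomial return times) fully explicit. But since the image/preimage machinery is already available in the paper, you can replace the whole excursion analysis for $x_0\neq 0$ by a one-line reference to those propositions, treating only $x\in\mathcal{D}$ directly.
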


\begin{proof}[Proof (of Proposition \ref{prop:dichotomy_tree_HoC})]
    Consider the set of points 
    \begin{align*}
        \mathcal{D} := \{ x \in [0] \;|\; x_i \neq 0 \; \forall i\geq 1\}
    \end{align*}
    \textit{i.e.}, $\mathcal{D}$ is the set of points that keep climbing. For every point $x\in \Omega$ that is not in $\bigcup_{k\geq 1} T^k \mathcal{D} \cup \bigcup_{j \geq 0} T^{-j}\mathcal{D}$, the result is a direct consequence of Theorems \ref{thm:HTS_infinite_FPP_infinitely_recurrent} and \ref{thm:HTS_infinite_CFPP_periodic_points}. Furthermore, the extremal index follows from the formula $\theta = 1 - \exp(S_q\phi(x) - qP_G(\phi))$ and the definition of the potential $\phi$ for Markov chains (see Section \ref{section:Connection_with_Markov_Chains}). \\
    For the remaining points, we first show that every point in $\mathcal{D}$ has a $0$ delay limit. Fix some $x \in \mathcal{D}$. Thank to the symmetry of the transition kernel, for all $m\geq n$, we have
    \begin{align*}
        \mu([0] \cap \{r_{[0]}\geq m\}) = 2^n \mu([x_0^{n-1}]\cap \{r_{[0]}\geq m\}).
    \end{align*}
    Thus, for all $t > 0$, we have
    \begin{align*}
        \mu(B_n\cap \{r_{[0]}\geq t/\gamma(\mu(B_n))\}) &= 2^{-n} \mu([0] \cap \{r_{[0]}\geq t/\gamma(\mu(B_n))\}) \\
        & \isEquivTo{n \to +\infty} 2^{-n} t^{-\alpha}\mu(B_n) \quad \text{by Lemma \ref{lem:equivalence_queue_renormalisation_gamma}}\\
        & = o(\mu(B_n))
    \end{align*}
    Thus, the rescaled delay vanishes in the limit and thus point in $\mathcal{D}$ have the fractional Poisson process as a limit by Theorem \ref{thm:HTS_infinite_points_that_never_come_back}. We conclude the proof for the remaining points by Propositions \ref{prop:convergence_REPP_preimages} and \ref{prop:convergence_REPP_images}.
\end{proof}

\begin{figure}[h]
\centering
\begin{tikzpicture}[
  state/.style={circle, draw, minimum size=7mm, inner sep=1pt, font=\small},
  >=Stealth,
  sibling distance=10mm,
  level distance=10mm
]

\node[state] (0) at (0,0) {0};

\node[state] (00) at (-1.5,2) {00};
\node[state] (01) at (1.5,2) {01};

\node[state] (000) at (-2.2,4) {000};
\node[state] (001) at (-0.8,4) {001};
\node[state] (010) at (0.8,4) {010};
\node[state] (011) at (2.2,4) {011};

\node[] (Bdots111) at ($(000) + (0.6,1)$) {};
\node[] (Bdots110) at ($(000) + (-0.6,1)$) {};
\node[] (Bdots101) at ($(001) + (0.6,1)$) {};
\node[] (Bdots100) at ($(001) + (-0.6,1)$) {};
\node[] (Bdots011) at ($(010) + (0.6,1)$) {};
\node[] (Bdots010) at ($(010) + (-0.6,1)$) {};
\node[] (Bdots001) at ($(011) + (0.6,1)$) {};
\node[] (Bdots000) at ($(011) + (-0.6,1)$) {};

\draw[->] (0) -- (00);
\draw[->] (0) -- (01);

\draw[->] (00) -- (000);
\draw[->] (00) -- (001);
\draw[->] (01) -- (010);
\draw[->] (01) -- (011);

\draw[->] (000) -- (Bdots111);
\draw[->] (000) -- (Bdots110);
\draw[->] (001) -- (Bdots101);
\draw[->] (001) -- (Bdots100);
\draw[->] (010) -- (Bdots011);
\draw[->] (010) -- (Bdots010);
\draw[->] (011) -- (Bdots001);
\draw[->] (011) -- (Bdots000);

\draw[->] (0) edge[loop below] (0);
\draw[->, bend right = 45] (000) to (0);
\draw[->, bend left = 10] (001) to (0);
\draw[->, bend right = 10] (010) to (0);
\draw[->, bend left = 45] (011) to (0);
\draw[->, bend right = 30] (00) to (0);
\draw[->, bend left = 30] (01) to (0);



\end{tikzpicture}
\caption{The tree House of Cards structure.}
\label{fig:HoC_tree_structure}
\end{figure}
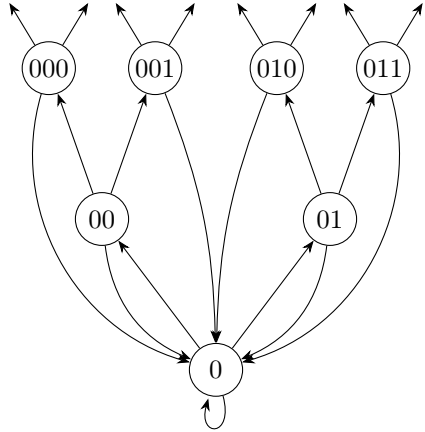

\paragraph{Renewal structure.} We end our presentation with the renewal shift. It is build as the hous of cards shift but with the arrows of the underlying graph inverted. Hence, we consider the countable alphabet alphabet $V = \mathbb{N}$ and the transition matrix 
\begin{align*}
    A(v,w) =  \bigg\{\begin{array}{cc}
        1 & \text{if} \; v = 0 \; \text{or}\; w = v -1 \\
        0 & \text{else}. 
    \end{array} 
\end{align*}

\begin{figure}[h]
\centering
\begin{tikzpicture}[
  state/.style={circle, draw, minimum size=7mm, inner sep=1pt, font=\small},
  >=Stealth,
  node distance=10mm and 12mm
]

\node[state] (H0) {0};
\node[state, right=of H0] (V1) {\(1\)};
\node[state, right=of V1] (V2) {\(2\)};
\node[right=1cm of V2] (Vdots) {\(\cdots\)};
\node[state, right=1cm of Vdots] (Vn) {\(n\)};
\node[right=1cm of Vn] (Vdots2) {\(\cdots\)};

\draw[<-, bend right=40] (V1) to (H0);
\draw[<-, bend right=40] (V2) to (H0);
\draw[<-, bend right=40] (Vn) to (H0);
\draw[->, bend left=45] (H0) to (Vdots2);

\draw[->] (H0) edge[loop left]  (H0);

\foreach \x/\y in {H0/V1, V1/V2, V2/Vdots, Vdots/Vn, Vn/Vdots2} {
  \draw[<-] (\x) -- (\y);
}


\end{tikzpicture}
\caption{The renewal graph.}
\label{fig:Renewal_structure}
\end{figure}
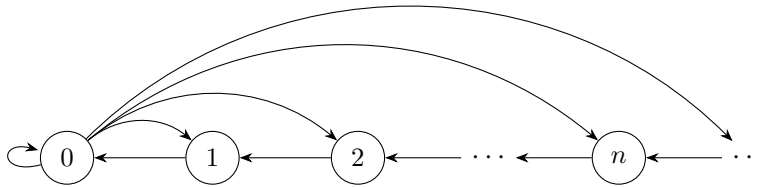

\noindent The renewal structure is special as for all $x \in \Omega$, we have $|\mathcal{O}(x) \cap [0]| = +\infty$, contrary to the House of Card shift where the points $x^{up}$ keeps climbing. So, for every point $x \in \Omega$, we are either in the regime of Theorem \ref{thm:HTS_infinite_FPP_infinitely_recurrent} for non periodic points and Theorem \ref{thm:HTS_infinite_CFPP_periodic_points} for periodic points. Hence, we obtain the \textit{all-point REPP} property with a \textit{dichotomy} between periodic and non-periodic points. \index{CMS!renewal}

\begin{prop} 
    \label{prop:all_points_renewal}
    Let $(\Omega, T)$ be a renewal shift and $\phi$ a potential satisfying Walters' condition and such that $P_G(\phi) < +\infty$. Assume furthermore that for some $v \in V$ there exists $\alpha \in (0,1]$ such that 
    \begin{align*}
        \frac{1}{\mu_{\phi}[v]} \sum_{k = 1}^{n} \lambda_{\phi}^{-k}Z_k(\phi, v) \in RV(\alpha).
    \end{align*}
    Then, for every $x \in \Omega$ and $B_n := [x_0^{n-1}]$ we have
    \begin{itemize}
        \item if $x$ is non periodic, we have
        \begin{align*}
            N_{B_n}^{\gamma} \xRightarrow[n \to +\infty]{\mu_{\phi}} \fPp_{\alpha}(\Gamma(1+\alpha)) \quad \text{and} \quad N_{B_n}^{\gamma} \xRightarrow[n \to +\infty]{\mu_{B_n}} \fPp_{\alpha}(\Gamma(1+\alpha)).
        \end{align*}
        \item if $x$ is periodic of prime period $p$, we have   
        \begin{align*}
                N_{B_n}^{\gamma} \xRightarrow[n \to +\infty]{\mathcal{L}(\mu)} \cfPp_{\alpha}( \theta\Gamma(1+\alpha), \geo(\theta)) \quad \text{and} \quad N_{B_n}^{\gamma} \xRightarrow[n \to +\infty]{\mu_{B_n}} \RPP(W_{\alpha,\theta}(\theta\Gamma(1+\alpha))).
        \end{align*}
        where $\theta = 1 - \exp(S_p\phi(x) - p P_G(\phi))$.
    \end{itemize}
\end{prop}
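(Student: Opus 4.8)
The plan is to recognise that the renewal shift is precisely the situation in which the two general theorems of Section~\ref{section:quatitative_recurrence_null-recurrent_CMS} already cover \emph{every} point, so that the statement follows without any of the extra analysis needed for the House of Cards--type examples. The decisive structural fact is that, in the renewal graph, every orbit is forced to return to the state $0$ infinitely often: the only outgoing edge from a state $j\geq 1$ is $j\to j-1$, so for $x\in\Omega$ with $x_0=m$ admissibility forces $x_k=m-k$ for $0\leq k\leq m$, whence $T^m x\in[0]$; and from a point of $[0]$ the first return to $[0]$ occurs within $x_1+1$ steps. In particular $r_{[0]}$ is finite everywhere on $\Omega$, and since each successive return starts afresh from a point of $\Omega$, we get $|\mathcal{O}(x)\cap[0]|=+\infty$ for \emph{every} $x\in\Omega$. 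One also checks the standing topological mixing assumption: from any state $v$ one descends to $0$ in exactly $v$ steps, loops at $0$ arbitrarily often, and then reaches any state $w$ in at least one further step, so $v\xrightarrow[]{n}w$ for all $n\geq v+1$.

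Next I would verify the hypotheses of Theorems~\ref{thm:HTS_infinite_FPP_infinitely_recurrent} and~\ref{thm:HTS_infinite_CFPP_periodic_points}. Topological mixing has just been noted; $\phi$ is Walters with $P_G(\phi)<+\infty$ by assumption; and $\phi$ is null recurrent, so that $\mu_\phi$ is the $\sigma$-finite infinite invariant measure of Theorem~\ref{thm:GRPF}$(ii)$ (for $\alpha<1$ this is forced by the regular variation hypothesis, since it makes $\sum_{k\le n}\lambda_\phi^{-k}Z_k(\phi,v)$ of order $o(n)$ while still divergent; for $\alpha=1$ it is the standing convention for the examples of this section). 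Theorem~\ref{thm:GRPF_null_recurrent} then gives that $(\Omega,T,\mu_\phi)$ is PDE with normalizing sequence $a_n\sim\frac{1}{\mu_\phi[v]}\sum_{k=1}^n\lambda_\phi^{-k}Z_k(\phi,v)$, which belongs to $\RV(\alpha)$ by hypothesis, so the scaling $\gamma$ of~\eqref{eq:defn_gamma} is the relevant one.

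With all hypotheses in place, the dichotomy is immediate. If $x$ is non-periodic, then taking the letter $v=0$ we have just shown $|\mathcal{O}(x)\cap[0]|=+\infty$, so Theorem~\ref{thm:HTS_infinite_FPP_infinitely_recurrent} applies verbatim and yields $N_{B_n}^\gamma\xRightarrow[n\to+\infty]{\mathcal{L}(\mu)}\fPp_\alpha(\Gamma(1+\alpha))$ and $N_{B_n}^\gamma\xRightarrow[n\to+\infty]{\mu_{B_n}}\fPp_\alpha(\Gamma(1+\alpha))$. If $x$ is periodic of prime period $p$, Theorem~\ref{thm:HTS_infinite_CFPP_periodic_points} applies directly (it imposes no condition on $x$ beyond periodicity) and gives $N_{B_n}^\gamma\xRightarrow[n\to+\infty]{\mathcal{L}(\mu)}\cfPp_\alpha(\theta\Gamma(1+\alpha),\geo(\theta))$ and $N_{B_n}^\gamma\xRightarrow[n\to+\infty]{\mu_{B_n}}\RPP(W_{\alpha,\theta}(\theta\Gamma(1+\alpha)))$ with $\theta=1-\exp(S_p\phi(x)-pP_G(\phi))$. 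Since every point of $\Omega$ is periodic or non-periodic, this establishes the \textit{all-point REPP} with the announced dichotomy.

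Unlike the House of Cards example, there is no genuine obstacle here: the orientation of the edges prevents the existence of any ``climbing'' point, so no point of $\Omega$ escapes to infinity and the exotic limit laws of Sections~\ref{section:quatitative_recurrence_null-recurrent_CMS}--\ref{section:possible_limit_laws} never arise. The only two points deserving a line of care are the bookkeeping around null recurrence when $\alpha=1$, and the fact that the combinatorial condition $|\mathcal{O}(x)\cap[0]|=+\infty$ holds for \emph{all} $x\in\Omega$ rather than merely $\mu$-almost every $x$ --- which is precisely what lets one bypass the finer analysis entirely.
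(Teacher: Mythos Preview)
Your proof is correct and follows exactly the same route as the paper: the key observation is that the renewal graph forces $|\mathcal{O}(x)\cap[0]|=+\infty$ for every $x\in\Omega$, after which the result is a direct application of Theorems~\ref{thm:HTS_infinite_FPP_infinitely_recurrent} and~\ref{thm:HTS_infinite_CFPP_periodic_points}. Your version simply spells out in more detail the verification of the standing hypotheses (topological mixing, null recurrence, PDE with the right normalizing sequence), which the paper leaves implicit.
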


\begin{proof}[Proof (of Proposition \ref{prop:all_points_renewal})]
    In the renewal structure, for all points $x$, we have $|\mathcal{O}(x) \cap [0]| = +\infty$. The result is a direct application of Theorems \ref{thm:HTS_infinite_FPP_infinitely_recurrent} and \ref{thm:HTS_infinite_CFPP_periodic_points}.
\end{proof}

\begin{rem}
    At first glance, it may seem surprising to obtain an \textit{all-point REPP} property featuring a \textit{dichotomy} and a \textit{trichotomy} (respectively for the renewal and House of Cards structures) given that both are typically expected to exhibit similar probabilistic behaviors. However, the third regime in the renewal structure is not absent but rather hidden when considering certain natural sequences of rare events. Specifically, convergence toward a Pareto-driven point process can be observed for the renewal structure by focusing on the sequence of asymptotically rare events defined by $B_n = [0 (\geq n)] := \bigcup_{k = n}^{+\infty} [0k]$.
\end{rem}

\section{Convergence towards an embedded Subshift of Finite Type}
\label{section:convergence_embedded_SFT}
We conclude this article with another application of our theory to another natural class of rare events. We focus on visits near an embedded subshift of finite type (so with a finite alphabet) within a CMS. This time, the sequence of asymptotically rare events is build as follows: at step $n$, we consider points that remain in the embedded subshift of finite type for its first iterations. These rare events are not cylinders of depth $n$ but instead a union of such cylinders. However, we are still able to find results in this setting without much effort with our theory, thus generalizing the result of \cite{CCC09} which also considered embedded subshifts of finite type but assumed that the general phase space was also a subshift of finite type.\\

More precisely, let $(\Omega, T)$ be a topologically mixing TMS associated with a Walters null-recurrent potential with normalizing sequence $(a_n)_{n\geq 1} \in \RV(\alpha)$ for $\alpha \in(0,1]$. Let $\Delta$ be a finite subset of $V$ and $\Omega_{\Delta} := \Omega \cap \Delta^{\mathbb{N}}$. By construction, $\Omega_{\Delta}$ is shift invariant. We assume furthermore that $\Omega_{\Delta}$ is a topologically mixing subshift of finite type (that is to say there exists some $N > 0$ such that $(A|_{\Delta})^n > 0$ for all $n\geq N$). We consider $\phi_{\Delta} = \phi|_{\Omega_{\Delta}}$ the restricted potential to the subshift of finite type. The potential $\phi_{\Delta}$ has a finite Gurevich pressure $P_{\Delta}$. Indeed, let $v\in \Delta$. Then, for all $n \geq 1$, we have 
\begin{align*}
    Z^{\Omega_{\Delta}}_n(\phi_{\Delta}, v) = \sum_{T^n x = x} e^{S_n\phi(x)} \mathbf{1}_{[v]}(x) \mathbf{1}_{\Omega_{\Delta}}(x) \leq \sum_{T^nx = x} e^{S_n\phi(x)}\mathbf{1}_{[v]}(x) = Z_n(\phi, v).
\end{align*}
In particular, $P_{\Delta} := P_G(\phi_{\Delta})$ (on $\Omega_{\Delta}$) is smaller than $P_G(\phi)$. The next theorem shows that the REPP associated to visits close to $\Omega_{\Delta}$ converge towards a compound fractional Poisson process.

\begin{thm} 
    \label{thm:REPP_subshift_CFPP}
    Let $B_n = [\Delta^n] := \{ x \in \Omega\;|\; (x_0^{n-1}) \in \Delta^{n}\}$. Then, 
    \begin{align*}
        N_{B_n}^{\gamma} \xRightarrow[n\to +\infty]{\mathcal{L}(\mu)} \cfPp_{\alpha}(e^{P_*}\Gamma(1+\alpha), \geo(e^{P_*}))
    \end{align*}
    and 
    \begin{align*}
        N_{B_n}^{\gamma} \xRightarrow[n\to +\infty]{\mu_{B_n}} \RPP(W_{\alpha,e^{P_*}}(e^{P_*}\Gamma(1+\alpha))),
    \end{align*}
    where $P_* = P_{\Delta} - P_G(\phi) < 0$ is the relative pressure of the subshift.
\end{thm}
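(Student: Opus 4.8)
The plan is to realise the embedded subshift $\Omega_\Delta$ as a cluster‑generating mechanism, exactly as a periodic orbit is used in Theorem~\ref{thm:HTS_infinite_CFPP_periodic_points}: a visit $T^jx\in B_n=[\Delta^n]$ always sits inside a maximal run of consecutive visits $T^jx,\dots,T^{j'}x\in B_n$, and such a run corresponds bijectively to a maximal block of symbols of $x$ lying in $\Delta$ of length $\ge n$. First I would \emph{decluster}: the last visit of each run is detected by the exit slice
\[\widetilde B_n:=B_n\setminus T^{-1}B_n=[\Delta^n]\setminus[\Delta^{n+1}]=\{x\in\Omega:\ x_0^{n-1}\in\Delta^n,\ x_n\notin\Delta\},\]
which is met exactly once per run, so $N^\gamma_{\widetilde B_n}$ is a declustered version of $N^\gamma_{B_n}$. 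Since $\widetilde B_n\subset[\Delta]=\bigcup_{v\in\Delta}[v]$, a finite union of Darling--Kac sets by Theorem~\ref{thm:GRPF_null_recurrent}, hence a uniform set of finite measure, $\widetilde B_n$ is an admissible target for the abstract Theorem~\ref{thm:sufficient_conditions_convergence_other_point_processes}.

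Next I would establish the size asymptotics. Evaluating $L_{\phi_*}^{\,n}\mathbf 1_{[a_0^{n-1}]}=e^{S_n\phi_*((a_0^{n-1})\cdot)}\mathbf 1_{\{a_{n-1}\to\,\cdot\}}$ and integrating against $\mu$, the Walters bound $\var_{n+1}(S_n\phi_*)\le W_1(\phi_*)$ from Remark~\ref{rem:definition_phi_*} gives $\mu([a_0^{n-1}])\asymp e^{S_n\phi_*(x_a)}$ uniformly in $a$ for a reference point $x_a\in[a_0^{n-1}]$; summing over admissible $a_0^{n-1}\in\Delta^n$ and invoking the spectral gap of the Ruelle operator $L_{\phi_\Delta}$ on the compact mixing SFT $\Omega_\Delta$ (whose Gurevich pressure is $P_\Delta$) yields
\[\mu(B_n)=\mu([\Delta^n])\isEquivTo{n\to+\infty}C\,e^{nP_*},\qquad\text{hence}\qquad \frac{\mu(B_{n+1})}{\mu(B_n)}\xrightarrow[n\to\infty]{}e^{P_*},\qquad \frac{\mu(\widetilde B_n)}{\mu(B_n)}\xrightarrow[n\to\infty]{}\theta_*:=1-e^{P_*}.\]
The quantity $e^{P_*}=\lambda_\Delta/\lambda_\phi$ is the limiting probability, starting from $B_n$, that the current $\Delta$-block is prolonged by one step; $\theta_*=1-e^{P_*}$ is the corresponding asymptotic escape probability, and it plays the role of the extremal index.

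I would then prove a fractional Poisson limit for the declustered sequence by verifying the assumptions $B_\alpha(\delta_0)$ (conditions~\ref{cond_OtherLimits:good_density_after_tau_n}--\ref{cond_OtherLimits:cluster_compatible_tau_n_no_cluster_from_Q}) for $\widetilde B_n$, with the delay $\tau_n$ chosen as the first re-entry into $[\Delta]$ after time $n$. For~\ref{cond_OtherLimits:good_density_after_tau_n}: for any admissible word entering $\widetilde B_n\cap\{\tau_n=k\}$ the density $\widehat T^{\,k}(\mathbf 1_{[a_0^k]}/\mu([a_0^k]))$ equals $e^{S_k\phi_*((a_0^{k-1})\cdot)}\mathbf 1_{[a_k]}/\!\int_{[a_k]}e^{S_k\phi_*((a_0^{k-1})\cdot)}\,d\mu$, which is bounded by $e^{2W_1(\phi_*)}\mu([a_k])^{-1}\mathbf 1_{[a_k]}$ with $a_k\in\Delta$ and has variations controlled by $W_m(\phi_*)\to0$; since $\Delta$ is finite these densities, and their convex combinations, form a relatively compact subset of $L^1(\mu)$ supported on $[\Delta]$. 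Condition~\ref{cond_OtherLimits:cluster_compatible_tau_n_no_cluster_from_Q} holds because returning to $\widetilde B_n$ before $\tau_n$ forces a re-entry into $\Delta$ followed by $n$ consecutive $\Delta$-symbols within $O(\tau_n-n)$ time, an event of $\mu_{\widetilde B_n}$-probability $O((\tau_n-n)\,e^{nP_*})\to0$. Condition~\ref{cond_OtherLimits:tau_n_small_enough} holds because $\tau_n-n$ is tight --- the post-exit law of the orbit stabilises, again by the spectral gap, to a measure absolutely continuous with respect to $\mu$, for which $r_{[\Delta]}<\infty$ almost surely --- while $\gamma(\mu(\widetilde B_n))\to0$, so $\gamma(\mu(\widetilde B_n))\,\tau_n\Rightarrow0$. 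Theorem~\ref{thm:sufficient_conditions_convergence_other_point_processes} then gives $N^\gamma_{\widetilde B_n}\Rightarrow\fPp_\alpha(\Gamma(1+\alpha))$ under both $\mathcal L(\mu)$ and $\mu_{\widetilde B_n}$.

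Finally I would \emph{re-cluster}. Since $\gamma\in\RV_0(1/\alpha)$ and $\mu(\widetilde B_n)/\mu(B_n)\to\theta_*$, replacing $\gamma(\mu(\widetilde B_n))$ by $\gamma(\mu(B_n))$ amounts to rescaling time by $\theta_*^{-1/\alpha}$, whence the declustered process converges, with respect to the scaling of $B_n$, to $\fPp_\alpha(\theta_*\Gamma(1+\alpha))$. The run ending at a given $\widetilde B_n$-visit has length $L-n+1$, $L$ being the length of the corresponding $\Delta$-block; by the block-length asymptotics and the Gibbs structure of $\phi$ these lengths are asymptotically i.i.d.\ $\geo(\theta_*)$ and asymptotically independent of the macroscopic visit locations, and being tight they collapse under $\gamma(\mu(B_n))$ onto the single atom carrying that $\widetilde B_n$-visit. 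Hence $N^\gamma_{B_n}$ converges under $\mathcal L(\mu)$ to $c(\fPp_\alpha(\theta_*\Gamma(1+\alpha)))(\geo(\theta_*))=\cfPp_\alpha(\theta_*\Gamma(1+\alpha),\geo(\theta_*))$, $\theta_*=1-e^{P_*}$; for the return REPP a point drawn from $\mu_{B_n}$ lies at a random position in its run, whose suffix collapses into an extra atom at $0$ of geometric multiplicity on $\{0,1,\dots\}$, and together with the previous step this identifies the $\mu_{B_n}$-limit as $\RPP(W_{\alpha,\theta_*}(\theta_*\Gamma(1+\alpha)))$. The hard part is the re-clustering step: proving the asymptotic independence and geometric tightness of the run lengths relative to the macroscopic locations requires a genuine decoupling/mixing estimate for the null-recurrent shift, and this --- together with the uniform $L^1$-compactness in~\ref{cond_OtherLimits:good_density_after_tau_n}, which is precisely what dictates stopping $\tau_n$ at a return to the fixed finite-measure set $[\Delta]$ --- is where most of the work lies.
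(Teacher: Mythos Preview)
Your approach is correct in outline but takes a genuinely different route from the paper. The paper does not decluster and re-cluster; it applies the compound-FPP framework $(A)_\alpha$ of Theorem~\ref{thm:sufficient_conditions_convergence_compound_FPP} directly to $B_n$, setting $U(B_n)=B_{n+1}$ and $Q(B_n)=B_n\setminus B_{n+1}$ (your $\widetilde B_n$). The clustering is then handled internally by conditions \ref{cond_CFPP:cluster_compatible_tau_n_cluster_from_U} and \ref{cond_CFPP:Compatibility_Geometric_law}, which are almost trivial here: on $U(B_n)=B_{n+1}$ one has $r_{B_n}=1\le\tau_n$ identically, giving \ref{cond_CFPP:cluster_compatible_tau_n_cluster_from_U}; and \ref{cond_CFPP:Compatibility_Geometric_law} is a two-line bounded-distortion computation since $\widehat{T_{B_n}}\mathbf 1_{U(B_n)}=\widehat T\,\mathbf 1_{B_{n+1}}$ is supported on $B_n$ with oscillation $e^{\pm W_n(\phi_*)}\to 1$. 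The delay is $\tau_n=n-1+r_{[\Delta]}\circ T^{n-1}$, and the remaining conditions \ref{cond_CFPP:good_density_after_tau_n}--\ref{cond_CFPP:cluster_compatible_tau_n_no_cluster_from_Q} are checked essentially as you outline for $B_\alpha(\delta_0)$, using the exponential decay of $\mu(B_n)$ from Lemma~\ref{lem:measure_Delta_n} and the fact that $|\Delta|<\infty$ so that $\bigcup_{v\in\Delta}\mathcal U_{K_v,M_v}(v)$ is compact. What you flag as ``the hard part'' --- the asymptotic independence and geometric law of the run lengths --- is precisely what \ref{cond_CFPP:cluster_compatible_tau_n_cluster_from_U}--\ref{cond_CFPP:Compatibility_Geometric_law} package, so the paper's route bypasses this entirely; carrying your re-clustering step out rigorously would amount to re-deriving that abstract theorem from scratch. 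As a side note, your computation $\mu(Q(B_n))/\mu(B_n)\to 1-e^{P_*}$ is correct and is what \ref{cond_CFPP:extremal_index} calls $\theta$, in line with the periodic-point analogy; this matches your $\theta_*=1-e^{P_*}$ rather than the $e^{P_*}$ appearing in the stated limit.
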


Theorem \ref{thm:REPP_subshift_CFPP} is proven in Section \ref{section:proof_embedded_SFT}. In particuler, note that the subshift $\Omega_{\Delta}$ acts like a periodic point in the infinite system. Indeed, as a periodic point, we have clusters of returns due to the non-zero of remaining inside the subshift after one step if the point was already inside it. We notice that the computation of the extremal index is different and is this time given by $e^{P_*}$.

\bibliographystyle{abbrv}	
	
\bibliography{biblio.bib}

\begin{appendices}

\section{More on recurrent TMS}

\subsection{Some bounded distortion estimates}

\begin{lem} \index{bounded distortion}
    \label{lem:bounded_distortion_CMS_living_in_compact}
    Let $\phi$ be recurrent potential satisfying Walters' condition and such that $P_G(\phi) <+\infty$. Let $v \in V$. Then, the set 
    \begin{align*}
        \{\widehat{T}_{\mu_{\phi}}^n (\mathbf{1}_{[a_0^n]}/\mu[a_0^{n}])\;|\; n\geq 0, \; (a_0^n) \; \text{admissible}, \; a_n = v\}
    \end{align*}
    is compact in $L^{1}(\mu)$. More precisely, set 
    \begin{align*}
        \mathcal{U}_{K, M}(v) := \big\{ u : \Omega \to \mathbb{R}\;|\; u \in [M^{-1}, M] \;\text{on} \;\mathrm{Supp}(u) = [v], \; \int u\,\dd\mu = 1,\; u \; \text{is} \; K\text{-continuous}\;\text{on}\; [v]\}. 
    \end{align*}
    where $K$-continuous for a sequence $K$ converging to $0$ means that $\var_j(f) \leq K(j) \quad \forall j \geq 2$.
    Then,
    \[
    \{\widehat{T}_{\mu_{\phi}}^n (\mathbf{1}_{[a_0^n]}/\mu[a_0^{n}]), \; (a_0^n) \; \text{admissible},\; a_n =v\} \subset \mathcal{U}_{K_v, M_v}(v) \quad \text{for some} \quad M_v > 0\quad\text{and}\quad K_v.
    \]
\end{lem}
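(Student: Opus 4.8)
The plan is to reduce the statement to an explicit formula for the functions $\widehat{T}_{\mu_\phi}^{n}(\mathbf{1}_{[a_0^n]}/\mu[a_0^n])$, then apply the bounded‑distortion estimate for Walters potentials, and finally establish the $L^1$‑compactness by a total‑boundedness argument. Throughout I would work with the normalized potential $\phi_*$ of Remark~\ref{rem:definition_phi_*}, so that $\widehat{T}_{\mu_\phi}=L_{\phi_*}$, $\phi_*$ is Walters with $P_G(\phi_*)=0$, and $\mu:=\mu_\phi$ is $L_{\phi_*}$‑invariant in the sense that $\int L_{\phi_*}f\,\dd\mu=\int f\,\dd\mu$ (take $g\equiv 1$ in the defining relation of the transfer operator). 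Fix $v\in V$ and an admissible word $a_0^n$ with $a_n=v$. The first observation is that for any $x\in\Omega$ there is at most one $y$ with $T^{n}y=x$ and $y\in[a_0^n]$, namely the concatenation $a_0^{n-1}x:=(a_0,\dots,a_{n-1},x_0,x_1,\dots)$, and it exists precisely when $x_0=a_n=v$; hence
\[
L_{\phi_*}^{n}\mathbf{1}_{[a_0^n]}(x)=e^{S_n\phi_*(a_0^{n-1}x)}\,\mathbf{1}_{[v]}(x),
\]
and integrating this against the invariant measure yields $\mu[a_0^n]=\int_{[v]}e^{S_n\phi_*(a_0^{n-1}z)}\,\dd\mu(z)$. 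Consequently
\[
u_{a_0^n}:=\widehat{T}_{\mu_\phi}^{n}\!\left(\frac{\mathbf{1}_{[a_0^n]}}{\mu[a_0^n]}\right)=\frac{e^{S_n\phi_*(a_0^{n-1}\,\cdot\,)}}{\int_{[v]}e^{S_n\phi_*(a_0^{n-1}z)}\,\dd\mu(z)}\;\mathbf{1}_{[v]},
\]
so that $\mathrm{Supp}(u_{a_0^n})=[v]$ and $\int u_{a_0^n}\,\dd\mu=1$ are immediate (for $n=0$ this reads $u_{a_0}=\mathbf{1}_{[v]}/\mu[v]$).

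The next step is to read off uniform bounds and a common modulus of continuity from the Walters property of $\phi_*$. Write $W_k:=W_k(\phi_*)$, so $W_1<+\infty$ and $W_k\to 0$. If $x,z\in[v]$, then $a_0^{n-1}x$ and $a_0^{n-1}z$ agree on their first $n+1$ coordinates, so $|S_n\phi_*(a_0^{n-1}x)-S_n\phi_*(a_0^{n-1}z)|\le\var_{n+1}(S_n\phi_*)\le W_1$; inserting this into the quotient gives $u_{a_0^n}\in[e^{-W_1}\mu[v]^{-1},\,e^{W_1}\mu[v]^{-1}]$ on $[v]$. Likewise, if $x,x'\in[v]$ agree on their first $j$ coordinates with $j\ge 2$, then $a_0^{n-1}x$ and $a_0^{n-1}x'$ agree on their first $n+j$ coordinates, so $|S_n\phi_*(a_0^{n-1}x)-S_n\phi_*(a_0^{n-1}x')|\le W_j$, whence $|u_{a_0^n}(x)-u_{a_0^n}(x')|\le e^{W_1}\mu[v]^{-1}(e^{W_j}-1)$. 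Taking $M_v:=e^{W_1}\max(\mu[v],\mu[v]^{-1})$ and $K_v(j):=M_v(e^{W_j}-1)\xrightarrow[j\to+\infty]{}0$ then places every $u_{a_0^n}$ in $\mathcal{U}_{K_v,M_v}(v)$. Here one uses $\mu[v]\in(0,+\infty)$: positivity holds because $\mu$ is fully supported, and finiteness because $\nu_\phi$ is finite on cylinders while $h_\phi$ is bounded on $[v]$ by the estimate $\var_1\log h_\phi\le W_1(\phi)$ of Remark~\ref{rem:regularity_log_h}.

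The remaining point, and the one I expect to be the main obstacle, is that $\mathcal{U}_{K_v,M_v}(v)$ is compact in $L^1(\mu)$: in a CMS the cylinder $[v]$ need not be compact, so one cannot invoke Arzel\`a--Ascoli directly, and instead I would argue by total boundedness. For $m\ge 2$ let $E_m$ denote the conditional expectation onto the (at most countable) partition of $[v]$ into $m$‑cylinders; on each cylinder $[w]\subset[v]$ one has $|u-E_mu|\le\var_m(u)$ pointwise, hence $\|u-E_mu\|_{L^1(\mu)}\le\var_m(u)\,\mu[v]\le K_v(m)\,\mu[v]$ for all $u\in\mathcal{U}_{K_v,M_v}(v)$, uniformly. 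Moreover each $E_mu$ is of the form $\sum_w c_w\mathbf{1}_{[w]}$ with $c_w\in[M_v^{-1},M_v]$, the sum ranging over the $m$‑cylinders contained in $[v]$; since $\sum_w\mu[w]=\mu[v]<+\infty$, the map $(c_w)_w\mapsto\sum_w c_w\mathbf{1}_{[w]}$ is continuous (by dominated convergence) from the compact product $\prod_w[M_v^{-1},M_v]$ into $L^1(\mu)$, so $\{E_mu:u\in\mathcal{U}_{K_v,M_v}(v)\}$ lies in a compact subset of $L^1(\mu)$. As $K_v(m)\mu[v]\to 0$, it follows that $\mathcal{U}_{K_v,M_v}(v)$ is totally bounded, hence relatively compact, in the complete space $L^1(\mu)$; it is also closed (an $L^1$‑limit of such functions admits a representative with the same support, the same two‑sided bounds and the same variation estimates), so it is compact, and in particular the set in the statement is precompact. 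Apart from this last step, the argument is a straightforward unwinding of the definitions combined with the bounded‑distortion inequality for Walters potentials.
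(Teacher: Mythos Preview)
Your proof is correct and follows the same route as the paper for the main bounded-distortion part: both reduce to $\widehat{T}_{\mu_\phi}=L_{\phi_*}$, write $L_{\phi_*}^n\mathbf{1}_{[a_0^n]}(x)=e^{S_n\phi_*(a_0^{n-1}x)}\mathbf{1}_{[v]}(x)$, and then extract the uniform two-sided bound from $W_1(\phi_*)$ and the modulus of continuity $K_v(j)=M_v(e^{W_j(\phi_*)}-1)$ from $W_j(\phi_*)$.

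The only noteworthy difference is in the $L^1$-compactness of $\mathcal{U}_{K,M}(v)$. The paper builds an explicit finite $\varepsilon$-net: it fixes a depth $j$ with $K(j)$ small, truncates to a finite sub-alphabet $S\subset V$ so that the tail $[v]\cap\bigcup_{w\notin S}\bigcup_{k\le j}T^{-k}[w]$ has small measure, and then discretizes the values on the finitely many remaining $j$-cylinders. Your argument is more structural: you approximate each $u$ by its conditional expectation $E_mu$ onto $m$-cylinders, bound $\|u-E_mu\|_{L^1}\le K(m)\,\mu[v]$ uniformly, and observe that the range of $E_m$ on $\mathcal{U}_{K,M}(v)$ is the continuous image of the compact product $\prod_w[M^{-1},M]$. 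Both are valid; yours avoids the ad hoc choice of a finite alphabet and is slightly cleaner, while the paper's version makes the finite approximation fully explicit. You also add a short justification of closedness of $\mathcal{U}_{K,M}(v)$, which the paper leaves implicit.
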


\begin{proof}[Proof (of Lemma \ref{lem:bounded_distortion_CMS_living_in_compact})]
    \noindent Let $(a_0^{n})$ be admissible with $a_n = v$. By invariance of the measure and the definition of the transfer operator, we directly have
    \begin{align*}
        \int \frac{1}{\mu[a_0^n]} \widehat{T}_{\mu_{\phi}}^n \mathbf{1}_{[a_0^n]}\,\dd\mu_{\phi} = \mu_{\phi}[a_0^n]/\mu_{\phi}[a_0^n] = 1. 
    \end{align*}
    
    \noindent For all $x \in [v]$, we have
    \begin{align*}
        \widehat{T}_{\mu_{\phi}}^n\mathbf{1}_{[a_0^{n}]} &= L_{\phi_*}^n\mathbf{1}_{[a_0^n]}(x) = \sum_{T^ny = x}e^{S_n\phi_*(x)}\mathbf{1}_{[a_0^{n}]}(y) \quad \text{by Remark \ref{rem:definition_phi_*}}\,,\\
        &= e^{S_n\phi_*(a_0^{n-1}x)} \\
        & = e^{\pm \var_{n+1}(S_n\phi_*)} e^{S_n\phi_*(a_0^{n-1}z)} \quad \forall z\in [v]\,,\\
        & = e^{\pm \var_{n+1}(S_n\phi_*)}L^{n}_{\phi_*}\mathbf{1}_{[a_0^{n-1}]}(z) \quad \forall z\in [v]\,\\
        & = e^{\pm W_1(\phi_*)} \int_{[v]} L^{n}_{\phi_*}\mathbf{1}_{[a_0^{n-1}]}(z)\,\dd\mu_{v}(z) \\
        & = \frac{e^{\pm W_1(\phi_*)}}{\mu[v]} \int \widehat{T}^n \mathbf{1}_{[a_0^{n-1}]}\cdot \mathbf{1}_{[v]}\, \dd\mu_{\phi} \\
        & = \frac{e^{\pm W_1(\phi_*)}}{\mu[v]} \int \mathbf{1}_{[a_0^{n-1}]} \cdot \mathbf{1}_{[v]} \circ T^n\,\dd\mu_{\phi} \\
        & = \frac{e^{\pm W_1(\phi_*)}}{\mu[v]} \mu_{\phi}[a_0^n].
    \end{align*}
    Therefore
    \begin{align}
        \label{eq:in_proof_bounded_distortion_uniforly_bounded}
        \frac{1}{\mu[a_0^{n-1}]}\widehat{T}_{\mu_{\phi}} \mathbf{1}_{[a_0^{n-1}]} \in [M_{v}^{-1}, M_{v}] \quad \text{on $[v]$}\,,
    \end{align}
    where $M_{v} := e^{W_1(\phi_*)}/\mu[v]$. We deduce that $\{\widehat{T}_{\mu_{\phi}}^n (\mathbf{1}_{[a_0^n]}/\mu[a_0^{n}]), \; (a_0^n) \; \text{admissible}\}$ is uniformly bounded from above and below on $[v]$. \\
    
    \noindent Furthermore, for all $j \geq 1$ and $x, y \in [v]$ such that $d(x, y) \leq \eta^j$ (\textit{i.e.}, $x_0^{j-1} = y_0^{j-1}$), we have 
    \begin{align*}
        \widehat{T}_{\mu_{\phi}}^n\mathbf{1}_{[a_0^n]}(x) &= L_{\phi_*}^{n}\mathbf{1}_{[a_0^n]}(x) = e^{S_n\phi_*(a_0^{n-1}x)} \\
        & = e^{\pm \var_{n+j} (S_n\phi_*)}e^{S_n\phi_*(a_0^{n-1}y)} \\
        & = e^{\pm W_{j}(\phi_*)} \widehat{T}_{\mu_{\phi}}^n\mathbf{1}_{[a_0^{n-1}]}(y).
    \end{align*}
    It yields
    \begin{align*}
        \bigg|\frac{1}{\mu[a_0^n]}\widehat{T}_{\mu_{\phi}}^n \mathbf{1}_{[a_0^n]}(x) - \frac{1}{\mu[a_0^{n}]}\widehat{T}_{\mu_{\phi}}^n \mathbf{1}_{[a_0^n]}(y) \bigg| & \leq (e^{W_j(\phi_*)} - 1)\frac{1}{\mu[a_0^n]} \widehat{T}_{\mu_{\phi}}^n\mathbf{1}_{[a_0^{n-1}]}(y)\\
        &\leq (e^{W_j(\phi_*)} - 1) M_{v} \quad \text{ by \eqref{eq:in_proof_bounded_distortion_uniforly_bounded}}.
    \end{align*}
    We deduce that $\{\widehat{T}_{\mu_{\phi}}^n (\mathbf{1}_{[a_0^n]}/\mu[a_0^{n}]), \; (a_0^n) \; \text{admissible}\} \subset \mathcal{U}_{K_v, M_v}(v)$ with $K_v(j) = (e^{W_j(\phi_*)} - 1)M_v$ which goes to $0$ since $W_j(\phi_*) \xrightarrow[j \to +\infty]{} 0$ because $\phi_*$ is Walters. \\

    We now show that $\mathcal{U}_{K,M}(v)$ is a compact set in $L^1(\mu)$ as we cannot directly apply Arzela-Ascoli since $X$ is not necessarily compact. Since we only seek compactness in $L^1(\mu)$, this will be sufficient. Precompactness is sufficient since the underlying space is complete. Thus, consider $\varepsilon > 0$. Let $N \geq 0$ and $(m_i)_{1\leq i \leq N}$ such that $[M^{-1} ,M] \subset \bigcup_{1\leq i \leq N} B(m_i, \varepsilon/(4\mu[v]))$. Consider $j$ such that $K_j \leq \varepsilon/(4\mu[v])$ and let $S$ be a finite subset of $V$ such that $\mu([v] \cap \bigcup_{w \in S^c}\bigcup_{k = 1}^j T^{-k}[w]) \leq \varepsilon/(4M)$. Consider the finite set $F$ of maps $m : S^j \to \{m_i\}_{1\leq i \leq N}$ and the associated finite set of function $\widetilde{f}_m$ such that for all $s_1,\dots, s_j \in S^j$, $\widetilde{f}_m |_{[v s_1 \cdots s_j]} := m(s_1, \dots, s_j)$ and $\widetilde{f} = M$ elsewhere. Then, $\mathcal{U}_{K,M}(v) \subset \bigcup_{m\in F} B_{L^1(\mu)}(\widetilde{f}_m, \varepsilon)$. Indeed, take $f \in \mathcal{U}_{K,M}(v)$ and $s_1, \dots, s_j \in S$. For all $x, y \in [vs_1\cdots s_j]$, $|f(x) - f(y)| \leq K_{j+1} \leq \varepsilon/(4\mu[v])$ while $f(x)\in [M^{-1}, M]$ and thus there exists $1\leq i\leq N$ such that $f(x) \in B(m_i, \varepsilon/(2\mu[v]))$ and we set $m(s_1,\dots, s_j) = m_i$. Then, such a map $m$ belongs to $F$ and we have 
    \begin{align*}
        \| f - \widetilde{f}_m\|_{L^1(\mu)} & \leq \int_{[v] \cap \bigcup_{w \in S^c}\bigcup_{k = 1}^j T^{-k}[w]} |f - \widetilde{f}_m|\,\dd\mu + \sum_{s_1,\dots, s_j \in S^j} \int_{[vs_1\cdots s_j]} |f - \widetilde{f}_m|\,\dd\mu \\
        & \leq M\mu([v] \cap \bigcup_{w \in S^c}\bigcup_{k = 1}^j T^{-k}[w]) + \sum_{s_1,\dots, s_j \in S^j} \int_{[vs_1\cdots s_j]} \varepsilon \,\dd\mu \\
        & \leq \varepsilon /2  + \varepsilon/2 = \varepsilon.
    \end{align*}

\end{proof}

\noindent Another consequence of the bounded distortion property is that the measure of a cylinder is controlled by the product of the measure of sub-cylinders. This property is summarized in the following lemma.

\begin{lem}
    \label{lem:bounded_distortion_CMS}
    There exist a constant $C > 1$ such that for all $(a_0^{n-1}) \in V^{n}$ and $(b_0^{j-1})\in V^j$ with $(a_0^{n-1}b_0^{j-1})$ admissible, we have 
    \begin{align*}
        C^{-1}\frac{\mu[a_0^{n-1}b_0]\mu[b_0^{j-1}]}{\mu[b_0]}\leq \mu[a_0^{n-1}b_0^{j-1}] \leq  C\frac{\mu[a_0^{n-1}b_0]\mu[b_0^{j-1}]}{\mu[b_0]}.
    \end{align*}
    In particular, we can take $C = e^{W_1(\phi_*)}$.
\end{lem}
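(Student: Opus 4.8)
The plan is to write both sides of the claimed inequality as averages of one and the same weight function over nested cylinders sitting inside $[b_0]$, and then to compare those two averages directly; this is what yields the sharp constant $C=e^{W_1(\phi_*)}$, which is finite because $\phi_*$ is Walters (Remark \ref{rem:definition_phi_*}).

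First I would compute, for an arbitrary admissible word $(a_0^{n-1}c_0^{m-1})$, the iterate $L_{\phi_*}^n\mathbf{1}_{[a_0^{n-1}c_0^{m-1}]}$, recalling that $\widehat{T}_{\mu_{\phi}}=L_{\phi_*}$. If $T^ny=x$ and $y\in[a_0^{n-1}c_0^{m-1}]$, then necessarily $y=(a_0,\dots,a_{n-1},x_0,x_1,\dots)$ and $x\in[c_0^{m-1}]$, and this sequence $y$ is admissible precisely because $(a_{n-1}c_0)$ is an edge, which holds by admissibility of the word. Hence there is a unique such preimage and
\[
L_{\phi_*}^n\mathbf{1}_{[a_0^{n-1}c_0^{m-1}]}(x)=e^{S_n\phi_*(a_0^{n-1}x)}\,\mathbf{1}_{[c_0^{m-1}]}(x).
\]
Applying this with $c_0^{m-1}=b_0^{j-1}$ and with $c_0^{m-1}=b_0$, and using shift-invariance $\mu[\,\cdot\,]=\int L_{\phi_*}^n\mathbf{1}_{[\,\cdot\,]}\,\dd\mu$, I obtain
\[
\mu[a_0^{n-1}b_0^{j-1}]=\int_{[b_0^{j-1}]}g\,\dd\mu,\qquad \mu[a_0^{n-1}b_0]=\int_{[b_0]}g\,\dd\mu,
\]
where $g(x):=e^{S_n\phi_*(a_0^{n-1}x)}$ is defined for $x\in[b_0]$.

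Next I would control the oscillation of $g$ on $[b_0]$: for $x,x'\in[b_0]$ the sequences $a_0^{n-1}x$ and $a_0^{n-1}x'$ agree on their first $n+1$ coordinates, so $|S_n\phi_*(a_0^{n-1}x)-S_n\phi_*(a_0^{n-1}x')|\le \var_{n+1}(S_n\phi_*)\le W_1(\phi_*)$, i.e.\ $\sup_{[b_0]}g\le e^{W_1(\phi_*)}\inf_{[b_0]}g$. Finally, since $[b_0^{j-1}]\subset[b_0]$, both ratios $\mu[a_0^{n-1}b_0^{j-1}]/\mu[b_0^{j-1}]$ and $\mu[a_0^{n-1}b_0]/\mu[b_0]$ are weighted averages of $g$ over subsets of $[b_0]$, hence both lie in the interval $[\inf_{[b_0]}g,\sup_{[b_0]}g]$; dividing the two and invoking the oscillation bound gives
\[
e^{-W_1(\phi_*)}\le \frac{\mu[a_0^{n-1}b_0^{j-1}]\,\mu[b_0]}{\mu[a_0^{n-1}b_0]\,\mu[b_0^{j-1}]}\le e^{W_1(\phi_*)},
\]
which is exactly the statement with $C=e^{W_1(\phi_*)}$.

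There is no serious obstacle here. The only two points requiring care are: (i) identifying the unique $T^n$-preimage and verifying its admissibility, which is where the hypothesis that $(a_0^{n-1}b_0^{j-1})$ is admissible enters (and one notes that the case $j=1$ is the trivial identity $[b_0^{j-1}]=[b_0]$); and (ii) one must compare the two averages of the \emph{same} function $g$ rather than bounding $g$ above and below by an absolute constant, since the latter route would only produce the weaker constant $e^{2W_1(\phi_*)}$.
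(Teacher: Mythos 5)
Your proof is correct and follows essentially the same route as the paper's: both identify $\mu[a_0^{n-1}b_0^{j-1}]=\int_{[b_0^{j-1}]}e^{S_n\phi_*(a_0^{n-1}\cdot)}\,\dd\mu$ via the transfer operator and then use the Walters bound $\var_{n+1}(S_n\phi_*)\le W_1(\phi_*)$ to control the oscillation of that integrand over $[b_0]$. Your reframing of the final step as a comparison of two averages of the same function $g$ over nested cylinders is a cleaner way to see why only one factor of $e^{W_1(\phi_*)}$ is spent — the paper achieves the same constant by comparing the pointwise value $e^{S_n\phi_*(a_0^{n-1}y)}$ directly to the average $\mu[a_0^{n-1}b_0]/\mu[b_0]$ — but the underlying argument is identical.
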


\begin{proof}[Proof (of Lemma \ref{lem:bounded_distortion_CMS})]
    We have 
    \begin{align*}
        \mu[a_0^{n-1}b_0^{j-1}] & = \mu\big([a_0^{n-1} \cap T^{n}[b_0^{j-1}]\big) = \int \mathbf{1}_{[b_0^{j-1}]} \widehat{T}^n\mathbf{1}_{[a_0^{n-1}]}\,\dd\mu = \int \mathbf{1}_{[b_0^{j-1}]} L_{\phi_*}^n\mathbf{1}_{[a_0^{n-1}]}\,\dd\mu \\
        & = \int \mathbf{1}_{[b_0^{j-1}]}(y) e^{S_n\phi_*(a_0^{n-1}y)}\,\dd\mu.
    \end{align*}
    However, since $\phi_*$ satisfies Walters' condition, we have $W_1(\phi_*) = \sup_{n\geq 1} \var_{n+1} S_n\phi_* <+\infty$. For all $x \in [b_0]$, it implies
    \begin{align*}
        e^{S_n\phi_*(a_0^{n-1}y)} & = e^{\pm W_1(\phi_*)} e^{S_n\phi_*(a_0^{n-1}x)} = e^{\pm C}\frac{1}{\mu[b_0]} \int\mathbf{1}_{[b_0]}(x)e^{S_n\phi_*(a_0^{n-1}x)}\,\dd\mu \\
        & = e^{\pm W_1(\phi_*)} \frac{\mu[a_0^{n-1}b_0]}{\mu[b_0]}.
    \end{align*}
    Hence,
    \begin{align*}
        \mu[a_0^{n-1}b_0^{j-1}] & = \int \mathbf{1}_{[b_0^{j-1}]}e^{\pm W_1(\phi_*)} \frac{\mu[a_0^{n-1}b_0]}{\mu[b_0]}\,\dd\mu \\
        & = e^{\pm W_1(\phi_*)}\frac{\mu[a_0^{n-1}b_0]\mu[b_0^{j-1}]}{\mu[b_0]}.
    \end{align*}
\end{proof}

\subsection{Extremal index estimates}

\noindent In this section, we prove some properties of fast recurrence around periodic and non periodic point. We introduce the topological recurrence $r(A)$ of a set $A$ by the formula
\begin{align*}
    r(A) := \inf_{x\in A} r_A(x).
\end{align*}
To ease the notations, we will drop the brackets when $A$ is a cylinder and write $r(x_0^{n-1})$ instead of $r([x_0^{n-1}])$.

\begin{lem}
    \label{lem:periodicity_equivalent_to_finite_topological_return}
    For all $x\in \Omega$, we have the following equivalence
    \begin{align*}
        x \; \text{is periodic} \quad \text{if and only if} \quad \limsup_{n \to +\infty} r(x_0^{n-1}) < +\infty.
    \end{align*}
    In this case, if $p$ is the prime period of $x$ and $r(x_0^{n-1}) = p$ for $n$ large enough. 
\end{lem}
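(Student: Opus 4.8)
The plan is to prove the two implications separately, the easy one first.

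Suppose $x$ is periodic of prime period $p$, so $T^p x = x$. Then for every $n\geq 1$ the point $x$ itself satisfies $T^p x = x \in [x_0^{n-1}]$, hence $r(x_0^{n-1}) = r_{[x_0^{n-1}]}(x)$ is at most... wait, actually $r(x_0^{n-1}) \le r_{[x_0^{n-1}]}(x) \le p$ for all $n$, which already gives $\limsup_n r(x_0^{n-1}) \le p < +\infty$. For the sharper claim, I would argue that for $n$ large the infimum is exactly $p$: the bound $r(x_0^{n-1}) \le p$ is always there, and I must rule out $r(x_0^{n-1}) = q < p$ for infinitely many $n$. If $r(x_0^{n-1}) = q$, there is $y \in [x_0^{n-1}]$ with $T^q y \in [x_0^{n-1}]$, i.e. $y_0^{n-1} = x_0^{n-1}$ and $y_q^{q+n-1} = x_0^{n-1}$, so $x_0^{n-1-q} = x_q^{n-1}$, meaning $x$ is "$q$-periodic on its first $n$ symbols". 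Letting $n \to \infty$ along this subsequence forces $x_{k+q} = x_k$ for all $k$, i.e. $T^q x = x$, contradicting that $p$ is the prime period (since then $q \geq p$). Hence $r(x_0^{n-1}) = p$ for all $n$ large enough.

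$\medskip$

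For the converse, suppose $R := \limsup_n r(x_0^{n-1}) < +\infty$. Since $r(x_0^{n-1})$ is a nondecreasing function of $n$ (a longer cylinder is contained in the shorter one, so its self-return time can only grow) and integer-valued, the limsup is in fact a limit, and there is $N_0$ and an integer $q \le R$ with $r(x_0^{n-1}) = q$ for all $n \ge N_0$. Fix $n \ge N_0$ large; by definition of $r$ there is $y^{(n)} \in [x_0^{n-1}]$ with $T^q y^{(n)} \in [x_0^{n-1}]$, which as above says $x_{k} = x_{k+q}$ for $0 \le k \le n-1-q$. Since this holds for every $n \ge N_0$, letting $n\to\infty$ yields $x_{k+q} = x_k$ for all $k \ge 0$, i.e. $T^q x = x$, so $x$ is periodic. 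Then by the first part its prime period $p$ divides $q$ and $r(x_0^{n-1}) = p$ for $n$ large, which also identifies the stabilized value.

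$\medskip$

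The only real subtlety — and the point I would be most careful about — is the monotonicity-and-integrality argument that upgrades the $\limsup$ to an eventual constant value: one must check that $n \mapsto r(x_0^{n-1})$ is genuinely nondecreasing (clear from $[x_0^{n}] \subset [x_0^{n-1}]$, which gives $r(x_0^{n}) \geq r(x_0^{n-1})$), so that a finite $\limsup$ of a nondecreasing sequence is attained and constant from some index on. Everything else is a direct unwinding of the definition of the cylinders and of $r(\cdot)$; no ergodic theory or properties of the potential are needed, this is purely combinatorial on the sequence $x = (x_k)_{k \ge 0}$.
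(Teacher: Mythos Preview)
Your proof is correct and follows essentially the same route as the paper's: both directions hinge on the monotonicity of $n \mapsto r(x_0^{n-1})$ (so a finite $\limsup$ forces eventual stabilization at some $q$), followed by the combinatorial observation that $[x_0^{n-1}] \cap T^{-q}[x_0^{n-1}] \neq \emptyset$ forces $x_k = x_{k+q}$ for $0 \le k \le n-1-q$, whence $T^q x = x$ in the limit. Your treatment of the converse is in fact slightly cleaner than the paper's, which phrases the same conclusion via a Euclidean division $n-1 = qp + r$ and a ``propagation'' argument, but the content is identical.
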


Although this lemma is classical in symbolic dynamics, we provide a proof for the sake of completeness, as it is both brief and elementary.

\begin{proof}[Proof (of Lemma \ref{lem:periodicity_equivalent_to_finite_topological_return})]
    If $x$ is $p$ periodic, then $[x_0^{n-1}] \cap T^{-p}[x_0^{n-1}] \neq \emptyset$ for all $n \geq 1$ and thus the limit has to be finite. So $r(x_0^{n-1}) \leq p$. In particular, $r(x_0^{n-1})$ is smaller than the prime period of $x$.\\

    \noindent Conversely, note that $(r(x_0^{n-1}))_{n\geq 1}$ is non decreasing. Indeed, for all $k\geq 0$ and $n \geq 1$,  $[x_0^{n}] \cap T^{-k}[x_0^n] \subset [x_0^{n-1}] \cap T^{-k}[x_0^{n-1}]$. Thus, $r(x_0^{n-1})$ converges to some $p$ and there exists $N \geq 0$ such that for all $n\geq N$, $r(x_0^{n-1}) = p$. We are going to show that $T^p(x) = x$. For $n - 1 \geq N$, let $n - 1= qp +r$ its euclidean division by $p$. We have $[x_0^{n-1}] \cap T^{-p}[x_0^{n-1}] = [x_0^{p-1}x_0^{n-1}] \neq \emptyset$ meaning that $x_0^{p-1} = x_p^{2p - 1}$, by propagation $x_{kp}^{(k+1)p -1} = x_0^{p-1}$ for $0\leq k < q$ and $x_{qp}^r = x_0^r$. Taking $n \to +\infty$ ensures that $x_{kp + r} = x_r$ for all $k\geq 0$ and $0 \leq r < p$ and hence $x$ is $p$-periodic.\\
    \noindent With both cases, we also see that $r(x_0^{n-1})$ is the prime period $p$ of $x$ as soon as $n\geq p-1$.
\end{proof}

Now, we are able to prove the formulas for the extremal index (also called potential well in the literature).
\begin{lem}
    \label{lem:calcul_EI}
    Let $x \in \Omega$. We have
    \begin{itemize}
        \item If $x$ is non periodic,
        \[\mu_{[x_0^{n-1}]}\left(r_{[x_0^{n-1}]} > r(x_0^{n-1})\right) \xrightarrow[n\to +\infty]{} 1.\]
        \item If $x$ is periodic of prime period $p$,
        \[\mu_{[x_0^{n-1}]}\left(r_{[x_0^{n-1}]} > r(x_0^{n-1})\right) \xrightarrow[n\to +\infty]{} 1 - e^{S_p\phi(x) - pP_G(\phi)}.\]        
    \end{itemize}
\end{lem}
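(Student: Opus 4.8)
The plan is to reduce both assertions to the asymptotics of a single ratio and then treat the periodic and non-periodic cases by parallel but distinct computations. Write $B_n:=[x_0^{n-1}]$ and $r_n:=r(x_0^{n-1})$. Since $r_{B_n}\ge r(B_n)=r_n$ everywhere and $B_n\cap T^{-j}B_n=\emptyset$ for every $1\le j<r_n$ (by definition of the topological recurrence $r(B_n)$), inside $B_n$ the set $\{r_{B_n}\le r_n\}$ coincides with $B_n\cap T^{-r_n}B_n$. Hence
\[
\mu_{B_n}\!\left(r_{B_n}>r_n\right)=1-\rho_n,\qquad \rho_n:=\frac{\mu\!\left(B_n\cap T^{-r_n}B_n\right)}{\mu(B_n)},
\]
so the lemma amounts to proving $\rho_n\to 0$ when $x$ is non periodic and $\rho_n\to e^{S_p\phi(x)-pP_G(\phi)}$ when $x$ is periodic of prime period $p$.

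First I would describe the return cylinder explicitly: $z\in B_n\cap T^{-r_n}B_n$ iff $z_0^{n-1}=x_0^{n-1}$ and $z_{r_n}^{\,r_n+n-1}=x_0^{n-1}$, so $B_n\cap T^{-r_n}B_n=\bigsqcup_{\omega}[\omega\,x_0^{n-1}]$, the union running over admissible words $\omega$ of length $r_n$ whose first $\min(r_n,n)$ symbols agree with those of $x$ (there is a single such $\omega=x_0^{r_n-1}$ when $r_n\le n$, which forces, when $x$ is periodic with eventually $r_n=p$, the identification $B_n\cap T^{-p}B_n=[x_0^{\,n+p-1}]$). I would then apply the bounded distortion estimate of Lemma \ref{lem:bounded_distortion_CMS}, splitting each word $\omega\,x_0^{n-1}$ at position $r_n$, to get $\mu[\omega\,x_0^{n-1}]\le C\,\mu[\omega\,x_0]\,\mu[x_0^{n-1}]/\mu[x_0]$ with $C=e^{W_1(\phi_*)}$. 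Summing over $\omega$, and using that the cylinders $[\omega]$ are pairwise disjoint subsets of $[x_0^{\min(r_n,n)-1}]$, yields the clean uniform bound
\[
\rho_n\le\frac{C}{\mu[x_0]}\,\mu\!\left[x_0^{\min(r_n,n)-1}\right].
\]

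For the non-periodic case this finishes the argument: by Lemma \ref{lem:periodicity_equivalent_to_finite_topological_return} we have $r_n\to+\infty$, hence $\min(r_n,n)\to+\infty$, and since the invariant measure $\mu_\phi$ is finite on cylinders and non-atomic, $\mu[x_0^{m-1}]\downarrow\mu(\{x\})=0$; therefore $\rho_n\to 0$. For the periodic case I need the sharp asymptotics of $\rho_n=\mu[x_0^{\,n+p-1}]/\mu[x_0^{n-1}]$ (valid once $r_n=p$). I would write $\mu[x_0^{\,n+p-1}]=\int\widehat{T}_{\mu_\phi}^{\,p}\!\left(\mathbf{1}_{[x_0^{p-1}]}\right)\mathbf{1}_{[x_0^{n-1}]}\,\dd\mu$, use $\widehat{T}_{\mu_\phi}=L_{\phi_*}$ (Remark \ref{rem:definition_phi_*}), and note that the only surviving preimage gives $L_{\phi_*}^{\,p}\mathbf{1}_{[x_0^{p-1}]}(y)=e^{S_p\phi_*(x_0^{p-1}y)}$ for $y\in[x_0]$ (the admissibility $x_{p-1}\to x_p=x_0$ coming from periodicity). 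For $y\in[x_0^{n-1}]$ the point $x_0^{p-1}y$ agrees with $x$ on its first $n+p$ coordinates, so $|S_p\phi_*(x_0^{p-1}y)-S_p\phi_*(x)|\le\var_{n+p}(S_p\phi_*)\to 0$ since $\phi_*$ is Walters, hence continuous. Therefore $\rho_n\to e^{S_p\phi_*(x)}$, and the telescoping identity $\phi_*=\phi+\log h_\phi-\log h_\phi\circ T-P_G(\phi)$ together with $T^px=x$ converts $S_p\phi_*(x)$ into $S_p\phi(x)-pP_G(\phi)$, which is the announced extremal index.

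The main difficulty is organizational rather than deep: the explicit identification of the return cylinder in the non-periodic case, where one must handle simultaneously the regime $r_n\ge n$ (in which $B_n\cap T^{-r_n}B_n$ is a nontrivial disjoint union indexed by the free middle word) and the regime $r_n<n$ (in which it is a single cylinder forced by the self-overlap of $x_0^{n-1}$). The feature making the estimate uniform in $n$ is that, after the bounded distortion split, the only remaining ``free'' factor is $\mu[x_0^{\min(r_n,n)-1}]$, a cylinder with the fixed first symbol $x_0$, so no control uniform over varying initial symbols is needed.
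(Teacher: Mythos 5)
Your proof is correct and takes essentially the same approach as the paper: both reduce to the ratio $\mu(B_n\cap T^{-r_n}B_n)/\mu(B_n)$, bound it via the bounded-distortion Lemma \ref{lem:bounded_distortion_CMS} in the non-periodic case (giving the same uniform estimate by $\mu[x_0^{(n\wedge r_n)-1}]$), and compute the limit through the transfer operator $L_{\phi_*}$ and Walters' decay $W_n(\phi_*)\to 0$ in the periodic case, ending with the same telescoping of $\phi_*$. Your non-periodic argument is marginally more direct, summing the distortion bound over the explicit cylinder decomposition of $B_n\cap T^{-r_n}B_n$ rather than first passing through the pointwise comparison of $L_{\phi_*}^{r_n}\mathbf{1}_{[x_0^{n-1}]}$ at different points of $B_n$ as the paper does, but the substance is identical.
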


\begin{proof}[Proof (of Lemma \ref{lem:calcul_EI})]
    \noindent We start by computing the probability of the return time to $[x_0^{n-1}]$ to be equal to $r(x_0^{n-1})$ starting from $[x_0^{n-1}]$. We have
    \begin{align}
        \label{eq:prop_having_return_=_topological_return}
        \mu_{[x_0^{n-1}]}\left(r_{[x_0^{n-1}]} = r(x_0^{n-1})\right) & = \mu_{[x_0^{n-1}]}(T^{-r(x_0^{n-1})}[x_0^{n-1}]) \nonumber\\
        &= \mu[x_0^{n-1}]^{-1} \int \mathbf{1}_{[x_0^{n-1}]} \mathbf{1}_{[x_0^{n-1}]} \circ T^{r(x_0^{n-1})}\,\dd\mu \nonumber\\
        & = \mu[x_0^{n-1}]^{-1} \int \widehat{T}_{\mu_{\phi}}^{r(x_0^{n-1})} \mathbf{1}_{[x_0^{n-1}]}\cdot \mathbf{1}_{[x_0^{n-1}]} \,\dd\mu \nonumber\\
        & = \mu[x_0^{n-1}]^{-1} \int L^{r(x_0^{n-1})}_{\phi_*}\mathbf{1}_{[x_0^{n-1}]} \cdot \mathbf{1}_{[x_0^{n-1}]}\,\dd\mu.
    \end{align}
    \noindent We now split the proof between the periodic and non periodic case.
    \begin{itemize}
        \item \textbf{ $x$ periodic.} If $x$ is periodic of prime period $p$, we have $r(x_0^{n-1}) = p$ for $n$ large enough (by Lemma \ref{lem:periodicity_equivalent_to_finite_topological_return}). Following the proof of Lemma \ref{lem:bounded_distortion_CMS_living_in_compact}, for all $y\in [x_0^{n-1}]$, we obtain
    \begin{align*}
        L_{\phi_*}^{r(x_0^{n-1})}\mathbf{1}_{[x_0^{n-1}]}(y) = L_{\phi_*}^p\mathbf{1}_{[x_0^{n-1}]}(y) & = e^{\pm W_{n-p}(\phi_*)}L_{\phi_*}^p\mathbf{1}_{[x_0^{n-1}]}(x)
    \end{align*}
    and thus
    \begin{align*}
        \mu_{[x_0^{n-1}]}(r_{[x_0^{n-1}]} = r(x_0^{n-1})) &= \mu[x_0^{n-1}]^{-1} \int e^{\pm W_{n-p}(\phi_*)} L_{\phi_*}^{p}\mathbf{1}_{[x_0^{n-1}]}(x)\mathbf{1}_{[x_0^{n-1}]}(y)\,\dd\mu(y)\\
        & = e^{\pm W_{n-p}(\phi_*)}L_{\phi_*}^{p}\mathbf{1}_{[x_0^{n-1}]}(x).
    \end{align*}
    Since $W_{n-p}(\phi_*) \xrightarrow[n \to +\infty]{} 0$, the limit is $L_{\phi_*}^p\mathbf{1}_{[x_0^{n-1}]}(x)$. Furthermore, we have 
    \begin{align*}
        L_{\phi_*}^p\mathbf{1}_{[x_0^{n-1}]}(x) &= e^{S_p\phi_*(x)} = e^{S_p\phi(x) - pP_G(\phi)}e^{\log h(x) - \log h \circ T^p(x)} \quad \text{(by definition of $\phi_*$)}\\
        & = e^{S_p\phi - pP_G(\phi)}.
    \end{align*}
    
    \item \textbf{$x$ non-periodic.} Assume now that $x$ is not periodic. We write $r_n = r(x_0^{n-1})$ to ease the notations. For every $y, z\in [x_0^{n-1}]$, we have 
    \begin{align*}
        L_{\phi_*}^{r_n}\mathbf{1}_{[x_0^{n-1}]}(y) = \sum_{T^{r_n}y' = y} e^{S_{r_n}\phi_*(y')}\mathbf{1}_{[x_0^{n-1}]}(y').
    \end{align*}
    By the Markov property of the TMS, to each $y'$ such that $T^{r_n}y' = y$, there exists a unique $z'$ such that $T^{r_n}z' = z$ and $(y')_0^{r_n} = (z')_0^{r_n}$ (and conversely to each $z'$ we can associate a unique $y'$). Furthermore, since $y_0^{n-1} = z_0^{n-1} = x_0^{n-1}$, we have $(y')_0^{n+r_n - 1} = (z')_0^{n+r_n -1}$. In particular, it gives
    \begin{align*}
        L_{\phi_*}^{r_n}\mathbf{1}_{[x_0^{n-1}]}(y) & = \sum_{T^{r_n}z' = z} e^{S_{r_n}\phi_*(z')}e^{\pm \var_{r_n + n}S_n\phi_*}\mathbf{1}_{[x_0^{n-1}]}(z')\\
        & = e^{\pm W_{r_n}(\phi_*)}L^{r_n}_{\phi_*}\mathbf{1}_{[x_0^{n-1}]}(z).
    \end{align*}
    Hence, 
    \begin{align*}
        L_{\phi_*}^{r_n}\mathbf{1}_{[x_0^{n-1}]}(y) &= e^{\pm W_{r_n}(\phi_*)} \mu[x_0^{n-1}]^{-1} \int L_{\phi_*}^{r_n} \mathbf{1}_{[x_0^{n-1}]}\cdot \mathbf{1}_{[x_0^{n-1}]}\,\dd\mu\\
        & = e^{\pm W_{r_n}(\phi_*)}\mu[x_0^{n-1}]^{-1} \mu([x_0^{n-1}] \cap T^{-r_n}[x_0^{n-1}]).
    \end{align*}
    Going back to \eqref{eq:prop_having_return_=_topological_return}, it yields
    \begin{align*}
        \mu_{[x_0^{n-1}]}(r_{[x_0^{n-1}]} =r_n) &= \mu[x_0^{n-1}]^{-2} \int e^{\pm W_{r_n}(\phi_*)} \mu([x_0^{n-1}] \cap T^{-r_n}[x_0^{n-1}])\cdot\mathbf{1}_{[x_0^{n-1}]}\,\dd\mu\\
        & \leq e^{W_{r_n}(\phi_*)} \mu[x_0^{n-1}]^{-1}\mu([x_0^{n-1}] \cap T^{-r_n}[x_0^{n-1}]).
    \end{align*}
    If $r_n \leq n$, then 
    \begin{align*}
        \mu([x_0^{n-1}] \cap T^{-r_n}[x_0^{n-1}]) &= \mu[x_0^{r_n - 1}x_0^{n-1}]\\
        & \leq e^{W_1(\phi_*)}\mu[x_0]^{-1}\mu[x_0^{r_n}]\mu[x_0^{n-1}] \quad \text{(by Lemma \ref{lem:bounded_distortion_CMS})}.
    \end{align*}
    \noindent If $r_n > n$, we have 
    \begin{align*}
        \mu([x_0^{n-1}] \cap T^{-r_n}[x_0^{n-1}]) &= \sum_{\underset{(x_{n-1}a_0^{r_n - n - 1}x_0)\; \text{admissible}}{(a_0^{r_n - n -1}) \in V^{r_n - n}}} \mu[x_0^{n-1}a_0^{r_n - n - 1}x_0^{n-1}] \\
        & \leq \sum_{\underset{(x_{n-1}a_0^{r_n - n - 1}x_0)\; \text{admissible}}{(a_0^{r_n - n -1}) \in V^{r_n - n}}} e^{W_1(\phi_*)}\mu[x_0]^{-1} \mu[x_0^{n-1}a_0^{r_n - n - 1}x_0]\mu[x_0^{n-1}]\\
        & \leq e^{W_1(\phi_*)}\mu[x_0]^{-1}\mu[x_0^{n-1}]^2\,,
    \end{align*}
    where we used again Lemma \ref{lem:bounded_distortion_CMS}. Thus, we obtain
    \begin{align*}
        \mu_{[x_0^{n-1}]}(r_{[x_0^{n-1}]} =r_n) & \leq e^{W_{r_n}(\phi_*)}e^{W_1(\phi_*)}\mu[x_0]^{-1} \mu[x_0^{n\wedge r_n - 1}].
    \end{align*}
    Since $r_n \to +\infty$ when $n \to +\infty$ (by Lemma \ref{lem:periodicity_equivalent_to_finite_topological_return}), it converges to $0$ as $\mu$ is a non-atomic measure.
    \end{itemize} 
\end{proof}

\subsection{Connection with Markov Chains}
\label{section:Connection_with_Markov_Chains}

We show that a Markov chain on a countable (or finite) state space can be represented as a special topological Markov shift associated with a Markovian potential.\\ \index{Markov!chain}

\noindent Let $(X, T)$ be an irreducible TMS endowed with a recurrent Markovian measure $\mu$. Let $\pi$ be the stationary measure on $V$ and $P$ the transition kernel such that for all $[a_0^k] \in \mathcal{C}_{k+1}(X)$
\begin{align*}
    \mu[a_0^k] = \pi(a_0)\prod_{i=0}^{k-1} P(a_i, a_{i+1}).
\end{align*}

\begin{prop}
    \label{prop:Markov_chain_to_CMS}
    The Markov system $(X, T, \mu)$ is such that  $\mu$ is a fixed point of the Ruelle-Perron-Frobenius operator associated to the Markovian potential
    \begin{align*}
        \phi : x \mapsto \log \bigg(\frac{\pi(x_0)P(x_0, x_1)}{\pi(x_1)}\bigg).
    \end{align*}
\end{prop}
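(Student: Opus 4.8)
The plan is to verify directly that $\mu$ is an eigenmeasure of $L_\phi$ with eigenvalue one, i.e. $L_\phi^*\mu = \mu$, which is the precise meaning of ``$\mu$ is a fixed point of the RPF operator''. Everything rests on identifying the preimages of a point under the shift. For $x\in X$, the sequences $y$ with $Ty=x$ are exactly $y = bx := (b,x_0,x_1,\dots)$ with $A(b,x_0)=1$, equivalently $P(b,x_0)>0$; moreover $\pi(b),\pi(x_0)>0$ since the chain is irreducible recurrent, so $\phi(bx) = \log\big(\pi(b)P(b,x_0)/\pi(x_0)\big)$ is a well-defined real number depending only on the two coordinates $b=y_0$ and $x_0=y_1$. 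In particular $\var_2(\phi)=0$, so $\phi$ is Markovian (hence trivially Walters, with $W_k(\phi)=0$ for every $k\geq 1$).

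First I would check that the constant function $1$ is an eigenfunction: summing over preimages,
\begin{align*}
    L_\phi 1(x) = \sum_{b:\,A(b,x_0)=1}\frac{\pi(b)P(b,x_0)}{\pi(x_0)} = \frac{(\pi P)(x_0)}{\pi(x_0)} = 1,
\end{align*}
where the last step is the stationarity $\pi P = \pi$, itself equivalent to (and following from) the shift-invariance of $\mu$ on $1$-cylinders. Thus $h_\phi\equiv 1$, $\lambda_\phi = 1$ and $P_G(\phi) = 0$. Next I would compute, for an admissible word $(a_0^{k-1})$ with $k\geq 2$, that $L_\phi\mathbf{1}_{[a_0^{k-1}]}(x) = e^{\phi(a_0x)}\mathbf{1}_{[a_1^{k-1}]}(x) = \tfrac{\pi(a_0)P(a_0,a_1)}{\pi(a_1)}\mathbf{1}_{[a_1^{k-1}]}(x)$, using that $x_0=a_1$ on $[a_1^{k-1}]$. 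Integrating against $\mu$ and invoking the product formula $\mu[a_1^{k-1}] = \pi(a_1)\prod_{i=1}^{k-2}P(a_i,a_{i+1})$ gives $\int L_\phi\mathbf{1}_{[a_0^{k-1}]}\,\mathrm{d}\mu = \pi(a_0)\prod_{i=0}^{k-2}P(a_i,a_{i+1}) = \mu[a_0^{k-1}]$; the case $k=1$ is the same short computation, using $\sum_v P(a_0,v)=1$. Since cylinders generate $\mathscr{B}$ and $\mu$ is $\sigma$-finite, a standard monotone-class/density argument extends $\int L_\phi f\,\mathrm{d}\mu = \int f\,\mathrm{d}\mu$ to all $f\in L^1(\mu)$, which is exactly $L_\phi^*\mu = \mu$.

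Finally, collecting these facts: $\mu$ is conservative (being the stationary measure of a recurrent chain) and finite on cylinders, $L_\phi^*\mu=\mu$, $L_\phi h_\phi=h_\phi$ and $\int h_\phi\,\mathrm{d}\mu=\mu(X)$, so by the GRPF theorem (Theorem~\ref{thm:GRPF}) $\phi$ is recurrent — positive recurrent if $\mu(X)<\infty$, null recurrent otherwise — and $\mu$ coincides with both $\nu_\phi$ and (as $h_\phi\equiv 1$) $\mu_\phi$; by Remark~\ref{rem:definition_phi_*}, $\phi_* = \phi$ and $\widehat{T}_\mu = L_{\phi_*} = L_\phi$. There is no genuine difficulty in the argument: it is a direct computation, and the only points demanding a little attention are the $k=1$ boundary case of the cylinder identity and the monotone-class step passing from cylinder indicators to arbitrary $L^1$ functions.
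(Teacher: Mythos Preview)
Your proof is correct. Both your argument and the paper's are direct cylinder computations, but they aim at slightly different targets: the paper verifies the two-function duality $\int L_\phi \mathbf{1}_{[b_0^p]}\cdot \mathbf{1}_{[a_0^k]}\,\dd\mu = \int \mathbf{1}_{[b_0^p]}\cdot \mathbf{1}_{[a_0^k]}\circ T\,\dd\mu$ on pairs of cylinder indicators, which gives $\widehat{T}_\mu = L_\phi$ immediately (and hence $L_\phi^*\mu=\mu$ by taking $g=1$). You instead take $g=1$ from the start, obtain $L_\phi^*\mu=\mu$ directly, and then recover $\widehat{T}_\mu=L_\phi$ through the GRPF theorem and Remark~\ref{rem:definition_phi_*}. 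Your route proves exactly the statement as phrased and gives the extra information ($h_\phi\equiv 1$, $P_G(\phi)=0$, $\phi_*=\phi$) for free, at the cost of invoking the heavier GRPF machinery; the paper's route is more self-contained but establishes the stronger identity $\widehat{T}_\mu=L_\phi$ without any appeal to GRPF.
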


\begin{proof}[Proof (of Proposition \ref{prop:Markov_chain_to_CMS})]
    We just need to check that the transfer operator $\widehat{T}$ associated to $\mu$ is equal $L_{\phi}$. We show that $L_{\phi}$ satisfies the defining equation of the transfer operator. Indeed, for all $p, k \geq 0$,  $[b_0^p] \in \mathcal{C}_{p}(X)$ and $[a_0^k] \in \mathcal{C}_k(X)$ (we can assume that $k \geq p - 1$) we have 
    \begin{align*}
        \int L_{\phi}\mathbf{1}_{[b_0^p]} \cdot \mathbf{1}_{[a_0^k]}\,\dd\mu &= \int \sum_{Ty = x} \frac{\pi(y_0)P(y_0, x_0)}{\pi(x_0)} \mathbf{1}_{[b_0^p]}(y)\mathbf{1}_{[a_0^k]}(x)\,\dd\mu(x)\\
        & = \frac{\pi(b_0)P(b_0, a_0)}{\pi(a_0)} \mathbf{1}_{\{b_1^p = a_0^{p-1}\}}\int \mathbf{1}_{[b_0^p]}(b_0x)\cdot \mathbf{1}_{[a_0^k]}(x)\,\dd\mu(x)\\
        & = \frac{\pi(b_0)P(b_0, a_0)}{\pi(a_0)} \mathbf{1}_{\{b_1^p = a_0^{p-1}\}} \mu[a_0^k]\\
        & = \pi(b_0)P(b_0, a_0)\prod_{i = 0}^{k-1} P(a_i, a_{i+1})\\
        & = \mu([b_0^{p}] \cap T^{-1}[a_0^k])\\
        & = \int \mathbf{1}_{[b_0^p]} \cdot \mathbf{1}_{[a_0^k]}\circ T\,\dd\mu\,.
    \end{align*}
    Of course, since $L_{\phi}1 = 1$, $P_G(\phi) < +\infty$, the generalized Ruelle's Perron-Frobenius Theorem \ref{thm:GRPF} can be applied for $\phi$.
\end{proof}

This ensures that a Markov chain can be viewed as a topological Markov shift (TMS), allowing specific results for recurrent Markov chains to be derived as corollaries of general results for recurrent TMS. Although we will not use this property in what follows, there exists a converse to Proposition \ref{prop:Markov_chain_to_CMS}. Indeed, given a Markovian potential $\phi$ satisfying $L_{\phi}1 = 1$, the associated TMS can be interpreted as a Markov chain in which the direction of the graph's arrows is reversed (see \cite[Section 4.3]{tdfsurvey}).

\subsection{Inducing for CMS}
\label{section:induced_systems_CMS}

Inducing is a powerful tool for deriving limit laws in the context of quantitative recurrence for dynamical systems. For TMS, we aim to extend this approach by inducing on suitably chosen sets. If the inducing set $Y$ is of the form $Y := [V'] := \bigcup_{v'\in V}[v']$ for some $V' \subset V$, we even have a representation of the induced system as a full shift. Indeed, we set 
\begin{align*}
    V_Y := \left\{ \bigsqcup_{q\in V'} [pwq]\;|\; p \in V', \; w \in (V\backslash V')^{n}, \, n \geq 0\right\}
\end{align*}
(with the convention that $(V\backslash V')^0 = \emptyset$) and consider the full shift on the alphabet $V_Y$, that is to say the dynamical system $(V_Y^{\mathbb{N}}, \sigma)$ with $\sigma$ the shift on $V_Y^{\mathbb{N}}$. Then, the exists a (measurable) isomorphism between $(V_Y^{\mathbb{N}}, \sigma)$ and $(Y, T_Y)$ and we can identify the two systems. For the induced system, we will denote $\mathcal{C}_{Y}(n)$ the set of $n$-cylinders on the induced system, \textit{i.e.},
\begin{align*}
    \mathcal{C}_Y(n) = \bigvee_{k = 0}^{n-1} T_Y^{-k}\{[\xi], \; \xi \in V_Y\}.
\end{align*}
For example, if $\xi = \bigsqcup_{q \in V'} [pw_0^{n-1}q] \in V_Y$, then $\xi \in \mathcal{C}_Y(1)$ but also $\xi$ is $\mathcal{C}(n+2)$-measurable (and included in an element of $\mathcal{C}(n+1)$). Of course it also works for deeper cylinders. For example, $\xi = \bigsqcup_{q'\in V'}[pv_0^{n-1}qw_0^{m-1}q'] \in \mathcal{C}_Y(2)$ is $\mathcal{C}(n + m + 3) = \mathcal{C}(1 + r_{Y}^{(2)}|_{[\xi]})$-measurable (and included in an element of $\mathcal{C}(n+m+2)$). \\

\noindent In the particular case, $Y = [v]$ for some $v \in V$, the expression of $V_Y$ is simpler as we have $V_Y = \{[vwv], \, w \in (V\backslash \{v\})^n, \, n \geq 0\}$. Furthermore, $\xi = [vw_0^{n-1}v] \in \mathcal{C}_{[v]}(1)$ is not only $\mathcal{C}(n+2)$-measurable but is in fact an element for $\mathcal{C}(n+2)$.\\ 

\noindent We can also induce potentials on $Y$. For $\phi : \Omega \to \mathbb{R}$, we denote $\phi_Y := \sum_{k = 0}^{r_{Y} - 1} \phi \circ T_k$ the induced potential on $Y$. Fortunately, some regularity properties are preserved through inducing when $Y = [V']$ for some $V' \subset V$.

\begin{prop}
    \label{prop:regularity_implies_induced_regularity}
    If $\phi$ is weakly Hölder (resp. Walters) on $\Omega$, then $\phi_Y$ is weakly Hölder (resp. Walters) on $(V_Y)^{\mathbb{N}}$. 
\end{prop}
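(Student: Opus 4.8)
The plan is to compute the variations $\var_n(\phi_Y)$ on $(V_Y)^{\mathbb{N}}$ in terms of the variations of $\phi$ on $\Omega$, exploiting the key combinatorial fact noted in Section~\ref{section:induced_systems_CMS}: an $n$-cylinder $[\xi_0^{n-1}]$ in the induced system $(V_Y^{\mathbb{N}},\sigma)$ is contained in an $\Omega$-cylinder of depth $S_n r_Y(\xi) = r_Y^{(n)}|_{[\xi_0^{n-1}]}$ (indeed, it is $\mathcal{C}(1+r_Y^{(n)})$-measurable). Thus two points $x,y\in Y$ with $\xi(x)_0^{n-1}=\xi(y)_0^{n-1}$ (where $\xi(\cdot)$ denotes the induced symbolic coding) agree on their first $r_Y^{(n)}$ coordinates in $\Omega$, and $T_Y^k x$, $T_Y^k y$ agree on their first $r_Y^{(n)} - r_Y^{(k)}$ coordinates for each $0\le k < n$. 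Since $\phi_Y = S_{r_Y}\phi = \sum_{k=0}^{r_Y-1}\phi\circ T^k$, we get
\begin{align*}
    |\phi_Y(x) - \phi_Y(y)| \le \sum_{k=0}^{r_Y^{(1)}-1} \var_{\,r_Y^{(n)}-k}(\phi).
\end{align*}
First I would set this estimate up carefully, being attentive to the fact that $r_Y^{(1)}(x) = r_Y^{(1)}(y) =: N$ is constant on $[\xi_0]$, so the sum has a fixed number of terms $N$; and since $r_Y^{(n)} \ge N + (n-1)$ (each further return costs at least one step), we have $r_Y^{(n)} - k \ge n$ for all $0 \le k < N$, so every term $\var_{r_Y^{(n)}-k}(\phi)$ is at least a variation of order $n$.

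For the \emph{weakly H\"older} case: assuming $\var_m(\phi)\le C\eta^m$ for $m\ge 2$, the bound above gives $|\phi_Y(x)-\phi_Y(y)| \le C\sum_{k=0}^{N-1}\eta^{\,r_Y^{(n)}-k} \le C\eta^{\,r_Y^{(n)}-N+1}\cdot\frac{1}{1-\eta}$. Using $r_Y^{(n)} \ge N+(n-1)$ we get $r_Y^{(n)}-N+1 \ge n$, so $\var_n(\phi_Y) \le \frac{C}{1-\eta}\eta^n$, which is weak H\"older on $(V_Y)^{\mathbb{N}}$ with the same base $\eta$ (one must also check $\var_1(\phi_Y)<\infty$, which is not required for the weak H\"older definition as stated but follows from the same argument with $n=1$ provided $\phi$ itself has finite first variation; in any case the definition only requires control for $n\ge 2$). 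For the \emph{Walters} case, one works directly with $W_j(\phi_Y) = \sup_{n\ge 1}\var_{n+j}(S^{\,Y}_n\phi_Y)$ where $S^Y_n$ is the Birkhoff sum for $T_Y$; but $S^Y_n\phi_Y = S_{r_Y^{(n)}}\phi$ along the orbit, so this reduces to controlling $\var$ of Birkhoff sums $S_m\phi$ over $\Omega$-cylinders whose depth exceeds $m$ by a controlled amount. The estimate $\var_{n+j}(S^Y_n\phi_Y) \le W_{j'}(\phi)$ for a suitable $j' = j'(j)\to\infty$ as $j\to\infty$ will follow because increasing the induced-cylinder depth by $j$ increases the $\Omega$-cylinder depth by at least $j$, while the number of summands $r_Y^{(n)}$ matches exactly the Birkhoff length; thus $W_j(\phi_Y)\le W_j(\phi)\to 0$.

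The main obstacle I anticipate is bookkeeping the \emph{two different inducing-set shapes}: when $Y=[V']$ for $V'\subset V$ the alphabet $V_Y$ consists of the countable disjoint unions $\bigsqcup_{q\in V'}[pwq]$, and an induced symbol pins down an $\Omega$-word only up to its \emph{last} letter ($\mathcal{C}(n+1)$- but not $\mathcal{C}(n+2)$-measurability in the example), whereas when $Y=[v]$ is a single one-cylinder an induced symbol \emph{is} an $\Omega$-cylinder. This off-by-one in the measurability statement must be tracked precisely in the variation estimate — concretely, one should work with the conservative bound that $[\xi_0^{n-1}]$ is $\mathcal{C}(r_Y^{(n)})$-measurable (depth $r_Y^{(n)}$, not $1+r_Y^{(n)}$) to be safe, which still suffices since we only need $\var$ of order $\ge n$. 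I would therefore state one lemma — ``$[\xi_0^{n-1}]\subset$ an $\Omega$-cylinder of depth $\ge r_Y^{(n)}$, with $r_Y^{(n)} \ge n$'' — prove it once for the general $Y=[V']$ case, and then the weak-H\"older and Walters conclusions both fall out of the single displayed variation inequality above. The remaining verifications ($\phi_Y$ is finite, measurable, real-valued) are routine and I would not grind through them.
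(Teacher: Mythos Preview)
Your proposal is correct and follows essentially the same route as the paper. The paper's proof treats only the Walters case (stating the H\"older case is similar) and is exactly your final paragraph: $S_n^Y\phi_Y = S_{r_Y^{(n)}}\phi$, an induced $(n+k)$-cylinder sits inside an $\Omega$-cylinder of depth $r_Y^{(n+k)} = r_Y^{(n)} + r_Y^{(k)}(T_Y^n\cdot) \ge r_Y^{(n)} + k$, hence $\var_{n+k}(S_n^Y\phi_Y) \le W_k(\phi)$ directly, giving $W_k(\phi_Y)\le W_k(\phi)$; your term-by-term geometric sum for the weakly H\"older case is the natural explicit version of what the paper omits.
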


\begin{rem}
    Here, we prefer to write $(V_Y)^{\mathbb{N}}$ rather than $Y$ because to talk about regularity with the induced topology, we need to make sure that $T_Y^n$ is well defined for every $n$, which is the case when we work on $(V_Y^{\mathbb{N}}, \sigma)$. So we need to drop the points for which the induced map is not well defined for some $n \geq 1$. For example consider a point $x \in \Omega$ such that $x_0 \in V'$ but $x_i \notin V'$ for all $i \geq 1$. The induced map is not well defined and thus we cannot talk about the induced regularity. However, for the original system, the point $x$ can be still be considered and compared with other points.
\end{rem}

\begin{rem}
    Proposition \ref{prop:regularity_implies_induced_regularity} is not true in general for summable variation, \textit{i.e.}, if $\phi$ has summable variations, then $\phi_Y$ can have non-summable variations. 
\end{rem}

\begin{proof}[Proof (of Proposition \ref{prop:regularity_implies_induced_regularity})]
    \noindent We prove it only for Walters' condition, the proof in the Hölder case being similar. For all $n,k\geq 1$ and $x, y$ in a same element of $\mathcal{C}_Y(n+ k)$ (so in a same element of $\mathcal{C}(r_Y^{(n+k)}(x))$, we have
    \begin{align*}
        S_n \phi_Y(x) - S_n\phi_Y(y) &= \sum_{k = 0}^{r_Y^{(n)}(x) - 1} \phi \circ T^k(x) - \sum_{k = 0}^{r_Y^{(n)}(x) - 1} \phi \circ T^k(y) \\
        & = S_{r_Y^{(n)}(x)} \phi(x) - S_{r_Y^{(n)}(x)} \phi(y)\\
        & \leq \var_{r_Y^{(n)}(x) + r^{(k)}_Y(T_Y^n(x))} S_{r_Y^{(n)}(x)}\phi \\
        & \leq \var_{r_Y^{(n)}(x) + k} S_{r_Y^{(n)}(x)}\phi\\
        & \leq \sup_{n\geq 1} \var_{n+k} S_n\phi = W_k(\phi)\,.
    \end{align*}
    Hence, $\sup_{n\geq 1}\var_{n+k} S_n \phi_Y \leq W_k(\phi)$.
\end{proof}

One of the advantages of inducing is that we can recover a crucial property in the context of CMS, the BIP property (see Definition \ref{defn:BIP_property}).

\section{Proofs of results from Section \ref{section:positive_recurrent_CMS}}

\subsection{Abstract conditions for convergence towards the (compound) Poisson point process}
In the finite measure preserving setting, we recall the assumptions and the abstract theorem from \cite{Zwe22} for proving convergence towards (compound) Poisson point processes. 

\begin{assumption}
    \label{assum:Roland_Compound_measure_finie}
    Let $(B_n)_{n\geq 1}$ be a sequence of asymptotically rare events. Assume the following
    \begin{enumerate}[label = \arabic*.]
        \item \label{cond:CPP_extremal index} There exist two subsets $U(B_n)$ and $Q(B_n)$ of $B_n$ such that $B_n = U(B_n) \sqcup Q(B_n)$ and $\mu(Q(B_n))/\mu(B_n) \to \theta \in (0,1]$ when $n \to +\infty$ (we allow $U(B_n) = \emptyset$).
        \item \label{cond:CPP_good_compact_set} There exists a sequence of measurable functions $\tau_n : B_n \to \mathbb{N}$ and a compact subset $\mathcal{U}$ of $L^1(\mu)$ such that 
        \begin{align*}
            \widehat{T}^{\tau_n}(\mathbf{1}_{Q(B_n)}/\mu(Q(B_n)) := \sum_{k\geq 0} \widehat{T}^{k}(\mathbf{1}_{Q(B_n) \cap \{\tau_n = k\}}/\mu(Q(B_n))) \in \mathcal{U} \quad \forall n \in \mathbb{N}.
        \end{align*}
        \item \label{cond:CPP_tau_n_small_enough} The sequence $(\tau_n)_{n \geq 1}$ is such that 
        \begin{align*}
            \mu(B_n)\,\tau_n \xRightarrow[n \to +\infty]{\mu_{B_n}} 0
        \end{align*}
        \item \label{cond:CPP_no_cluster_from_Q} For $Q(B_n)$ we have $\mu_{Q(B_n)}(r_{B_n}\leq \tau_n) \xrightarrow[n \to +\infty]{} 0$. \\ \\
    If $U(B_n) \neq \emptyset$, assume furthermore
        \item \label{cond:CPP_cluster_from_U} For $U(B_n)$, we have $\mu_{U(B_n)}(r_{B_n} > \tau_n) \xRightarrow[n \to +\infty]{} 0$.
        \item \label{cond:CPP_compatibility_geometric_law} Finally, we have
        \begin{align*}
            \Big\| \widehat{T}_{B_n}(\mathbf{1}_{U(B_n)}/\mu(U(B_n)) - \mathbf{1}_{B_n}/\mu(B_n) \Big\|_{L^{\infty}(\mu)} \xrightarrow[n \to +\infty]{} 0.
        \end{align*}
    \end{enumerate}
\end{assumption}

\begin{thm}[\textup{\cite[Theorems 3.6 and 3.8]{Zwe22}}]
    \label{thm:Roland_CPP_convergence_when_conditions_are_satisfied}
    Let $(X, \mathscr{B}, \mu, T)$ be an ergodic probability preserving dynamical systems. Let $(B_n)_{n\geq 1}$ be a sequence of asymptotically rare events satisfying Assumption \ref{assum:Roland_Compound_measure_finie}. Then,
    \begin{align*}
        N_{B_n}^{\id} \xRightarrow[n \to +\infty]{\mu} \CPPP(\theta, \geo(\theta)) \quad \text{and} \quad N_{B_n}^{\id} \xRightarrow[n \to +\infty]{\mu_{B_n}} \RPP(W_{1, \theta}).
    \end{align*}
    In particular, if $\theta = 1$ or $U(B_n) = \emptyset$, 
    \begin{align*}
        N_{B_n}^{\id} \xRightarrow[n \to +\infty]{\mu} \PPP \quad \text{and} \quad N_{B_n}^{\id} \xRightarrow[n \to +\infty]{\mu_{B_n}} \PPP.
    \end{align*}
\end{thm}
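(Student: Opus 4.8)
The plan is to reconstruct the argument of \cite{Zwe22}. Write $b_n := \mu(B_n)\to 0$ and recall $B_n = U(B_n)\sqcup Q(B_n)$ with $\mu(Q(B_n)) = (\theta+o(1))\,b_n$ by \ref{cond:CPP_extremal index}. I would first prove the return statement, $N_{B_n}^{\id}\Rightarrow\RPP(W_{1,\theta}(\theta))$ under $\mu_{B_n}$, and then deduce the $\mu$-statement $N_{B_n}^{\id}\Rightarrow\CPPP(\theta,\geo(\theta))$, using that $\CPPP(\theta,\geo(\theta)) = \DRPP(\mathcal{E}(\theta),W_{1,\theta}(\theta))$ by Definition~\ref{defn:(compound)_Poisson_point_process} together with the classical hitting/return dictionary for probability preserving systems and \cite[Corollary~6]{Zwei07} (which permits replacing $\mu$ by any fixed absolutely continuous density when identifying the first-entrance delay $\mathcal{E}(\theta)$). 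Since $\mathcal{N}_{\mathbb{R}_+}^{\#}$ is Polish, it suffices to prove convergence of the joint laws of the counts $N_{B_n}^{\id}((t_{i-1},t_i])$, $1\le i\le m$, for all $0=t_0<\dots<t_m$, while bookkeeping multiplicities (the limits being a.s.\ finite on bounded sets).

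Second, I would set up a cluster decomposition. Starting inside $B_n$, iterate the first-return map $T_{B_n}$, recording the itinerary $y^{(0)},y^{(1)},\dots$ in $B_n$ and the gaps $g^{(k)} := r_{B_n}(y^{(k)})$; call step $k$ a \emph{continuation} if $y^{(k)}\in U(B_n)$ and a \emph{break} if $y^{(k)}\in Q(B_n)$, and let a cluster be a maximal run ending at a break. Condition~\ref{cond:CPP_cluster_from_U} gives $g^{(k)}\le\tau_n(y^{(k)})$ with high probability at a continuation, and \ref{cond:CPP_tau_n_small_enough} gives $b_n\tau_n\to 0$ in $\mu_{B_n}$-probability, so within-cluster gaps scale to $0$ and an entire cluster collapses to a single atom of the limit with multiplicity equal to its size. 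Condition~\ref{cond:CPP_compatibility_geometric_law} --- the normalised density landed on $B_n$ at a return, restricted to $U(B_n)$, is asymptotically (in $L^\infty(\mu)$) its restriction to all of $B_n$ --- combined with \ref{cond:CPP_extremal index} should yield that, conditionally on having just landed in $B_n$, the chance of a continuation tends to $1-\theta$ independently of the past; hence cluster sizes are asymptotically i.i.d.\ $\geo(\theta)$.

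Third, and centrally, I would establish the between-cluster exponential limit. A break occurs at $y\in Q(B_n)$; by \ref{cond:CPP_no_cluster_from_Q} the ensuing gap exceeds $\tau_n(y)$ w.h.p., and by \ref{cond:CPP_good_compact_set} the density that $\widehat{T}^{\tau_n}$ transports onto then lies in the fixed norm-compact set $\mathcal{U}\subset L^1(\mu)$. The key lemma is: for \emph{any} densities $u_n$ drawn from a fixed norm-compact $\mathcal{K}\subset\mathcal{D}(\mu)$, one has $b_n r_{B_n}\Rightarrow\mathcal{E}(\theta)$ under $u_n\,\dd\mu$, and the successive hits organise into the cluster pattern above. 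I would prove it by an inductive renewal-type decomposition on the number of returns to $B_n$ inside a window $[0,t/b_n]$: the mean ergodic theorem for $\widehat{T}$, made \emph{uniform over $\mathcal{K}$} by compactness, shows each return ``re-randomises'' the conditional density into a compact family, so the window subdivides into asymptotically independent pieces, each hitting $B_n$ with probability $\sim\varepsilon\cdot(\text{fresh rate})$; and the fresh rate is $\theta b_n$, not $b_n$, precisely because \ref{cond:CPP_no_cluster_from_Q} prevents a $Q$-visit from being immediately followed by another $B_n$-visit, so only the $Q$-part (of mass $\sim\theta b_n$) runs the Poissonian clock. Iterating (each break re-enters $\mathcal{U}$, so the scheme is self-consistent) and absorbing the $\tau_n$-detours via $b_n\tau_n\to0$, the cluster starts converge to $\PPP(\theta)$ with i.i.d.\ $\geo(\theta)$ multiplicities, i.e.\ $\CPPP(\theta,\geo(\theta))$ under $\mu$; under $\mu_{B_n}$ one starts already inside a cluster at a $(1-\theta,\theta)$ split into $U(B_n)/Q(B_n)$ (since $\mu_{B_n}(U(B_n))\to1-\theta$), so the first scaled gap is $0$ with probability $1-\theta$ and $\sim\mathcal{E}(\theta)$ otherwise --- exactly $W_{1,\theta}(\theta)$ --- and likewise for all later gaps by the re-randomisation, giving $\RPP(W_{1,\theta}(\theta))$. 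When $\theta=1$ or $U(B_n)=\emptyset$ there are no continuations and no clustering, and one reads off $\PPP(1)=\PPP$.

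The hard part will be the key lemma, specifically the asymptotic independence of hits in disjoint time-windows: plain ergodicity decouples only for a \emph{fixed} density, whereas here the density changes with $n$ and re-randomises after every cluster, so the decoupling must hold uniformly over the compact family $\mathcal{U}$ and its forward iterates. Compactness of $\mathcal{U}$ in $L^1(\mu)$ is exactly what upgrades ``$N^{-1}\sum_{k<N}\widehat{T}^k u\to 1$ for each $u$'' to a statement uniform on $\mathcal{K}$; conditions~\ref{cond:CPP_good_compact_set} and \ref{cond:CPP_no_cluster_from_Q} carry this analytic load, while \ref{cond:CPP_extremal index}, \ref{cond:CPP_cluster_from_U} and \ref{cond:CPP_compatibility_geometric_law} serve only to pin down the geometric multiplicity and the value $\theta$ of the rate.
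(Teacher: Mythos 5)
You should first note that the paper does not prove this statement at all: Theorem \ref{thm:Roland_CPP_convergence_when_conditions_are_satisfied} is imported verbatim from \cite{Zwe22} (Theorems 3.6 and 3.8 there) and is used as a black box, so there is no internal proof to compare against. Judged on its own, your outline does reproduce the known architecture of such results: cluster decomposition via the induced map $T_{B_n}$, geometric multiplicities extracted from \ref{cond:CPP_extremal index} and \ref{cond:CPP_compatibility_geometric_law}, collapse of intra-cluster gaps from \ref{cond:CPP_tau_n_small_enough} and \ref{cond:CPP_cluster_from_U}, and exponential inter-cluster gaps driven by \ref{cond:CPP_good_compact_set} and \ref{cond:CPP_no_cluster_from_Q} together with compactness of $\mathcal{U}$; the transfer between the $\mu_{B_n}$- and $\mu$-statements via the hitting/return dictionary is also the standard move.

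The genuine gap is in your ``key lemma''. As formulated ($b_n r_{B_n}\Rightarrow\mathcal{E}(\theta)$ uniformly over a compact family of densities, with the successive hits organising into the cluster pattern) it essentially \emph{is} the theorem, and your proposed proof of it asserts the conclusion: saying that the window $[0,t/b_n]$ ``subdivides into asymptotically independent pieces, each hitting $B_n$ with probability $\sim\varepsilon\cdot\theta b_n$'' is precisely the Poissonian structure you are trying to establish, and the uniform mean ergodic statement over a compact set (the content of Lemma \ref{lem:convergence_Birkhoff_transfer_operator_compact}, i.e.\ \cite[Theorem 3.1]{Zwe07_InfiniteMeasurePreservingTransformationsWithCompactFirstRegeneration}) does not deliver it: that lemma only lets you compare the laws of the rare-event process under two different starting densities drawn from the same compact set; it gives neither independence across disjoint time windows nor the value of the rate. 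The way this is made rigorous---both in \cite{Zwe22} and in the infinite-measure counterpart proved in this paper (Theorem \ref{thm:sufficient_conditions_convergence_other_point_processes}, via Theorem \ref{thm:HTS-REPP_vs_RTS-REPP_infinite_measure_renormalized_measure})---is different in one essential respect: one uses tightness to extract subsequential limits of the finite-dimensional distributions of the rescaled return process, uses the compactness/uniformity lemma only to show these limits are insensitive to the starting density and to the $\tau_n$-detours, and then \emph{identifies} the limit as the unique solution of the return/hitting duality integral equation (the finite-measure analogue of \eqref{eq:relation_hitting_return_infinite_measure}), from which the exponential inter-cluster law with rate $\theta$ and the $\geo(\theta)$ compounding follow; the only place where a specific process must be exhibited is in checking that the candidate limit is indeed a fixed point. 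You invoke the duality only at the very end to pass from returns to hittings, not as the identification device, so at the decisive step your argument is circular. Secondarily, the claim that \ref{cond:CPP_compatibility_geometric_law} makes the continuation probability ``independent of the past'' needs to be run through the same fixed-point/fdd machinery to yield genuinely i.i.d.\ multiplicities independent of the cluster positions, rather than being asserted from the one-step $L^\infty$ estimate alone.
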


In the positive recurrent case, we exploit the inducing equivalence \cite[Theorem 3]{FFTV16} (see also \cite[Theorem 11.1]{Zwe22}) to reduce the problem to verifying the sufficient conditions and Theorem \ref{thm:Roland_CPP_convergence_when_conditions_are_satisfied} directly for the induced system. \\

Unfortunately, this strategy breaks down in the infinite measure setting, which will be addressed in Section \ref{section:proofs_for_null-recurrent_TMS}. There, the absence of suitable inducing method necessitates a direct proof and other sufficient conditions.

\subsection{Proof of Theorem \ref{thm:all-point_REPP_positive_recurrent_CMS}}
\label{section:proof_of_thm_all_point_REPP_positive_recurrent}

We prove that a topologically mixing CMS associated to a positive recurrent potential have the \textit{all-point REPP} property for cylinders with a \textit{dichotomy} between periodic and non-periodic points. \\

The plan is to use the strategy explained in the previous section and Theorem \ref{thm:Roland_CPP_convergence_when_conditions_are_satisfied}. We start with the proof for non periodic points, \textit{i.e} Theorem \ref{thm:all-point_REPP_positive_recurrent_CMS}-1.

\begin{proof}[Proof (of Theorem \ref{thm:all-point_REPP_positive_recurrent_CMS}-1.)] To use Theorem \ref{thm:Roland_CPP_convergence_when_conditions_are_satisfied} and get the convergence as a consequence, we need to check the Assumption \ref{assum:Roland_Compound_measure_finie}. Furthermore, since we consider non periodic points here, we choose $Q(B_n) = B_n$ and $U(B_n) = \emptyset$ and the extremal index $\theta$ is trivially equal to $1$. Thus we only need to check assumptions from Assumption \ref{assum:Roland_Compound_measure_finie}-\ref{cond:CPP_good_compact_set} to \ref{assum:Roland_Compound_measure_finie}-\ref{cond:CPP_compatibility_geometric_law}. \\

\noindent We split the proof in two distinct parts depending on the (topological) recurrence properties of the point $x \in \Omega$ considered. \\

\noindent \textbf{Case 1 (Infinitely recurrent).} Assume $x \in \Omega$ is such that $r_{x_0}^{(k)}(x) < +\infty$ for all $k\geq 1$, (\textit{i.e.}, $x$ returns an infinite number of time into $[x_0]$, in particular, $\mu$-almost every point verify this hypothesis). Set $\tau_n := \min\{k \geq 1 \;\big|\; r^{(k)}_{x_0}(x) \geq n-1\}$. \cite[Theorem 3]{FFTV16} (or \cite[Theorem 11.1]{Zwe22}) ensures that the convergence of the REPP is equivalent to the convergence of the REPP on the induced dynamical system $([x_0], T_{x_0}, \mu_{x_0})$. Note that $B_n := [x_0^{n-1}]$ is $\mathcal{C}_{x_0}(\tau_n)$ measurable and is included in a $(\tau_n - 1)$-cylinder for the induced system. Because $([x_0],T_{x_0}, \mu_{x_0})$ is $\psi$-mixing, we have exponential decay of the measure of cylinders and thus 
\begin{align*}
    \tau_n\mu_{x_0}(B_n) = \tau_n\mu[x_0]^{-1}\mu(B_n) \leq C\mu[x_0]^{-1}\tau_ne^{-c(\tau_n - 1)} \xrightarrow[n\to +\infty]{} 0
\end{align*}
\noindent because $B_n$ is included in an element of $\mathcal{C}_{x_0}(\tau_n - 1)$. It proves \ref{assum:Roland_Compound_measure_finie}-\ref{cond:CPP_tau_n_small_enough}, \textit{i.e.}, $\mu_{[x_0]}(B_n) \tau_n \to 0$.\\

\noindent We turn now to \ref{assum:Roland_Compound_measure_finie}-\ref{cond:CPP_good_compact_set} That is to say, we need to show that there exists a compact subset $\mathcal{U}$ of $L^1(\mu)$ such that
\begin{align*}
    \widehat{T}^{\tau_n}\bigg( \frac{\mathbf{1}_{B_n}}{\mu(B_n)}\bigg) \in \mathcal{U}, \quad \forall n\geq 1.
\end{align*}

As stated earlier, $B_n$ is measurable with respect to $\mathcal{C}_{[x_0]}(\tau_n)$ and thus we can write $B_n =: \bigsqcup_{k\in I_n} B_n^{(k)}$ where $B_n^{(k)} \in \mathcal{C}_{x_0}(\tau_n)$ for every $k$ and $I_n \subset \mathbb{N}$ (eventually infinite). Each $B_n^{(k)} =: [y_0^{p_{n,k} - 1}]$ is in  $\mathcal{C}(p_{n,k})$ with $p_{n,k}\geq n$ and where $(y_0^{p_{n,k} - 1})$ verifies $y_0^{n-1} = x_0^{n-1}$ and $y_{p_{n,k} - 1} = x_0$. Thus, by the bounded distortion Lemma \ref{lem:bounded_distortion_CMS_living_in_compact}, there exist $K_{x_0}, M_{x_0} > 1$ such that for all $n\geq 1$ and $k \in I_n$
\begin{align*}
    \frac{1}{\mu(B_n^{(k)})}\widehat{T}_{x_0}^{\tau_n}\mathbf{1}_{B_n^{(k)}} = \mu[y_0^{p_{n,k}-1}]^{-1}L_{\phi_*}^{p_{n,k}-1}\mathbf{1}_{[y_0^{p_{n,k}-1}]} \in \mathcal{U}_{K_{x_0}, M_{x_0}}(x_0)
\end{align*}
and thus, since $\mathcal{U}_{K_{x_0}, M_{x_0}}(x_0)$ is stable by (finite or countable) convex combination, we obtain
\begin{align*}
    \frac{1}{\mu(B_n)}\widehat{T}_{x_0}^{\tau_n}\mathbf{1}_{B_n}  = \sum_{k \in I_n} \frac{\mu(B_n^{(k)})}{\mu(B_n)}\frac{1}{\mu(B_n^{(k)})}\widehat{T}_{x_0}^{\tau_n}\mathbf{1}_{B_n^{(k)}} \in \mathcal{U}_{K_{x_0}, M_{x_0}}(x_0)
\end{align*}
and \ref{assum:Roland_Compound_measure_finie}-\ref{cond:CPP_good_compact_set} follows.\\

\noindent It remains to check \ref{assum:Roland_Compound_measure_finie}-\ref{cond:CPP_no_cluster_from_Q}, \textit{i.e.}, $\mu_{B_n}(r_{B_n}^{x_0} \leq \tau_n) \to 0$ as $n \to +\infty$. We have
\begin{align*}
    \mu_{B_n}(r^{x_0}_{B_n}\leq \tau_n) &\leq  \mu_{B_n}(r_{B_n} < n) + \mu(B_n)^{-1}\sum_{k \in I_n} \mu(B_n^{(k)} \cap T_{x_0}^{-\tau_n}(B_n)).
\end{align*}

\noindent We can use the bounded distortion Lemma \ref{lem:bounded_distortion_CMS} estimate to get 
\begin{align*}
    \mu(B_n)^{-1}\sum_{k \in I_n} \mu(B_n^{(k)} \cap T_{x_0}^{-\tau_n}(B_n)) \leq e^{W_1(\phi_*)}\mu[x_0]^{-1} \mu(B_n)^{-1} \sum_{k \in I_n} \mu(B_n^{(k)}) \mu(B_n) \xrightarrow[n \to +\infty]{} 0\,.
\end{align*}
\noindent On the other side, we need to control $\mu_{B_n}(r_{B_n} < n)$. If $r(x_0^{n-1}) \geq n$, the probability is $0$ so there is nothing to prove. So we can consider the integers such that $r(x_0^{n-1}) < n$. Then, we have
\begin{align*}
    \mu_{B_n}(r_{B_n} < n) = \mu_{B_n}(r_{B_n} = r(x_0^{n-1})) + \mu_{B_n}(r(x_0^{n-1}) < r_{B_n} < n).
\end{align*}

\noindent By Lemma \ref{lem:calcul_EI}, we know that the first term vanishes. Let $n =: q_nr(x_0^{n-1}) +r_n$ be the euclidean division of $n$ by $r(x_0^{n-1})$ (by hypothesis $q_n\geq 1$). We have 
\begin{align*}
    \{r(x_0^{n-1}) < r_{[x_0^{n-1}]} < n\} = \{q_n r(x_0^{n-1}) < r_{[x_0^{n-1}]} < n\}. 
\end{align*}
Indeed, by definition of $r(x_0^{n-1})$ before $q_n$, returns before $n$ can only occur at time $i r(x_0^{n-1})$ for $1 \leq i \leq q_n$. However, if a return occurs at such time for a point $y$, we have $x_0^{n-1} = y_0^{n-1} = y_{ir(x_0^{n-1})}^{ir(x_0^{n-1}) + n-1}$ and in particular $y_{r(x_0^{n-1})}^{r(x_0^{n-1}) + n-1} = x_0^{n-1}$ and thus $r_{[x_0^{n-1}]}(y) = r(x_0^{n-1})$.\\  
Thus, we have
\begin{align*}
    \mu_{[x_0^{n - 1}]}(r(x_0^{n-1})< r_{[x_0^{n - 1}]} < n) &\leq \sum_{k = q_n r(x_0^{n-1})}^{n-1} \mu_{[x_0^{n-1}]}(T^{-k}[x_0^{n-1}]) \\
    &\leq \sum_{\overset{q_nr(x_0^{n-1}) \leq k \leq n-1}{x_k = x_0}} \mu[x_0^{n-1}]^{-1}\mu[x_0^{k-1}x_0^{n-1}] \\
    & \leq \sum_{\overset{q_nr(x_0^{n-1}) \leq k \leq n-1}{x_k = x_0}} e^{W_1(\phi_*)}\mu[x_0]^{-1} \mu[x_0^{k-1}x_0] \quad \text{by Lemma \ref{lem:bounded_distortion_CMS}} \\
    & \leq e^{W_1(\phi_*)}\mu[x_0]^{-1} \sum_{k = j_n}^{+\infty}  Ce^{-ck} \xrightarrow[n\to +\infty]{} 0,
\end{align*}
where we used bounded distortion estimates and the exponential of the measure of the cylinders and $j_n := \max \{k\geq 0\;|\; r^{(k)}_{x_0}(x) \leq q_n r(x_0^{n-1})\} \to +\infty$ when $n\to +\infty$. Hence \ref{assum:Roland_Compound_measure_finie}-\ref{cond:CPP_no_cluster_from_Q} is satisfied and the limit behavior of the REPP is a direct consequence of Theorem \ref{thm:Roland_CPP_convergence_when_conditions_are_satisfied}.\\

\noindent \textbf{Case 2 (Finitely recurrent).}
Assume now  that $p := \inf \{k\geq 1 \;|\; r_{x_0}^{(k)}(x) = +\infty\}$ is finite. Consider $n \geq r^{(p-1)}_{x_0}(x)$. Since the TMS is topologically mixing, up to zero measure, we have again a decomposition of $B_n$ of the following form
\begin{equation*}
    B_n =: \bigsqcup_{k\in I_n} B_n^{(k)}, 
\end{equation*}
where $B_n^{(k)} \in \mathcal{C}_{x_0}(p)$ (contrary to the previous case, the depth $p$ for the induced system is constant this time). Setting $\tau_n := p$, we obtain 
\begin{align}
    \label{eq:compact_finitely_recurrent}
    \widehat{T}_{x_0}^{\tau_n} \left(\frac{\mathbf{1}_{B_n}}{\mu(B_n)}\right) &= \sum_{k\in I_n} \frac{1}{\mu(B_n)} T_{x_0}^{\tau_n} \mathbf{1}_{B_n^{(k)}} = \sum_{k \in I_n} \frac{1}{\mu(B_n)} L_{\phi^*}^{p_{n,k} - 1} \mathbf{1}_{B_n^{(k)}}. 
\end{align}
where $p_{n,k}$ is such that $B_n^{(k)} \in \mathcal{C}(p_{n,k})$.
As in the first case, this form a compact sequence ensuring Assumption \ref{assum:Roland_Compound_measure_finie}-\ref{cond:CPP_good_compact_set}\\

\noindent Furthermore, for the induced system, Assumption \ref{assum:Roland_Compound_measure_finie}-\ref{cond:CPP_tau_n_small_enough} is even easier because $\tau_n\mu_{x_0}(B_n) = p\mu_{x_0}(B_n) \xrightarrow[n\to +\infty]{} 0$. \\

\noindent Finally, we prove that \ref{assum:Roland_Compound_measure_finie}-\ref{cond:CPP_no_cluster_from_Q} holds. Assume $n > 2r_{x_0}^{(p-1)}(x)$. Each $B_n^{(k)} \in \mathcal{C}_{x_0}(p)$ and their $p$-th symbol is different from the $p-1$ preceding (because of the choice of $n$). Hence,
\begin{align*}
    \mu_{B_n}(r^{x_0}_{B_n} \leq \tau_n) & = \frac{\mu_{x_0}(B_n\cap \{r^{x_0}_{B_n} \leq p\})}{\mu_{x_0}(B_n)} = \frac{\mu_{x_0}(B_n \cap T_{x_0}^{-p}B_n)}{\mu_{x_0}(B_n)} \\
    & \leq C \frac{\mu_{x_0}(B_n)^2}{\mu_{x_0}(B_n)} = C \mu_{x_0}(B_n) \xrightarrow[n\to +\infty]{} 0
\end{align*}
by bounded distortion (Lemma \ref{lem:bounded_distortion_CMS}).
\end{proof}

\noindent We turn now to the study of periodic points. This time, due to the presence of cluster, the limit process is a Compound Poisson Process. However, the study follows the same path as in the finite case. The main difference is that we need to look at deeper cylinders (to control the exit of the cluster) but the additional depth is the period and it vanishes in the limit. On the other side, periodic points also have nicer recurrence properties, for example they are made by only a finite number of symbols, and it can give a valuable help in the proofs.

\begin{proof}[Proof (of Theorem \ref{thm:all-point_REPP_positive_recurrent_CMS}-2.)]
    Let $x \in \Omega$ be a periodic point of prime period $q$. Consider $Q(B_n) = B_n \cap T^{-q}B_n^c$ and $U(B_n) = B_n \cap T^{-q}B_n$. By Lemma \ref{lem:calcul_EI}, Assumption \ref{assum:Roland_Compound_measure_finie}-\ref{cond:CPP_extremal index} is satisfied with $\theta = 1 - e^{S_q\phi(x) - qP_{G}(\phi)}$. Again, it is enough to work on the induced system $([x_0], T_{x_0}, \mu_{x_0})$. Let $\tau_n := \min\{k\geq 1\;|\; r_{x_0}^{(k)}(x) \geq n + q -1\}$. Then, $Q(B_n)$ is measurable with respect to $\mathcal{C}_{x_0}(\tau_n + q)$ (at worst) and is included in an element of $\mathcal{C}_{x_0}(\tau_n - 1)$. As in the proof of Theorem \ref{thm:all-point_REPP_positive_recurrent_CMS}-1., Assumptions \ref{assum:Roland_Compound_measure_finie}-\ref{cond:CPP_good_compact_set}, \ref{assum:Roland_Compound_measure_finie}-\ref{cond:CPP_tau_n_small_enough} and \ref{assum:Roland_Compound_measure_finie}-\ref{cond:CPP_no_cluster_from_Q} are satisfied as the additional $q$ disappears in the limit. In the periodic case, we need to show the two other Assumptions \ref{assum:Roland_Compound_measure_finie}-\ref{cond:CPP_cluster_from_U} and \ref{assum:Roland_Compound_measure_finie}-\ref{cond:CPP_compatibility_geometric_law}. We have $U(B_n) = B_n \cap T^{-q}B_n = [x_0^{n + q - 1}]$. Since $r_{B_n} = q$ on $U(B_n)$ and $\tau_n \xrightarrow[n\to +\infty]{} +\infty$, assumption \ref{assum:Roland_Compound_measure_finie}-\ref{cond:CPP_cluster_from_U} is easy. Finally, for all $z, z' \in [x_0^{n-1}]$, we have
    \begin{align*}
        & \widehat{T_{B_n}} \frac{\mathbf{1}_{U(B_n)}}{\mu(U(B_n))}(z)  = \widehat{T}^q \frac{\mathbf{1}_{[x_0^{n+q - 1}]}}{\mu[x_0^{n+q - 1}]}(z) = \frac{1}{\mu[x_0^{n+q - 1}]}L_{\phi_*}^q \mathbf{1}_{[x_0^{n+q - 1}]}(z) \\ 
        & = \frac{1}{\mu[x_0^{n+q - 1}]} \sum_{T^qy = z}e^{S_q\phi_*(y)} \mathbf{1}_{[x_0^{n+q - 1}]}(y)  = \frac{1}{\mu[x_0^{n+q - 1}]} \sum_{T^qy' = z'}e^{S_q\phi_*(y')}e^{\pm W_{n}(\phi_*)} \mathbf{1}_{[x_0^{n+q - 1}]}(y')\\
        & = e^{\pm W_n(\phi_*)} \widehat{T_{B_n}} \frac{\mathbf{1}_{U(B_n)}}{\mu(U(B_n))}(z').
    \end{align*}
    Using that $\widehat{T_{B_n}} \frac{\mathbf{1}_{U(B_n)}}{\mu(U(B_n))}$ is probability density with support on $B_n$ and $W_n(\phi_*) \xrightarrow[n\to +\infty]{} 0$, it ensures \ref{assum:Roland_Compound_measure_finie}-\ref{cond:CPP_compatibility_geometric_law} and Theorem \ref{thm:Roland_CPP_convergence_when_conditions_are_satisfied} can be applied.
\end{proof}

\section{Proofs of results from Section \ref{section:quatitative_recurrence_null-recurrent_CMS}}
\label{section:proofs_for_null-recurrent_TMS}

\noindent Recall that in this section, we assume that we have a topologically mixing CMS $(\Omega,T)$ endowed with a null recurrent potential $\phi : \Omega \to \mathbb{R}$. By the Generalized Ruelle-Perron-Frobenius Theorem (Theorem \ref{thm:GRPF}), we know that there exists an invariant measure $\mu$ such that the dynamical system $(\Omega, T, \mu)$ is a PDE CEMPT (Theorem \ref{thm:GRPF_null_recurrent}). As in the positive recurrent case, we denote $\phi_*$ the potential such that $\widehat{T} = L_{\phi*}$ where $\widehat{T}$ is the transfer operator associated to the dynamical system $(\Omega, T, \mu)$. Assume furthermore that the normalizing sequence $(a_n) \in RV(\alpha)$ for some $0 < \alpha \leq 1$.  \\

Recall that for an infinite measure preserving dynamical systems and a subset $A$ of finite measure, the wandering rate of $A$ is defined by 
\begin{align*}
    w_n(A) := \mu\bigg( \bigcup_{k = 0}^{n-1} T^{-k}A\bigg) = \sum_{k = 0}^{n-1} \mu(A \cap \{r_A > k\})
\end{align*}

\noindent Let us present a first consequence of regular variation on the tail of returns towards a non-rare event. 

\begin{lem}
    \label{lem:equivalence_queue_renormalisation_gamma}
    Let $(u_n)_{n\geq 1}$ be non-increasing sequence converging to $0$. Then, for all $k \geq 1$ and  $\mathbf{v} \in V^k$ admissible,
    \begin{align*}
        \mu \big([\mathbf{v}] \cap \{\gamma(u_n)r_{[\mathbf{v}]} \geq t\}) \sim \frac{1}{\Gamma(1+\alpha)\Gamma(1 - \alpha)}t^{-\alpha}u_n\,.
    \end{align*}
\end{lem}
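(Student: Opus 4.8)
The plan is to exploit the link between the wandering rate $w_n([\mathbf{v}])$ and the normalizing sequence $(a_n)$ furnished by Darling--Kac theory, together with the standard asymptotic relation between $w_n$ and the tail of the return time. Since $[\mathbf{v}]$ is a Darling--Kac set (Theorem~\ref{thm:GRPF_null_recurrent}) and $(a_n)\in\RV(\alpha)$ with $0<\alpha\le 1$, Aaronson's theory (see \cite{Aar97}, the Darling--Kac theorem and its wandering-rate corollary) gives
\begin{align*}
    w_n([\mathbf{v}]) \isEquivTo{n\to+\infty} \frac{1}{\Gamma(1+\alpha)\Gamma(2-\alpha)}\,\frac{n}{a_n}.
\end{align*}
On the other hand, by Karamata's Tauberian theorem applied to $w_n([\mathbf{v}])=\sum_{k=0}^{n-1}\mu([\mathbf{v}]\cap\{r_{[\mathbf{v}]}>k\})$, the fact that $w_n([\mathbf{v}])$ is regularly varying of index $1-\alpha$ is equivalent to $\mu([\mathbf{v}]\cap\{r_{[\mathbf{v}]}>k\})$ being regularly varying of index $-\alpha$ (this requires monotonicity of the tail, which holds, or the use of the monotone density theorem), with
\begin{align*}
    \mu\big([\mathbf{v}]\cap\{r_{[\mathbf{v}]}>k\}\big) \isEquivTo{k\to+\infty} \frac{1-\alpha}{n}\,w_n([\mathbf{v}])\Big|_{n=k} = \frac{1-\alpha}{\Gamma(1+\alpha)\Gamma(2-\alpha)}\,\frac{1}{a_k} = \frac{1}{\Gamma(1+\alpha)\Gamma(1-\alpha)}\,\frac{1}{a_k},
\end{align*}
using $\Gamma(2-\alpha)=(1-\alpha)\Gamma(1-\alpha)$.

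Next I would substitute $k = r_{[\mathbf{v}]}$ with the scaling $\gamma(u_n)$. Fix $t>0$; then $\{\gamma(u_n) r_{[\mathbf{v}]} \ge t\} = \{r_{[\mathbf{v}]} \ge t/\gamma(u_n)\}$, so from the tail asymptotics above,
\begin{align*}
    \mu\big([\mathbf{v}]\cap\{\gamma(u_n) r_{[\mathbf{v}]}\ge t\}\big) \isEquivTo{n\to+\infty} \frac{1}{\Gamma(1+\alpha)\Gamma(1-\alpha)}\,\frac{1}{a_{\lceil t/\gamma(u_n)\rceil}}.
\end{align*}
It remains to identify $1/a_{\lceil t/\gamma(u_n)\rceil}$ with $t^{-\alpha}u_n$ asymptotically. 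By definition \eqref{eq:defn_gamma}, $\gamma(s) = 1/a^{\leftarrow}(1/s)$, so $1/\gamma(u_n) = a^{\leftarrow}(1/u_n)$, and by the duality between a regularly varying sequence and its generalized inverse (\cite[Theorem 1.5.12]{Bingham89_RegularVariation}), $a_{a^{\leftarrow}(1/u_n)} \sim 1/u_n$, i.e. $1/a_{1/\gamma(u_n)} \sim u_n$. Since $a\in\RV(\alpha)$, replacing the argument $1/\gamma(u_n)$ by $t/\gamma(u_n)$ multiplies the value by $t^{\alpha}$ in the limit, hence $1/a_{\lceil t/\gamma(u_n)\rceil} \sim t^{-\alpha}/a_{1/\gamma(u_n)} \sim t^{-\alpha}u_n$, which yields the claimed equivalence.

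The main obstacle I anticipate is twofold and both points are essentially bookkeeping about regular variation rather than genuine difficulty. First, one must be careful that the wandering-rate constant is exactly $\tfrac{1}{\Gamma(1+\alpha)\Gamma(2-\alpha)}$: this comes from the precise form of the Darling--Kac normalization for uniform (Darling--Kac) sets in \cite{Aar97}, and one should check the normalization convention matches the one used to define $(a_n)$ in Theorem~\ref{thm:GRPF_null_recurrent} (there the constant is absorbed into $a_n$ via $a_n \sim \mu[v]^{-1}\sum_{k\le n}\lambda_\phi^{-k}Z_k$, so a short compatibility check is needed, and the boundary case $\alpha = 1$, where $\Gamma(1-\alpha)$ is problematic, must be treated as a slowly varying limit — in that case both sides should be interpreted with the slowly varying correction and the statement reads as an $o$ of the wandering rate). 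Second, the passage from the ``$n$ integer'' tail asymptotics to the ``$t/\gamma(u_n)$ real argument'' form requires the uniform convergence theorem for regularly varying functions to handle the fact that $t/\gamma(u_n)$ ranges over a half-line as $n$ varies; this is standard but should be invoked explicitly. I would also note that the monotonicity of $k\mapsto \mu([\mathbf{v}]\cap\{r_{[\mathbf{v}]}>k\})$ is what licenses the monotone density theorem step, so no extra regularity hypothesis on the tail is needed.
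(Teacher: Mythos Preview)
Your proposal is correct and follows essentially the same route as the paper: use the wandering-rate asymptotic for the Darling--Kac set $[\mathbf{v}]$ (the paper cites \cite[Proposition 3.8.7]{Aar97}), apply the monotone density theorem to pass to the tail $\mu([\mathbf{v}]\cap\{r_{[\mathbf{v}]}>k\})\sim \Gamma(1+\alpha)^{-1}\Gamma(1-\alpha)^{-1}a_k^{-1}$, and then substitute $k\approx t/\gamma(u_n)=t\,a^{\leftarrow}(1/u_n)$ and use regular variation of $a$ together with $a\circ a^{\leftarrow}\sim\id$ to get the $t^{-\alpha}u_n$ factor. Your extra care about the last substitution step (and the caveats on $\alpha=1$ and normalization conventions) is appropriate but not something the paper spells out.
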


\begin{proof}[Proof (of Lemma \ref{lem:equivalence_queue_renormalisation_gamma})] By \cite[Proposition 3.8.7 p.137]{Aar97} and since $[\mathbf{v}]$ is a uniform set for all admissible $\mathbf{v} \in V^k$, we have 
\begin{align*}
    w_{n}([\mathbf{v}]) \sim \frac{1}{\Gamma(1+\alpha)\Gamma(2 - \alpha)} \frac{n}{a_n}.
\end{align*}

\noindent Since the sequence $(\mu([\mathbf{v}] \cap \{r_{[\mathbf{v}]} > n\}))_{n\geq 0}$ is monotonous, by the monotone density theorem \cite[Theorem 1.7.2]{Bingham89_RegularVariation}, we get
\begin{align*}
    \mu([\mathbf{v}] \cap \{r_{[\mathbf{v}]} > n\}) & \isEquivTo{n\to +\infty} (1- \alpha) \frac{w_n([\mathbf{v}])}{n} \isEquivTo{n \to +\infty}  \frac{1 - \alpha}{\Gamma(1+\alpha)\Gamma(2 - \alpha)} \frac{1}{a_n}\\
    &\isEquivTo{n \to +\infty} \frac{1}{\Gamma(1+\alpha)\Gamma(1 - \alpha)}a_n^{-1}
    = \frac{\sin(\pi \alpha)}{\pi\alpha}a_n^{-1}.
\end{align*}

We deduce that $(\mu([\mathbf{v}] \cap \{r_{[\mathbf{v}]} > n\}))_{n\geq 0} \in \RV(-\alpha)$. In particular, since $\mu([\mathbf{v}] \cap \{r_{[\mathbf{v}]} > n+1\}) \isEquivTo{n\to +\infty} \mu([\mathbf{v}] \cap \{r_{[\mathbf{v}]} > n\})$, we have
\begin{align*}
    \mu([\mathbf{v}] \cap \{\gamma(u_n)\,r_{[\mathbf{v}]} \geq t\}) &\isEquivTo{n\to +\infty} \frac{\sin(\pi \alpha)}{\pi\alpha} \big(a(ta^{\leftarrow}(u_n^{-1}))\big)^{-1} \quad \text{by definition of $\gamma$ in \eqref{eq:defn_gamma}}\\
    & \isEquivTo{n\to +\infty} \frac{\sin(\pi \alpha)}{\pi\alpha} t^{-\alpha}u_n.
\end{align*}
\end{proof}

\noindent Finally, for a measurable set $B \in \mathscr{B}$, we also define its entrance time $e_B$ by $e_B(x) := \inf \{n \geq 0\;|\; T^nx\in B\}$. By induction, we can also define the successive entrance times $e_B^{(k)}$. This quantity is linked to the return time as we have $e_B = r_B \,\mathbf{1}_{B^c}$ but it will sometimes be more suitable to work with it, especially when we deal with the delay times.

\subsection{Proof of Theorems \ref{thm:HTS_infinite_FPP_infinitely_recurrent} and \ref{thm:HTS_infinite_CFPP_periodic_points}}
\label{section:FPP_CFPP_infinite_recurrent_points}
\label{section:proof_of_thm_FPP_infinite_recurrent_and_periodic}

\noindent The general strategy parallels that used in the case of a positive recurrent potential. However, a crucial difference arises: in the null-recurrent (or infinite measure) setting, we can no longer rely on the standard inducing argument to transfer results from the induced system back to the original system. While the inducing procedure itself remains structurally similar in both the finite and infinite measure settings, its implications differ significantly. However, our approach continues to make essential use of the favorable properties of the induced system, in particular the bounded distortion estimates given in Lemmas ~\ref{lem:bounded_distortion_CMS_living_in_compact} and~\ref{lem:bounded_distortion_CMS} .\\

\paragraph{Non-periodic infinitely recurrent points.}
\label{subsubsection:FPP_CMS_generic_points}
In this paragraph, we establish Theorem \ref{thm:HTS_infinite_FPP_infinitely_recurrent}. An abstract convergence result towards the fractional Poisson process has already been developed in \cite{BT24_FractionalPoisson}, and we recall here the relevant assumptions and statement of that theorem. We remark that this abstract result also encompasses the case where clusters appear, which will be addressed in the following paragraph. Nevertheless, while the theorem provides a general framework and identifies the conditions required for convergence, the principal challenge lies in verifying these conditions for the specific sequence of asymptotically rare events under consideration. \\

\noindent  \textbf{Assumption $(A)_{\alpha}$.} A sequence $(B_n) \in \mathscr{B}^{\mathbb{N}}$ of asymptotically rare events 
satisfies $(A)_{\alpha}$ if there exists a uniform set $Y$ such that $B_n \subset Y$ for all $n\geq 1$ and
\begin{enumerate}[label = $(A\arabic*)_{\alpha}$, leftmargin=*]
\item \label{cond_CFPP:extremal_index} For every $n \geq 1$, we can write 
$B_n = U(B_n) \sqcup Q(B_n)$, and $\lim_{n\to +\infty} \mu(Q(B_n))/\mu(B_n) = \theta 
\in (0,1]$ (the extremal index).
\item \label{cond_CFPP:good_density_after_tau_n} There exists a sequence of 
measurable functions $\tau_n : B_n \to \mathbb{N}$ and a compact subset $\mathcal{U}$ 
of $L^1(\mu)$ such that
\[
\widehat{T^{\tau_n}}\left(\frac{\mathbf{1}_{Q(B_n)}}{\mu(Q(B_n))}\right)\in 
\mathcal{U} \;, \forall n\geq 1.
\]
\item \label{cond_CFPP:tau_n_small_enough} The sequence $(\tau_n)_{n\geq 0}$
satisfies $\gamma(\mu(B_n))\,\tau_n \xRightarrow[n\to +\infty]{\mu_{B_n}} 0,$ where 
$\gamma$ is defined in \eqref{eq:defn_gamma}.
\item \label{cond_CFPP:cluster_compatible_tau_n_no_cluster_from_Q} The sequence 
$(Q(B_n))_{n\geq 0}$ is such that $\mu_{Q(B_n)}(\lr_{B_n} < \tau_n) \xrightarrow[n\to 
+\infty]{} 0.$
\end{enumerate}
Furthermore, if $U(B_n) \neq \emptyset$, we have 
\begin{enumerate}[label = $(A\arabic*)_{\alpha}$, leftmargin=*]
\setcounter{enumi}{4}
\item \label{cond_CFPP:cluster_compatible_tau_n_cluster_from_U}
The sequence $(U(B_n))_{n\geq 0}$ is such that $\mu_{U(B_n)}(\lr_{B_n} > 
\tau_n)\xrightarrow[n\to +\infty]{} 0$
\item \label{cond_CFPP:Compatibility_Geometric_law} We have the following limit
\begin{align*}
\left\|\; \widehat{T_{B_n}}\left(\frac{\mathbf{1}_{U(B_n)}}{\mu(U(B_n))}\right) - 
\frac{1}{\mu(B_n)}\mathbf{1}_{B_n}\right\|_{L^{\infty}(\mu_{B_n})} \xrightarrow[n\to 
+\infty]{} 0.
\end{align*}
\end{enumerate}

\begin{thm}[\textup{\cite[Theorem 2.3]{BT24_FractionalPoisson}}]
\label{thm:sufficient_conditions_convergence_compound_FPP}
Let $(X,\mathscr{B},\mu, T)$ be a PDE CEMPT with $\mu(X) = +\infty$ and normalizing sequence $(a_n)_{n\geq 1} \in \RV(\alpha)$ for some $0 < \alpha \leq 1$. Let $(B_n)_{n\geq 0}$ be a sequence of asymptotically rare events satisfying $(A)_{\alpha}$. Then,
    \begin{align*}
        N_{B_n}^{\gamma} \xRightarrow[n\to +\infty]{\mathcal{L}(\mu)} \cfPp_{\alpha}(\theta\Gamma(1+\alpha), \geo(\theta))
    \end{align*}
    and 
    \begin{align*}
        N_{B_n}^{\gamma} \xRightarrow[n\to +\infty]{\mu_{B_n}} \RPP(W_{\alpha,\theta}(\theta\Gamma(1+\alpha)))\,.
    \end{align*}
    In particular, if $\theta = 1$, we have 
    \begin{align*}
        N_{B_n}^{\gamma} \xRightarrow[n\to +\infty]{\mathcal{L}(\mu)} \fPp_{\alpha}(\Gamma(1+\alpha)) \quad \text{and} \quad  N_{B_n}^{\gamma} \xRightarrow[n\to +\infty]{\mu_{B_n}} \fPp_{\alpha}(\Gamma(1+\alpha))\,.
    \end{align*}
\end{thm}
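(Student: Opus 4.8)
The plan is to promote the finite–measure theory to the whole point process by inducing on the uniform set $Y$ and composing with a stable time change, in the spirit of the first–return–time argument of \cite{RZ20}. The induced system $(Y,\mathscr{B}\cap Y,\mu_Y,T_Y)$ is an ergodic probability preserving transformation. Since $Y$ is uniform, its occupation time $S^Y_N:=\sum_{k=0}^{N-1}\mathbf{1}_Y\circ T^k$ obeys the functional Darling--Kac theorem (\cite[Theorem 6.1]{OwadaSamorodnitsky15}; see also \cite{Aar97}): $(S^Y_{\lfloor Nt\rfloor}/a_N)_{t\geq 0}$ converges in distribution to $(c_0 M_\alpha(t))_{t\geq 0}$ for a Mittag--Leffler process $M_\alpha$ and a constant $c_0>0$, so that the rescaled generalized inverse $S^{Y,\leftarrow}$ converges to a multiple of the stable subordinator $D_\alpha=M_\alpha^{\leftarrow}$. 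If $r^{Y,(k)}_{B_n}$ denotes the successive return times of $T_Y$ to $B_n$ (counted in visits to $Y$), then $r^{(k)}_{B_n}$ is exactly the original time of the $r^{Y,(k)}_{B_n}$–th visit of the orbit to $Y$, i.e. $r^{(k)}_{B_n}=\min\{N\geq 1:S^Y_N=r^{Y,(k)}_{B_n}\}$; this is the bridge between the two time scales.

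\textbf{Step 1 (the induced system).} Conditions $(A)_\alpha$ are designed to be the infinite–measure counterpart of Assumption \ref{assum:Roland_Compound_measure_finie} for $(Y,T_Y,\mu_Y)$ and the rare sets $B_n\subset Y$, with Kac scaling $\mu_Y(B_n)=\mu(B_n)/\mu(Y)$: $(A1)_\alpha$ and $(A4)_\alpha$–$(A6)_\alpha$ are literally \ref{cond:CPP_extremal index} and \ref{cond:CPP_no_cluster_from_Q}–\ref{cond:CPP_compatibility_geometric_law}, while $(A2)_\alpha$ and $(A3)_\alpha$ require a translation. Setting $\tau^Y_n:=S^Y_{\tau_n}$, the smallness $\gamma(\mu(B_n))\tau_n\Rightarrow 0$ in $(A3)_\alpha$ upgrades---using the regular variation tying $\gamma$ to $a$ through \eqref{eq:defn_gamma} and \cite[Theorem 1.5.12]{Bingham89_RegularVariation}, and the Darling--Kac behaviour of $S^Y$---to the finite–measure condition $\mu_Y(B_n)\tau^Y_n\xRightarrow[n\to+\infty]{\mu_{B_n}}0$; and the compact–density property $(A2)_\alpha$ survives inducing because $\widehat{T_Y}$ contracts $L^1(\mu_Y)$ and the relevant induced pushforwards are obtained from the compact set $\mathcal{U}$ by conditioning on $Y$ and iterating $\widehat{T_Y}$. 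Theorem \ref{thm:Roland_CPP_convergence_when_conditions_are_satisfied} applied to $(Y,T_Y,\mu_Y)$ then yields that $\sum_{k\geq 1}\delta_{\mu_Y(B_n)\,r^{Y,(k)}_{B_n}}$ converges under $\mu_{B_n}$ to $\RPP(W_{1,\theta}(\theta))$, and that the associated hitting process converges under any probability $\nu\ll\mu$ to $\CPPP(\theta,\geo(\theta))$ (to $\PPP$ if $\theta=1$ or $U(B_n)=\emptyset$).

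\textbf{Step 2 (the time change).} Combining $r^{(k)}_{B_n}=\min\{N:S^Y_N=r^{Y,(k)}_{B_n}\}\approx S^{Y,\leftarrow}(r^{Y,(k)}_{B_n})$ with the joint convergence of the rescaled induced return times and of the rescaled inverse occupation process---together with the asymptotic independence of the latter, which only feels the long–run clock of the system, from the former, and the continuity of the composition map at the relevant limits---one gets $(\gamma(\mu(B_n))\,r^{(k)}_{B_n})_{k\geq 1}\Rightarrow(D_\alpha(c\,T_k))_{k\geq 1}$, where $(T_k)_k$ are the arrival times of $\RPP(W_{1,\theta}(\theta))$, independent of $D_\alpha$, and $c>0$ is the Darling--Kac constant. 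A one–line Laplace–transform computation gives $D_\alpha(\mathcal{E}(\lambda))\eqlaw H_\alpha(\lambda)$ and $D_\alpha(W_{1,\theta}(\lambda))\eqlaw W_{\alpha,\theta}(\lambda)$; since $D_\alpha$ has independent increments, the time–changed partial sums $(D_\alpha(cT_k))_k$ are exactly the arrival times of $\RPP(W_{\alpha,\theta}(\lambda_\star))$ for a parameter $\lambda_\star$ depending only on $\alpha,\theta$ and $c$, and matching the $k=1$ term with the first–return–time theorem \cite[Theorem 2.2]{RZ20} (equivalently, computing $c$ from the wandering–rate asymptotics underlying Lemma \ref{lem:equivalence_queue_renormalisation_gamma}) pins $\lambda_\star=\theta\Gamma(1+\alpha)$. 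Convergence of the arrival times, plus the fact that the limit a.s. lies in $\mathcal{N}_{\mathbb{R}_+}^{\#\infty}$ and has no fixed atom other than possibly $0$, upgrades this to weak convergence of the point processes, giving $N^\gamma_{B_n}\xRightarrow[n\to+\infty]{\mu_{B_n}}\RPP(W_{\alpha,\theta}(\theta\Gamma(1+\alpha)))$; the hitting statement $N^\gamma_{B_n}\xRightarrow[n\to+\infty]{\mathcal{L}(\mu)}\cfPp_\alpha(\theta\Gamma(1+\alpha),\geo(\theta))=\DRPP(H_\alpha(\theta\Gamma(1+\alpha)),W_{\alpha,\theta}(\theta\Gamma(1+\alpha)))$ follows identically from the induced hitting process, the negligible initial stretch before the orbit reaches $Y$ and the $\nu$–independence of hitting limits (\cite{Zwei07}), or else directly from the already established equivalence of the two REPPs (\cite[Theorem 2.1 and Corollary 2.1]{BT24_FractionalPoisson}). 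The case $\theta=1$ is a specialization, since then $W_{\alpha,1}(\lambda)=H_\alpha(\lambda)$ and $\RPP(H_\alpha(\lambda))=\fPp_\alpha(\lambda)$.

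\textbf{Main obstacle.} The two delicate points are, first, the faithful translation of the abstract conditions $(A)_\alpha$ into the induced system---in particular controlling $\tau_n$ in induced time uniformly in $n$ and verifying that the compact–density condition is not destroyed by inducing---and, second and above all, the joint convergence in Step 2 with the correct asymptotic independence between the ``local'' induced hitting behaviour and the ``global'' Mittag--Leffler clock; this is where $(a_n)\in\RV(\alpha)$ is genuinely used, and it requires handling the Skorokhod topology with care because the inverse occupation process has jumps. By contrast, the passage from finite–dimensional convergence of the arrival times to weak convergence of the point processes is standard for renewal limits and is only a routine additional check.
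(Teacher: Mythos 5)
Your proposal takes the route that the paper explicitly discusses and sets aside: induce on a uniform set, obtain a (compound) Poisson limit for the induced REPP via Theorem \ref{thm:Roland_CPP_convergence_when_conditions_are_satisfied}, and then convert back to original time by composing with the inverse occupation process via the functional Darling--Kac theorem. The paper (and \cite{BT24_FractionalPoisson}, which it cites for this statement) instead works directly in the infinite-measure system: it exploits tightness of the RESP in $(\overline{\mathbb{R}}_+)^{\mathbb{N}}$, derives from $(A)_\alpha$ a distributional identity of the form $\widetilde{\Phi}\overset{\mathrm{(law)}}{=}\Phi+W$ (with $W=0$ when $\theta=1$), feeds it into the hitting/return equivalence relation \eqref{eq:relation_hitting_return_infinite_measure} of Theorem \ref{thm:HTS-REPP_vs_RTS-REPP_infinite_measure_renormalized_measure}, and then proves uniqueness of the fixed point by Laplace-transform and fractional-calculus arguments, with existence furnished by an explicit Markov-chain example. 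The two approaches are not equivalent proofs of the same lemmas.

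There is, however, a genuine gap in your argument rather than a mere difference of route. The critical step is the joint convergence in Step~2: you need the pair $\bigl((a_n^{-1}S^{Y,\leftarrow}(nt))_{t},\ (\mu_Y(B_n)r^{Y,(k)}_{B_n})_k\bigr)$ to converge jointly, with the limiting inverse stable subordinator $D_\alpha$ \emph{independent} of the arrival times $(T_k)_k$ of $\RPP(W_{1,\theta}(\theta))$. You assert this because the clock ``only feels the long-run behaviour'' while the returns are local, but no argument is supplied, and the two quantities are built from the same orbit of the same point, so independence is not free: under $\mu_{B_n}$, which changes with $n$ and is concentrated on increasingly atypical sets, the excursion lengths contributing to $S^{Y,\leftarrow}$ and the induced return times to $B_n$ are a priori correlated. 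This is precisely the obstruction the paper names (``in this approach, the main difficulty is to prove that we have the asymptotic independence''), and naming it as an obstacle is not the same as overcoming it; as it stands the proof does not go through. A secondary but real issue is your translation of $(A2)_\alpha$ to the induced system: the condition supplies compactness of the family $\widehat{T}^{\tau_n}(\mathbf{1}_{Q(B_n)}/\mu(Q(B_n)))$ in $L^1(\mu)$, where $\widehat{T}$ is the transfer operator of the \emph{full} system, and these densities need not be supported in $Y$; your claim that the condition ``survives inducing because $\widehat{T_Y}$ contracts $L^1(\mu_Y)$'' does not explain how to obtain the analogous compact family for $\widehat{T_Y}^{\tau_n^Y}$ in $L^1(\mu_Y)$ as required by Assumption~\ref{assum:Roland_Compound_measure_finie}-\ref{cond:CPP_good_compact_set}, and the two objects do not coincide.
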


\noindent The main technical challenge lies in verifying condition~\ref{cond_CFPP:tau_n_small_enough}, which in the finite measure case was greatly simplified by the use of inducing. In the infinite measure context, more care is required to control the necessary tail estimates directly in the original system.\\

\noindent We are now ready to prove Theorem \ref{thm:HTS_infinite_FPP_infinitely_recurrent}.

\begin{proof}[Proof (of Theorem \ref{thm:HTS_infinite_FPP_infinitely_recurrent})]
    Let $x\in \Omega$ be a non periodic point and $v \in V$ be such that $x\in \bigcap_{k\geq 0} \bigcup_{n\geq k} T^{-n}[v]$. We denote $(j_k)_{k\geq 0}$ the sequence of hits of $[v]$ (\textit{i.e.}, $j_k := e^{(k)}_{[v]}(x)$). Furthermore, for all $n\geq j_0$, we set $\ell(n) := \max\{ j_k\;|\; j_k \leq n - 2\}$ (\textit{i.e.}, $\ell(n)$ is the last visit of $[v]$ before $n-2$).\\
    
    \noindent As in the positive recurrent case, we build upon Theorem \ref{thm:sufficient_conditions_convergence_compound_FPP} to prove limiting results for the hitting and return REPP. We check that the assumptions $(A)_{\alpha}$ are satisfied. By Theorem \ref{thm:GRPF}, the cylinder $[x_0]$ is a Darling-Kac set and by hypothesis $(a_n) \in RV(\alpha)$. Since $B_n \subset [x_0]$ for all $n\geq 1$ and $[x_0]$ is a uniform set, the first constraint is verified. We are considering the non-periodic case so we take $U(B_n) = \emptyset$ for all $n\geq 0$ meaning that we only need to check \ref{cond_CFPP:good_density_after_tau_n}-\ref{cond_CFPP:cluster_compatible_tau_n_no_cluster_from_Q}.\\ 
    
    \noindent We set $\tau_n := n-1 + e_{[v]}\circ T^{n-1}$. Checking \ref{cond_CFPP:good_density_after_tau_n} is similar to the finite case. We show that for all $n\geq 1$, 
    \begin{align*}
        \widehat{T}^{\tau_n}\Big(\frac{\mathbf{1}_{B_n}}{\mu(B_n)}\Big) \in \mathcal{U}_{K_v, M_v}(v).
    \end{align*}
    \noindent Indeed, by definition of $\tau_n$, we necessarily have $T^{\tau_n}(B_n) \subset [v]$ (remark that $\tau_n$ depends on the point $x$ considered) ensuring $\mathrm{Supp} \,(T^{\tau_n}\mathbf{1}_{B_n}) \subset [v]$. Since the system is recurrent, $e_{[v]}^{(k)}$ is finite almost everywhere for all $k\geq 1$. Thus, for all $n\geq 1$, there exists a collection of admissible words $\mathcal{I}_n$, which can be interpreted as a partition of $[x_0^{n-1}]$ up to the time $\tau_n$, giving the following decomposition of $[x_0^{n-1}]$
    \begin{align*}
        [x_0^{n-1}] =: \bigsqcup_{(y_0^{p}) \in \mathcal{I}_n} [x_0^{n-1}y_0^{p}] 
    \end{align*}
    More precisely, the collection $\mathcal{I}_n$ can be computed explicitly as we have
    \begin{align*}
        \mathcal{I}_n := \{(y_0^{p}) \; \text{admissible} \;|\; p\geq 0, \; x_{n-1} \rightarrow y_0,\; y_p = v,\; \forall 0\leq i < p, \;y_i \neq v\}. 
    \end{align*}
    Note that if $x_{n-1} = v$, $\mathcal{I}_n = \emptyset$ and the decomposition is trivial. For $(y_0^{p}) \in \mathcal{I}_n$, we have $\tau_n = n+p$ on $[x_0^{n-1}y_0^p]$. Hence, we get
    \begin{align*}
        \frac{1}{\mu(B_n)}\widehat{T}^{\tau_n}\mathbf{1}_{B_n} &= \mu(B_n)^{-1} \sum_{y_0^p \in \mathcal{I}_n} \widehat{T}^{n+p}\mathbf{1}_{[x_0^{n-1}y_0^p]} \\
        & = \mu(B_n)^{-1} \sum_{y_0^{p}\in \mathcal{I}_n} \mu[x_0^{n-1}y_0^{p}] \widehat{T}^{n+p}(\mathbf{1}_{[x_0^{n-1}y_0^{p}]}/\mu[x_0^{n-1}y_0^{p}]).
    \end{align*}
    By Lemma \ref{lem:bounded_distortion_CMS_living_in_compact}, this function belongs to $\mathcal{U}_{K_v, M_v}(v)$ and hence \ref{cond_CFPP:good_density_after_tau_n} is satisfied. \\

    \noindent We turn now to \ref{cond_CFPP:tau_n_small_enough}. This is the condition for which we need to make a deeper study than in the finite measure case. This is where we are going to take advantage of the infinite recurrence of the symbol $v$ for $x$. The regular variation hypothesis is also crucial to get the result. Assume first that $x \in [v]$.  By definition of $\ell(n)$, we have 
\begin{align*}
    \tau_n & = e_{[v]} \circ T^{n-1} + n-1 = r_{[v]} \circ T^{\ell(n)} + \ell(n)
\end{align*}
Thus, with the bounded distortion estimate Lemma \ref{lem:bounded_distortion_CMS}, there exists a constant $C > 0$ such that for all $n \geq j_0 +2$, we have
\begin{align*}
    \mu_{B_n}(\gamma(\mu(B_n))\tau_n \geq t) & = \mu_{B_n} (\gamma(\mu(B_n)) (r_{[v]} \circ T^{\ell(n)} + \ell(n)) \geq t)\\
    & =  \mu_{B_n}(T^{-\ell(n)} \{\gamma(\mu(B_n))\,r_{[v]} \geq t - \ell(n)\gamma(\mu(B_n))\})\\
    & = \mu[x_0^{n-1}]^{-1} \mu\big([x_0^{\ell(n)}] \cap T^{-\ell(n)}([x_{\ell(n)}^{n-1}] \cap \{r_{[v]} \geq t\gamma(\mu(B_n))^{-1} - \ell(n)\})\big)
\end{align*}
Now, $[x_{\ell(n)}^{n-1}] \cap \{r_{[v]} \geq t\gamma(\mu(B_n))^{-1} - \ell(n)\}$ is measurable with respect to $\mathcal{C}( t\gamma(\mu(B_n))^{-1}\vee n - \ell(n))$ and we note $\varsigma_n$ its associated partition. It yields

\begin{align*}
    \mu_{B_n}(\gamma(\mu(B_n))\tau_n \geq t) &= \mu[x_0^{n-1}]^{-1} \sum_{[y_0^p] \in \varsigma_n} \mu[x_0^{\ell(n)}y_1^p] \\
    & \leq \mu[x_0^{n-1}]^{-1}e^{W_1(\phi_*)}\mu[v]^{-1} \sum_{[y_0^p] \in \varsigma_n}  \mu[x_0^{\ell(n)}]\mu[y_0^p]\\
    & \leq \mu[x_{\ell(n)}^{n-1}]^{-1}e^{2W_1(\phi_*)}\mu[v]^{-2} \mu([x_{\ell(n)}^{n-1}] \cap \{r_{[v]} \geq t\gamma(\mu(B_n))^{-1} - \ell(n)\})\\
    &\leq  e^{2W_1(\phi_*)}\mu[v]^{-2} \mu_{[x_{\ell(n)}^{n-1}]}(r_{[v]} \geq t\gamma(\mu(B_n))^{-1} - \ell(n))
\end{align*}
where we used twice Lemma \ref{lem:bounded_distortion_CMS}. Hence, it gives the following implication (in fact this is an equivalence as the lower bound can also be deduced from Lemma \ref{lem:bounded_distortion_CMS}).
\begin{align}
    \label{eq:reformulation_proving_delay_time_converges_to_0}
    \gamma(\mu(B_n))\tau_n \xRightarrow[n\to +\infty]{\mu_{B_n}} 0 \quad \text{if and only if} \quad \gamma(\mu(B_n))r_{[v]} \xRightarrow[n \to +\infty]{\mu_{[x_{\ell(n)}^{n-1}]}} 0 \;\text{and}\; \ell(n)\gamma(\mu(B_n)) \xrightarrow[n \to +\infty]{} 0.
\end{align}

\noindent We start be showing that the time $\ell(n)$ is negligible with respect to the renormalization. Recall that we assumed $x_0$ to be infinitely recurrent for $x$.

\begin{lem}
    \label{lem:negligeable_up_to_time_n_x_infinitely_recurrent}
    For all $x \in X$ such that $x_0$ is infinitely recurrent for $x$, we have
    \begin{align*}
        n\gamma(\mu[x_0^{n-1}]) \xrightarrow[n\to +\infty]{} 0.
    \end{align*}
\end{lem}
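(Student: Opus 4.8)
The statement to prove is Lemma~\ref{lem:negligeable_up_to_time_n_x_infinitely_recurrent}: if $x_0$ is infinitely recurrent for $x$, then $n\,\gamma(\mu[x_0^{n-1}]) \to 0$. The strategy is to translate the claim into an asymptotic comparison between the depth $n$ and the normalizing sequence $(a_n)$, exploiting that infinite recurrence forces the cylinders $[x_0^{n-1}]$ to shrink only along a ``sparse'' subsequence of depths, so that $\mu[x_0^{n-1}]$ cannot decay too fast relative to $n$.

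First I would recall that, by the theory of regular variation (cf.\ the remark after \eqref{eq:defn_gamma} and \cite[Theorem 1.5.12]{Bingham89_RegularVariation}), $\gamma \in \RV_0(1/\alpha)$, and more concretely $\gamma(\mu[x_0^{n-1}]) \asymp 1/a^{\leftarrow}(1/\mu[x_0^{n-1}])$. So $n\,\gamma(\mu[x_0^{n-1}]) \to 0$ is equivalent to $a^{\leftarrow}(1/\mu[x_0^{n-1}]) / n \to +\infty$, i.e.\ to $\mu[x_0^{n-1}] = o\big(1/a(n)\big)$ up to the usual regular-variation manipulations (using $a_n \in \RV(\alpha)$, $0<\alpha\le 1$, and $a_n = o(n)$). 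Thus the heart of the matter is the estimate $a_n\,\mu[x_0^{n-1}] \to 0$. Next I would use infinite recurrence: let $(j_k)_{k\ge 0}$ be the successive hitting times of $[x_0]$ by $x$, all finite by hypothesis, with $j_k \to +\infty$. For $n$ with $j_{k} \le n-1 < j_{k+1}$, the cylinder $[x_0^{n-1}]$ is contained in $[x_0^{j_k}]$ but the symbol at position $n-1$ may or may not be $x_0$; crucially, one can write $[x_0^{n-1}] = [x_0^{j_k - 1} w]$ where $w$ is the block $x_{j_k}^{n-1}$ starting with $x_0$, and apply the bounded distortion Lemma~\ref{lem:bounded_distortion_CMS} to factor $\mu[x_0^{n-1}] \le C\,\mu[x_0^{j_k-1} x_0]\,\mu[x_0^{n-1}]_{\text{restarted}}/\mu[x_0] \le C'\,\mu[x_0^{j_k}]$, i.e.\ the measure only genuinely drops at the recurrence times $j_k$. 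Between consecutive $j_k$'s it stays comparable to $\mu[x_0^{j_k}]$.

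The key inequality to extract is then: for $n$ with $j_k \le n-1 < j_{k+1}$, $a_n\,\mu[x_0^{n-1}] \le C'' a_{j_{k+1}}\,\mu[x_0^{j_k}]$, using monotonicity of $(a_n)$. Now I would bound $a_{j_{k+1}}$ by $a_n$ times a controlled factor is not available directly since $j_{k+1}$ can be much larger than $n$; instead the right move is to compare $\mu[x_0^{j_k}]$ with the wandering-rate / tail estimates of Lemma~\ref{lem:equivalence_queue_renormalisation_gamma}. Indeed, $\mu([x_0]\cap\{r_{[x_0]} > m\}) \sim \frac{\sin(\pi\alpha)}{\pi\alpha} a_m^{-1}$, and summing the cylinder measures $\mu[x_0^{j_k-1}y]$ over blocks $y$ of length $j_{k+1}-j_k$ that avoid $x_0$ shows $\mu[x_0^{j_k}] \le C\,\mu[x_0]\,\mu([x_0]\cap\{r_{[x_0]} \ge j_{k+1}-j_k\}) \asymp a_{j_{k+1}-j_k}^{-1}$ when summed over all admissible continuations; more carefully, one fixes the particular block $x_{j_k}^{j_{k+1}-1}$ realized by the orbit of $x$, so that $\mu[x_0^{j_{k+1}-1}] \le \mu[x_0^{j_k}]$ is automatic and what one needs is that $a_{j_{k+1}}\mu[x_0^{j_k}] \to 0$ along $k$. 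Since $j_{k+1} - j_k \ge 1$ and the orbit does eventually return, the comparison $\mu[x_0^{n-1}] \le C\,\mu[x_0^{j_k}]$ combined with the subadditive/regular-variation bound $a_{j_{k+1}} \le a_{j_k}\cdot(j_{k+1}/j_k)^{\alpha + \epsilon}$ for large $k$ reduces everything to showing $a_{j_k}\,\mu[x_0^{j_k}] \to 0$ along recurrence times, which follows because $\mu[x_0^{j_k}] \le \mu([x_0]\cap\{r_{[x_0]} \ge j_k - j_{k-1}\}) \cdot (\text{bounded factor})$ and $j_k - j_{k-1} \to \infty$ is \emph{not} guaranteed --- this is exactly the subtle point.

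\textbf{Main obstacle.} The delicate step is precisely that infinite recurrence does \emph{not} force the gaps $j_{k+1}-j_k$ to tend to infinity (the orbit could return to $[x_0]$ at bounded intervals along a subsequence while $j_k$ still $\to\infty$ overall). So a naive ``the cylinder only shrinks at rate $a_{\text{gap}}^{-1}$'' argument fails. The resolution I would pursue: work with the full block $x_0^{n-1}$ directly and use that $\mu[x_0^{n-1}] \le \mu[x_0^{j_k}] \le \mu([x_0]\cap\{r_{[x_0]}\ge 1\})\cdots$ telescoped, or more cleanly, bound $\mu[x_0^{n-1}] \le \mu[x_0]\prod$-type estimate is wrong too; rather, since $x_0$ is infinitely recurrent, for \emph{every} $M$ there is $k$ with $j_k > 2M$ and $j_k$ arbitrarily large, and one writes $[x_0^{j_k}] \subset [x_0^{M}] \cap T^{-j_{k'}}[x_0]$ for the largest recurrence time $j_{k'} \le j_k$; iterating the bounded-distortion factorization across \emph{all} recurrence times in $[0,n]$ gives $\mu[x_0^{n-1}] \le C^{K(n)}\mu[x_0]^{K(n)} \cdot(\text{tails})$ where $K(n)$ is the number of returns before $n$ --- which grows, forcing super-polynomial decay only if $\mu[x_0] < 1/C$, not generally. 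The clean fix is to invoke instead that $[x_0^{n-1}]$, being contained in $[x_0]$ and having the orbit of $x$ return infinitely often, satisfies $\mu[x_0^{n-1}] \le \mu([x_0]\cap\{r_{[x_0]}^{(K(n))} = \text{the specific return pattern of }x\}) \le \mu([x_0]\cap\{r_{[x_0]} > n - j_{K(n)-1}\})$-type bound only on the \emph{last} free stretch, combined with a separate argument that $n - j_{K(n)} = o(n)$ fails in general too. Given these difficulties, I would actually expect the correct proof to argue by contradiction: if $n\,\gamma(\mu[x_0^{n-1}]) \not\to 0$, then along a subsequence $\mu[x_0^{n-1}]$ decays fast enough that, via Lemma~\ref{lem:equivalence_queue_renormalisation_gamma} and bounded distortion (Lemma~\ref{lem:bounded_distortion_CMS}), $\mu_{[x_0]}(r_{[x_0]} > n) $ would have to be $o(\mu[x_0^{n-1}]/\mu[x_0]) $ in a way incompatible with $x$ hitting $[x_0]$ again at some finite time $j_{k} > n$ --- precisely, $\mu[x_0^{j_k - 1}] \ge$ a fixed fraction of $\mu[x_0^{n-1}] \cdot \mu([x_0]\cap\{r_{[x_0]}\ge j_k - n\})/\mu[x_0]$ cannot hold if $\mu[x_0^{n-1}]$ is too small relative to $1/a_n$, since the right side has a regularly-varying lower bound. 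Making this contradiction quantitative, using that $a \in \RV(\alpha)$ with $\alpha \le 1$ so that $a_n/a_{cn}$ is bounded for fixed $c$, is the technical crux and where I would spend the most care.
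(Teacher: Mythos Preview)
Your proposal correctly identifies the central difficulty---that infinite recurrence gives no lower bound on the gap sizes $j_{k+1}-j_k$, so a single ``long-gap $\Rightarrow$ tail decay'' argument cannot work---but it does not resolve it. The contradiction sketch at the end is not made quantitative, and the various telescoping or iterated bounded-distortion ideas you raise are (as you yourself note) insufficient in general. As written, there is no complete argument.

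The paper's proof supplies the missing idea: a dichotomy on the \emph{number} of returns $S_n\mathbf{1}_{[x_0]}(x)$ compared to the scale $\log n$. If $S_n\mathbf{1}_{[x_0]}(x)/\log n \to +\infty$, then the cylinder $[x_0^{n-1}]$ is contained in a $\mathcal{C}_{[x_0]}(S_n\mathbf{1}_{[x_0]}(x))$-cylinder for the induced full shift on $[x_0]$; since the induced system is $\psi$-mixing, the measure of induced cylinders decays exponentially in their depth, giving $\mu[x_0^{n-1}] \le C e^{-c\,S_n\mathbf{1}_{[x_0]}(x)}$, hence $n\,\mu[x_0^{n-1}]\to 0$, which is stronger than needed. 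If instead $S_n\mathbf{1}_{[x_0]}(x) \le K\log n$, then by pigeonhole the two longest excursions among the at most $K\log n$ returns before time $n$ have lengths $l_1(n), l_2(n)$ satisfying $l_1(n) \ge n/(K\log n)$ and (when $l_1(n) < n/2$) also $l_2(n) \ge n/(4K\log n)$. One then factors $\mu[x_0^{n-1}]$ across one or both of these long blocks via Lemma~\ref{lem:bounded_distortion_CMS} and bounds each factor by $\mu([x_0]\cap\{r_{[x_0]} \ge l_i(n)\}) \asymp a(l_i(n))^{-1}$ (Lemma~\ref{lem:equivalence_queue_renormalisation_gamma}). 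The regular-variation computation $a(n)\big/\big(a(n/\log n)\big)^2 \to 0$ then closes the argument. This two-case split is exactly what your exploration was circling around but never isolated: you need \emph{either} exponentially many returns (giving exponential decay via the induced system) \emph{or} logarithmically few returns (forcing long gaps by pigeonhole).
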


\begin{proof}[Proof (of Lemma \ref{lem:negligeable_up_to_time_n_x_infinitely_recurrent})]
    We split the proof into two cases depending on the recurrence properties of $x$. 
    \begin{enumerate}[label = (\alph*)]
        \item Assume first that 
        \begin{align}
            \label{eq:recurrence_more_than_ln}
            S_n \mathbf{1}_{[x_0]}/ \log(n) \xrightarrow[n\to +\infty]{} +\infty.
        \end{align}
    Then, the information we have on the induced map is sufficient to get the result. Indeed, since the induced map is $\psi$-mixing, the exponential decay of the measure of cylinders implies
    \begin{align*}
        n\mu[x_0^{n-1}] \xrightarrow[n\to +\infty]{} 0.
    \end{align*}
    This is even stronger than $n \gamma(\mu[x_0^{n-1}]) \xrightarrow[n\to +\infty]{} 0$.
        \item Assume now that 
        \begin{align}
            \label{eq:bad_points_for_returns}
            S_n \mathbf{1}_{[x_0]}/ \log(n) \leq K.
        \end{align}
        for all $n \geq 0$. 
    Set $l_1(n)$ and $l_2(n)$ the two longest waiting times before coming back to $x_0$ up to time $n-1$, that is to say
    \begin{align*}
        l_1(n) &:= \max_{j \;|\; r_{[x_0]}^{(j)}(x) \leq n-1} n \wedge r_{[x_0]}^{(j+1)}(x) - r_{[x_0]}^{(j)}(x).
    \end{align*}
    Define $j_n = j_{n}(x)$ the argmax of $l_1(n)$, $k_n := r_{[x_0]}^{(j_n)}(x) \leq n-1$. Then,
    \begin{align*}
        l_2(n) := \max_{j \;|\; r_{[x_0]}^{(j)}(x) \leq n-1, \; j\neq j_n} n \wedge r_{[x_0]}^{(j+1)}(x) - r_{[x_0]}^{(j)}(x)
    \end{align*}
    and similarly define $j_n^{(2)} = j_n^{(2)}(x)$ the argmax of the previous quantity and $k_n^{(2)} = r_{[x_0]}^{(j^{(2)}_n)}(x)$. Of course, $j_n^{(2)}$ and $k_n^{(2)}$ are well defined for $n > r^{(2)}_{[x_0]}(x)$. Because $x_0$ is infinitely recurrent for $x$, we have $j_n, \; j_n^{(2)}, \; k_n, \; k_n^{(2)} \xrightarrow[n\to +\infty]{} +\infty$. Since we assumed \eqref{eq:bad_points_for_returns}, we have
    \begin{align*}
        l_1(n) \geq \frac{n}{K\log(n)}.
    \end{align*}
    We again split into two different cases. First, if $l_1(n) \geq n/2$. By bounded distortion Lemma \ref{lem:bounded_distortion_CMS}, we get
    \begin{align*}
        \mu[x_0^{n-1}] &\leq C \mu[x_0^{k_n}]\mu[x_{k_n}^{n-1}] \\
        & \leq C \mu[x_0^{k_n}]\mu\Big[x_{k_n}^{(n-1)\wedge r_{[x_0]}^{(j_n + 1)}(x) -1} \Big]\\
        & \leq C\mu[x_0^{k_n}]\mu([x_0] \cap \{r_{[x_0]} \geq l_1(n)\}) \\
        & \leq C\mu[x_0^{k_n}]\mu([x_0] \cap \{r_{[x_0]} \geq n/2\}) \\
        & \leq C'\mu[x_0^{k_n}]a\big(n/2\big)^{-1} \quad \text{by Lemma \ref{lem:equivalence_queue_renormalisation_gamma}}.
    \end{align*}
    On the other side, if $l_1(n) < n/2$, we have 
    \begin{align*}
        l_2(n) \geq \frac{n}{4K\log(n)}.
    \end{align*}
    Hence, assuming without loss of generality that $j_n \leq j_n^{(2)}$ and using again the bounded distortion Lemma \ref{lem:bounded_distortion_CMS} (twice this time), we get
    \begin{align*}
        \mu[x_0^{n-1}] &\leq C^2\mu[x_0^{k_n}] \mu[x_{k_n}^{k_n^{(2)}}]\mu[x_{k^{(2)}_n}^{n-1}] \\
        & \leq C^2 \mu[x_0^{k_n}] \mu([x_0] \cap \{r_{[x_0]} \geq l_1(n)\})\, \mu([x_0] \cap \{r_{[x_0]} \geq l_2(n)\} \\
        & \leq C^2 \mu[x_0^{k_n}] \,\mu([x_0] \cap  \{r_{[x_0]} \geq n/(K\log(n))\})\, \mu([x_0] \cap \{r_{[x_0]} \geq n/(4K\log(n))\}) \\
        & \leq C' a\big(n/(K\log(n))\big)^{-1}a\big( n/(4K\log(n))\big)^{-1}
    \end{align*}
    \noindent Using the regular variation hypothesis on $a$ and the definition $\gamma$ with respect to $a$, for all $\varepsilon > 0$ and $n$ large enough so that $\mu[x_0^{k_n}] \leq \varepsilon$, we obtain
    \begin{align*}
        n \gamma(\mu[x_0^{n-1}]) &\leq n\gamma(C'\varepsilon a(n/2)^{-1}) \\
        & \leq C\varepsilon^{1/\alpha} n  \big(a^{\leftarrow}(a(n/2))\big)^{-1} \\
        &\lesssim C\varepsilon^{1/\alpha}
    \end{align*}
    On the other side, we also have 
    \begin{align*}
        & \frac{a(n)}{a\big(n/(K\log(n))\big)a\big( n/(4K\log(n))\big)}\\
        & = \frac{n^{\alpha}L(n)}{(n/(K\log(n)))^{\alpha}(n/(4K\log(n)))^{\alpha} L(n/(K\log(n)))L(n/(4K\log(n)))}\\
        & \isEquivTo{n \to +\infty} \frac{4K^2n^{\alpha}\log(n)^{2\alpha}}{n^{2\alpha}} \frac{L(n)}{L(n/\log(n))^2}\\
        & \xrightarrow[n\to +\infty]{} 0
    \end{align*}
    Thus, for $n$ large enough, we have 
    \begin{align*}
        n \gamma(\mu[x_0^{n-1}]) \leq n \gamma(\varepsilon a(n)) \lesssim \varepsilon^{1/\alpha}.
    \end{align*}
    Hence $n\gamma(\mu[x_0^{n-1}]) \xrightarrow[n\to +\infty]{} 0$.  
    \end{enumerate}
\end{proof}

\begin{rem}
    In the proof of Lemma~\ref{lem:negligeable_up_to_time_n_x_infinitely_recurrent}, one could consider splitting at a finer scale than $\log(n)$ to better exploit the rescaling effect of $\gamma$. However, this does not alter the fundamental need to adopt different strategies for controlling the scaling: one for “typical” points that return sufficiently quickly, and another for “exceptional” points that take longer to return but for which the measures of the associated cylinders decay faster.
\end{rem}

\noindent Lemma \ref{lem:negligeable_up_to_time_n_x_infinitely_recurrent} proves the second part of \eqref{eq:reformulation_proving_delay_time_converges_to_0} as we have $\ell(n) \leq n$ by definition. This first part is taken care of by the following Lemma.

\begin{lem}
    \label{lem:second_part_reformulation_proving_delay_time_converges_to_0}
    We have 
    \begin{align*}
        \gamma(\mu(B_n))r_{[v]} \xRightarrow[n \to +\infty]{\mu_{[x_{\ell(n)}^{n-1}]}} 0.
    \end{align*}
\end{lem}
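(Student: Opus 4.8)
\textbf{Proof proposal for Lemma \ref{lem:second_part_reformulation_proving_delay_time_converges_to_0}.}

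The plan is to unfold the statement as a tail estimate and reduce it to the asymptotics of the tail of $r_{[v]}$ under $\mu_{[v]}$, which are known to be regularly varying by Lemma~\ref{lem:equivalence_queue_renormalisation_gamma}. First I would fix $t>0$ and write, for $n$ large,
\begin{align*}
    \mu_{[x_{\ell(n)}^{n-1}]}\bigl(\gamma(\mu(B_n))\,r_{[v]} \geq t\bigr) = \mu[x_{\ell(n)}^{n-1}]^{-1}\,\mu\bigl([x_{\ell(n)}^{n-1}] \cap \{r_{[v]} \geq t\gamma(\mu(B_n))^{-1}\}\bigr).
\end{align*}
Since $x_{\ell(n)} = v$ by the definition of $\ell(n)$ (it is the index of a visit to $[v]$), the cylinder $[x_{\ell(n)}^{n-1}]$ is a sub-cylinder of $[v]$; decomposing the event $\{r_{[v]}\geq s\}$ according to the word read up to time $s \vee n$ and applying the bounded distortion Lemma~\ref{lem:bounded_distortion_CMS} (exactly as in the computation preceding \eqref{eq:reformulation_proving_delay_time_converges_to_0}) I would bound this quantity from above by a constant times $\mu_{[v]}(r_{[v]} \geq t\gamma(\mu(B_n))^{-1})$.

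Next I would control the latter using Lemma~\ref{lem:equivalence_queue_renormalisation_gamma}: since $(\mu(B_n))_{n\geq 1}$ is a non-increasing sequence going to $0$, that lemma gives
\begin{align*}
    \mu\bigl([v] \cap \{\gamma(\mu(B_n))\,r_{[v]} \geq t\}\bigr) \sim \frac{1}{\Gamma(1+\alpha)\Gamma(1-\alpha)}\,t^{-\alpha}\,\mu(B_n) \xrightarrow[n\to +\infty]{} 0,
\end{align*}
hence $\mu_{[v]}(r_{[v]}\geq t\gamma(\mu(B_n))^{-1}) = \mu(B_n)\,o(1)/\mu[v] \to 0$ as well (using that $\mu(B_n)\to 0$ and dividing by the fixed positive constant $\mu[v]$). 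Combining with the previous step, $\mu_{[x_{\ell(n)}^{n-1}]}(\gamma(\mu(B_n))\,r_{[v]}\geq t)\to 0$ for every $t>0$, which is precisely convergence in distribution to $0$ under the (changing) measures $\mu_{[x_{\ell(n)}^{n-1}]}$.

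The main subtlety — and the step to be careful about — is that $[x_{\ell(n)}^{n-1}]$ is a cylinder of depth $n - \ell(n)$ that grows as $n\to+\infty$, while the event $\{r_{[v]}\geq s\}$ with $s = t\gamma(\mu(B_n))^{-1}$ may involve an even larger depth; so one must apply bounded distortion with the correct common refinement (measurable with respect to $\mathcal{C}(s\vee n - \ell(n))$) and make sure the constant $e^{W_1(\phi_*)}$ from Lemma~\ref{lem:bounded_distortion_CMS} is uniform in $n$, which it is since $\phi_*$ is Walters. One also has to treat separately the (harmless) case where $x \notin [v]$, so that $x_0 \neq v$ and $\ell(n)$ may not be defined for small $n$; this only affects finitely many $n$ and does not change the limit. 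Note finally that this lemma together with Lemma~\ref{lem:negligeable_up_to_time_n_x_infinitely_recurrent} establishes the right-hand side of the equivalence \eqref{eq:reformulation_proving_delay_time_converges_to_0}, hence condition \ref{cond_CFPP:tau_n_small_enough}.
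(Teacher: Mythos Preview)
Your first step has a genuine gap. The inequality you want,
\[
\mu_{[x_{\ell(n)}^{n-1}]}\bigl(r_{[v]} \geq t\gamma(\mu(B_n))^{-1}\bigr) \;\leq\; C\,\mu_{[v]}\bigl(r_{[v]} \geq t\gamma(\mu(B_n))^{-1}\bigr)
\]
with $C$ uniform in $n$, does \emph{not} follow from Lemma~\ref{lem:bounded_distortion_CMS}. That lemma compares $\mu[a_0^{n-1}b_0^{j-1}]$ with $\mu[a_0^{n-1}b_0]\mu[b_0^{j-1}]/\mu[b_0]$; it does not control the ratio $\mu_C(E)/\mu_{C'}(E)$ for nested cylinders $C\subset C'$. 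The computation preceding \eqref{eq:reformulation_proving_delay_time_converges_to_0} works because one peels off the prefix $[x_0^{\ell(n)}]$ and its measure cancels against the same factor in $\mu(B_n)$; here there is no prefix to peel off --- $[v]$ is already the first letter of $[x_{\ell(n)}^{n-1}]$. In fact the bound is false in general: conditioning on the sub-cylinder forces $r_{[v]}\geq n-\ell(n)$, and along subsequences where the gap $n-\ell(n)$ grows (which can happen for an infinitely recurrent $x$ with unbounded return gaps), the ratio blows up like $(n-\ell(n))^{\alpha}$ --- check this explicitly on the House of Cards. You even flag the growing depth as the ``main subtlety'', but this is exactly where your argument breaks, not merely where one must be careful.

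The paper's fix is short and different in spirit. Bound the \emph{numerator} by the trivial inclusion $[x_{\ell(n)}^{n-1}]\subset[v]$, so that
\[
\mu\bigl([x_{\ell(n)}^{n-1}]\cap\{r_{[v]}>t/\gamma(\mu(B_n))\}\bigr)\;\leq\;\mu\bigl([v]\cap\{r_{[v]}>t/\gamma(\mu(B_n))\}\bigr)\;\lesssim\;\tfrac{\sin(\pi\alpha)}{\pi\alpha}\,t^{-\alpha}\,\mu(B_n),
\]
and then observe, via Lemma~\ref{lem:bounded_distortion_CMS} applied to $B_n=[x_0^{n-1}]$ split at position $\ell(n)$, that $\mu(B_n)/\mu[x_{\ell(n)}^{n-1}]\leq e^{W_1(\phi_*)}\mu[v]^{-1}\mu[x_0^{\ell(n)}]\to 0$ because $\ell(n)\to\infty$. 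The convergence to zero thus comes from the \emph{scaling} $\mu(B_n)$ being much smaller than the normalizing measure $\mu[x_{\ell(n)}^{n-1}]$, not from any uniform comparison of conditional tails.
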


\begin{proof}[Proof (of Lemma \ref{lem:second_part_reformulation_proving_delay_time_converges_to_0})]
    Let $t > 0$. We have 
    \begin{align*}
        \mu([x_{\ell(n)}^{n-1}] \cap \{r_{[v]} > t/\gamma(\mu(B_n))\}) &\leq \mu([v] \cap \{r_{[v]} > t/\gamma(\mu(B_n))\}) \quad \text{because $x_{\ell(n)} = v$}\\
        & \lesssim \frac{\sin(\pi \alpha)}{\pi\alpha} t^{-\alpha} \mu(B_n) \quad \text{by Lemma \ref{lem:equivalence_queue_renormalisation_gamma}}.
    \end{align*}
    On the other side, we have 
    \begin{align*}
        \mu(B_n)/ \mu[x_{\ell(n)}^{n-1}] \leq e^{W_1(\phi_*)}\mu[v]^{-1}\mu[x_0^{\ell(n)}] \quad \text{by Lemma \ref{lem:bounded_distortion_CMS}},
    \end{align*}
    and $\mu[x_0^{\ell(n)}] \xrightarrow[n \to +\infty]{} 0$ as $\ell(n) \xrightarrow[n\to +\infty]{} +\infty$ because $x_0$ is infinitely recurrent for $x$.
\end{proof}

\noindent Hence, Lemma \ref{lem:second_part_reformulation_proving_delay_time_converges_to_0} concludes the proof of the right part of \eqref{eq:reformulation_proving_delay_time_converges_to_0} and it shows that 
\ref{cond_CFPP:tau_n_small_enough} is satisfied for this choice of $\tau_n$. \\

Now if $x \notin [v]$ but is infinitely recurrent for the symbol $v$, the same idea can be applied. We set $j := j_0 = r_{[v]}(x)$. The difference between the case $x \in [v]$ is that we need to introduce a further delay $j$ before seeing $v$. Fortunately, this delay is constant and negligible in the limit. Indeed, assume that $n > r_{[v]}(x) = j_0 =: j$ and let $t > 0$. We have 
    \begin{align*}
        & \mu_{[x_0^{n-1}]}(\gamma(\mu(B_n))\tau_n \geq t) = \mu[x_0^{n-1}]^{-1} \mu([x_0^{n-1}] \cap \{\gamma(\mu(B_n))(n-1 + e_{[v]}\circ T^{n-1}) \geq t\}) \\
        & = \mu[x_0^{n-1}]^{-1}\mu([x_0^{n-1}] \cap T^{-j} \{\gamma(\mu(B_n))(n-1 + j + e_{[v]} \circ T^{n-1-j}) \geq t\}) \\
        & = e^{\pm C} \mu[x_j^{n-1}]^{-1}\mu([x_j^{n-1}] \cap \{\gamma(\mu(B_n))(n-1 + e_{[v]}\circ T^{n-1-j}) \geq t + j\gamma(\mu(B_n))\}) \\
        & = e^{\pm C} \mu_{[x_j^{n-1}]}(n-1 + e_{[v]}\circ T^{n-1 - j}\geq t/\gamma(\mu(B_n)) - j),
    \end{align*}
    where we used bounded distortion estimates from Lemma \ref{lem:bounded_distortion_CMS} the same way we used it for the delay $\ell(n)$. Now, the point $x' = T^jx \in [v]$ and thus the right end side goes to 0 (by considering any $0 < t' < t$ because $j\gamma(\mu(B_n)) \to 0$ when $n \to +\infty$), proving the result for $x$.\\

\noindent Finally, we need to check \ref{cond_CFPP:cluster_compatible_tau_n_no_cluster_from_Q}. We split with respect to the return to $[x_0^{n-1}]$. For this property, the proof is similar to the finite case thanks to our definition of $\tau_n$. We have
\begin{align*}
    \mu_{B_n}(r_{B_n} \leq \tau_n) & = \mu_{B_n}(r_{B_n} < n) + \mu_{B_n}(n \leq r_{B_n} \leq \tau_n) \\
    & \leq \mu_{B_n}(r_{B_n} < n) + \mu_{B_n}( T^{-(n-1)}(r_{B_n} \leq r_{[v]})).
\end{align*}

\noindent We deal with the second term. Since $x_j = v$, we have, for $n > j$ (recall that $j = j_0 = e_{[v]}(x)$), $\{r_{[x_0^{n-1}]} \leq r_{[v]}\} \subset \{r_{[x_0^{n-1}]} = r_{[x_0^j]}\}$. Hence, 
\begin{align*}
    \mu_{B_n}(T^{-(n-1)}(r_{B_n} \leq r_{[v]})) \leq \mu_{B_n}(T^{-(n-1)}\{r_{[x_0^{n-1}]} = r_{[x_0^j]}\})
\end{align*}
Taking a partition with respect to the first return $[x_0^j]$ and since $x_j = v$, we can obtain, using distortion estimates of Lemma \ref{lem:bounded_distortion_CMS}, we obtain 
\begin{align*}
    \mu_{B_n}(T^{-(n-1)} \{r_{[x_0^{n-1}]} = r_{[x_0^j]}\}) \leq C \mu[x_j^{n-1}] \xrightarrow[n\to +\infty]{} 0.
\end{align*}

\noindent On the other side, we study $\mu_{B_n}(r_{B_n} < n)$. If $r(x_0^{n-1}) \geq n$, then there is nothing to prove. Else, assume that $r(x_0^{n-1}) < n$. By Lemma \ref{lem:calcul_EI}, since $x$ is non periodic, we know that 
\begin{align*}
    \mu_{B_n}(r_{B_n} = r(B_n)) \xrightarrow[n\to +\infty]{} 0. 
\end{align*}
Furthermore, because $x$ is non periodic, we have $r(x_0^{n-1}) \xrightarrow[n\to +\infty]{} +\infty$. Having $r(x_0^{n-1}) \leq n$ for an infinite number of $n$ implies that $x$ is infinitely recurrent for $x_0$ and hence, we can set $v = x_0$. Then, for $n$ such that $r(x_0^{n - 1}) \leq n$, we have with $q_nr(x_0^{n-1}) + r_n = n$ the euclidean division of $n$ par $r(x_0^{n-1})$, 
\begin{align*}
    \mu_{[x_0^{n - 1}]}(r(x_0^{n-1})< r_{[x_0^{n - 1}]} < n) &\leq \sum_{k = q_nr(x_0^{n-1})}^{n-1} \mu_{[x_0^{n-1}]}(T^{-k}[x_0^{n-1}]) \\
    &\leq \sum_{\overset{q_nr(x_0^{n-1}) \leq k \leq n-1}{x_k = x_0}} \mu[x_0^{n-1}]^{-1}\mu[x_0^{k-1}x_0^{n-1}] \\
    & \leq \sum_{\overset{q_nr(x_0^{n-1}) \leq k \leq n-1}{x_k = x_0}} C \mu[x_0^{k-1}x_0] \quad \text{by Lemma \ref{lem:bounded_distortion_CMS}}\\
    & \leq \sum_{k = m_n}^{+\infty} C e^{-ck} \xrightarrow[n\to +\infty]{} 0,
\end{align*}
where $m_n = |\{1\leq k \leq q_n r(x_0^{n-1})\;|\; x_k = x_0\}|$ and with the exponential decay of the measure of cylinders for the induced map.\\

\noindent This completes the proof of \ref{cond_CFPP:cluster_compatible_tau_n_no_cluster_from_Q} and we can apply Theorem \ref{thm:sufficient_conditions_convergence_compound_FPP} to get Theorem \ref{thm:HTS_infinite_FPP_infinitely_recurrent}.
\end{proof}

\paragraph{Periodic points.}
\label{subsubsection:CFPP_CMS_periodic_points}

\noindent We turn now to the proof of Theorem \ref{thm:HTS_infinite_CFPP_periodic_points} and the study of periodic points. As always, the periodicity prevents a simple point process as a limit. However, its effect is purely local and the clusters are the same both in the finite and infinite setting. It leads to the following theorem giving the limit of the hitting  and return REPPs when the targets are cylinders shrinking to a periodic point. Apart from the difficulty caused by the presence of clusters, periodic points are in fact easier to study. Indeed, the main change with respect to the finite setting is the proof of \ref{cond_CFPP:tau_n_small_enough}. However, it is easy to check for points that exhibit good recurrence properties and due to their periodicity, periodic points have even better recurrence properties than most points making \ref{cond_CFPP:tau_n_small_enough} easier to deal with.

\begin{proof}[Proof (of Theorem \ref{thm:HTS_infinite_CFPP_periodic_points})]
    Since $x$ is $q$-periodic, we set $V_x := \{v\in V\;|\; \exists n\geq 0, \,x_n = v\}$. Then $|V_x| < +\infty$. The definition of $U(B_n)$ and $Q(B_n)$ are the same as in the finite setting and we set $Q(B_n) = B_n \cap T^{-q}B_n^c$ and $U(B_n) = B_n \cap T^{-q}B_n  = [x_0^{n+q-1}]$. By construction, $U(B_n)$ and $Q(B_n)$ are measurable with respect to $\mathcal{C}(n+q)$. By Lemma \ref{lem:calcul_EI}, we have \ref{cond_CFPP:extremal_index} and $\theta$ is equal to $1 - \exp(S_q\phi(x) - qP_G(\phi))$. Note that we have 
    \begin{align*}
        Q(B_n) = [x_0^{n-1}] \cap T^{-n}[x_n]^c \sqcup \cdots \sqcup [x_0^{n +q-2}] \cap T^{-(n+q-1)}[x_{n+q-1}]^c \\
        =: Q_0(B_n) \sqcup \cdots \sqcup Q_{q-1}(B_n).
    \end{align*}
    We set $\tau_n := n+k-1$ on $Q_k(B_n)$. 
    Assuming without loss of generality that $\mu(Q_k(B_n)) > 0$ for $0\leq k \leq q-1$ (else, we simply do not count it), we have
    \begin{align*}
        \mu(Q(B_n))^{-1}\,\widehat{T}^{\tau_n} \mathbf{1}_{Q(B_n)} = \sum_{k = 0}^{q-1} \frac{\mu(Q_k(B_n))}{\mu(Q(B_n))} \mu(Q_k(B_n))^{-1} \widehat{T}^{n+k-1}\mathbf{1}_{Q_k(B_n)}
    \end{align*}
    Yet $\mu(Q_k(B_n))^{-1} \widehat{T}^{n+k-1}\mathbf{1}_{Q_k(B_n)}$ belongs in the compact set $\mathcal{U}_{K,M}([x_{n+k-1}] \cap T^{-1}[x_{n+k}]^c)$ by an direct adaptation of the proof of Lemma \ref{lem:bounded_distortion_CMS_living_in_compact}, where $\mathcal{U}_{K,M}([x_{n+k-1}] \cap T^{-1}[x_{n+k}]^c)$ is constructed as $\mathcal{U}_{K,M}(v)$ but assuming that the support is on $[x_{n+k-1}] \cap T^{-1}[x_{n+k}]^c$ and same for the continuity hypothesis (note that here we find $M = e^{W_1(\phi_*)}/(\mu[x_{n+k-1}] - \mu[x_{n+k-1}x_{n+k}])$ which is finite because we assume that $\mu(Q_k(B_n)) > 0$). Note that 
    \begin{align*}
        \bigcup_{v, v'\in V_x} \mathcal{U}_{K_{v,v'}, M_{v,v'}}(v \cap T^{-1}[v']^c),
    \end{align*}
    is precompact in $L^1(\mu)$ because $V_x$ is finite and thus so is the closure of its convex hull. We obtain that $\mu(Q(B_n))^{-1}\,\widehat{T}^{\tau_n} \mathbf{1}_{Q(B_n)}$ remains in the same compact set for all $n\geq 1$, hence proving \ref{cond_CFPP:good_density_after_tau_n}.
    
    \noindent For \ref{cond_CFPP:tau_n_small_enough}, we can use the recurrence properties of the periodic point. Indeed, it satisfies $S_n\mathbf{1}_{[x_0]}(x) \geq n/q$ for all $n \geq 0$ and thus by exponential decay of the measure of cylinders for the induced system $([x_0], T_{x_0}, \mu_{x_0})$, we have 
    \begin{align*}
        \gamma(\mu(B_n))\,\tau_n 
        & = \gamma(\mu[x_0^{n+q-1}])(n-1+q) \leq (n-1+q)\mu[x_0^{n+q-1}]\\
        & \leq C(n+q-1) \, e^{-cS_{n+q}\mathbf{1}_{[x_0]}(x)} \xrightarrow[n \to +\infty]{} 0.
    \end{align*}

    \noindent For \ref{cond_CFPP:cluster_compatible_tau_n_no_cluster_from_Q}, we have $r(x_0^{n-1}) = q$ for $n \geq q$ by Lemma \ref{lem:periodicity_equivalent_to_finite_topological_return}. Let $n =: m_n q + r$ the Euclidean division of $n$ by $q$. We have 
    \begin{align*}
        \mu_{Q(B_n)}(r_{B_n} \leq n + q - 1) &\leq \mu(Q(B_n))^{-1} \sum_{m_nq \leq k \leq n + q - 1} \mu(Q(B_n) \cap T^{-k}B_n) \\
        & \lesssim \theta^{-1}\mu(B_n)^{-1}  e^{W_1(\phi_*)} \mu[x_0]^{-1} 2q \mu[x_0^{m_n q}]\mu(B_n) \\
        & \lesssim C \mu[x_0^{m_n q}] \xrightarrow[n\to +\infty]{} 0. 
    \end{align*}
    Finally, \ref{cond_CFPP:cluster_compatible_tau_n_cluster_from_U} and \ref{cond_CFPP:Compatibility_Geometric_law} can be proven the exact same way as in the finite case. Thus the conclusion of Theorem \ref{thm:HTS_infinite_CFPP_periodic_points} is a consequence of Theorem \ref{thm:sufficient_conditions_convergence_compound_FPP}.
\end{proof}

\subsection{Proof of Theorem \ref{thm:sufficient_conditions_convergence_other_point_processes}}
\label{section:proof_CMS_allowing_for_non_zero_times}

To prove Theorem \ref{thm:sufficient_conditions_convergence_other_point_processes}, we will study an equivalent stochastic process that keep track of the sequence of returns, that it to say, for a sequence of asymptotically rare events $(B_n)_{n\geq 1}$, we define the associated Rare Event Stochastic Process (RESP) with scaling $\gamma$ as the sequence of functions $\gamma(\mu(B_n))\, \Phi_{B_n}$ taking values in $\overline{\mathbb{R}}_+^{\mathbb{N}}$ by
\begin{align*}
    \gamma(\mu(B_n))\, \Phi_{B_n} := \big(\gamma(\mu(B_n))\,r_{B_n}^{(1)}, \gamma(\mu(B_n))\, r_{B_n}^{(2)}, \dots\big)
\end{align*}

Of course, similarly to the REPP, we can define the notion of hitting RESP and return RESP if we consider it as a sequence of random variable on the probability spaces $(X, \nu)$ or $(B_n, \mu_{B_n})$ respectively. In fact, if the limit RESP belongs to the set $\{(t_i)_{i\geq 1} \in (\mathbb{R}_+)^{\mathbb{N}} \;|\; t_i \leq t_{i+1} \; \forall i\geq 0 \; \text{and} \; \lim_{i\to +\infty}t_i = +\infty\}$ almost surely, then the convergence of the (hitting or return) RESP is equivalent to the convergence of the  (hitting or return) REPP $(N_{B_n}^{\gamma})_{n\geq 1}$ and there is a one-to-one connection between the limit processes. For more details on this connection, we refer to \cite[Appendix A.2]{BT25_PhD}. \\

The RESP is sometimes more suitable to study the convergence because the convergence of such a process is equivalent to the convergence of its finite dimensional marginals.

Theorem \ref{thm:sufficient_conditions_convergence_other_point_processes} provides a generalization of Theorem \cite[Theorem 2.3]{BT24_FractionalPoisson} (in the absence of cluster) is crucial in the proof of Theorem \ref{thm:HTS_infinite_points_that_never_come_back_statements} and to establish the \textit{all-point REPP} property for the examples in Section \ref{section:all-point_REPP_for_examples}. As explained, it is easier to work with the RESP $(\gamma(\mu(B_n))\,\Phi_{B_n})_{n \geq 1}$ taking values in $(\mathbb{R}_+)^{\mathbb{N}}$. We recall the Theorem giving the relationship between limiting behavior of the hitting RESP and the return RESP for PDE systems (with regular variation).

\begin{thm}[\textup{\cite[Theorem 2.1]{BT24_FractionalPoisson}}]
\label{thm:HTS-REPP_vs_RTS-REPP_infinite_measure_renormalized_measure}
Let $(X,\mathscr{B},\mu, T)$ be a PDE CEMPT with $\mu(X)=+\infty$ with normalizing sequence $(a_n)_{n\geq 1} \in \RV(\alpha)$ for some $0 < \alpha \leq 1$. Let $Y$ be a uniform set and $(B_n)_{n\geq 0}$ be a sequence of asymptotically rare events satisfying included in $Y$ for all $n\geq 1$.\\
Let $\Phi, \widetilde{\Phi}$ be stochastic processes in $\{(t_i)_{i\geq 1} \in (\overline{\mathbb{R}}_+)^{\mathbb{N}}\;|\; t_i\leq t_{i+1} \; \text{for all $i\geq 1$}\}$. Then,
\begin{align*}
\gamma(\mu(B_n))\, \Phi_{B_n} \xRightarrow[n\to +\infty]{\mu_{B_n}} \widetilde{\Phi} \quad \text{if and only if} \quad 
\gamma(\mu(B_n))\, \Phi_{B_n} \xRightarrow[n\to +\infty]{\mathcal{L}(\mu)} \Phi. 
\end{align*}
Moreover, the distributions of $\Phi$ and $\widetilde{\Phi}$ uniquely determine each other in the following way. For all $d \geq 1$, denoting $F^{[d]}$ 
(respectively $\widetilde{F}^{[d]}$) the distribution function of the $d$ first coordinates of $\Phi$ (respectively 
$\widetilde{\Phi}$), we have, for all $0 \leq t_1 \leq \dots \leq t_d$,
\begin{align}
\label{eq:relation_hitting_return_infinite_measure}
F^{[d]}(t_1,\dots, t_d)
&= \alpha \int_0^{t_1} \biggl( \widetilde{F}^{[d-1]}\left(t_2 -t_1 + x, \dots, t_d - t_1 + x\right) \nonumber\\
& \quad\qquad \qquad - \widetilde{F}^{[d]}\left(x, t_2 - t_1 + x,\dots, t_d - t_1 +x\right)\biggr) 
(t_1 - x)^{\alpha - 1}\,\dd x\,,
\end{align}
with the convention $\widetilde{F}^{[0]} = 1$.
\end{thm}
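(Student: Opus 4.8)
The plan is to prove the equivalence by establishing an exact finite-$n$ hitting--return duality at the level of finite-dimensional distributions, and then passing to the limit using the regular variation of $(a_n)_{n\geq 1}$ together with the Darling--Kac machinery attached to the uniform set $Y$.

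\textbf{Reductions.} By \cite[Corollary~6]{Zwei07} the limiting distribution of the hitting RESP does not depend on the choice of $\nu\ll\mu$ with $\nu(X)=1$, so I would fix once and for all $\nu=\mu_Y$, the normalized restriction of $\mu$ to $Y$ (a probability since $\mu(Y)<+\infty$). Both $\gamma(\mu(B_n))\,\Phi_{B_n}$ and the candidate limits take values in the set $\{(t_i)_{i\geq 1}\in(\overline{\mathbb{R}}_+)^{\mathbb{N}}:t_i\leq t_{i+1}\}$, which is a closed subset of the compact metrizable space $(\overline{\mathbb{R}}_+)^{\mathbb{N}}$; hence weak convergence there is equivalent to convergence of all finite-dimensional marginals $(\gamma(\mu(B_n))\,r_{B_n}^{(1)},\dots,\gamma(\mu(B_n))\,r_{B_n}^{(d)})$, $d\geq1$. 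It therefore suffices to prove, for each fixed $d$, the equivalence of $d$-marginal convergence under $\mu_Y$ and under $\mu_{B_n}$, together with formula \eqref{eq:relation_hitting_return_infinite_measure} for the limits.

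\textbf{The duality identity.} For fixed $n$ and $d$, I would decompose a $\mu_Y$-typical orbit according to its first entrance time $N:=e_{B_n}$ into $B_n$ and its landing point $Z:=T^N x\in B_n$. Since $r_{B_n}^{(k)}(x)=N+r_{B_n}^{(k-1)}(Z)$ for $k\geq1$ (with $r_{B_n}^{(0)}\equiv0$), the hitting $d$-marginal is the $\mu_Y$-law of $N$ convolved with the return $(d-1)$-marginal started from $Z$, so the task reduces to identifying the joint $\mu_Y$-law of $(N,Z)$. Iterating the elementary identity $\{e_{B_n}\geq j\}=B_n^c\cap T^{-1}\{e_{B_n}\geq j-1\}$, using that $B_n\cap T^{-1}\{e_{B_n}\geq j-1\}=B_n\cap\{r_{B_n}\geq j\}$, and pushing everything through the transfer operator $\widehat{T}$, one obtains, for $A\subset B_n$, an exact expression for $\mu_Y(\{e_{B_n}=k\}\cap T^{-k}A)$ as a finite sum in the quantities $\mu(A\cap\{r_{B_n}\geq j\})$, $j\leq k$, plus a remainder controlled \emph{uniformly in $n$} by the defining estimate of the uniform set $Y$ (uniform convergence on $Y$ of $a_j^{-1}\sum_{i<j}\widehat{T}^i f$ to $\int f\,\dd\mu$). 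This is the infinite-measure substitute for Kac's identity: it replaces the divergent series $\sum_{j\geq1}\mu(B_n\cap\{r_{B_n}\geq j\})$ by a truncation whose behaviour, once time is rescaled by $\gamma(\mu(B_n))$, is dictated by the regularly varying tail of $r_Y$ on $Y$.

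\textbf{Passage to the limit and inversion.} Taking $k=\lfloor t_1/\gamma(\mu(B_n))\rfloor$ and invoking regular variation of index $-\alpha$ for $j\mapsto\mu(B_n\cap\{r_{B_n}\geq j\})$ after scaling --- the tail of $r_Y$ on the uniform set $Y$ lying in $\RV(-\alpha)$ by \cite[Proposition~3.8.7]{Aar97}, exactly as in Lemma~\ref{lem:equivalence_queue_renormalisation_gamma} --- the discrete convolution passes to an integral convolution against the kernel $(t_1-x)^{\alpha-1}$ with the multiplicative constant $\alpha$, while the occupation time of $Y$ before the first hit converges, via the functional Darling--Kac theorem \cite{OwadaSamorodnitsky15} (compare the heuristic after Theorem~\ref{thm:HTS_infinite_FPP_infinitely_recurrent}), to the Mittag-Leffler arithmetic; its asymptotic independence from the induced return process makes the limit factorize, and matching the terms yields precisely \eqref{eq:relation_hitting_return_infinite_measure}. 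For the converse, for fixed $t_2\leq\dots\leq t_d$ the map sending $\widetilde F^{[d]}$ to the right-hand side of \eqref{eq:relation_hitting_return_infinite_measure} is an Abel-type (Riemann--Liouville, order $\alpha>0$) integral transform in the variable $t_1$, hence injective; together with the already-determined $\widetilde F^{[d-1]}$ term this recovers $\widetilde F^{[d]}$ from $F^{[d]}$ uniquely, giving both the explicit formula and the ``if and only if''.

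\textbf{Main obstacle.} The delicate points are (i) the uniform-in-$n$ control of the remainder in the infinite-measure Kac identity using only the uniform-set property of $Y$ (no mixing or bounded distortion being available at this level of generality), and (ii) the asymptotic independence between the induced return process and the Mittag-Leffler occupation-time factor --- the same type of difficulty flagged after Theorem~\ref{thm:HTS_infinite_FPP_infinitely_recurrent}. These two steps carry essentially all of the analytic work; the rest is reduction and regular-variation bookkeeping.
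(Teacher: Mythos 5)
This statement is not proved in the present paper at all: it is imported verbatim from \cite{BT24_FractionalPoisson}, so your proposal has to stand on its own, and it does not — the two steps carrying all of the analytic content are asserted rather than proved, as you yourself concede in your ``main obstacle'' paragraph. First, the claimed finite-$n$ identity expressing $\mu_Y(\{e_{B_n}=k\}\cap T^{-k}A)$ as a finite sum of the quantities $\mu(A\cap\{r_{B_n}\geq j\})$ ``plus a remainder controlled uniformly in $n$'' is precisely the infinite-measure Kac duality whose establishment \emph{is} the theorem. The telescoping $\{e_{B_n}\geq j\}=B_n^c\cap T^{-1}\{e_{B_n}\geq j-1\}$ gives an exact identity only against the invariant measure $\mu$ (where it degenerates, since $\mu\{e_{B_n}\geq j\}=+\infty$); once you weight by $\mathbf{1}_Y/\mu(Y)$ the telescoping breaks, and the uniform-set hypothesis only controls Cesàro sums $a_K^{-1}\sum_{j<K}\widehat{T}^jf$ on $Y$. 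The actual mechanism producing the kernel is that these partial sums are regularly varying: $a_{\lfloor t/\gamma(\mu(B_n))\rfloor}\sim t^{\alpha}/\mu(B_n)$, so window sums over $j\in(x/\gamma(\mu(B_n)),t_1/\gamma(\mu(B_n))]$, normalized by $\mu(B_n)$, converge to $t_1^{\alpha}-x^{\alpha}=\alpha\int_x^{t_1}s^{\alpha-1}\,\dd s$, which is where $\alpha(t_1-x)^{\alpha-1}$ comes from. You never carry out this computation; instead you outsource the kernel to the functional Darling--Kac theorem of \cite{OwadaSamorodnitsky15} plus an ``asymptotic independence'' between the occupation-time factor and the induced return process. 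That independence is exactly the hard point — it is the ``natural'' route the paper explicitly describes and declines after Theorem \ref{thm:HTS_infinite_FPP_infinitely_recurrent} — and nothing in Theorem \ref{thm:HTS-REPP_vs_RTS-REPP_infinite_measure_renormalized_measure}'s hypotheses (no mixing, no distortion, no compactness of $\widehat{T}^{k}(\mathbf{1}_{B_n}/\mu(B_n))$ as in the later conditions \ref{cond_OtherLimits:good_density_after_tau_n}) lets you prove it. The same problem infects your ``convolution'' step: writing the hitting $d$-marginal as the law of $N=e_{B_n}$ convolved with return marginals ``started from $Z$'' presumes both independence of $N$ and $Z$ and that the entry law of $Z$ may be replaced by $\mu_{B_n}$; neither holds at finite $n$ nor is established in the limit. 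A smaller but telling slip: $j\mapsto\mu(B_n\cap\{r_{B_n}\geq j\})$ is not regularly varying — what lies in $\RV(-\alpha)$ is $(a_n)$, equivalently the tail of $r_Y$ on $Y$ as in Lemma \ref{lem:equivalence_queue_renormalisation_gamma}; the scaled return tail of $B_n$ merely converges to $1-\widetilde{F}^{[1]}$, which is the hypothesis, not a Karamata fact.

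On the positive side, your framing is reasonable: the reduction to finite-dimensional marginals in the compact space $(\overline{\mathbb{R}}_+)^{\mathbb{N}}$ is correct, and your converse mechanism — tightness of the return laws plus injectivity of the Riemann--Liouville transform $g\mapsto\alpha\int_0^{t_1}g(x)(t_1-x)^{\alpha-1}\,\dd x$ applied at fixed $s_j=t_j-t_1$, identifying all subsequential limits — is sound and would upgrade the direct implication to the stated equivalence (this is also how the paper exploits \eqref{eq:relation_hitting_return_infinite_measure} in the proof of Theorem \ref{thm:sufficient_conditions_convergence_other_point_processes}). But the direct implication, i.e.\ return convergence forcing hitting convergence together with \eqref{eq:relation_hitting_return_infinite_measure}, is exactly where your argument is a sketch whose decisive transfer-operator renewal estimate is missing and whose substitute (occupation-time independence) is unproven.
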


We also recall the following lemma giving a uniform control of the convergence to $0$ for the average of the iterations by the transfer operator for functions in a compact subset of $L^1(\mu)$ when the system is a CEMPT.

\begin{lem} \textup{\cite[Theorem 3.1]{Zwe07_InfiniteMeasurePreservingTransformationsWithCompactFirstRegeneration}}
    \label{lem:convergence_Birkhoff_transfer_operator_compact}
    Let $(X, \mathscr{B}, \mu, T)$ be a CEMPT and $\mathcal{U}$ a compact subset of $L_1(\mu)$ such that $\int u\,\dd\mu = 1$ for all $u\in \mathcal{U}$. Then, uniformly in $u, u^* \in \mathcal{U}$, we have 
    \begin{align*}
        \left\| \frac{1}{M}\sum_{j = 0}^{M-1} \widehat{T}^ju - \frac{1}{M}\sum_{j = 0}^{M-1} \widehat{T}^ju^*\right\|_{L^1(\mu)} \xrightarrow[M\to +\infty]{} 0.
    \end{align*}
\end{lem}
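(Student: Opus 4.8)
The plan is to split the statement into a pointwise (in the density) mean-ergodic estimate and a routine compactness upgrade. Write $A_M := \frac{1}{M}\sum_{j=0}^{M-1}\widehat{T}^j$. Since $\widehat{T}$ is positive and, by the defining duality tested against $g = 1 \in L^\infty(\mu)$, satisfies $\int \widehat{T}|f|\,\dd\mu = \int|f|\,\dd\mu$, each $\widehat{T}^j$ and hence each $A_M$ is an $L^1(\mu)$-contraction. For $u, u^* \in \mathcal{U}$ set $f := u - u^*$; since $\int u\,\dd\mu = \int u^*\,\dd\mu = 1$, we have $\int f\,\dd\mu = 0$, and the task becomes to show $\sup\{\,\|A_M(u-u^*)\|_{L^1(\mu)} : u, u^* \in \mathcal{U}\,\} \to 0$ as $M \to +\infty$.

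First I would establish the core claim: for every $f \in L^1(\mu)$ with $\int f\,\dd\mu = 0$ one has $\|A_M f\|_{L^1(\mu)} \to 0$. The key input is the identification $\overline{(I - \widehat{T})L^1(\mu)} = \{\,f \in L^1(\mu) : \int f\,\dd\mu = 0\,\}$. This follows from the bipolar theorem together with $(L^1(\mu))^* = L^\infty(\mu)$ (valid as $\mu$ is $\sigma$-finite): a function $\psi \in L^\infty(\mu)$ annihilates $(I-\widehat{T})L^1(\mu)$ iff $\int (\psi - \psi\circ T)\,g\,\dd\mu = 0$ for all $g \in L^1(\mu)$, i.e. iff $\psi\circ T = \psi$ $\mu$-a.e., and by ergodicity of the CEMPT such $\psi$ are constant; the pre-annihilator of the constants is precisely $\{f : \int f\,\dd\mu = 0\}$. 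On the dense subspace $(I - \widehat{T})L^1(\mu)$ the estimate is immediate by telescoping: if $f = g - \widehat{T}g$ then $A_M f = \frac{1}{M}(g - \widehat{T}^M g)$, so $\|A_M f\|_{L^1(\mu)} \le \frac{2}{M}\|g\|_{L^1(\mu)} \to 0$. A standard approximation argument using $\|A_M\|_{L^1 \to L^1} \le 1$ then extends this to all $f$ in the closure, proving the claim.

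Next I would carry out the uniform upgrade. Given $\varepsilon > 0$, compactness of $\mathcal{U}$ provides a finite $\varepsilon$-net $\{u_1, \dots, u_N\} \subseteq \mathcal{U}$. For arbitrary $u, u^* \in \mathcal{U}$, pick $i, j$ with $\|u - u_i\|_{L^1(\mu)} \le \varepsilon$ and $\|u^* - u_j\|_{L^1(\mu)} \le \varepsilon$; using that $A_M$ is a contraction,
\begin{align*}
    \|A_M(u - u^*)\|_{L^1(\mu)}
    &\le \|A_M(u - u_i)\|_{L^1(\mu)} + \|A_M(u_i - u_j)\|_{L^1(\mu)} + \|A_M(u_j - u^*)\|_{L^1(\mu)}\\
    &\le 2\varepsilon + \max_{1 \le i, j \le N}\|A_M(u_i - u_j)\|_{L^1(\mu)}.
\end{align*}
Each $u_i - u_j$ has zero integral, so the core claim applied to these finitely many pairs gives $\max_{i,j}\|A_M(u_i - u_j)\|_{L^1(\mu)} \to 0$; hence $\limsup_{M \to +\infty}\sup_{u, u^* \in \mathcal{U}}\|A_M(u - u^*)\|_{L^1(\mu)} \le 2\varepsilon$, and letting $\varepsilon \to 0$ finishes the proof.

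The only genuinely delicate point is the core claim, and within it the density identity $\overline{(I - \widehat{T})L^1(\mu)} = \{\int f = 0\}$. The temptation to invoke the classical mean ergodic theorem to obtain norm-convergence of $A_M f$ for \emph{all} $f$ must be resisted: $L^1(\mu)$ is not reflexive, and in the infinite-measure (null-recurrent) case $\widehat{T}$ has no invariant density at all, so $A_M f$ generally fails to converge. It is exactly the constraint $\int f\,\dd\mu = 0$ — automatically met by the difference $u - u^*$ of two probability densities — that places $f$ in the closed range of $I - \widehat{T}$ and forces the Cesàro averages to vanish; this is where conservativity–ergodicity of the CEMPT enters, through the fact that the only bounded $T$-invariant functions are constants.
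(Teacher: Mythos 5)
Your proof is correct. Note that the paper does not prove this lemma at all: it is quoted verbatim from Zweimüller \cite[Theorem 3.1]{Zwe07_InfiniteMeasurePreservingTransformationsWithCompactFirstRegeneration}, so there is no internal proof to compare against; what you have written is a sound self-contained derivation along the standard lines. The three ingredients all check out: the identification $\overline{(I-\widehat{T})L^1(\mu)}=\{f\in L^1(\mu):\int f\,\dd\mu=0\}$ via the $(L^1)^*=L^\infty$ duality (legitimate since $\mu$ is $\sigma$-finite) together with the fact that for a \emph{conservative} ergodic system every $\psi\in L^\infty$ with $\psi\circ T=\psi$ a.e.\ is a.e.\ constant — you rightly flag that this, and not the mean ergodic theorem, is where the CEMPT hypothesis enters; the telescoping bound $\|A_M(g-\widehat{T}g)\|_{L^1(\mu)}\le 2\|g\|_{L^1(\mu)}/M$ combined with the contraction property $\|A_M\|_{L^1\to L^1}\le 1$; and the finite $\varepsilon$-net upgrade, which only needs the claim for finitely many zero-mean differences $u_i-u_j$.
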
 
We are now ready to prove Theorem \ref{thm:sufficient_conditions_convergence_other_point_processes}.

\begin{proof}[Proof (of Theorem \ref{thm:sufficient_conditions_convergence_other_point_processes})]
  We consider the RESP $(\gamma(\mu(B_n))\Phi_{B_n})_{n\geq 1}$ on $\overline{\mathbb{R}}_+$ and the sequence of probability spaces $(B_n,\mu_{B_n})$, \textit{i.e.} we look at the successive return times. As $(\overline{\mathbb{R}}_+)^{\mathbb{N}}$ is compact, this sequence is tight so up to a subsequence and the convergence of the point process is characterized by the convergence of its finite-dimensional marginals, we can assume that the family of distribution functions $(\widetilde{F}_{B_n}^{[d]})_{d\geq 1}$ converges towards the family of functions $(\widetilde{F}^{[d]})_{d\geq 0}$ (meaning that we have pointwise convergence for each one of them at the continuity points of $\widetilde{F}^{[d]}$). We are going to show that the only possible limits are the distribution functions of the successive return times of the stochastic process $\Phi_{\RPP(\widetilde{J}_{\alpha}(\nu))}$, which is enough to get the convergence towards this process. By Theorem \ref{thm:HTS-REPP_vs_RTS-REPP_infinite_measure_renormalized_measure}, for any given 
density \( u \), let \((F_{B_n, v}^{[d]})_{d \geq 1}\) denote the family of distribution functions 
corresponding to the finite-dimensional marginals of the stochastic process
\((\gamma(\mu(B_n))\Phi_{B_n})\) on the probability space \((X, \mu_{v})\), where \(\mu_v\) is a 
probability measure absolutely continuous with respect to \(\mu\), having density \( v \). Then, 
the family of renormalized distribution functions \((\widetilde{F}_{B_n,v}^{[d]})_{d \geq 1}\) 
converges to the family of functions \((F^{[d]})_{d \geq 1}\). 
Both \((\widetilde{F}_{B_n,v}^{[d]})_{d \geq 1}\) and \((F^{[d]})_{d \geq 1}\) satisfy the 
relationship given in \eqref{eq:relation_hitting_return_infinite_measure}.

      For all $d\geq 1$ and $0\leq t_1\leq \dots \leq t_d$ such that $(t_1,\dots, t_d)$ is a continuity point of $\widetilde{F}^{[d]}$, we have 
    \begin{align}
        \widetilde{F}_{B_n}^{[d]}(t_1,\dots, t_d) =\mu_{B_n}\big(\gamma(\mu_{B_n})\,r_{B_n} \leq t_1,\dots, \gamma(\mu(B_n))\,r_{B_n}^{(d)} \leq t_d\big)\,.
    \end{align}

    \noindent By hypothesis, there exists a function $u \in L^1(\mu)$ such that $Y$ is $u$-uniform (without loss of generality we assume $\int u \,\dd\mu = 1$). We consider $\mu_{v_n}$ the probability absolutely continuous with respect to $\mu$ and of density $v_n :=  \widehat{T^{\tau_n}}(\mathbf{1}_{B_n}/\mu(B_n))$ and write $(F_{B_n,v_n}^{[d]})_{d\geq 1}$ the family of distribution functions of $\gamma(\mu(B_n))\Phi_{B_n}$ drawn from $\mu_{v_n}$. 
    By \ref{cond_OtherLimits:good_density_after_tau_n}, for all $n\geq 1$, $v_n\in \mathcal{U}$.
    
    \begin{lem}
        \label{lem:same_limit_as_hitting_if_we_start_at_tau_n}
        We have the following asymptotical result
        \begin{align*}
            (\gamma(\mu(B_n)) \Phi_{B_n})_{\#}\mu_{v_n}
        \end{align*}
        and 
        \begin{align*}
            (\gamma(\mu(B_n))\Phi_{B_n})_{\#}\mu_v
        \end{align*}
        share the same limit, if it exists.
    \end{lem}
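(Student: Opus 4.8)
The plan is to compare the two pushforwards through their finite-dimensional distribution functions, handling \emph{fixed} reference densities first via an averaging device built on Lemma~\ref{lem:convergence_Birkhoff_transfer_operator_compact}, and only afterwards letting the moving density $v_n$ vary, using that condition~\ref{cond_OtherLimits:good_density_after_tau_n} confines $v_n$ to the compact set $\mathcal{U}$. Concretely, since $(\overline{\mathbb{R}}_+)^{\mathbb{N}}$ is compact it is enough to pass to a subsequence along which the marginals of both $(\gamma(\mu(B_n))\Phi_{B_n})_{\#}\mu_{v_n}$ and $(\gamma(\mu(B_n))\Phi_{B_n})_{\#}\mu_v$ converge, and to show the two limits coincide. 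For $\mathbf{t}=(t_1,\dots,t_d)$ with $0\le t_1\le\cdots\le t_d$ I would write $E_n(\mathbf{t}):=\{x:\gamma(\mu(B_n))\,r_{B_n}^{(i)}(x)\le t_i,\ 1\le i\le d\}$, so that for any density $w$ with $\int w\,\dd\mu=1$ one has $F_{B_n,w}^{[d]}(\mathbf{t})=\int w\,\mathbf{1}_{E_n(\mathbf{t})}\,\dd\mu=\mu_w(E_n(\mathbf{t}))$; the goal is then $F_{B_n,v_n}^{[d]}(\mathbf{t})-F_{B_n,v}^{[d]}(\mathbf{t})\to 0$ at every continuity point of the relevant subsequential limit.

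Next I would freeze an integer $M\ge 1$ and, using $\int(\widehat{T}^j w)\,\mathbf{1}_{E_n(\mathbf{t})}\,\dd\mu=\mu_w(T^{-j}E_n(\mathbf{t}))$, compare $\mu_w(E_n(\mathbf{t}))$ with $\int\bigl(\tfrac1M\sum_{j=0}^{M-1}\widehat{T}^j w\bigr)\mathbf{1}_{E_n(\mathbf{t})}\,\dd\mu=\tfrac1M\sum_{j=0}^{M-1}\mu_w(T^{-j}E_n(\mathbf{t}))$. The elementary fact driving everything is that off $R_{n,j}:=\bigcup_{k=1}^{j}T^{-k}B_n$ one has $r_{B_n}^{(i)}\circ T^j=r_{B_n}^{(i)}-j$, which yields the sandwich $E_n(\mathbf{t})\setminus R_{n,j}\subseteq T^{-j}E_n(\mathbf{t})\subseteq E_n(\mathbf{t}+j\gamma(\mu(B_n))\mathbf{1})$ for $0\le j\le M-1$, and hence $|\mu_w(E_n(\mathbf{t}))-\mu_w(T^{-j}E_n(\mathbf{t}))|\le\bigl(F_{B_n,w}^{[d]}(\mathbf{t}+M\gamma(\mu(B_n))\mathbf{1})-F_{B_n,w}^{[d]}(\mathbf{t})\bigr)+\mu_w\bigl(\bigcup_{k=1}^{M}T^{-k}B_n\bigr)$. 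With $M$ \emph{frozen}, both terms tend to $0$ as $n\to\infty$: the first because $M\gamma(\mu(B_n))\to 0$ and the subsequential limit is continuous at $\mathbf{t}$ (together with monotonicity of the $F_{B_n,w}^{[d]}$), the second because it is a sum of only $M$ terms $\mu_w(T^{-k}B_n)=\int(\widehat{T}^k w)\mathbf{1}_{B_n}\,\dd\mu$, each vanishing since $\mu(B_n)\to 0$ and $\widehat{T}^k w\in L^1(\mu)$. Thus $\mu_w(E_n(\mathbf{t}))-\int\bigl(\tfrac1M\sum_{j=0}^{M-1}\widehat{T}^j w\bigr)\mathbf{1}_{E_n(\mathbf{t})}\,\dd\mu\to 0$ for every fixed $M$.

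Applying this to two densities $w,w'$ and subtracting gives $\limsup_n|F_{B_n,w}^{[d]}(\mathbf{t})-F_{B_n,w'}^{[d]}(\mathbf{t})|\le\bigl\|\tfrac1M\sum_{j=0}^{M-1}\widehat{T}^j w-\tfrac1M\sum_{j=0}^{M-1}\widehat{T}^j w'\bigr\|_{L^1(\mu)}$ for every $M$, and letting $M\to\infty$ while invoking Lemma~\ref{lem:convergence_Birkhoff_transfer_operator_compact} (uniform over $w,w'$ in a fixed compact subset of the probability densities) yields $F_{B_n,w}^{[d]}-F_{B_n,w'}^{[d]}\to 0$ for \emph{fixed} densities. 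I would then cover $\mathcal{U}$ by finitely many $L^1$-balls $B(w_1,\varepsilon),\dots,B(w_N,\varepsilon)$ with $w_i\in\mathcal{U}$; since $v_n\in\mathcal{U}$ by~\ref{cond_OtherLimits:good_density_after_tau_n}, choosing $i(n)$ with $\|v_n-w_{i(n)}\|_{L^1(\mu)}<\varepsilon$ and using the trivial bound $|\mu_{v_n}(A)-\mu_{w}(A)|\le\|v_n-w\|_{L^1(\mu)}$ gives $|F_{B_n,v_n}^{[d]}(\mathbf{t})-F_{B_n,w_{i(n)}}^{[d]}(\mathbf{t})|<\varepsilon$; as all the $F_{B_n,w_i}^{[d]}(\mathbf{t})$ share a common limit (pass to a further subsequence along which one, hence all by the fixed-density comparison, converge), $F_{B_n,v_n}^{[d]}(\mathbf{t})$ converges to that same limit up to $\varepsilon$, and letting $\varepsilon\downarrow 0$ pins it down; likewise $F_{B_n,v}^{[d]}(\mathbf{t})$ converges to it, via the fixed-density comparison between $v$ (a probability density absolutely continuous with respect to $\mu$) and some $w_i$. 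Since $d\ge 1$ and the continuity point $\mathbf{t}$ are arbitrary, this identifies the two subsequential limits along every subsequence, hence the two limits themselves whenever they exist.

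I expect the main obstacle to be getting the order of limits right: there is no single estimate uniform in $M$, because Lemma~\ref{lem:convergence_Birkhoff_transfer_operator_compact} carries no rate, so $M$ must be held fixed while $n\to\infty$ — at which point only $\mu(B_n)\to 0$ is needed to dispose of the finitely many correction terms $\mu_w(T^{-k}B_n)$ coming from early returns to $B_n$ — and $M\to\infty$ is taken only at the very end. Correspondingly, the compactness of $\mathcal{U}$, i.e.\ condition~\ref{cond_OtherLimits:good_density_after_tau_n}, is exactly what promotes the fixed-density comparison to the moving density $v_n$; everything else is routine bookkeeping with distribution functions, and the dictionary of Theorem~\ref{thm:HTS-REPP_vs_RTS-REPP_infinite_measure_renormalized_measure} is not needed at this stage.
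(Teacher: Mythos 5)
Your proof is correct and follows essentially the same approach as the paper: you compare the integral against the starting density with the integral against its Birkhoff transfer-operator average, dispose of the difference via the identity $r_{B_n}^{(i)}\circ T^j = r_{B_n}^{(i)}-j$ off early returns together with $\mu\bigl(\bigcup_{k\le M}T^{-k}B_n\bigr)\to 0$, invoke Lemma~\ref{lem:convergence_Birkhoff_transfer_operator_compact} to compare two averages, and upgrade to the moving density $v_n$ using compactness of $\mathcal{U}$ from~\ref{cond_OtherLimits:good_density_after_tau_n}. The only surface-level deviation is that you work with distribution functions and an $\varepsilon$-net (hence the continuity-point and monotonicity bookkeeping), whereas the paper tests against bounded Lipschitz functions and handles the uniformity over $\mathcal{U}$ via uniform integrability of the compact set; the underlying estimate is identical.
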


    \begin{proof}[Proof (of Lemma \ref{lem:same_limit_as_hitting_if_we_start_at_tau_n})]
        We denote $(F_{B_n,v_n}^{[d]})_{d\geq 1}$ and $(F_{B_n,v}^{[d]})_{d\geq 1}$ their respective family of distribution functions of the finite-dimensional marginals. By the Portmanteau theorem, the convergence in law is implied by the convergence for every bounded Lipschitz function. Thus, for every $d\geq 1$, consider a bounded Lipschitz function $\psi : \mathbb{R}^d\to \mathbb{R}^d$. We are going to show that 
    \begin{align*}
        \int \psi \circ (\gamma(\mu(B_n))\Phi^{[d]}_{B_n})\, u\,\dd\mu - \int \psi \circ (\gamma(\mu(B_n))\Phi^{[d]}_{B_n})\, u^*\,\dd\mu \xrightarrow[n\to +\infty]{} 0 \quad \text{uniformly in $u,u^* \in \mathcal{U}$.}
    \end{align*}
      Let $\varepsilon > 0$ and consider $M$ large enough so that the quantity in Lemma \ref{lem:convergence_Birkhoff_transfer_operator_compact} is smaller than $\varepsilon$. It yields,
    \begin{align*}
        \left|\int \psi \circ \big(\gamma(\mu(B_n))\Phi^{[d]}_{B_n}\big) \cdot \frac{1}{M}\sum_{j = 0}^{M - 1} \big(\widehat{T}^ju - \widehat{T}^ju^*\big)\,\dd\mu \right| & \leq \sup |\psi| \left\| \frac{1}{M}\sum_{j = 0}^{M - 1} \big(\widehat{T}^ju - \widehat{T}^ju^* \big) \right\|_{L^1(\mu)}\\
        & \leq \varepsilon \sup |\psi| .
    \end{align*}
    Furthermore, 
    \begin{align*}
        &\left| \int \psi \circ \big(\gamma(\mu(B_n))\Phi^{[d]}_{B_n}\big) \Big(u - \frac{1}{M}\sum_{j= 0}^{M-1}\widehat{T}^ju\Big)\,\dd\mu\right| \\
        & \qquad\leq \frac{1}{M} \sum_{j=0}^{M-1} \int \left| \psi \circ (\gamma(\mu(B_n))\Phi^{[d]}_{B_n}) - \psi \circ \big(\gamma(\mu(B_n))\Phi^{[d]}_{B_n}\big) \circ T^{j} \right|u\,\dd\mu \\
        & \qquad \leq \frac{1}{M} \sum_{j= 0}^{M-1} \left( 2 \sup |\psi| \int_{\{r_{B_n} \leq j\}} u\,\dd\mu + \lip(\psi)\,\gamma(\mu(B_n))j\right) \\
        & \qquad \leq 2\sup |\psi| \int_{\{r_{B_n}\leq M\}} u\,\dd\mu + \lip(\psi) \,\gamma(\mu(B_n))M \\
        & \qquad \leq \varepsilon, \; \text{for $n$ large enough and uniformly on $u\in \mathcal{U}$ since $\mathcal{U}$ is uniformly integrable.}
    \end{align*}
    
      Thus, by Portemanteau theorem, for every $d\geq 1$
    \begin{align}
        \label{eq:second_step_equivalence_convergence_cluster_chap5}
        F_{B_n,v_n}^{[d]}(t_1,\dots, t_d) - F_{B_n,v}^{[d]}(t_1,\dots, t_d) \xrightarrow[n\to +\infty]{} 0.
    \end{align}

    \end{proof}

    \begin{lem}
        \label{lem:independence_waiting_time_and_shift_process}
        Under the distribution $\mu_{B_n}$, the two random variables $\gamma(\mu(B_n))\Phi_{B_n} \circ T^{\tau_n}$ and $\gamma(\mu(B_n))\tau_n$ are asymptotically independent. This is equivalent to say that 
        \begin{align*}
            (\gamma(\mu(B_n))\Phi_{B_n} \circ T^{\tau_n}, \gamma(\mu(B_n))\tau_n) \xRightarrow[n\to +\infty]{\mu_{B_n}} (\Phi, W)
        \end{align*}
        where $\Phi$ and $W$ are as above and independent.
    \end{lem}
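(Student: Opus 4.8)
The plan is to prove Lemma~\ref{lem:independence_waiting_time_and_shift_process} by showing that, for any finite collection of test functions, the joint expectation factors asymptotically. Concretely, fix $d\geq 1$, a bounded Lipschitz function $\psi:\mathbb{R}^d\to\mathbb{R}$ and a bounded Lipschitz function $\chi:\mathbb{R}_+\to\mathbb{R}$. We want to show
\begin{align*}
  \mathbb{E}_{\mu_{B_n}}\!\Big[\psi\big(\gamma(\mu(B_n))\Phi^{[d]}_{B_n}\circ T^{\tau_n}\big)\,\chi\big(\gamma(\mu(B_n))\tau_n\big)\Big]
  - \mathbb{E}_{\mu_{B_n}}\!\Big[\psi\big(\gamma(\mu(B_n))\Phi^{[d]}_{B_n}\circ T^{\tau_n}\big)\Big]\,\mathbb{E}_{\mu_{B_n}}\!\big[\chi\big(\gamma(\mu(B_n))\tau_n\big)\big]
  \xrightarrow[n\to+\infty]{}0.
\end{align*}
The point is that $\chi(\gamma(\mu(B_n))\tau_n)$ depends only on how long we wait before $\tau_n$, whereas $\psi\big(\gamma(\mu(B_n))\Phi^{[d]}_{B_n}\circ T^{\tau_n}\big)$ is a functional of the trajectory \emph{after} time $\tau_n$, which by \ref{cond_OtherLimits:good_density_after_tau_n} has a ``good'' density landing in the compact set $\mathcal{U}$. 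Together with \ref{cond_OtherLimits:tau_n_small_enough} (which gives $\gamma(\mu(B_n))\tau_n\Rightarrow W$) this should yield the product structure.

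The key steps, in order, are as follows. First I would decompose $B_n$ according to the level sets $\{\tau_n=k\}$, writing $B_n=\bigsqcup_k B_n\cap\{\tau_n=k\}$, so that on each piece $\tau_n$ is constant. Then for a function of the post-$\tau_n$ trajectory, the transfer-operator identity gives
\begin{align*}
  \mathbb{E}_{\mu_{B_n}}\!\Big[\psi\big(\gamma(\mu(B_n))\Phi^{[d]}_{B_n}\circ T^{\tau_n}\big)\,\chi\big(\gamma(\mu(B_n))\tau_n\big)\Big]
  = \sum_k \chi\big(\gamma(\mu(B_n))k\big)\,\frac{\mu(B_n\cap\{\tau_n=k\})}{\mu(B_n)}\int \psi\big(\gamma(\mu(B_n))\Phi^{[d]}_{B_n}\big)\,\widehat{T}^{k}\!\Big(\tfrac{\mathbf{1}_{B_n\cap\{\tau_n=k\}}}{\mu(B_n\cap\{\tau_n=k\})}\Big)\dd\mu.
\end{align*}
Second, I would invoke Lemma~\ref{lem:same_limit_as_hitting_if_we_start_at_tau_n} (or rather its proof, \eqref{eq:second_step_equivalence_convergence_cluster_chap5}): since $\widehat{T}^{k}\big(\mathbf{1}_{B_n\cap\{\tau_n=k\}}/\mu(B_n\cap\{\tau_n=k\})\big)\in\mathcal{U}$ for every $k$ with $B_n\cap\{\tau_n=k\}\neq\emptyset$, the inner integral $\int \psi\big(\gamma(\mu(B_n))\Phi^{[d]}_{B_n}\big)\,\widehat{T}^{k}(\cdots)\dd\mu$ is, \emph{uniformly in $k$}, asymptotically equal to $\int \psi\big(\gamma(\mu(B_n))\Phi^{[d]}_{B_n}\big)\,u\,\dd\mu =: I_n$, where $u$ is the fixed $u$-uniform density for $Y$. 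Here I would reuse the averaging trick from the proof of Lemma~\ref{lem:same_limit_as_hitting_if_we_start_at_tau_n}: replace each density in $\mathcal{U}$ by its Cesàro average $\frac1M\sum_{j<M}\widehat{T}^j(\cdot)$, use Lemma~\ref{lem:convergence_Birkhoff_transfer_operator_compact} to compare two such averages, and use Lipschitz continuity of $\psi$ together with $\gamma(\mu(B_n))\to0$ and uniform integrability of $\mathcal{U}$ to discard the averaging. This makes the error uniform in $k$, which is exactly what we need to pull $I_n$ out of the sum. Third, once the inner integral is $I_n+o(1)$ uniformly in $k$, the whole expression becomes
\begin{align*}
  \big(I_n+o(1)\big)\sum_k \chi\big(\gamma(\mu(B_n))k\big)\,\frac{\mu(B_n\cap\{\tau_n=k\})}{\mu(B_n)}
  = \big(I_n+o(1)\big)\,\mathbb{E}_{\mu_{B_n}}\!\big[\chi\big(\gamma(\mu(B_n))\tau_n\big)\big],
\end{align*}
and likewise $\mathbb{E}_{\mu_{B_n}}\big[\psi(\gamma(\mu(B_n))\Phi^{[d]}_{B_n}\circ T^{\tau_n})\big] = I_n + o(1)$ by taking $\chi\equiv1$; subtracting gives the desired asymptotic factorization. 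Finally, \ref{cond_OtherLimits:tau_n_small_enough} identifies $\lim\mathbb{E}_{\mu_{B_n}}[\chi(\gamma(\mu(B_n))\tau_n)] = \mathbb{E}[\chi(W)]$, and (along the convergent subsequence fixed in the proof of Theorem~\ref{thm:sufficient_conditions_convergence_other_point_processes}) $I_n\to\mathbb{E}[\psi(\Phi^{[d]})]$, so the joint law converges to the product law of $\Phi$ and $W$, which is the claimed asymptotic independence.

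The main obstacle I anticipate is making the convergence of the inner integral \emph{uniform over all the levels $k$} simultaneously: the number of nonempty level sets $\{\tau_n=k\}$ may be infinite and $k$ may grow with $n$, so one cannot simply apply a fixed-$k$ limit. This is precisely why the hypothesis \ref{cond_OtherLimits:good_density_after_tau_n} is phrased with a single compact set $\mathcal{U}$ independent of $k$ and $n$, and why Lemma~\ref{lem:convergence_Birkhoff_transfer_operator_compact} is stated uniformly over $u,u^*\in\mathcal{U}$; the careful bookkeeping needed to leverage this uniformity — in particular verifying that the Cesàro-averaging error terms ($\sup|\psi|\,\mu(r_{B_n}\leq M)$ and $\lip(\psi)\gamma(\mu(B_n))M$) are controlled uniformly in $k$, using only that $B_n\subset Y$ with $\mu(Y)<+\infty$ and $\mathcal{U}$ uniformly integrable — is the technical heart of the argument. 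A minor secondary point is that $\gamma(\mu(B_n))\tau_n$ and $\chi$ bounded Lipschitz suffice to handle the possibility $\tau_n=+\infty$ on a negligible set (which does not occur here since the system is a CEMPT and $\mu(B_n)>0$, but should be noted), and that the subsequence extraction from the proof of Theorem~\ref{thm:sufficient_conditions_convergence_other_point_processes} is what lets us speak of ``the'' limit $\Phi$.
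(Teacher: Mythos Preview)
Your proposal is correct and follows the same underlying strategy as the paper: decompose $B_n$ over the level sets $\{\tau_n=k\}$, push the post-$\tau_n$ observable through $\widehat{T}^{k}$, and exploit that all the resulting densities lie in the fixed compact set $\mathcal{U}$. The only organizational difference is that the paper tests against indicator functions $\mathbf{1}_E$ for $E$ in a $\pi$-system (with $\mathbb{P}(W\in\partial E)=0$) rather than against bounded Lipschitz $\chi$, and instead of keeping the levels $k$ separate and invoking uniformity over $\mathcal{U}$ as you do, it first bundles all levels with $\gamma(\mu(B_n))k\in E$ into a \emph{single} convex-combination density
\[
v'_n=\sum_{\gamma(\mu(B_n))k\in E}\frac{\mu(B_n\cap\{\tau_n=k\})}{\mu(B_n\cap\{\gamma(\mu(B_n))\tau_n\in E\})}\,\widehat{T}^{k}\!\Big(\tfrac{\mathbf{1}_{B_n\cap\{\tau_n=k\}}}{\mu(B_n\cap\{\tau_n=k\})}\Big)
\]
which lies in (the closed convex hull of) $\mathcal{U}$, and then applies Lemma~\ref{lem:same_limit_as_hitting_if_we_start_at_tau_n} once to $v'_n$. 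Your explicit uniform-in-$k$ argument and the paper's convex-combination packaging are equivalent; your version has the small bonus of not needing $\mathcal{U}$ to be replaced by its convex hull, while the paper's version avoids the bookkeeping of carrying the $k$-uniformity through the Ces\`aro step.
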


    \begin{proof}[Proof (of Lemma \ref{lem:independence_waiting_time_and_shift_process})]
        Let $d\geq 1$ and $t_1, \dots, t_d > 0$. Let $E$ be an element of a $\pi$-system generating $\mathbb{R}_+$ such that $\mathbb{P} (W \in \partial E) = 0$. Then, for all $n \geq 1$, we have 
        \begin{align*}
            &\mu_{B_n} \Big( \gamma(\mu(B_n))\Phi_{B_n}^{[d]} \circ T^{\tau_n} \leq (t_1, \dots, t_d), \; \gamma(\mu(B_n))\tau_n \in E \Big) \\
            &= \sum_{k \geq 0} \mu(B_n)^{-1} \int_{B_n} \mathbf{1}_{\{\gamma(\mu(B_n))\Phi_{B_n}^{[d]} \leq (t_1, \dots, t_d)\}}\circ T^{k}\cdot  \mathbf{1}_{\gamma(\mu(B_n))k \in E}\mathbf{1}_{\{\tau_n = k\}}\;\dd\mu \\
            & = \mu_{B_n}(\gamma(\mu(B_n))\tau_n\in E) \int \bigg(\mathbf{1}_{\{\gamma(\mu(B_n))\Phi_{B_n}^{[d]} \leq (t_1, \dots, t_d)\}} \times \\
            & \quad \sum_{\overset{\gamma(\mu(B_n))k\in E}{B_n \cap \{\tau_n = k\} \neq \emptyset}}\frac{\mu(B_n \cap \{\tau_n = k\})}{\mu(B_n \cap \{\gamma(\mu(B_n))\tau_n \in E\})} \widehat{T}^k \Big(\frac{\mathbf{1}_{B_n \cap \{\tau_n = k\}}}{\mu(B_n \cap \{\tau_n = k\})}\Big) \bigg) \,\dd\mu. 
        \end{align*}
        \ref{cond_OtherLimits:good_density_after_tau_n} ensures that for all $n \geq 1$, 
        \begin{align*}
            v'_n := \sum_{\overset{\gamma(\mu(B_n))k\in E}{B_n \cap \{\tau_n = k\} \neq \emptyset}}\frac{\mu(B_n \cap \{\tau_n = k\})}{\mu(B_n \cap \{\gamma(\mu(B_n))\tau_n \in E\})} \widehat{T}^k \Big(\frac{\mathbf{1}_{B_n \cap \{\tau_n = k\}}}{\mu(B_n \cap \{\tau_n = k\})}\Big) \in \mathcal{U}.
        \end{align*}
        Thus, 
        \begin{align*}
            & \mu_{B_n} \Big( \gamma(\mu(B_n))\Phi_{B_n}^{[d]} \circ T^{\tau_n} \leq (t_1, \dots, t_d), \; \gamma(\mu(B_n))\tau_n \in E \Big) \\
            & = \mu_{v'_n}\big( \gamma(\mu(B_n))\Phi_{B_n}^{[d]} \leq (t_1, \dots, t_d)\big)\mu_{B_n}(\gamma(\mu(B_n))\tau_n \in E)\\
            & \xrightarrow[n\to +\infty]{} \mathbb{P}\big(\Phi^{[d]} \leq (t_1, \dots, t_d)\big) \mathbb{P}(W \in E).
        \end{align*}    
    \end{proof}

    \begin{rem}
        \label{rem:condition_B_1_can_be_replaced}
        As seen in the proof of Lemma \ref{lem:independence_waiting_time_and_shift_process}, we could have replace \ref{cond_OtherLimits:good_density_after_tau_n} by the condition
        \begin{align*}
            v'_{n, E} := \widehat{T}^{\tau_n} \bigg(\frac{\mathbf{1}_{B_n \cap \{\gamma(\mu(B_n))\tau_n \in E\}}}{\mu(B_n \cap \{\gamma(\mu(B_n))\tau_n \in E\})}\bigg) \in \mathcal{U}
        \end{align*}
        for all $n \geq 1$ and $E$ in the $\pi$-system generating the topology such that $\mathbb{P}(W \in \partial E) = 0$. However, this is not very restrictive as in all our example \ref{cond_OtherLimits:good_density_after_tau_n} will hold.\\
        \noindent Furthermore, \ref{cond_OtherLimits:good_density_after_tau_n} is also not too restrictive in comparison with \ref{cond_CFPP:good_density_after_tau_n}, as it is also satisfied in all of our examples.
    \end{rem}

    \begin{cor}
        \label{cor:limit_of_the_process_sum_waiting_time_and_shifted}
        In particular,
        \begin{align*}
            \gamma(\mu(B_n))\Phi_{B_n} \circ T^{\tau_n} + \gamma(\mu(B_n))\tau_n \xRightarrow[n\to +\infty]{} \Phi + W := (\phi^{(1)} + W, \phi^{(2)} + W, \dots) 
        \end{align*}
        where $\Phi$ and $W$ are as above and independent.
    \end{cor}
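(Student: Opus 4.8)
The plan is to obtain this corollary directly from Lemma \ref{lem:independence_waiting_time_and_shift_process} via the continuous mapping theorem, so essentially nothing new has to be proved. Recall that convergence in law of the sequence-valued RESP is equivalent to convergence of all its finite-dimensional marginals, so it suffices to argue coordinate by coordinate. For a fixed $d \geq 1$, Lemma \ref{lem:independence_waiting_time_and_shift_process} provides the joint convergence
\begin{align*}
    \big(\gamma(\mu(B_n))\Phi_{B_n}^{[d]} \circ T^{\tau_n}, \; \gamma(\mu(B_n))\tau_n\big) \xRightarrow[n\to +\infty]{\mu_{B_n}} (\Phi^{[d]}, W),
\end{align*}
with $\Phi^{[d]}$ and $W$ independent.

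Next I would apply the continuous mapping theorem to the ``diagonal addition'' map $g : \overline{\mathbb{R}}_+^{d}\times \overline{\mathbb{R}}_+ \to \overline{\mathbb{R}}_+^{d}$ given by $g(s_1, \dots, s_d, w) := (s_1 + w, \dots, s_d + w)$, which is continuous because addition extends continuously to the compact interval $[0, +\infty]$ (there is no troublesome point, since $+\infty + s = +\infty$ in the limit). This yields at once
\begin{align*}
    \gamma(\mu(B_n))\Phi_{B_n}^{[d]} \circ T^{\tau_n} + \gamma(\mu(B_n))\tau_n \xRightarrow[n\to +\infty]{\mu_{B_n}} \Phi^{[d]} + W = (\phi^{(1)} + W, \dots, \phi^{(d)} + W),
\end{align*}
and letting $d$ range over all positive integers upgrades this to convergence of the full process, which is precisely the stated claim. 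The only minor points to record are the continuity of $g$ and the identification of $\Phi^{[d]} + W$ with the first $d$ coordinates of $\Phi + W$; neither presents any difficulty, so there is no genuine obstacle here — the content is entirely carried by Lemma \ref{lem:independence_waiting_time_and_shift_process}.
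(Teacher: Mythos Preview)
Your proposal is correct and matches the paper's approach: the paper states the corollary with the words ``In particular'' and gives no separate proof, treating it as an immediate consequence of Lemma \ref{lem:independence_waiting_time_and_shift_process} via the continuous mapping theorem, exactly as you do.
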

    
    \noindent We can now continue with the proof of Theorem \ref{thm:sufficient_conditions_convergence_other_point_processes}. On $\{r_{B_n} > \tau_n\}$, we have $\gamma(\mu(B_n))\Phi_{B_n} = \gamma(\mu(B_n))\Phi_{B_n} \circ T^{\tau_n} +  \gamma(\mu(B_n))\tau_n$, so by \ref{cond_OtherLimits:cluster_compatible_tau_n_no_cluster_from_Q} and \ref{cor:limit_of_the_process_sum_waiting_time_and_shifted}, we get
    \begin{align*}
        \gamma(\mu(B_n))\Phi_{B_n}  \xRightarrow[n \to +\infty]{\mu_{B_n}} \Phi + W. 
    \end{align*}
    However, by hypothesis we also had 
    \begin{align*}
        \gamma(\mu(B_n))\Phi_{B_n} \xRightarrow[n\to +\infty]{\mu_{B_n}} \widetilde{\Phi}.
    \end{align*}
    Thus, we have 
    \begin{align}
        \label{eq:equality_in_law_non_0_waiting_time}
       \widetilde{\Phi} \overset{(law)}{=} \Phi + W  
    \end{align}
    and it remains to show that this equality uniquely determines the law of $\Phi$. We proceed as in Theorem \ref{thm:sufficient_conditions_convergence_compound_FPP} to show first that we get a unique law of $\Phi$ using \ref{thm:HTS-REPP_vs_RTS-REPP_infinite_measure_renormalized_measure}. We proceed by induction. For $d = 1$ and $s> 0$ we have the equation for the Laplace transform
    \begin{align*}
    1 - \frac{s^{\alpha}}{\Gamma(1+\alpha)}\mathbb{E}[e^{-s\phi_1}] = \mathbb{E}[e^{-s\widetilde{\phi}_1}] = \mathbb{E}[e^{-s\phi_1}]\,\mathbb{E}[e^{-sW}]
    \end{align*}
    and thus
    \begin{align*}
        \mathbb{E}[e^{-s\phi_1}] &= \bigg( \mathbb{E}[e^{-sW}] + \frac{s^{\alpha}}{\Gamma(1+\alpha)}\bigg)^{-1}, \quad \forall s > 0
    \end{align*}
    so $\phi_1 \overset{\mathrm{(law)}}{=} J_{\alpha}(\nu)$ and 
    \begin{align*}
        \mathbb{E}[e^{-s\widetilde{\phi}_1}] = \frac{s^{-\alpha}\Gamma(1+\alpha)\,\mathbb{E}[e^{-sW}]}{s^{-\alpha}\Gamma(1+\alpha)\,\mathbb{E}[e^{-sW}] +1},\quad \forall s > 0,
    \end{align*}
    so $\widetilde{\phi}_1 \overset{\mathrm{(law)}}{=} \widetilde{J}_{\alpha}(\nu)$. \\
    
    Now, assume this is true for $d-1\geq 1$. Recall that $\nu$ is the law of $W$. Let $F_1^{[k]}$ and $F_2^{[k]}$ be the distribution functions of $\Phi_1$ and $\Phi_2$ satisfying \eqref{eq:equality_in_law_non_0_waiting_time}. By induction hypothesis, $F_1^{[k]} = F_2^{[k]}$ for $1\leq k \leq d-1$. Let $0 < t_1 \leq \dots \leq t_d$ and set $s_2, \dots, s_d = t_2 - t_1, \dots, t_d - t_1$. By the equivalence between hittings and returns convergence, we have \eqref{eq:relation_hitting_return_infinite_measure} for $i = 1, 2$, that is to say
    \begin{align*}
        F_i^{[d]}(t_1, \dots, t_d) &= \alpha \int_0^{t_1} \bigg( \widetilde{F}_i^{[d-1]}(x+s_2, \dots, x+s_d) \\&\qquad - \widetilde{F}_i^{[d]}(x, x+s_2, \dots, x+s_d)\bigg)(t_1 - x)^{\alpha - 1}\,\dd x\,,
    \end{align*}
    and, by \eqref{eq:equality_in_law_non_0_waiting_time} it yields
    \begin{align*}
        F_i^{[d]}(t_1, \dots, t_d) &= \alpha \int_0^{t_1} \int_0^x \bigg(\bigg( F_i^{[d-1]}(x-y +s_2, \dots, x-y+s_d) \\
        &\qquad - F_i^{[d]}(x-y, x-y+s_2, \dots, x-y+s_d)\bigg)\nu(\dd y) \bigg)(t_1 - x)^{\alpha - 1}\,\dd x\,.
    \end{align*}
    Hence, with
    \begin{align*}
        h : x \mapsto F^{[d]}_1(x, x+s_2, \dots, x+s_d) - F_2^{[d]}(x, x+s_2, \dots, x+s_d), \quad x \geq 0,
    \end{align*}
    we have, for all $t_1 > 0$,
    \begin{align*}
        h(t_1) =- \alpha \int_0^{t_1} \bigg(\int_0^x h(x-y)\nu(\dd y)\bigg)(t_1 - x)^{\alpha - 1}\dd x.
    \end{align*}
    which can be reduced to
    \begin{align*}
        h = - \Gamma(1+\alpha)\, I^{\alpha}( h \star \nu).
    \end{align*}
    Since $h$ is locally integrable and bounded, we can go in the Laplace domain and the only solution is $h = 0$ (see \textit{e.g.} \cite{Book_FractionalCalculus} for more results on this topic) and thus $F_1^{[d]} = F_2^{[d]}$. \\

    Hence there can be at most one process whose distribution functions are fixed points of \eqref{eq:equality_in_law_non_0_waiting_time}. It remains to show the existence of at least one fixed point for this equation. Fortunately, the theory applies again to certain examples of null-recurrent Markov chains (Section \ref{section:Connection_with_Markov_Chains}) for which getting the result for the first return and hitting times is enough to get the convergence of the process due to the strong Markov property. In such scenarios, due to the case $d = 1$, the processes \(\Phi_{\RPP(\widetilde{J}_{\alpha}(\nu))}\) and \(\Phi_{\DRPP(J_{\alpha}(\nu), \widetilde{J}_{\alpha}(\nu))}\) emerge naturally and must satisfy \eqref{eq:equality_in_law_non_0_waiting_time}. This demonstrates that a fixed point indeed exists. Consequently, the fixed point is both unique and well-defined.
\end{proof}

\subsection{Proof of Theorem \ref{thm:HTS_infinite_points_that_never_come_back_statements}}
\label{section:proof_points_that_never_come_back}
Finally, we turn to the proof of Theorem \ref{thm:HTS_infinite_points_that_never_come_back_statements} which studies the third class of points: those that are only finitely recurrent with respect to their initial one cylinder. These are particularly interesting in the infinite measure setting, as they can exhibit behaviors that differ markedly from those observed for generic and periodic points treated in Section \ref{section:FPP_CFPP_infinite_recurrent_points}. However, Theorem \ref{thm:HTS_infinite_points_that_never_come_back_statements} states the result only in term of recurrence for the first letter $x_0$ of the point $x$ considered. However, we enlarge it to work for a wider class of letters $v$. This will be particularly useful when we will have to deal with images or preimages.

\begin{thm}
    \label{thm:HTS_infinite_points_that_never_come_back}
    Let $(\Omega, T, \mu)$ be a topologically mixing Countable Markov Shift associated to a Walters potential $\phi$ that is null recurrent. Assume that the (non-decreasing) normalizing sequence $(a_n)_{n\geq 1}$ belongs to $\RV(\alpha)$ for some $0 < \alpha < 1$. Let $x \in \Omega$ be such that
    \begin{align*}
        K := K_{x_0}(x) := |\mathcal{O}(x) \cap [x_0]| < +\infty
    \end{align*}
    and $v \in V$ be such that 
    \begin{align*}
        K_v := K_v(x) := \big| \mathcal{O}(x) \cap [v] \big| < +\infty.
    \end{align*}
    Set $j_v := j_v(x) := \max \{k \geq 0\;|\; T^kx \in [v]\}$ (with the convention $j_v = 0$ if $K_v = 0$). For the asymptotically rare events $B_n := [x_0^{n-1}]$, we have 
    \begin{align}
        \label{eq:convergence_law_tau_n_in_thm}
        \gamma(\mu(B_n))\,r_{[v]} \circ T^{j_v} \xRightarrow[n \to +\infty]{\mu_{B_n}} W \sim \nu
    \end{align}
    if and only if 
    \begin{align*}
         N_{B_n}^{\gamma} \xRightarrow[n\to +\infty]{\mu_{B_n}} \RPP(\widetilde{J}_\alpha(\nu))\quad \text{and} \quad N_{B_n}^{\gamma} \xRightarrow[n\to +\infty]{\mathcal{L}(\mu)} \DRPP(J_{\alpha}(\nu), \widetilde{J}_\alpha(\nu))
    \end{align*}
    where $J_{\alpha}(\nu)$ and $\widetilde{J}_{\alpha}(\nu)$ are defined in Definition \ref{defn:waiting_random_variables_J_alpha}. 
\end{thm}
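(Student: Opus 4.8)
The plan is to derive both implications from the abstract Theorem~\ref{thm:sufficient_conditions_convergence_other_point_processes}, applied with the uniform set $Y:=[x_0]$ --- which has finite measure and is a Darling--Kac set by Theorem~\ref{thm:GRPF_null_recurrent} --- and with the delay time
\[
\tau_n:=(n-1)+e_{[v]}\circ T^{n-1},
\]
i.e. the first entrance to $[v]$ at a time $\geq n-1$. The starting point is that $K:=|\mathcal{O}(x)\cap[x_0]|<+\infty$ forces $x$ to be non-periodic (a periodic point meets its initial cylinder infinitely often), hence $r(x_0^{n-1})\to+\infty$ by Lemma~\ref{lem:periodicity_equivalent_to_finite_topological_return}; also $j_v<n$ for $n$ large. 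For $y\in B_n$ the coordinates $y_0,\dots,y_{n-1}$ are frozen to $x_0,\dots,x_{n-1}$ and, since $j_v$ is the last index with $x_{j_v}=v$, they do not meet $[v]$ strictly after position $j_v$; this readily yields the identity $\tau_n=j_v+r_{[v]}\circ T^{j_v}$ on $B_n$, valid for $n>j_v$. As $j_v$ is a fixed integer and $\gamma(\mu(B_n))\to 0$, condition~\ref{cond_OtherLimits:tau_n_small_enough} for $(B_n)$ with this $\tau_n$ is \emph{exactly} the statement~\eqref{eq:convergence_law_tau_n_in_thm}; this equivalence is the crux, and it remains to check that the other two conditions $B1_\alpha(\nu)$ and $B3_\alpha(\nu)$ hold \emph{unconditionally}, i.e. using only $K_{x_0}(x),K_v(x)<+\infty$.

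Condition~\ref{cond_OtherLimits:good_density_after_tau_n} is obtained as in Theorem~\ref{thm:HTS_infinite_FPP_infinitely_recurrent}: one decomposes $B_n\cap\{\tau_n=k\}$ into cylinders $[x_0^{n-1}w]$, where $w$ runs over admissible words from $x_{n-1}$ ending at their first occurrence of the letter $v$; on each such piece $\widehat{T}^{k}(\mathbf{1}_{[x_0^{n-1}w]}/\mu[x_0^{n-1}w])$ lies, by Lemma~\ref{lem:bounded_distortion_CMS_living_in_compact}, in the compact set $\mathcal{U}_{K_v,M_v}(v)$, which is stable under countable convex combinations, whence $\widehat{T}^{k}(\mathbf{1}_{B_n\cap\{\tau_n=k\}}/\mu(B_n\cap\{\tau_n=k\}))\in\mathcal{U}_{K_v,M_v}(v)$ for all $n,k$. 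Condition~\ref{cond_OtherLimits:cluster_compatible_tau_n_no_cluster_from_Q}, namely $\mu_{B_n}(r_{B_n}\leq\tau_n)\to 0$, is the delicate one and is the main obstacle. A return of $y\in B_n$ at a time $m<n$ would require $y_m=x_m=x_0$ (so $m$ is one of the finitely many positions with $x_m=x_0$) and $x_{m+i}=x_i$ for $0\leq i\leq n-1-m$, which fails for $n$ large by non-periodicity of $x$; hence $\mu_{B_n}(r_{B_n}<n)=0$ eventually. A return at a time $m\in[n,\tau_n]$ forces the returning orbit to read $x_0^{n-1}$, so $y$ lies in $[v]$ at the absolute position $m+i_v$ with $i_v:=\min\{k\geq 0:x_k=v\}$; since $m+i_v\geq n>n-1$ and $\tau_n$ is the first position $\geq n-1$ at which $y\in[v]$, this forces $m+i_v\geq\tau_n$, i.e. $m$ lies in the \emph{constant-length} window $\{\tau_n-i_v,\dots,\tau_n\}$, and a further comparison with the definition of $\tau_n$ pins down $m=\tau_n-i_v$. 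Writing out the corresponding cylinders and applying Lemma~\ref{lem:bounded_distortion_CMS} twice bounds $\mu_{B_n}(r_{B_n}=\tau_n-i_v)$ by a constant multiple of $\mu[x_0^{n-1}]$ times the expected number of visits to $[x_0]$ before the first visit to $[v]$, which is finite by a geometric-excursion argument (each excursion from $[x_0]$ hits $[v]$ before returning to $[x_0]$ with probability $p>0$); since $\mu[x_0^{n-1}]\to 0$, this vanishes. Thus $B_\alpha(\nu)$ holds and Theorem~\ref{thm:sufficient_conditions_convergence_other_point_processes} gives the two announced limits, proving the implication~\eqref{eq:convergence_law_tau_n_in_thm}$\Rightarrow$(convergence).

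For the converse, assume $N_{B_n}^{\gamma}\xRightarrow[n\to +\infty]{\mu_{B_n}}\RPP(\widetilde{J}_\alpha(\nu))$ and $N_{B_n}^{\gamma}\xRightarrow[n\to +\infty]{\mathcal{L}(\mu)}\DRPP(J_\alpha(\nu),\widetilde{J}_\alpha(\nu))$. On $\{r_{B_n}>\tau_n\}$, whose $\mu_{B_n}$-probability tends to $1$ by the (already established) condition~\ref{cond_OtherLimits:cluster_compatible_tau_n_no_cluster_from_Q}, one has $r_{B_n}=\tau_n+r_{B_n}\circ T^{\tau_n}\geq\tau_n$; since $\gamma(\mu(B_n))\,r_{B_n}^{(1)}$ converges to the almost surely finite variable $\widetilde{J}_\alpha(\nu)$, it follows that $(\gamma(\mu(B_n))\,\tau_n)_{n\geq 1}$ is tight on $\mathbb{R}_+$ (no escape of mass to $+\infty$). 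By compactness of $\overline{\mathbb{R}}_+$, along any subsequence one extracts a further subsequence with $\gamma(\mu(B_n))\,\tau_n\Rightarrow W'$, $W'\sim\nu'\in\mathbb{P}(\mathbb{R}_+)$. Along that subsequence, the arguments in the proof of Theorem~\ref{thm:sufficient_conditions_convergence_other_point_processes} (Lemmas~\ref{lem:same_limit_as_hitting_if_we_start_at_tau_n} and~\ref{lem:independence_waiting_time_and_shift_process} and Corollary~\ref{cor:limit_of_the_process_sum_waiting_time_and_shifted}, all of which rely only on~\ref{cond_OtherLimits:good_density_after_tau_n}), combined with $\mu_{B_n}(r_{B_n}>\tau_n)\to 1$, give $\gamma(\mu(B_n))\,\Phi_{B_n}\Rightarrow\Phi+W'$ with $\Phi$ and $W'$ independent and $\Phi$ the RESP of the hitting limit, i.e. $\Phi\overset{\text{(law)}}{=}\Phi_{\DRPP(J_\alpha(\nu),\widetilde{J}_\alpha(\nu))}$ (the assumed $\mathcal{L}(\mu)$-limit). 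Identifying first coordinates with the assumed $\mu_{B_n}$-limit gives $\widetilde{J}_\alpha(\nu)\overset{\text{(law)}}{=}J_\alpha(\nu)+W'$, and the Laplace-transform identities of Definition~\ref{defn:waiting_random_variables_J_alpha} force $\mathbb{E}[e^{-sW'}]=\mathbb{E}[e^{-sW}]$ for all $s\geq 0$, i.e. $\nu'=\nu$. Every subsequential limit of $\gamma(\mu(B_n))\,\tau_n$ thus equals $\nu$, so the whole sequence converges to $W\sim\nu$; in view of $\tau_n=j_v+r_{[v]}\circ T^{j_v}$ on $B_n$, this is precisely~\eqref{eq:convergence_law_tau_n_in_thm}, which closes the equivalence.
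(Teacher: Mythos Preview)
Your overall strategy coincides with the paper's: same delay $\tau_n$ (the paper defines it directly as $j_v+r_{[v]}\circ T^{j_v}$, which agrees with your $(n-1)+e_{[v]}\circ T^{n-1}$ on $B_n$ for $n>j_v+1$), same verification of \ref{cond_OtherLimits:good_density_after_tau_n} via Lemma~\ref{lem:bounded_distortion_CMS_living_in_compact}, and the converse via tightness plus injectivity of $\nu\mapsto\widetilde J_\alpha(\nu)$. Your tightness argument (bounding $\tau_n$ by $r_{B_n}$ on a set of probability $\to 1$) is a legitimate alternative to the paper's Lemma~\ref{lem:tightness_gamma_tau_n}.

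The substantive difference is your treatment of \ref{cond_OtherLimits:cluster_compatible_tau_n_no_cluster_from_Q}. The paper reduces to the induced system on $[x_0^{e_{[v]}(x)}]$ (when $K_v\geq 1$) or on $[x_0]$ (when $K_v=0$) and invokes the exponential return law of Theorem~\ref{thm:all-point_REPP_positive_recurrent_CMS}. Your combinatorial route—pin the return to the single time $\tau_n-i_v$ and then bound $\mu_{B_n}(T^{\tau_n-i_v}\cdot\in B_n)$ by $C\mu[x_0^{n-1}]$ times an expected number of visits—works when $K_v\geq 1$, but two points need attention. First, the ``expected number of visits to $[x_0]$ before the first visit to $[v]$'' is computed starting from $T^{n-1}y$, whose first coordinate is $x_{n-1}$ and varies with $n$; you need a uniform bound, which requires one more bounded-distortion step (push forward to $[x_0]$ by the first hit of $[x_0]$, then run your geometric-excursion argument there). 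Second, and more seriously, your argument is written only for $K_v\geq 1$: when $K_v=0$ one has $i_v=e_{[v]}(x)=+\infty$, so the ``constant-length window $\{\tau_n-i_v,\dots,\tau_n\}$'' and the pinned time $\tau_n-i_v$ are undefined. This case is allowed by the hypotheses (and is used later, e.g.\ in Proposition~\ref{prop:convergence_REPP_images} when $m>j_{x_0}(x)$). The easiest repair is to bypass the pinning step altogether: for any $m\in[n,\tau_n]$ with $T^my\in B_n$ one has $y_m=x_0$ and $y_j\neq v$ for $n\leq j<m$, so summing over $m$ and applying Lemma~\ref{lem:bounded_distortion_CMS} still gives $\mu_{B_n}(n\leq r_{B_n}\leq\tau_n)\leq C\,\mu[x_0^{n-1}]\cdot\mathbb{E}_{\mu_{B_n}}[\#\{m\geq n:y_m=x_0,\ y_j\neq v\ \text{for }n\leq j<m\}]$, and the expectation is uniformly bounded by the argument above. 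Alternatively, treat $K_v=0$ separately as the paper does.
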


\begin{proof}[Proof (of Theorem \ref{thm:HTS_infinite_points_that_never_come_back_statements})]
    This is a direct consequence of Theorem \ref{thm:HTS_infinite_points_that_never_come_back} with $v = x_0$.
\end{proof}

\begin{rem}
    The standard choice is to take $v = x_0$ and thus Theorem \ref{thm:HTS_infinite_points_that_never_come_back_statements} is what we will do in most situations. However, if we want to treat images and preimages of a point, it is sometime easier to take $v$ different from the first symbol of the considered point (see Section \ref{subsubsection:Images_and_preimages}).
\end{rem}

We begin the proof of Theorem \ref{thm:HTS_infinite_points_that_never_come_back} by a Lemma ensuring tightness of the rescaled delay times.

\begin{lem}
    \label{lem:tightness_gamma_tau_n}
    The sequence of measures $\big((\gamma(\mu(B_n))r_{[v]} \circ T^{j_v})_{*} \mu_{B_n} \big)_{n\geq 1}$ is tight.
\end{lem}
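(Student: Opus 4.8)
The plan is to establish tightness by showing that $\mu_{B_n}(\gamma(\mu(B_n))\,r_{[v]}\circ T^{j_v} > t)$ can be made uniformly small by taking $t$ large, uniformly in $n$. Since $j_v = j_v(x)$ is a \emph{fixed} finite integer depending only on $x$ (it does not grow with $n$), the delay caused by the composition with $T^{j_v}$ is negligible in the $\gamma$-scaling; the real content is controlling $r_{[v]}$ after we land inside $[v]$.

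First I would reduce to the case $v = x_0$ and $j_v = 0$ by a bounded-distortion argument in the spirit of the estimates used in the proof of Theorem~\ref{thm:HTS_infinite_FPP_infinitely_recurrent}. Writing $j := j_v$, one conditions on the first $j$ symbols and uses Lemma~\ref{lem:bounded_distortion_CMS} to compare $\mu_{[x_0^{n-1}]}(\,\cdot \cap T^{-j}\{\gamma(\mu(B_n))\,r_{[v]} > t\}\,)$ with $e^{\pm W_1(\phi_*)}\mu[v]^{-1}\mu[x_0^{j-1}]$ times $\mu_{[x_j^{n-1}]}(\gamma(\mu(B_n))\,r_{[v]} > t - j\gamma(\mu(B_n)))$, exactly as in the treatment of the constant delay $j$ at the end of the proof of Theorem~\ref{thm:HTS_infinite_FPP_infinitely_recurrent}. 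This leaves us to bound, for $x' := T^j x \in [v]$ with $|\mathcal O(x')\cap[v]| = 0$ beyond time $0$,
\begin{align*}
    \mu_{[x'_0{}^{m}]}\big(\gamma(\mu[x'_0{}^{m}])\,r_{[v]} > t\big)
\end{align*}
for $m \sim n$. Here $[x'_0{}^m] \subset [v]$, so $r_{[v]}$ restricted to this cylinder is simply the return time to $[v]$.

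The key step is then the tail estimate: $\mu([v]\cap\{r_{[v]} > s\})$ is regularly varying of index $-\alpha$, with $\mu([v]\cap\{r_{[v]} > s\}) \sim \frac{\sin(\pi\alpha)}{\pi\alpha} a_s^{-1}$ by Lemma~\ref{lem:equivalence_queue_renormalisation_gamma} (taking $\mathbf v = v$). Combined with $\mu[x'_0{}^m] \to 0$ and Lemma~\ref{lem:bounded_distortion_CMS}, one gets, for a constant $C$ independent of $n$ and $t$,
\begin{align*}
    \mu_{[x'_0{}^m]}\big(\gamma(\mu[x'_0{}^m])\,r_{[v]} > t\big)
    \;\le\; C\,\frac{\mu([v]\cap\{r_{[v]} > t/\gamma(\mu[x'_0{}^m])\})}{\mu[x'_0{}^m]\,\mu[v]^{-1}\mu[x'_{0}]^{?}}\;,
\end{align*}
and using the definition of $\gamma$ in \eqref{eq:defn_gamma} together with the uniform convergence theorem for regularly varying functions \cite[Theorem 1.5.2]{Bingham89_RegularVariation}, this is $\le C' t^{-\alpha} + o(1)$ as $n \to \infty$, for every $t > 0$. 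Hence $\limsup_n \mu_{B_n}(\gamma(\mu(B_n))\,r_{[v]}\circ T^{j_v} > t) \le C' t^{-\alpha}$, which tends to $0$ as $t \to +\infty$, giving tightness.

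I expect the main obstacle to be bookkeeping the bounded-distortion factors cleanly: one must make sure that the ratio $\mu[x'_0{}^m]\big/\mu([v]\cap\{r_{[v]} > \cdot\})$ is genuinely bounded uniformly in $n$, which requires invoking Lemma~\ref{lem:bounded_distortion_CMS} in the right direction (to write $\mu[x'_0{}^m] \ge C^{-1}\mu[x'_0{}^m \cap \{r_{[v]} > s\}]$ type bounds), and then absorbing the vanishing factor $\mu[x'_0{}^{m-1}]$ or $\mu[x_0^{j-1}]$ harmlessly. The regular-variation input is standard once the combinatorial setup is in place; the subtlety is purely the uniformity of the distortion constants, which however is exactly what Lemma~\ref{lem:bounded_distortion_CMS} provides (with $C = e^{W_1(\phi_*)}$).
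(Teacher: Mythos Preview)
Your approach for the case $K_v(x) \geq 1$ is sound in spirit but unnecessarily heavy. Once $n > j_v$ you have $B_n \subset T^{-j_v}[v]$ (since $x_{j_v} = v$), and therefore
\[
\mu\big(B_n \cap \{r_{[v]} \circ T^{j_v} > s\}\big) \;\le\; \mu\big(T^{-j_v}([v] \cap \{r_{[v]} > s\})\big) \;=\; \mu([v] \cap \{r_{[v]} > s\})
\]
by $T$-invariance of $\mu$ alone. Dividing by $\mu(B_n)$ and invoking Lemma~\ref{lem:equivalence_queue_renormalisation_gamma} with $u_n = \mu(B_n)$ gives $\mu_{B_n}(\gamma(\mu(B_n))\,r_{[v]} \circ T^{j_v} > t) \lesssim \tfrac{\sin(\pi\alpha)}{\pi\alpha}\,t^{-\alpha}$ directly, with no bounded-distortion computation at all. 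The paper takes exactly this route; your detour through $\mu_{[x'_0{}^m]}$ and the comparison of $\gamma(\mu(B_n))$ with $\gamma(\mu[x'_0{}^m])$ works (up to the garbled display with the ``$?$'' exponent) but is superfluous.

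The genuine gap in your proposal is the case $K_v(x) = 0$, i.e.\ when the symbol $v$ never appears in the orbit of $x$. By the convention $j_v = 0$, you are then looking at $r_{[v]}$ under $\mu_{B_n}$ with $B_n \subset [x_0]$ and $x_0 \neq v$. Your reduction ``shift by $j_v$ to land in $[v]$'' fails here: $T^{j_v}x = x \notin [v]$, so there is no cylinder $[x'_0{}^m] \subset [v]$ to condition on, and the inclusion $B_n \subset T^{-j_v}[v]$ above is simply false. The paper handles this case separately by going \emph{backwards} to a preimage $y \in [v]$ with $T^m y = x$ (using topological mixing), comparing $\mu_{[y_0^{m+n-1}]}$ with $\mu_{B_n}$ via the asymptotic density identity $\|\mu[y_0^{m+n-1}]^{-1}\widehat T^m\mathbf 1_{[y_0^{m+n-1}]} - \mu(B_n)^{-1}\mathbf 1_{B_n}\|_{L^\infty} \to 0$, and finally noting that $\mu(B_n)/\mu[y_0^{m+n-1}]$ is bounded uniformly in $n$. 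This extra step is not hard, but it is not covered by your sketch.
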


\begin{proof}[Proof (of Lemma \ref{lem:tightness_gamma_tau_n})]
    First, assume that $K_v(x) \geq 1$. Then, for all $t>0$, 
    \begin{align*}
        \mu_{B_n}(\gamma(\mu(B_n))\, r_{[v]} \circ T^{j_v} \geq t) &= \mu(B_n)^{-1}\mu(B_n \cap \{\gamma(\mu(B_n))\, r_{[v]} \circ T^{j_v}\geq t\})\\
        &\leq \mu(B_n)^{-1} \mu\big( T^{-j_v}([v] \cap \{\gamma(\mu(B_n))\, r_{[v]} \circ T^{j_v} \geq t\})\big)\\
        & \leq \mu(B_n)^{-1} \mu([v] \cap \{\gamma(\mu(B_n))\, r_{[v]} \circ T^{j_v} \geq t\}) \quad \text{by $T$-invariance of $\mu$.}
    \end{align*}
    Now, we take advantage of the regular variation hypothesis and the definition of $\gamma$ and Lemma \ref{lem:equivalence_queue_renormalisation_gamma} to get 
    \begin{align*}
        \mu_{B_n}(\gamma(\mu(B_n))\tau_n \geq t) \lesssim \frac{\sin(\pi \alpha)}{\pi\alpha}\, t^{-\alpha}.
    \end{align*}
    and hence the tightness for this family of random variables.\\
    Assume now that $K_v(x) = 0$. Then, by the topologically mixing hypothesis, there exists $y \in \Omega$ such that $y \in [v]$, $T^my = x$ and $K_v(y) = 1$, \textit{i.e.} $r_{[v]}(y) = +\infty$. Then, we are able to pass from $\mu_{[y_0^{m+n-1}]}$ to $\mu_{B_n}$. Indeed, for all $t > 0$,
    \begin{align*}
        \mu_{[y_0^{m+n-1}]}( \gamma(\mu(B_n))\, r_{[v]} \circ T^{m} \geq t) &= \mu[y_0^{m+n-1}]^{-1} \int \mathbf{1}_{[y_0^{n+m-1}]} \cdot \mathbf{1}_{\{\gamma(\mu(B_n))\,r_{[v]} \geq t\}} \circ T^m\,\dd\mu \\
        & = \mu[y_0^{m+n-1}]^{-1} \int \widehat{T}^{m}\mathbf{1}_{[y_0^{n+m-1}]} \cdot \mathbf{1}_{\{\gamma(\mu(B_n))\,r_{[v]} \geq t\}}\,\dd\mu,
    \end{align*}
    and we have 
    \begin{align}
        \label{eq:convergence_L_infty_before}
        \big\|\;\mu[y_0^{n+m-1}]^{-1}\,\widehat{T}^m\mathbf{1}_{[y_0^{n+m-1}]} - \mu[x_0^{n-1}]^{-1}\mathbf{1}_{[x_0^{n-1}]} \big\|_{L^{\infty}(\mu_{[x_0^{n-1}]})} \xrightarrow[n\to +\infty]{} 0.
    \end{align}
    as
    \begin{align*}
        \mu[y_0^{n+m-1}]^{-1} \widehat{T}^m\mathbf{1}_{[y_0^{n-1}]}(z) &= \mu[y_0^{n+m-1}]^{-1} L_{\phi_*}^m\mathbf{1}_{[y_0^{n+m-1}]}(z) \\
        & = \mu[y_0^{n+m-1}]^{-1} e^{S_m\phi_*(y_0^{m+n-1}z)} \\
        & = \mu[y_0^{n+m-1}]^{-1} e^{\pm W_{n}(\phi_*)} e^{S_m\phi_*(y_0^{n+m-1}z')} \\
        & = e^{\pm W_{n}(\phi_*)} \mu[y_0^{n+m-1}]^{-1}\,\widehat{T}^m\mathbf{1}_{[y_0^{n+m-1}]}(z')\,.
    \end{align*}
    Yet, we know that $\mu[y_0^{n-1}]^{-1} \widehat{T}^m\mathbf{1}_{[y_0^{n-1}]}$ is a probability density and its support is contained in $[x_0^{n-1}]$. Walter's condition for $\phi_*$ ensures that $W_{n}(\phi_*) \xrightarrow[n\to +\infty]{} 0$ and thus it gives \eqref{eq:convergence_L_infty_before}. Thus, we have for all $t > 0$,
    \begin{align*}
        \mu_{[y_0^{n+m-1}]}\big(\gamma(\mu(B_n))\,r_{[v]} \circ T^{m} \geq t\big) - \mu_{[x_0^{n-m-1}]} \big(\gamma(\mu(B_n))\,r_{[v]} \circ T^{j(x)} \geq t\big)\xrightarrow[n \to +\infty]{} 0.
    \end{align*}
    Finally, 
    \begin{align*}
        \mu_{[y_0^{n+m-1}]}\big(\gamma(\mu(B_n))\,r_{[v]} \circ T^{m} \geq t\big) & \leq \mu[y_0^{n+m-1}]^{-1} \mu([y_0^{n+m-1}] \cap \{r_{[v]} \geq t/\gamma(\mu(B_n))\}) \\
        & \leq \mu[y_0^{n+m-1}]^{-1} \mu([v] \cap \{r_{[v]} \geq t/\gamma(\mu(B_n))\}) \\
        & \leq C t^{-\alpha} \frac{\mu(B_n)}{\mu[y_0^{n+m-1}]} \quad \text{by Lemma \ref{lem:equivalence_queue_renormalisation_gamma}}.
    \end{align*}
    It concludes the proof of the tightness because $\mu(B_n)/\mu[y_0^{n+m-1}]$ is uniformly bounded for all $n \geq 1$.
\end{proof}

\noindent We are now ready to prove Theorem \ref{thm:HTS_infinite_points_that_never_come_back}. 

\begin{proof}[Proof (of Theorem \ref{thm:HTS_infinite_points_that_never_come_back})]
    We define $\tau_n := j_v + r_{[v]} \circ T^{j_v} |_{B_n}$. First, note that $B_n \subset [x_0]$ for all $n\geq 1$ and $[x_0]$ is a uniform set. We are going to prove that $B_n$ satisfies \ref{cond_OtherLimits:good_density_after_tau_n} and \ref{cond_OtherLimits:cluster_compatible_tau_n_no_cluster_from_Q}. $B_n$ is included in the same uniform set $[x_0]$ for all $n\geq 1$ (it is a direct consequence of Theorem \ref{thm:GRPF_null_recurrent} and the definition of $B_n$ as shrinking cylinders). \\

    \noindent For \ref{cond_OtherLimits:good_density_after_tau_n}, let $k \geq 1$ be such that $B_n \cap \{\tau_n = k\} \neq \emptyset$. By the Markov structure of the CMS, there exists a collection $I_k$ of admissible paths $[a_0^{k}]\subset B_n$ with $a_0 = x_0$ and $a_k = v$ such that 
    \begin{align*}
        B_n \cap \{\tau_n = k\} = \bigsqcup_{(a_0^k) \in I_k} [a_0^k].
    \end{align*}
    Now, by Lemma \ref{lem:bounded_distortion_CMS_living_in_compact}, 
    \begin{align*}
        \frac{1}{\mu[a_0^k]}\widehat{T}^k \mathbf{1}_{[a_0^k]} \in \mathcal{U}_{K, M}(v).
    \end{align*}
    for some $K, M$ depending only on $v$. Thus, we get 
    \begin{align*}
        \frac{1}{\mu(B_n \cap \{\tau_n = k\})} \widehat{T}^k \mathbf{1}_{B_n \cap \{\tau_n = k\}} = \sum_{(a_0^k)\in I_k} \frac{\mu[a_0^k]}{\mu(B_n \cap \{\tau_n = k\})} \widehat{T}^k \bigg(\frac{\mathbf{1}_{[a_0^{k}]}}{\mu[a_0^k]}\bigg)
    \end{align*}
    belongs to a compact set of $L^1(\mu)$ as a (potentially countable) convex combination of elements of $\mathcal{U}_{K, M}(v)$.\\
    
    \noindent Finally, we check \ref{cond_OtherLimits:cluster_compatible_tau_n_no_cluster_from_Q}. For this condition, we split the proof between two cases.
    \begin{itemize}
        \item If $|\mathcal{O}(x) \cap [v]| \geq 1$. In particular, $e_{[v]}(x) < +\infty$. When $n > j_v + e_{[v]}(x)$, we have $r_{B_n} \geq \tau_n - e_{[v]}(x)$ on $B_n$ because $r_{B_n} > j_v$ and  $B_n \subset [x_0] \cap T^{-e_{[v]}(x)}[v]$. Moreover, a return to $B_n$ implies a return to $[x_0^{e_{[v]}(x)}]$. Thus, we have 
    \begin{align*}
        \mu_{B_n}(r_{B_n}\leq \tau_n) &\leq \mu_{B_n}(j_v + r_{[v]} \circ T^{j_{v}} - e_{[v]}(x) \leq r_{B_n} \leq r_{[v]} \circ T^{j_v} + j_v) \\
        &\leq \mu_{B_n}(r_{B_n}^{[x_0^{e_{[v]}(x)}]} \leq K_v + e_{[v]}(x))        
    \end{align*}
    where we considered the induced map on $[x_0^{e_{[v]}(x)}]$. Since the induced map on $[x_0^{e_{[v]}(x)}]$ has good properties ($[x_0^{e_{[v]}(x)}]$ is a Darling-Kac set and the induced map is $\psi$-mixing),
    we have $\mu(B_n)r_{B_n}^{[x_0^{e_{[v]}(x)}]} \xRightarrow[n \to +\infty]{\mu_{B_n}} \mathcal{E}$ by Theorem \ref{thm:all-point_REPP_positive_recurrent_CMS}, which ensures that 
    \begin{align*}
        \mu_{B_n}(r_{B_n} \leq \tau_n) \leq \mu_{B_n}\Big(r^{[x_0^{e_{[v]}(x)}]}_{B_n} \leq K_v + e_{[v]}(x)\Big) \xrightarrow[n \to +\infty]{} 0,
    \end{align*}
    showing \ref{cond_OtherLimits:cluster_compatible_tau_n_no_cluster_from_Q}.
    \item If $\mathcal{O}(x) \cap [v] = \emptyset$. Since the TMS is topologically mixing, there exist some $p\geq 1$ and $(a_0^{p})$ admissible such that $a_0 = x_0$ and $a_p = v$. For $n \geq j_{v}(x)$, we have $r_{B_n} \geq n$. In particular, it gives
    \begin{align*}
        \mu_{B_n}(r_{B_n} \leq \tau_n) &= \mu_{B_n}(r_{B_n} \leq r_{[v]})\\
        & \leq \mu_{B_n}(r_{B_n} \leq r_{[a_0^p]})\\
        &\leq \mu_{B_n}(r_{B_n}^{[x_0]} \leq r_{[a_0^p]}^{[x_0]})\\
        & \leq \mu_{B_n}(r_{B_n}^{[x_0]} = K) + \mu_{B_n}\big(T_{[x_0]}^{-K} \{r_{B_n}^{[x_0]} \leq r_{[a_0^p]}^{[x_0]}\}\big) \\
        & \leq \mu_{B_n}(r_{B_n}^{[x_0]} = K) + C\mu_{[x_0]}\big(r_{B_n}^{[x_0]} \leq r_{[a_0^p]}^{[x_0]}\big)\,,
    \end{align*}
    where for the last inequality, we use the bounded distortion estimate for the induced map. Finally, for the induced map, by Theorem \ref{thm:all-point_REPP_positive_recurrent_CMS} we know that $\mu(B_n)\,r_{B_n}^{[x_0]} \xRightarrow[n \to +\infty]{\mu_{B_n}\; \& \;\mu_{[x_0]}} \mathcal{E}$ which ensures that both terms term on the right part of the inequality converge to $0$ and hence it shows \ref{cond_OtherLimits:cluster_compatible_tau_n_no_cluster_from_Q}.
    \end{itemize}

    \noindent Thus, to apply Theorem \ref{thm:sufficient_conditions_convergence_other_point_processes} the only condition remaining is \ref{cond_OtherLimits:tau_n_small_enough}, that is to say
    \begin{align*}
        \gamma(\mu(B_n))\,\tau_n \xRightarrow[n \to +\infty]{\mu_{B_n}} W.
    \end{align*}
    But, by definition of $\tau_n$, we have
    \begin{align*}
        \gamma(\mu(B_n))\,\tau_n - \gamma(\mu(B_n))\, r_{[v]}\circ T^{j_v} = j_v\gamma(\mu(B_n)) \xrightarrow[n \to +\infty]{} 0. 
    \end{align*}
    Hence \ref{cond_OtherLimits:tau_n_small_enough} is equivalent to \eqref{eq:convergence_law_tau_n_in_thm}. So, if \eqref{eq:convergence_law_tau_n_in_thm} is verified, so is \ref{cond_OtherLimits:tau_n_small_enough} and thus we have the left implication. On the other side, Lemma \ref{lem:tightness_gamma_tau_n} shows the tightness of $\gamma(\mu(B_n))r_{[v]}\circ T^{j_v}$ and, since the law of $J_{\alpha}(\nu)$ uniquely defines the law of $W$, it ensures that the only possible limit is $W$ and thus \eqref{eq:convergence_law_tau_n_in_thm} is satisfied.
\end{proof}

\subsection{The special case of images and preimages}
\label{subsubsection:Images_and_preimages}

\noindent However, if one point has a non zero limit for the delay time, then the behavior of the REPP for its preimages is much more constrained. We start by defining our density transitions to pass from a point $x$ to a point $y$. For all $x, y \in \Omega$ such that $T^m y = x$, we set 
\begin{align}
    \label{eq:definition_Q_x_m_y}
    Q_{x, m}(y) := e^{S_m\phi_*(y)} = h(x)^{-1}\, h(y)e^{S_m\phi - mP_G(\phi)}.
\end{align}

\begin{rem}
    \label{rem:Q_x,m_is_a_probability_density}
We choose such a notation $Q_{x,m}(y)$ because $Q_{x,m}$ can be interpreted as a probability distribution on $T^{-m}\{x\}$. Indeed, we have 
\begin{align*}
    \sum_{y \in T^{-m}x} Q_{x, m}(y) = \sum_{T^my = x} e^{S_m\phi_*(y)} = L_{\phi_*}1 = 1, \quad \text{by Remark \ref{rem:definition_phi_*}.} 
\end{align*}
\end{rem}

\begin{lem}
    \label{lem:preimages_CMS_delay_time}
    Let $x \in \Omega$ be such that $K(x) <+\infty$. Assume that 
    \begin{align*}
        \gamma(\mu[x_0^{n-1}])\,r_{[x_0]} \circ T^{j_{x_0}(x)} \xRightarrow[n \to +\infty]{\mu_{[x_0^{n-1}]}} W.
    \end{align*}
    Then, for all $m \geq 0$,
    \begin{itemize}
        \item If $y \in T^{-m}\{x\}$, 
        \begin{align*}
            \gamma(\mu([y_0^{n-1}]))\,r_{[x_0]}\circ T^{j_{x_0}(y)} \xRightarrow[n \to +\infty]{\mu_{[y_0^{n-1}]}} Q_{x,m}(y)^{1/\alpha}\,W.
        \end{align*}
        \item If $y = T^m x$,
        \begin{align*}
            \gamma(\mu([y_0^{n-1}]))\,r_{[x_0]}\circ T^{j_{x_0}(y)} \xRightarrow[n \to +\infty]{\mu_{[y_0^{n-1}]}} Q_{y,m}(x)^{-1/\alpha}\,W.
    \end{align*}
    \end{itemize}
\end{lem}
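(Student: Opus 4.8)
\textbf{Proof plan for Lemma~\ref{lem:preimages_CMS_delay_time}.} The statement relates the rescaled delay time at $x$ to the rescaled delay time at a preimage $y \in T^{-m}\{x\}$ (or at the image $T^m x$), with a deterministic multiplicative factor $Q_{x,m}(y)^{1/\alpha}$ (resp.\ $Q_{y,m}(x)^{-1/\alpha}$) applied to the limit law $W$. Since the image case follows from the preimage case by exchanging the roles of $x$ and $y$ (note $T^m y = x$ with $y = T^m x$ replaced by $x = T^m y$, and $Q$ is consistent), I would concentrate on the preimage case and deduce the image case at the end.

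First I would record the combinatorial bookkeeping: if $y\in T^{-m}\{x\}$, then $y_m^{m+k} = x_0^k$ for all $k\ge 0$, so $[y_0^{m+n-1}] \subset T^{-m}[x_0^{n-1}]$, and moreover $j_{x_0}(y) = m + j_{x_0}(x)$ (the last visit of $[x_0]$ along the orbit of $y$ occurs exactly $m$ steps after the last visit along the orbit of $x$, because the first $m$ symbols $y_0^{m-1}$ are fixed and contribute only finitely many visits — here one must be slightly careful and absorb any visits of $[x_0]$ inside $y_0^{m-1}$ into the finiteness $K(y) < +\infty$, which holds since $K(x) < +\infty$). Then $r_{[x_0]}\circ T^{j_{x_0}(y)}$ evaluated on $[y_0^{n'-1}]$ with $n' = m+n$ is, up to the fixed shift by $m$, the same random variable as $r_{[x_0]}\circ T^{j_{x_0}(x)}$ on $T^{-m}[x_0^{n-1}]$. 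The key analytic input is then to transport the conditional measure $\mu_{[y_0^{m+n-1}]}$ to $\mu_{[x_0^{n-1}]}$: using $\widehat{T}^m = L_{\phi_*}^m$ and bounded distortion (Lemma~\ref{lem:bounded_distortion_CMS_living_in_compact}, or the direct computation used in the proof of Lemma~\ref{lem:tightness_gamma_tau_n}), one gets that $\mu[y_0^{m+n-1}]^{-1}\widehat{T}^m\mathbf{1}_{[y_0^{m+n-1}]}$ converges in $L^\infty(\mu_{[x_0^{n-1}]})$ to $\mu[x_0^{n-1}]^{-1}\mathbf{1}_{[x_0^{n-1}]}$, and in fact the ratio $\mu[y_0^{m+n-1}]/\mu[x_0^{n-1}]$ converges to $e^{S_m\phi_*(y_0^{m-1}x)} \cdot (\text{correction}) \to Q_{x,m}(y)$ as $n\to+\infty$ (by Walters' condition, $\var_n S_m\phi_* \to 0$, so $e^{S_m\phi_*(y_0^{m-1}z)}\to e^{S_m\phi_*(y)}$ uniformly for $z \in [x_0^{n-1}]$ as $n$ grows). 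Hence $\mu[y_0^{m+n-1}] \sim Q_{x,m}(y)\,\mu[x_0^{n-1}]$.

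The final step is the scaling: for any $t>0$,
\begin{align*}
    \mu_{[y_0^{m+n-1}]}\big(\gamma(\mu[y_0^{m+n-1}])\,r_{[x_0]}\circ T^{j_{x_0}(y)} \ge t\big)
    &\approx \mu_{[x_0^{n-1}]}\big(\gamma(\mu[y_0^{m+n-1}])\,r_{[x_0]}\circ T^{j_{x_0}(x)} \ge t\big)
\end{align*}
by the $L^\infty$-convergence above, and since $\gamma \in \RV_0(1/\alpha)$ and $\mu[y_0^{m+n-1}] \sim Q_{x,m}(y)\,\mu[x_0^{n-1}]$, we have $\gamma(\mu[y_0^{m+n-1}]) \sim Q_{x,m}(y)^{1/\alpha}\,\gamma(\mu[x_0^{n-1}])$. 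Therefore the left-hand side is asymptotically $\mu_{[x_0^{n-1}]}\big(\gamma(\mu[x_0^{n-1}])\,r_{[x_0]}\circ T^{j_{x_0}(x)} \ge t\,Q_{x,m}(y)^{-1/\alpha}\big)$, which by hypothesis converges to $\mathbb{P}(W \ge t\,Q_{x,m}(y)^{-1/\alpha}) = \mathbb{P}(Q_{x,m}(y)^{1/\alpha} W \ge t)$. This gives convergence in distribution to $Q_{x,m}(y)^{1/\alpha}W$, as claimed. The image case is obtained by applying the preimage result with the roles of $x$ and $y=T^m x$ reversed: since $T^m y' = x$ for $y' = $ the original $x$, one reads off the factor $Q_{y,m}(x)^{1/\alpha}$ on the side of $x$, hence $Q_{y,m}(x)^{-1/\alpha}$ on the side of $y$.

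\textbf{Main obstacle.} The delicate point is the identification of the asymptotic ratio $\mu[y_0^{m+n-1}]/\mu[x_0^{n-1}] \to Q_{x,m}(y) = e^{S_m\phi_*(y)}$ with the \emph{exact} constant $e^{S_m\phi_*(y)}$ (rather than just up to bounded multiplicative error), and the handling of the shift in $j_{x_0}$ together with the possibility that $[x_0]$ is visited within the initial block $y_0^{m-1}$; this requires carefully combining the $\widehat{T}^m$-identity with Walters' decay $W_n(\phi_*)\to 0$ to upgrade the bounded-distortion bound to a genuine limit, analogously to equation~\eqref{eq:convergence_L_infty_before} in the proof of Lemma~\ref{lem:tightness_gamma_tau_n}. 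Everything else is routine manipulation of regular variation and the transfer operator.
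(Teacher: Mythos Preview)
Your proposal is correct and follows essentially the same approach as the paper: both arguments push the conditional law forward through $\widehat{T}^m = L_{\phi_*}^m$, use Walters' condition to get the $L^\infty$-convergence $\mu[y_0^{m+n-1}]^{-1}\widehat{T}^m\mathbf{1}_{[y_0^{m+n-1}]} \to \mu[x_0^{n-1}]^{-1}\mathbf{1}_{[x_0^{n-1}]}$ and the exact ratio $\mu[y_0^{m+n-1}]/\mu[x_0^{n-1}] \to e^{S_m\phi_*(y)} = Q_{x,m}(y)$, and then invoke $\gamma\in\RV_0(1/\alpha)$ for the scaling factor. The only minor looseness is your reduction of the image case to the preimage case: it is not a direct application of the preimage \emph{statement} (that would require knowing a priori that the delay at $y=T^m x$ converges), but it works because the core computation---the $L^\infty$ identity \eqref{eq:from_y_to_x_preimages} in the paper---is genuinely two-sided, so convergence at $x$ forces convergence at $y$; the paper is equally brief here (``can be proven similarly'').
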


\begin{proof}[Proof (of Lemma \ref{lem:preimages_CMS_delay_time})]
    Let $y$ be such that $T^my = x$. We have $j_{x_0}(y) = j_{x_0}(x) + m$. By bounded distortion estimates, we are able to pass from $\mu_{B_n}$ to $\mu_{[x_0^{n-1}]}$. Indeed, for all $t> 0$, 
    \begin{align}
         \nonumber
         \MoveEqLeft \mu_{[y_0^{n-1}]}(\gamma(\mu[y_0^{n-1}])\,r_{[x_0]} \circ T^{j_{x_0}(x) +m} > t) \\
        &= \mu[y_0^{n-1}]^{-1} \int \mathbf{1}_{[y_0^{n-1}]} \cdot \mathbf{1}_{\{r_{[x_0]} \circ T^{j(x)} > t/\gamma(\mu[y_0^{n-1}])\}} \circ T^m\,\dd\mu \nonumber\\
        & = \mu[y_0^{n-1}]^{-1} \int \widehat{T}^m\mathbf{1}_{[y_0^{n-1}]}\cdot \mathbf{1}_{\{r_{[x_0]} \circ T^{j(x)} > t/\gamma(\mu[y_0^{n-1}])\}} \,\dd\mu. \label{eq:computation1_preimages}
    \end{align}
    \noindent Moreover, we have 
    \begin{align}
        \label{eq:convergence_L_infty_preimages}
        \Big\|\;\mu[y_0^{n-1}]^{-1}\,\widehat{T}^m\mathbf{1}_{[y_0^{n-1}]} - \mu[x_0^{n-m-1}]^{-1}\mathbf{1}_{[x_0^{n-m-1}]} \Big\|_{L^{\infty}(\mu_{[x_0^{n-m-1}]})} \xrightarrow[n\to +\infty]{} 0.
    \end{align}    
    The argument is the same as the one we used to prove \ref{cond_CFPP:Compatibility_Geometric_law} for periodic points in Theorem \ref{thm:all-point_REPP_positive_recurrent_CMS} and the argument for Lemma \ref{lem:tightness_gamma_tau_n}. Indeed, for $n > m$ and for all $z, z' \in [x_0^{n-m-1}]$, we have 
    \begin{align*}
        \mu[y_0^{n-1}]^{-1} \widehat{T}^m\mathbf{1}_{[y_0^{n-1}]}(z) &= \mu[y_0^{n-1}]^{-1} L_{\phi_*}^m\mathbf{1}_{[y_0^{n-1}]}(z) \\
        & = \mu[y_0^{n-1}]^{-1} e^{S_m\phi_*(y_0^{m-1}z)} \\
        & = \mu[y_0^{n-1}]^{-1} e^{\pm W_{n - m}(\phi_*)} e^{S_m\phi_*(y_0^{m-1}z')} \\
        & = e^{\pm W_{n - m}(\phi_*)} \mu[y_0^{n-1}]^{-1}\,\widehat{T}^m\mathbf{1}_{[y_0^{n-1}]}(z')\,.
    \end{align*}
    Yet, we know that $\mu[y_0^{n-1}]^{-1} \widehat{T}^m\mathbf{1}_{[y_0^{n-1}]}$ is a probability density and its support is contained in $[x_0^{n-m-1}]$. Walter's condition for $\phi_*$ ensures that $W_{n - m}(\phi_*) \xrightarrow[n\to +\infty]{} 0$ and thus it gives \eqref{eq:convergence_L_infty_preimages}. Thus, going back to \eqref{eq:computation1_preimages}, we have for all $t > 0$,
    \begin{align}
        \label{eq:from_y_to_x_preimages}
        \mu_{[y_0^{n-1}]}\big(\gamma(\mu[y_0^{n-1}])\,r_{[x_0]} \circ T^{j(x) +m} > t\big) - \mu_{[x_0^{n-m-1}]} \big(\gamma(\mu[y_0^{n-1}])\,r_{[x_0]} \circ T^{j(x)} > t\big)\xrightarrow[n \to +\infty]{} 0.
    \end{align}
    Furthermore, we need to take care about the $\gamma$ renormalization and compute the limit of the ratio
    \begin{align*}
        \gamma(\mu[y_0^{n-1}])/ \gamma(\mu[x_0^{n-m-1}]) &= \gamma(\mu[y_0^{n-1}]) / \gamma(\mu[y_m^{n-1}]).
    \end{align*}
    For all $z \in [y_0^{n-1}]$, we have
    \begin{align*}
        \widehat{T}^m \mathbf{1}_{[y_0^{n-1}]}(z) &= L_{\phi_*}^m \mathbf{1}_{[y_0^{n-1}]}(z) = e^{S_m\phi_*(y_0^{m-1}z)}\\
        & = e^{\pm \var_{n} S_m\phi_*}e^{S_m\phi_*(y)} \quad \text{because $y \in [y_0^{n-1}]$}\\
        & = e^{\pm W_{n-m}(\phi_*)}e^{S_m\phi_*(y)}.
    \end{align*}
    Hence, we obtain 
    \begin{align*}
        \mu[y_0^{n-1}] & = \int \widehat{T}^{m}\mathbf{1}_{[y_0^{n-1}]} \,\dd\mu \\
        & = e^{\pm W_{n-m}(\phi_*)} \int e^{S_m\phi_*(y)} \mathbf{1}_{[y_m^{n-1}]}(z)\,\dd\mu(z) \\
        & = e^{\pm W_{n-m}(\phi_*)} e^{S_m\phi_*(y)}\mu[y_m^{n-1}].
    \end{align*}
    Thus, since $W_{n-m}(\phi_*) \xrightarrow[n \to +\infty]{} 0$, we have 
    \begin{align}
        \label{eq:computation_measure_cylindre_images}
        \lim_{n \to +\infty} \frac{\mu[y_0^{n-1}]}{\mu[x_0^{n-m-1}]} = e^{S_m\phi_*(y)} = Q_{x,m}(y).
    \end{align}
    
    \noindent Finally, by the regular variation hypothesis on the system, we have $\gamma \in RV(1/\alpha)$ and thus, 
    \begin{align*}
        \frac{\gamma(\mu[y_0^{n-1}])}{\gamma(\mu[x_0^{n-m-1}])} \xrightarrow[n \to +\infty]{} Q_{x,m}(y)^{1/\alpha}.
    \end{align*}
    Using the convergence of the delay time for the point $x$ and \eqref{eq:from_y_to_x_preimages}, it yields 
    \begin{align*}
        \gamma(\mu[y_0^{n-1}])\,r_{[x_0]} \circ T^{j(x) + m} \xRightarrow[n \to +\infty]{\mu_{[y_0^{n-1}]}} Q_{x,m}(y)^{1/\alpha} W.
    \end{align*}
    \noindent The case $y = T^m x$ can be proven similarly.
\end{proof}

\noindent From Lemma \ref{lem:preimages_CMS_delay_time}, we can deduce the following Proposition allowing us to quantify the limits of REPP associated to preimages of a certain point for which we are able to characterize the limit.

\begin{prop}
    \label{prop:convergence_REPP_preimages}
    Let $x \in \Omega$ be a point such that $K(x) < +\infty$. Assume that for some probability measure $\nu$ on $\mathbb{R}_+$,
    \begin{align*}
        \gamma(\mu(B_n))\,r_{[x_0]} \circ T^{j(x)} \xRightarrow[n \to +\infty]{\mu_{[x_0^{n-1}]}} W \sim \nu.
    \end{align*}
    \noindent For all $m \geq 0$ and $y \in T^{-m}\{x\}$ denote $\nu_y$ the probability distribution of $Q_{x,m}(y)^{1/\alpha}W$. Then, for $B_n := [y_0^{n-1}]$,
    \begin{align*}
        N_{B_n}^{\gamma} \xRightarrow[n \to +\infty]{\mu_{B_n}} \RPP(\widetilde{J}_{\alpha}(\nu_y)) \quad \text{and} \quad N_{B_n}^{\gamma} \xRightarrow[n \to +\infty]{\mathcal{L}(\mu)} \DRPP(J_{\alpha}(\nu_y), \widetilde{J}_{\alpha}(\nu_y)).
    \end{align*}
\end{prop}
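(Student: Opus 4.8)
\textbf{Proof plan for Proposition \ref{prop:convergence_REPP_preimages}.}

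The plan is to reduce everything to Theorem \ref{thm:HTS_infinite_points_that_never_come_back} (the version stated for a general letter $v$), applied to the point $y$ with the choice $v = x_0$. The first thing I would observe is that the hypotheses of that theorem are met: since $K(x) = |\mathcal{O}(x) \cap [x_0]| < +\infty$ and $y \in T^{-m}\{x\}$, the orbit $\mathcal{O}(y) = \{y, Ty, \dots, T^{m-1}y\} \cup \mathcal{O}(x)$ meets $[x_0]$ only finitely often, so $K_{x_0}(y) < +\infty$. Likewise $K_v(y) < +\infty$ for $v = x_0$ and $j_v(y) = j_{x_0}(x) + m$ by the computation already used in the proof of Lemma \ref{lem:preimages_CMS_delay_time}. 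Thus, by Theorem \ref{thm:HTS_infinite_points_that_never_come_back}, in order to conclude that
\[
N_{B_n}^{\gamma} \xRightarrow[n \to +\infty]{\mu_{B_n}} \RPP(\widetilde{J}_{\alpha}(\nu_y)) \quad \text{and} \quad N_{B_n}^{\gamma} \xRightarrow[n \to +\infty]{\mathcal{L}(\mu)} \DRPP(J_{\alpha}(\nu_y), \widetilde{J}_{\alpha}(\nu_y)),
\]
it suffices to identify the limit in law of $\gamma(\mu(B_n))\, r_{[x_0]} \circ T^{j_{x_0}(y)}$ under $\mu_{B_n}$, where now $B_n = [y_0^{n-1}]$.

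The second step is precisely this identification, and it is supplied directly by Lemma \ref{lem:preimages_CMS_delay_time}: under the standing assumption that $\gamma(\mu[x_0^{n-1}])\, r_{[x_0]} \circ T^{j_{x_0}(x)} \xRightarrow{\mu_{[x_0^{n-1}]}} W \sim \nu$, the first bullet of that lemma gives
\[
\gamma(\mu([y_0^{n-1}]))\, r_{[x_0]} \circ T^{j_{x_0}(y)} \xRightarrow[n \to +\infty]{\mu_{[y_0^{n-1}]}} Q_{x,m}(y)^{1/\alpha}\, W,
\]
and $\nu_y$ is by definition the law of $Q_{x,m}(y)^{1/\alpha} W$. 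So the delay-time limit in \eqref{eq:convergence_law_tau_n_in_thm} holds with $\nu$ replaced by $\nu_y$, and Theorem \ref{thm:HTS_infinite_points_that_never_come_back} (applied to $y$, with $v = x_0$) immediately yields the claimed convergences. In short, the proposition is a two-line consequence of Lemma \ref{lem:preimages_CMS_delay_time} feeding the input hypothesis of Theorem \ref{thm:HTS_infinite_points_that_never_come_back}.

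The only point deserving a word of care is that $\nu_y \in \mathbb{P}(\mathbb{R}_+)$ is a genuine probability measure so that $\widetilde{J}_\alpha(\nu_y)$ and $J_\alpha(\nu_y)$ are well-defined as in Definition \ref{defn:waiting_random_variables_J_alpha}; this is automatic since $Q_{x,m}(y) = e^{S_m \phi_*(y)} \in (0,1]$ is a finite positive constant (Remark \ref{rem:Q_x,m_is_a_probability_density}), so scaling $W$ by $Q_{x,m}(y)^{1/\alpha}$ preserves both finiteness and the non-degeneracy $\mathbb{P}(W>0)>0$ whenever $W$ is non-degenerate (and if $W = 0$ a.s.\ one simply recovers $\fPp_\alpha(\Gamma(1+\alpha))$, consistently with the identities noted after Definition \ref{defn:waiting_random_variables_J_alpha}). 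I do not anticipate a genuine obstacle here: essentially all the analytic work — the bounded-distortion transfer from $\mu_{[y_0^{n-1}]}$ to $\mu_{[x_0^{n-m-1}]}$, the computation of the cylinder-measure ratio, and the $\RV(1/\alpha)$ behaviour of $\gamma$ — was already carried out in Lemma \ref{lem:preimages_CMS_delay_time}, so the proof of the proposition is a direct assembly of that lemma with Theorem \ref{thm:HTS_infinite_points_that_never_come_back}.
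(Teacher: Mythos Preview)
Your approach matches the paper's: apply Theorem \ref{thm:HTS_infinite_points_that_never_come_back} to $y$ with $v = x_0$, and feed it the delay-time limit supplied by Lemma \ref{lem:preimages_CMS_delay_time}. However, there is a genuine gap in your verification of the hypotheses. Theorem \ref{thm:HTS_infinite_points_that_never_come_back} requires \emph{two} finiteness conditions on the point being studied: both $K_{y_0}(y) = |\mathcal{O}(y)\cap[y_0]| < +\infty$ and $K_v(y) < +\infty$. You check only the second (with $v = x_0$, which is fine), but never the first. And the first can fail: nothing prevents $y_0$ from being infinitely recurrent for $x$ (hence for $y$), even though $x_0$ is not.

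The paper closes this gap by a case split. If $K(y) = K_{y_0}(y) = +\infty$, then $K_{y_0}(x) = +\infty$ as well (the orbits differ by finitely many points), so $x$ is non-periodic and infinitely recurrent for $y_0$; Theorem \ref{thm:HTS_infinite_FPP_infinitely_recurrent} then forces the REPP of $x$ to converge to $\fPp_\alpha(\Gamma(1+\alpha))$, and by the equivalence in Theorem \ref{thm:HTS_infinite_points_that_never_come_back} (applied to $x$) this forces $W = 0$ a.s. In that case $\nu_y = \delta_0$ too, and Theorem \ref{thm:HTS_infinite_FPP_infinitely_recurrent} applied directly to $y$ (which is non-periodic and infinitely recurrent for $y_0$) gives the claimed $\fPp_\alpha$ limit, matching $\RPP(\widetilde{J}_\alpha(\delta_0))$. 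Only once $K(y) < +\infty$ is secured can one invoke Theorem \ref{thm:HTS_infinite_points_that_never_come_back} for $y$ as you do. So your argument is correct in spirit but needs this additional case to be complete.
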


\begin{proof}[Proof (of Proposition \ref{prop:convergence_REPP_preimages})]
    If $K(y) = |\mathcal{O}(y_0) \cap [y_0]| = +\infty$ then we also have $K_{y_0}(x) = |\mathcal{O}(x) \cap [y_0]| = +\infty$ and $W$ is the null random variable by application of Theorem \ref{thm:HTS_infinite_FPP_infinitely_recurrent} and \ref{thm:HTS_infinite_points_that_never_come_back} together for $x$. Theorem \ref{thm:HTS_infinite_FPP_infinitely_recurrent} for $y$ also ensures the equivalence. So we can assume that $K(y) < +\infty$. Since $K(x) <+\infty$, we also have $K_{x_0}(y) < +\infty$. Thus, Theorem \ref{thm:HTS_infinite_points_that_never_come_back} can be applied to $y$ with the delay time law being $Q_{x,m}(y)^{\alpha^{-1}} W$ by Lemma \ref{lem:preimages_CMS_delay_time}.
\end{proof}

\noindent The converse is also true, that it to say we can pass from the behavior of a point to the behavior of one of its images.

\begin{prop}
    \label{prop:convergence_REPP_images}
    Let $x \in \Omega$ be a point such that $K(x) < +\infty$. Assume that 
    \begin{align*}
        \gamma(\mu(B_n))\, r_{[x_0]} \circ T^{j(x)} \xRightarrow[n \to +\infty]{\mu_{[x_0^{n-1}]}} W.
    \end{align*}
    
    For all $m \geq 0$, $y = T^mx$, denote $\nu_y$ the probability distribution of $Q_{y,m}(x)^{-1/\alpha}W$. Then, for the sequence of asymptotically rare events $B_n := [y_0^{n-1}]$, we have 
    \begin{align*}
        N_{B_n}^{\gamma} \xRightarrow[n \to +\infty]{\mu_{B_n}} \RPP(\widetilde{J}_{\alpha}(\nu_y)) \quad \text{and} \quad N_{B_n}^{\gamma} \xRightarrow[n \to +\infty]{\mathcal{L}(\mu)} \DRPP(J_{\alpha}(\nu_y), \widetilde{J}_{\alpha}(\nu_y))
    \end{align*}
    where $B_n := [y_0^{n-1}]$.
\end{prop}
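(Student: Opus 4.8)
The plan is to reduce the statement to Theorem~\ref{thm:HTS_infinite_points_that_never_come_back} applied to the point $y = T^m x$, with waiting-time law $\nu_y$ obtained from Lemma~\ref{lem:preimages_CMS_delay_time}. First I would record the combinatorial facts: since $K(x) = |\mathcal{O}(x)\cap[x_0]| < +\infty$ and $y$ lies on the forward orbit of $x$, we have $\mathcal{O}(y) \subset \mathcal{O}(x)$, hence $K_{x_0}(y) = |\mathcal{O}(y)\cap[x_0]| \leq K(x) < +\infty$; so $x_0$ is finitely recurrent for $y$ and the hypotheses of Theorem~\ref{thm:HTS_infinite_points_that_never_come_back} (with the ``wider letter'' $v = x_0$, not necessarily the first symbol of $y$) are met. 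As in the proof of Proposition~\ref{prop:convergence_REPP_preimages}, one must separate the degenerate case where $K_{x_0}(y) = +\infty$ is impossible here, but one does need to handle the case $K(y) = |\mathcal{O}(y)\cap[y_0]| = +\infty$ separately: if $y_0$ is infinitely recurrent for $y$ then Theorem~\ref{thm:HTS_infinite_FPP_infinitely_recurrent} already gives convergence to $\fPp_\alpha(\Gamma(1+\alpha))$ for $y$, while simultaneously $Q_{y,m}(x)^{-1/\alpha}W$ must be the zero random variable (otherwise Lemma~\ref{lem:preimages_CMS_delay_time} would force a non-degenerate delay, contradicting Theorem~\ref{thm:HTS_infinite_FPP_infinitely_recurrent} combined with $\fPp_\alpha(\Gamma(1+\alpha)) \eqlaw \RPP(\widetilde{J}_\alpha(0)) \eqlaw \DRPP(J_\alpha(0),\widetilde{J}_\alpha(0))$), so the two conclusions coincide. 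Otherwise $K(y) < +\infty$ and Theorem~\ref{thm:HTS_infinite_points_that_never_come_back} applies nondegenerately.

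The core step is the identification of the delay law. Lemma~\ref{lem:preimages_CMS_delay_time}, second bullet, directly gives
\begin{align*}
    \gamma(\mu[y_0^{n-1}])\,r_{[x_0]}\circ T^{j_{x_0}(y)} \xRightarrow[n\to+\infty]{\mu_{[y_0^{n-1}]}} Q_{y,m}(x)^{-1/\alpha}\,W,
\end{align*}
so that with $\nu_y$ the law of $Q_{y,m}(x)^{-1/\alpha}W$, hypothesis \eqref{eq:convergence_law_tau_n_in_thm} of Theorem~\ref{thm:HTS_infinite_points_that_never_come_back} holds for $y$ with waiting random variable distributed according to $\nu_y$. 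I would then simply invoke that theorem to obtain
\begin{align*}
    N_{B_n}^{\gamma} \xRightarrow[n\to+\infty]{\mu_{B_n}} \RPP(\widetilde{J}_\alpha(\nu_y)) \quad \text{and} \quad N_{B_n}^{\gamma} \xRightarrow[n\to+\infty]{\mathcal{L}(\mu)} \DRPP(J_\alpha(\nu_y), \widetilde{J}_\alpha(\nu_y)),
\end{align*}
which is exactly the assertion. One subtlety worth flagging explicitly in the write-up: Theorem~\ref{thm:HTS_infinite_points_that_never_come_back} is stated for the rare events $B_n = [x_0^{n-1}]$ associated to the \emph{considered} point, but here $B_n = [y_0^{n-1}]$ and the tested letter $v = x_0$ may differ from $y_0$; this is precisely the generality for which that theorem was formulated (cf.\ the remark following its statement), so no extra argument is needed — one only checks $K_{y_0}(y) < +\infty$ (for the statement to be nontrivial) and $K_{x_0}(y) < +\infty$ (for the hypothesis on the tested letter), both of which follow from $K(x) < +\infty$ and $\mathcal{O}(y)\subset\mathcal{O}(x)$.

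The main obstacle, though largely bookkeeping, is making sure the degenerate cases are dispatched cleanly: when $W = 0$ (equivalently when $x$ is infinitely recurrent for some symbol, which can propagate to $y$), one needs the observation that $Q_{y,m}(x)^{-1/\alpha}\cdot 0 = 0$ and that $\RPP(\widetilde{J}_\alpha(\delta_0)) \eqlaw \fPp_\alpha(\Gamma(1+\alpha)) \eqlaw \DRPP(J_\alpha(\delta_0),\widetilde{J}_\alpha(\delta_0))$, so the statement remains literally true with $\nu_y = \delta_0$. Everything else is a direct citation; no new estimates beyond Lemma~\ref{lem:preimages_CMS_delay_time} and Theorem~\ref{thm:HTS_infinite_points_that_never_come_back} are required.
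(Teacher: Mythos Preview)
Your approach is essentially the paper's: reduce to Theorem~\ref{thm:HTS_infinite_points_that_never_come_back} for $y$ with $v=x_0$, using Lemma~\ref{lem:preimages_CMS_delay_time} (second bullet) to identify the delay law as $Q_{y,m}(x)^{-1/\alpha}W$. The main case $K(y)<+\infty$ is handled exactly as in the paper.

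There is, however, a small circularity in your treatment of the degenerate case $K(y)=+\infty$. You argue that $Q_{y,m}(x)^{-1/\alpha}W$ must be zero because otherwise ``Lemma~\ref{lem:preimages_CMS_delay_time} would force a non-degenerate delay, contradicting Theorem~\ref{thm:HTS_infinite_FPP_infinitely_recurrent}''. But the passage from a non-degenerate delay for $y$ to a non-$\fPp_\alpha$ REPP limit for $y$ is precisely the content of Theorem~\ref{thm:HTS_infinite_points_that_never_come_back}, and that theorem \emph{requires} $K(y)<+\infty$; you cannot invoke it here. The paper closes this gap by arguing through $x$ rather than $y$: since $\mathcal{O}(y)\subset\mathcal{O}(x)$, one has $K_{y_0}(x)=+\infty$; as $K(x)<+\infty$ forces $x$ non-periodic, Theorem~\ref{thm:HTS_infinite_FPP_infinitely_recurrent} applied to $x$ gives REPP for $x$ converging to $\fPp_\alpha$, and then the equivalence in Theorem~\ref{thm:HTS_infinite_points_that_never_come_back} \emph{for $x$} (which is legitimate since $K(x)<+\infty$) yields $W=0$. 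You actually gesture toward this in your final paragraph (``equivalently when $x$ is infinitely recurrent for some symbol''), so the fix is just to make that the primary argument and drop the contradiction-via-$y$ reasoning.
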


\begin{proof}[Proof (of Proposition \ref{prop:convergence_REPP_images})]
    If $K(y) = +\infty$, then $K_{y_0}(x) = +\infty$ and $W$ is necessarily the null random variable. In both case, we can apply Theorem \ref{thm:HTS_infinite_FPP_infinitely_recurrent} giving the equivalence. So we can assume that $K(y) < +\infty$.  In this case, we have $K_{x_0}(y) < +\infty$ (because $Y = T^mx$ and $K(x) < +\infty$). We just need to find the limit behavior of $\gamma(\mu(B_n))\,r_{[x_0]} \circ T^{j_{x_0}(y)}$ under $\mu_{B_n}$. This is handled by Lemma \ref{lem:preimages_CMS_delay_time}.
\end{proof}

Lemma \ref{lem:preimages_CMS_delay_time}, Proposition \ref{prop:convergence_REPP_preimages} and Proposition \ref{prop:convergence_REPP_images} together with the next Section characterizing the possible delay times will be a key tool to prove another sufficient condition for a point $x$ to have the convergence of its REPP towards a fractional Poisson process.

\section{Proofs for results of section \ref{section:possible_limit_laws}}
\label{section:proof_possible_limit_laws}
This section is dedicated to classify all the achievable random variables and explain the explicit examples where they actually appear. Thus we will here prove Propositions \ref{prop:only_possible_limits_for_waiting_times_in_G} and \ref{prop:Every_waiting_time_achievable_when_CMS_well_chosen}.

Recall that we defined $\mathcal{G}_{\alpha}$ by the following set of distribution (see Figure \ref{fig:enter-label} for a graphical representation).
\begin{align*}
    \mathcal{G}_{\alpha} := \bigg\{ \nu \in &\,\mathbb{P}(\mathbb{R}_+) \;|\; \forall s>t>0, \; \nu(]t, +\infty[) \leq 1 \wedge \frac{\sin(\pi \alpha)}{\pi\alpha}\, t^{-\alpha}\; \\
    &\text{and}\; \nu(]t, s]) \leq \frac{\sin(\pi\alpha)}{\pi\alpha}\big(t^{-\alpha} - s^{-\alpha}\big)\bigg\}.
\end{align*}

This is a property that can be easily define through the tail distribution function $\overline{F_{\nu}} : s \mapsto 1 - F_{\nu}(s) := 1 - \nu([0, s])$. To alleviate notation, we will sometimes identify the probability $\nu$ and its tail distribution function and write $\overline{F_{\nu}} \in \mathcal{G}_{\alpha}$.\\

We start with the proof of Proposition \ref{prop:only_possible_limits_for_waiting_times_in_G}. This is a direct consequence of the regular variation hypothesis.

\begin{proof}[Proof (of Proposition \ref{prop:only_possible_limits_for_waiting_times_in_G})]
    Lemma \ref{lem:equivalence_queue_renormalisation_gamma} ensures that, for all $0 < t < s \leq +\infty$,
    \begin{align*}
        \mu_{B_n} \big( t < \gamma(\mu(B_n))\, r_{[x_0]} \circ T^j \leq s\big) &= \mu(B_n)^{-1} \mu\big( B_n \cap \{t < \gamma(\mu(B_n))\, r_{[x_0]} \circ T^j \leq s\}\big) \\
        & \leq \mu(B_n)^{-1}\mu\big([x_0] \cap T_{[x_0]}^{-(j-1)} \{t < \gamma(\mu(B_n))\, r_{[x_0]} \leq s\}\big)\\
        & \leq \mu(B_n)^{-1} \mu\big([x_0] \cap \{t < \gamma(\mu(B_n))\, r_{[x_0]} \leq s\}\big), \; \text{by $T_{[x_0]}$-invariance}\\
        & \xrightarrow[n\to +\infty]{} \frac{\sin(\pi \alpha)}{\pi\alpha} (t^{-\alpha} - s^{-\alpha}).
    \end{align*}
    Since $\mu_{B_n}$ is a probability measure, it gives the upper bound
    \begin{align*}
        \mu_{B_n}\big(\gamma(\mu(B_n))\, r_{[x_0]} \circ T^j > t\big) \lesssim 1 \wedge \frac{\sin(\pi \alpha)}{\pi\alpha}t^{-\alpha}.
    \end{align*}
    This is enough to ensure that, if the limit law exists, then we have $\nu \in \mathcal{G}_{\alpha}$.
\end{proof}

\noindent Now, we turn to Proposition \ref{prop:Every_waiting_time_achievable_when_CMS_well_chosen}. Our construction is based on a family of null-recurrent Markov Chains (recall that Markov chains are within the CMS setting, see Section \ref{section:Connection_with_Markov_Chains}) that allow us to keep the regular variation hypothesis while enabling us to build a large variety of examples. Thus, we start by constructing the frame for our future examples. It is a combination of a House of Cards and a renewal-type structure.

\begin{lem}
    \label{lem:construction_HoC/renewal_given_probabilities}
    Let $(p_k)_{k\geq 1}$ be a sequence of non negative numbers such that $\sum_{k \geq 1} p_k = 1$. Let $(c_{k,n})_{k\geq 1, n \geq 1} \in [0,1]^{\mathbb{N}^2}$ be such that for all $k \geq 1$, $(c_{k,n})_{n \geq 1}$ is a non increasing sequence, for all $k \geq 1$, $c_{k,1} = 1$ and for all $n > k$, $c_{k,n} = 0$. Then, there exists a Markov Chain on the countable phase space $V := \mathbb{N} \cup i\mathbb{N}$ (see Figure \ref{fig:HoC-Renewal_structure} for the graph) and a Markov measure $\mu$ such that
    \begin{align*}
        \mu( B_n \cap \{r_{[0]} = k\}) = c_{k,n}\, p_k \quad \forall k\geq 1
    \end{align*}
    with $B_n = [0 i(2i)\cdots ((n-1)i)]$.
\end{lem}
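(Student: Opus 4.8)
The plan is to build the chain by hand and then invoke the dictionary between countable Markov chains and topological Markov shifts. By Proposition~\ref{prop:Markov_chain_to_CMS} it suffices to produce an irreducible recurrent transition kernel $P$ on $V$: the associated Markov measure $\mu$, normalised so that $\pi(0)=1$, is then conservative invariant, the associated potential $\phi$ is Markovian hence Walters (as $\var_2(\phi)=0$), and $P_G(\phi)=0<+\infty$ since $Z_n(\phi,0)=P^n(0,0)\le1$. So the whole task reduces to choosing $P$ with $\mu(B_n\cap\{r_{[0]}=k\})=c_{k,n}p_k$ for all $k\ge1$, where $B_n=[0\,i\,(2i)\cdots((n-1)i)]$. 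I would first record the elementary compatibility facts forced by the data: summing over $k$ gives $\mu[0]=\sum_k c_{k,1}p_k=\sum_k p_k=1$; a point of $B_n$ has $x_j=ji\neq0$ for $1\le j\le n-1$, so $B_n\cap\{r_{[0]}=k\}=\varnothing$ for $k<n$, matching $c_{k,n}=0$ when $n>k$; and $B_{n+1}\subset B_n$, matching $c_{k,n+1}\le c_{k,n}$.

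For the graph I would glue at the vertex $0$ (carrying a self-loop) two pieces sharing only $0$, as in Figure~\ref{fig:HoC-Renewal_structure}: a House-of-Cards ladder on $\mathbb{N}\setminus\{0\}$ ($0\to1$, $m\to m+1$, $m\to0$), which soaks up whatever return-time mass is not carried by the renewal part, and a ``renewal ladder'' $0\to i\to 2i\to\cdots$ along $i\mathbb{N}$, where from each state $(m-1)i$ the walk may go up to $mi$, jump back to $0$, or enter a private chain hanging off $(m-1)i$ along which it moves deterministically and may exit to $0$ after any prescribed number of steps; these private chains, together with the two ladders and $\{0\}$, form the countable alphabet. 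The role of $B_n$ is then transparent: an excursion from $0$ lies in $B_n\setminus B_{n+1}$ exactly when it climbs the renewal ladder precisely up to $(m-1)i$ with $m=n$ and then leaves it, so on $\{r_{[0]}=k\}$ its total mass must be matched to $q_{k,n}:=(c_{k,n}-c_{k,n+1})p_k$ (convention $c_{k,k+1}=0$). These satisfy $q_{k,n}\ge0$, $q_{k,n}=0$ for $n>k$, $\sum_{n=2}^{k}q_{k,n}=c_{k,2}p_k$ by telescoping, and $\sum_{n\ge2}\sum_{k\ge n}q_{k,n}=\sum_k c_{k,2}p_k\le\sum_k p_k=1$.

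I would then determine $P$ by a forward recursion up the renewal ladder (assuming the non-degenerate case $\sum_k c_{k,2}p_k>0$, the other case being vacuous as then $\mu(B_n)=0$ for $n\ge2$). Put $P(0,i)=\beta:=\sum_k c_{k,2}p_k\in(0,1]$ and let $\kappa_n$ be the $\mu$-mass of excursions reaching $(n-1)i$: $\kappa_2=\beta$ and $\kappa_{n+1}=\kappa_n-\sum_{k\ge n}q_{k,n}$, which stays nonnegative by the budget inequality and decreases to $0$, so no mass climbs the ladder forever. From $(n-1)i$ one routes the fraction $\kappa_{n+1}/\kappa_n$ of the incoming mass up to $ni$; the complementary fraction $\sum_{k\ge n}q_{k,n}/\kappa_n\le1$ is divided among the escape routes, the one of length $k-n+1$ getting $q_{k,n}/\kappa_n$ — a direct arrow $(n-1)i\to0$ for length $1$, an exit at the right step of the private chain for longer routes, which is legitimate because along a deterministic chain the ``exit at step $j$'' probabilities may be any nonnegative sequence (take successive ratios of their tail sums). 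The mass left at $0$, namely $P(0,0)=p_1$ and $P(0,1)=1-p_1-\beta\ge0$ (using $c_{k,2}\le1$), is sent into the House-of-Cards ladder, whose up/return probabilities are picked in the same tail-ratio way so that its excursions of length $k$ carry mass exactly $(1-c_{k,2})p_k$ for $k\ge2$. A telescoping computation then yields, for $2\le n\le k$, $\mu(B_n\cap\{r_{[0]}=k\})=\sum_{n'=n}^{k}q_{k,n'}=c_{k,n}p_k$, and for $n=1$, $\mu([0]\cap\{r_{[0]}=k\})=c_{k,2}p_k+(1-c_{k,2})p_k=p_k$, the self-loop supplying the residual mass when $k=1$ and the House-of-Cards ladder for $k\ge2$.

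It remains to check that $P$ is stochastic at every vertex, that $\pi$ with $\pi(0)=1$ is $P$-stationary (immediate from the excursion decomposition, since every state communicates with $0$), that $\mu$ is finite on cylinders and conservative (recurrence follows from $\sum_k p_k=1$, i.e.\ $r_{[0]}<+\infty$ $\mu_{[0]}$-almost surely), and that the shift is topologically mixing thanks to the self-loop at $0$ (after deleting, if needed, states rendered unreachable by a vanishing continuation probability). The main obstacle is precisely this last construction step: one must verify that every probability produced by the recursion lies in $[0,1]$ and that the probabilities out of each vertex sum to $1$, and this is exactly where all the hypotheses on $(c_{k,n})$ and $(p_k)$ are used — nonnegativity of the successive differences $c_{k,n}-c_{k,n+1}$, their vanishing for $n>k$, the normalisation $c_{k,1}=1$, and $\sum_k p_k=1$.
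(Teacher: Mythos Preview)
Your construction is correct and does yield $\mu(B_n\cap\{r_{[0]}=k\})=c_{k,n}p_k$, but it lives on a \emph{larger} state space than the lemma prescribes: the private chains hanging off each $(n-1)i$ are extra vertices not in $\mathbb{N}\cup i\mathbb{N}$, and your House-of-Cards on $\mathbb{N}\setminus\{0\}$ uses the real axis differently from Figure~\ref{fig:HoC-Renewal_structure}. For the downstream application (Proposition~\ref{prop:Every_waiting_time_achievable_when_CMS_well_chosen}) this is harmless, but strictly speaking you have proved a variant of the lemma, not the lemma as stated.

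The paper's construction is more economical and matches the announced graph exactly. The real axis is a \emph{deterministic descent} $k+1\to k$ (no up-arrows, no House-of-Cards there), and from every imaginary state $ni$ --- including $0=0i$ --- one may jump to \emph{any} real state $\ell\ge0$. This single shared delay line replaces all your private chains at once: jumping from $(n+m-1)i$ to $k-m$ and walking down to $0$ gives return time $n+k$, so setting
\[
P((n-1)i,ni)=\frac{\sum_jc_{j,n+1}p_j}{\sum_jc_{j,n}p_j},\qquad
P(ni,\ell)=\frac{(c_{n+\ell+1,n+1}-c_{n+\ell+1,n+2})\,p_{n+\ell+1}}{\sum_j c_{j,n+1}p_j}
\]
and telescoping over $m=0,\dots,k$ (exactly your computation) gives $c_{n+k,n}\,p_{n+k}$ directly. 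Your separate House-of-Cards on $\mathbb{N}$ is then superfluous: the mass $(1-c_{k,2})p_k$ not entering $B_2$ is already carried by the jumps $0\to\ell$, which is just the case $n=0$ of the same rule. The trick you missed is that a single renewal queue can serve every level of the imaginary ladder simultaneously, so no auxiliary states are needed.
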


\begin{rem}
    In particular, we have 
    \begin{align*}
        \mu[0] = 1 \quad \text{and} \quad \mu([0] \cap \{r_{[0]} = k\}) = p_k \quad \forall k \geq 1,
    \end{align*}
    and
    \begin{align*}
        \mu(B_n) = \sum_{k \geq 0} c_{k,n}\, p_k = \sum_{k \geq n} c_{k,n}\, p_k \quad \text{because}\; c_{k,n} = 0 \; \text{for} \; k < n.
    \end{align*}
\end{rem}

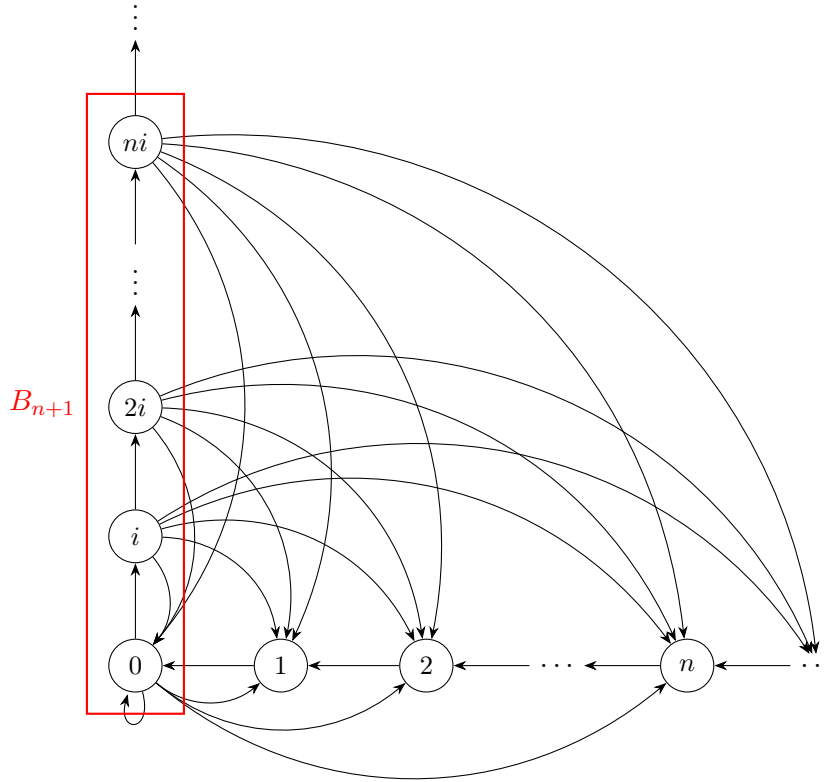
\begin{figure}[h]
\centering
\begin{tikzpicture}[
  state/.style={circle, draw, minimum size=7mm, inner sep=1pt, font=\small},
  >=Stealth,
  node distance=10mm and 12mm
]

\node[state] (H0) {0};
\node[state, right=of H0] (H1) {1};
\node[state, right=of H1] (H2) {2};
\node[right=1cm of H2] (Hdots) {\(\ldots\)};
\node[state, right=1cm of Hdots] (Hn) {\(n\)};
\node[right=1cm of Hn] (Hdots2) {\(\ldots\)};

\node[state, above=of H0] (V1) {\(i\)};
\node[state, above=of V1] (V2) {\(2i\)};
\node[above=1cm of V2] (Vdots) {\(\vdots\)};
\node[state, above=1cm of Vdots] (Vn) {\(ni\)};
\node[above=1cm of Vn] (Vdots2) {\(\vdots\)};

\foreach \x/\y in {H0/H1, H1/H2, H2/Hdots, Hdots/Hn, Hn/Hdots2} {
  \draw[->] (\y) -- (\x);
}

\draw[->, bend left=40] (V1) to (H0);
\draw[->, bend left=40] (V1) to (H1);
\draw[->, bend left=40] (V1) to (H2);
\draw[->, bend left=40] (V1) to (Hn);
\draw[->, bend left=45] (V1) to (Hdots2);

\draw[->, bend left=40] (V2) to (H0);
\draw[->, bend left=40] (V2) to (H1);
\draw[->, bend left=40] (V2) to (H2);
\draw[->, bend left=40] (V2) to (Hn);
\draw[->, bend left=45] (V2) to (Hdots2);

\draw[->, bend left=40] (Vn) to (H0);
\draw[->, bend left=40] (Vn) to (H1);
\draw[->, bend left=40] (Vn) to (H2);
\draw[->, bend left=40] (Vn) to (Hn);
\draw[->, bend left=45] (Vn) to (Hdots2);

\draw[->] (H0) edge[loop below]  (H0);
\draw[->, bend right=40] (H0) to (H1);
\draw[->, bend right=40] (H0) to (H2);
\draw[->, bend right=40] (H0) to (Hn);

\foreach \x/\y in {H0/V1, V1/V2, V2/Vdots, Vdots/Vn, Vn/Vdots2} {
  \draw[->] (\x) -- (\y);
}

\node[draw=red, thick, fit=(H0)(V1)(V2)(Vn), inner sep=8pt, label=left:{\textcolor{red}{$B_{n+1}$}}] {};


\end{tikzpicture}
\caption{The House of Cards-Renewal structure. The targets are cylinders shrinking towards the point $x = (ki)_{k\geq 0}$.}
\label{fig:HoC-Renewal_structure}
\end{figure}

\begin{proof}[Proof (of Lemma \ref{lem:construction_HoC/renewal_given_probabilities})]
    The idea is to use a House of cards construction for climbing on the imaginary axis and a renewal when we arrive on the real axis. Hence, we consider the set $E$ of arrows is the following 
    \begin{align*}
        E := \{(ni, (n+1)i), \; n\geq 0\} \cup \{(ni, \ell), \; n,\ell\geq 0\} \cup \{(k+1, k), \; k\geq 0\}.
    \end{align*}
    We endow it with the transition kernel $P$ defined by
    \begin{align*}
        P(k+1, k) &= 1\\
        P(ni, k) &= \bigg(\sum_{j\geq 0} c_{j, n+1}\, p_j \bigg)^{-1}(c_{n+k+1, n+1} - c_{n+k+1, n+2})\, p_{n+k+1} \\
        P( (n-1)i, ni) &= \bigg(\sum_{j\geq 0} c_{j, n+1}\, p_j \bigg) \bigg( \sum_{j \geq 0} c_{j,n}\, p_j\bigg)^{-1}.
    \end{align*}
    This Markov chain verifies for all $k\geq 0$
    \begin{align*}
        & \mu(B_n \cap \{r_{[0]} = n +k\}) = \sum_{m = 0}^k \mu[0i\cdots((n-1)i)\cdots ((n + m - 1)i)(k-m)(k-m-1)\cdots 10]\\
        & = \sum_{m = 0}^k \prod_{\ell = 1}^{n+m-1} P((\ell-1) i, \ell i) \cdot P((n+m-1)i, k-m) \\
        & = \sum_{m = 0}^k \prod_{\ell = 1}^{n+m-1} \frac{\sum_{j\geq 0} c_{j, \ell+1}\, p_j}{\sum_{j\geq 0} c_{j,\ell}\, p_j} \cdot \bigg(\sum_{j\geq 0} c_{j, n+m} p_j \bigg)^{-1}(c_{n + k, n + m} - c_{n + k, n+m+1})\, p_{n+k} \\
        & = \sum_{m = 0}^k (c_{n + k, n + m} - c_{n + k, n+m+1})\, p_{n+k}\\
        & = \big(c_{n + k, n} - c_{n+k, n+k+1}\big)\, p_{n+k}\\
        & = c_{n+k}\, p_{n+k} \quad \text{because} \; c_{n+k, n+k+1} = 0.
    \end{align*}
    Since $c_{k, n} = 0$ for $k < n$ by hypothesis, we have
    \begin{align*}
        \mu(B_n \cap \{r_{[0]} = k\}) = c_{k,n}\, p_k \quad \forall k \geq 1
    \end{align*}
\end{proof}

\noindent We can now proceed to the proof of Proposition \ref{prop:Every_waiting_time_achievable_when_CMS_well_chosen}.

\begin{proof}[Proof (of Proposition \ref{prop:Every_waiting_time_achievable_when_CMS_well_chosen})]
    Let $W$ be a random variable such that $1 - F_W \in \mathcal{G}_{\alpha}$. By definition of $\mathcal{G}_{\alpha}$, the only possible atom of $W$ is $0$. Let $\rho := \mathbb{P}(W = 0)$.
    Consider a decreasing sequence $(p_k)_{k \geq 1}$ of regular variation of parameter $-1 -\alpha$ and such that
    \begin{align*}
        \sum_{k \geq 1} p_k = 1
    \end{align*} 
    \noindent We are going to build a Markov chain on a countable state space \(V := \mathbb{N} \cup i\mathbb{N}\) such that 
    \begin{align*}
        \mu([0] \cap \{r_{[0]} = k\}) = p_k \quad \forall k \geq 1. 
    \end{align*}
    We also denote $q_k := \sum_{j \geq k} p_j$, that is to say we are going to have
    \begin{align*}
        \mu([0] \cap \{r_{[0]} \geq k\}) = q_k \quad \forall k \geq 1
    \end{align*}
    and $(q_k)_{k \geq 1} \in RV(-\alpha)$. We can take a continuation $q : \mathbb{R}_+ \to \mathbb{R}_+ \in RV(-\alpha)$.\\

    \noindent Let $(\varepsilon_n)_{n \geq 1}$ be a sequence converging to $0$ that will be designed afterwards. Set $(s_n)_{n \geq 1}$, $(M_n := n+1)_{n\geq 1}$, $(m_n := 2^{-n})_{n\geq 1}$ and $(u_n)_{n\geq 1}$ such that, for all $n \geq 1$,
    \begin{align}
        \label{eq:hyp_on_u_n_m_n_s_n}
        u_n \geq n, \;
        q_{m_ns_n} \leq \Big(\frac{1 - \rho}{\rho} \wedge 1\Big) p_{u_n} \; \text{and} \; 1 \geq q_{u_{n+1}} / q_{M_ns_n} \xrightarrow[n \to +\infty]{} 0.
    \end{align}
    In particular, it implies $u_{n} \leq m_ns_n \leq M_ns_n \leq u_{n+1}$.
    The regular variation implies that 
    \begin{align*}
        \frac{q(tn)}{q(n)} \xrightarrow[n \to +\infty]{} t^{-\alpha}
    \end{align*}
    and the convergence is uniform on every set of the form $[a, +\infty)$ where $a > 0$ (see \cite[Theorem 1.5.2]{Bingham89_RegularVariation}), so, up to choosing $s_n$ even larger, we can also build it such that $t\geq m_n = 2^{-n}$, 
    \begin{align}
        \label{eq:uniform_regular_variation_queue}
        t^{-\alpha} - \varepsilon_n \leq q(ts_n)/q(s_n) \leq t^{-\alpha} + \varepsilon_n.
    \end{align}
    Note that the dependencies are the following : $s_n = s_n(u_n, m_n, \varepsilon_n)$ and $u_{n+1} = u_{n+1}(s_n,M_n)$ but there is no dependencies needed between $\varepsilon_n$ and $m_n$ or $M_n$.\\
    
    Let $h : t \mapsto \frac{\sin(\pi \alpha)}{\pi\alpha}t^{-\alpha}$. For all $k \in E_n := \{-2^n + 1, \dots, n2^n - 1\}$, define $0 \leq \eta_{k,n}$ such that 
    \begin{align}
        \label{eq:definition_eta_k,n}
        F_W(1 + k2^{-n}) - F_W(1+(k+1)2^{-n}) = \eta_{k,n}(h(1 + k2^{-n}) - h(1 + (k+1)2^{-n})). 
    \end{align}
    Because $1 - F_W \in \mathcal{G}_{\alpha}$, we have $\eta_{k,n} \leq 1$. 
    For all $p \geq 0$ and $k \in E_p$, set 
    \begin{align*}
        A_{p,k} := [(1 + k2^{-p})s_p, (1 + (k+1)2^{-p})s_p[ \,\cap\, \mathbb{N}.
    \end{align*}
    With such a definition, we have 
    \begin{align*}
        [m_ps_p, M_ps_p] \cap \mathbb{N} = \bigsqcup_{k\in E_p} A_{p,k}. 
    \end{align*}
    For all $n \geq 0$, we set 
    \begin{align*}
        c_{m, n} := \eta_{k,p} \; \text{if} \; m\in A_{p,k} \; \text{for some} \; p\geq n, \, k \in E_p.
    \end{align*}
    Then, for $p \geq n$ 
    \begin{align}
        \label{eq:definition_c_u_n_n}
        c_{u_p, n} := \frac{\rho}{p_{u_p}(1 - \rho)}\sum_{k \in E_p}\sum_{m \in A_{p,k}} \eta_{k,p}p_m.
    \end{align}
    Finally, we set $c_{m,n} := 0$ for all $m \in \Big(\bigcup_{p\geq n} \{u_p\} \cup ([m_ps_p, M_ps_p] \cap \mathbb{N})\Big)^c$.

    \noindent The defined sequence $(c_{k,n})_{k,n\geq 1}$ verifies the hypothesis of Lemma \ref{lem:construction_HoC/renewal_given_probabilities}. Indeed, for all $k, p \geq 0$, $\eta_{k,p} \leq 1$ and for all $p \geq n \geq 1$, 
    \begin{align*}
        c_{u_p, n} \leq \frac{\rho}{(1 - \rho)p_{u_p}} \sum_{k\geq m_ps_p} p_k \leq \frac{\rho}{(1 - \rho)p_{u_p}} q_{m_ps_p} \leq \frac{\rho}{1 - \rho}\frac{1 - \rho}{\rho} = 1 \; \text{by} \; \eqref{eq:hyp_on_u_n_m_n_s_n}.
    \end{align*}
    Furthermore, by construction $c_{m,n} = 0$ for $m < u_n$ and since $u_n \geq n$ by \eqref{eq:hyp_on_u_n_m_n_s_n}, $c_{m,n} = 0$ for all $m < n$. Finally, the construction imposes $(c_{k,n})_{n\geq 0}$ for all $k\geq 1$ as for each $k$, $c_{k,n}$ can take only two values and once it is $0$ it remains $0$.\\
    
    Thus, by Lemma \ref{lem:construction_HoC/renewal_given_probabilities} we can construct a Markov Chain associated to $(p_k)_{k \geq 1}$ and $(c_{k,n})_{n\geq 0}$. We now show that for this special Markov chain and the point $x = (ki)_{n \geq 0}$ we have
    \begin{align}
        \label{eq:convergence_waiting_time_in_construction_everything_possible}
        \gamma(\mu(B_n))r_{[0]} \xRightarrow[n \to +\infty]{\mu_{B_n}} W
    \end{align}
    where $B_n = [x_0^{n-1}] = [0i \cdots (n-1)i]$. We start by computing the measure of $B_n$.
    
    \begin{lem}
    \label{lem:connection_between_s_n_and_mu_B_n}
    We have the following asymptotic equivalences
    \begin{align*}
        \mu(B_n) \isEquivTo{n \to +\infty} \frac{\pi\alpha}{\sin(\pi\alpha)}q_{s_n} \quad \text{and} \quad
        \gamma(\mu(B_n))^{-1} \isEquivTo{n \to +\infty} s_n.
    \end{align*}
\end{lem}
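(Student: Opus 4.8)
The plan is to compute $\mu(B_n)$ explicitly from the construction in Lemma~\ref{lem:construction_HoC/renewal_given_probabilities}, isolate the dominant term using the uniform regular variation estimate~\eqref{eq:uniform_regular_variation_queue}, and then read off the second equivalence from the first via Lemma~\ref{lem:equivalence_queue_renormalisation_gamma}. By the Remark following Lemma~\ref{lem:construction_HoC/renewal_given_probabilities}, $\mu(B_n)=\sum_{k\geq n}c_{k,n}\,p_k$, and by construction the nonzero weights $c_{k,n}$ sit only at the ``renewal-exit'' indices $k=u_p$ and at the ``tower'' indices $k\in[m_ps_p,M_ps_p]\cap\mathbb N=\bigsqcup_{k'\in E_p}A_{p,k'}$, for $p\geq n$; these index blocks are pairwise disjoint because $u_p\leq m_ps_p\leq M_ps_p\leq u_{p+1}$. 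Adding the exit weight $c_{u_p,n}p_{u_p}=\tfrac{\rho}{1-\rho}\sum_{k'\in E_p}\eta_{k',p}\sum_{m\in A_{p,k'}}p_m$ (from~\eqref{eq:definition_c_u_n_n}) to the tower weights $\sum_{k'\in E_p}\eta_{k',p}\sum_{m\in A_{p,k'}}p_m$, the $p$-th block equals $\tfrac1{1-\rho}\sum_{k'\in E_p}\eta_{k',p}\sum_{m\in A_{p,k'}}p_m$, so that
\[
\mu(B_n)=\frac1{1-\rho}\sum_{p\geq n}\ \sum_{k\in E_p}\eta_{k,p}\sum_{m\in A_{p,k}}p_m .
\]

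First I would discard the blocks $p>n$. Bounding $\eta_{k,p}\leq 1$ and using $\bigsqcup_{k\in E_p}A_{p,k}=[m_ps_p,M_ps_p]\cap\mathbb N$ gives $\sum_{k\in E_p}\eta_{k,p}\sum_{m\in A_{p,k}}p_m\leq q_{\lceil m_ps_p\rceil}\leq q_{u_p}$ since $m_ps_p\geq u_p$. The third condition in~\eqref{eq:hyp_on_u_n_m_n_s_n} yields $q_{u_{p+1}}/q_{u_p}\leq (q_{u_{p+1}}/q_{M_ps_p})(q_{M_ps_p}/q_{u_p})\to 0$, so $(q_{u_p})_p$ decays faster than geometrically and $\sum_{p>n}q_{u_p}=O(q_{u_{n+1}})=o(q_{M_ns_n})=o(q_{s_n})$, the last step because $M_ns_n\geq s_n$. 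Hence the blocks $p>n$ contribute only $o(q_{s_n})$, a point that, though routine, must be made because the blocks are infinitely many.

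For the leading block $p=n$, the key identity is $\sum_{m\in A_{n,k}}p_m=q_{\lceil(1+k2^{-n})s_n\rceil}-q_{\lceil(1+(k+1)2^{-n})s_n\rceil}$; since every $1+k2^{-n}\geq m_n=2^{-n}$ for $k\in E_n$, estimate~\eqref{eq:uniform_regular_variation_queue} together with $q_{\lceil x\rceil}\sim q(x)$ uniformly for $x\geq m_ns_n\to+\infty$ gives $\sum_{m\in A_{n,k}}p_m=q(s_n)\big[(1+k2^{-n})^{-\alpha}-(1+(k+1)2^{-n})^{-\alpha}\big]+(\text{error})$. Summing over the $|E_n|\asymp n2^{n+1}$ indices $k$, the total error is $O\big(n2^n(\varepsilon_n+\delta_n 2^{n\alpha})q(s_n)\big)$, where $\delta_n:=\sup_{x\geq m_ns_n}|q_{\lceil x\rceil}/q(x)-1|\to 0$; since $(\varepsilon_n)$ and (through enlarging $s_n$) $(\delta_n)$ are still free at this stage, one may additionally impose $n2^{n(1+\alpha)}(\varepsilon_n+\delta_n)\to 0$, rendering this error $o(q(s_n))$. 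For the main part, write $(1+k2^{-n})^{-\alpha}-(1+(k+1)2^{-n})^{-\alpha}=\tfrac{\pi\alpha}{\sin(\pi\alpha)}\big(h(1+k2^{-n})-h(1+(k+1)2^{-n})\big)$ with $h(t)=\tfrac{\sin(\pi\alpha)}{\pi\alpha}t^{-\alpha}$, and use~\eqref{eq:definition_eta_k,n} to replace $\eta_{k,n}\big(h(1+k2^{-n})-h(1+(k+1)2^{-n})\big)$ by the increment $\nu\big(\,]1+k2^{-n},1+(k+1)2^{-n}]\,\big)$ of $W$'s law (legitimate, with $\eta_{k,n}\in[0,1]$, precisely because $\overline{F_W}\in\mathcal G_\alpha$). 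The resulting sum telescopes to $\nu\big(\,]2^{-n},n+1]\,\big)=F_W(n+1)-F_W(2^{-n})\to 1-\rho$, using that the only atom of $W$ is at $0$ with mass $\rho$, so $F_W(2^{-n})\to F_W(0)=\rho$ and $F_W(n+1)\to 1$. Thus the $p=n$ block equals $\tfrac1{1-\rho}q(s_n)\big(\tfrac{\pi\alpha}{\sin(\pi\alpha)}(1-\rho)+o(1)\big)=\tfrac{\pi\alpha}{\sin(\pi\alpha)}q(s_n)(1+o(1))$; combined with the previous paragraph, $\mu(B_n)\sim\tfrac{\pi\alpha}{\sin(\pi\alpha)}q_{s_n}$. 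This error bookkeeping against the exponentially growing $|E_n|$, forcing the choice of $\varepsilon_n$ and $s_n$, is the main obstacle.

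Finally, for the second equivalence I would invoke Lemma~\ref{lem:equivalence_queue_renormalisation_gamma} with $[\mathbf v]=[0]$ (and its proof), giving $\mu([0]\cap\{r_{[0]}>n\})\sim\tfrac{\sin(\pi\alpha)}{\pi\alpha}a_n^{-1}$, i.e.\ $a_n\sim\tfrac{\sin(\pi\alpha)}{\pi\alpha}q_{n+1}^{-1}\sim\tfrac{\sin(\pi\alpha)}{\pi\alpha}q_n^{-1}$. Therefore $1/\mu(B_n)\sim\tfrac{\sin(\pi\alpha)}{\pi\alpha}q_{s_n}^{-1}\sim a_{s_n}$, and since $a\in\RV(\alpha)$ with $\alpha>0$ its generalized inverse is asymptotic to a genuine inverse (\cite[Theorem~1.5.12]{Bingham89_RegularVariation}), whence $\gamma(\mu(B_n))^{-1}=a^{\leftarrow}(1/\mu(B_n))\sim a^{\leftarrow}(a_{s_n})\sim s_n$, using $s_n\to+\infty$.
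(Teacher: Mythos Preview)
Your proof is correct and follows essentially the same route as the paper's: decompose $\mu(B_n)=\sum_k c_{k,n}p_k$ into the $p=n$ block and the tail $p>n$, control the tail by $o(q_{M_ns_n})=o(q_{s_n})$, evaluate the main block via the uniform estimate~\eqref{eq:uniform_regular_variation_queue} and the defining relation~\eqref{eq:definition_eta_k,n} for $\eta_{k,n}$, and then derive the second equivalence from $q_{s_n}\sim\tfrac{\sin(\pi\alpha)}{\pi\alpha}a(s_n)^{-1}$ together with $a^{\leftarrow}(a(s_n))\sim s_n$. The only cosmetic differences are that you merge the exit term $c_{u_p,n}p_{u_p}$ with the tower sum into a single $\tfrac{1}{1-\rho}$ factor before computing (the paper treats them separately and recombines), and you are more explicit about the ceiling error $\delta_n$ and the resulting constraint $n2^{n(1+\alpha)}(\varepsilon_n+\delta_n)\to 0$; the paper absorbs this into the choice $\varepsilon_n=3^{-n}$ and the freedom in $s_n$. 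For the tail $p>n$, note that the paper's bound is slightly shorter: since every index appearing for $p>n$ is $\geq u_{n+1}$ and $c_{k,n}\leq 1$, the whole tail is $\leq\sum_{k\geq u_{n+1}}p_k=q_{u_{n+1}}$ directly, without the geometric-decay detour.
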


\begin{proof}[Proof (of Lemma \ref{lem:connection_between_s_n_and_mu_B_n})]
   We have 
\begin{align*}
    \mu(B_n) & = \sum_{k \geq 0} c_{k,n}p_k = c_{u_n,n}p_{u_n} + \sum_{m_ns_n \leq k \leq M_ns_n} c_{k,n}p_k + \sum_{j > n}\Big( c_{u_j,n}p_{u_j} + \sum_{m_js_j \leq k \leq M_js_j} c_{k,n}p_k\Big) \\
    & = c_{u_n,n}p_{u_n} + \sum_{m_ns_n \leq k \leq M_ns_n} c_{k,n}p_k + O\Big( \sum_{k \geq u_{n+1}} p_k\Big)\\
    & = c_{u_n,n}p_{u_n} + \sum_{m_ns_n \leq k \leq M_ns_n} c_{k,n}p_k + O(q_{u_{n+1}})\\
    & = c_{u_n,n}p_{u_n} + \sum_{m_ns_n \leq k \leq M_ns_n} c_{k,n}p_k + o(q_{M_ns_n}) \; \text{by} \;\eqref{eq:hyp_on_u_n_m_n_s_n} \\
    & = c_{u_n,n}p_{u_n} + \sum_{k \in E_n} \sum_{m \in A_{n,k}} \eta_{k,n} p_m  + o(q_{M_ns_n})\\
    & = c_{u_n,n}p_{u_n} + \sum_{k \in E_n} \eta_{k,n} \sum_{m \in A_{n,k}} p_m + o(q_{M_ns_n})\\
    & = c_{u_n,n}p_{u_n} + \sum_{k \in E_n} \eta_{k,n} \mu\big([0] \cap \{(1 + k2^{-n})s_n \leq r_{[0]} < (1 + (k+1)2^{-n})s_n\}\big) + o(q_{M_ns_n}) \\
    & = c_{u_n, n}p_{u_n} + \sum_{k \in E_n} \bigg( \eta_{k,n} \mu([0] \cap \{r_{[0]} \geq s_n\})\\
    &  \qquad \qquad \times \frac{\mu\big([0] \cap \{(1 + k2^{-n})s_n \leq r_{[0]} < (1 + (k+1)2^{-n})s_n\}\big)}{\mu([0] \cap \{r_{[0]} \geq s_n\})}\bigg) + o(q_{M_ns_n})\\ 
    & = c_{u_n,n}p_{u_n} + \sum_{k \in E_n} \eta_{k,n} q_{s_n} \frac{q_{(1+k2^{-n})s_n} - q_{(1+(k+1)2^{-n})s_n}}{q_{s_n}} + o(q_{M_ns_n})\\
    & = c_{u_n, n}p_{u_n}+ q_{s_n}\sum_{k \in E_n} \eta_{k,n} \Big((1+k2^{-n})^{-\alpha} - (1+(k+1)2^{-n})^{-\alpha}\Big)\\
    & \qquad \qquad + O( q_{s_n}|E_n|\varepsilon_n) + o(q_{M_ns_n}) \quad \text{by} \; \eqref{eq:uniform_regular_variation_queue},\\
    & = c_{u_n,n}p_{u_n} + q_{s_n} \frac{\pi\alpha}{\sin(\pi \alpha)} \big(\overline{F_W}(m_n) - \overline{F_W}(M_n)\big) + O(q_{s_n}|E_n|\varepsilon_n) + o(q_{M_ns_n}) \quad \text{by}\; \eqref{eq:definition_eta_k,n}.
\end{align*}
Furthermore, we have 
\begin{align*}
    c_{u_n,n}p_{u_n} &= \frac{\rho}{1 - \rho} \sum_{k \in E_n}\sum_{m \in A_{n,k}} \eta_{k,n}p_m \quad \text{by} \; \eqref{eq:definition_c_u_n_n}\\
    & = \frac{\rho}{1 - \rho} \frac{\pi\alpha}{\sin(\pi \alpha)} q_{s_n} (\overline{F_W}(m_n) - \overline{F_W}(M_n)) + O(q_{s_n}|E_n| \varepsilon_n)
\end{align*}
and 
\begin{align*}
    \frac{\rho}{1 - \rho}(\overline{F_W}(m_n) - \overline{F_W}(M_n)) \xrightarrow[n \to +\infty]{} \frac{\rho}{1- \rho}\mathbb{P}(W > 0) = \rho.
\end{align*}
Hence, we obtain
\begin{align*}
    \mu(B_n) & = \rho \frac{\pi\alpha}{\sin(\pi \alpha)} q_{s_n}(1 + o(1)) + q_{s_n}\frac{\pi\alpha}{\sin(\pi \alpha)}(1 - \rho) (1+o(1)) + O(q_{s_n}|E_n|\varepsilon_n) + o(q_{M_ns_n}) \\
    & = \frac{\pi\alpha}{\sin(\pi \alpha)} q_{s_n} + o(q_{s_n}) + o(q_{s_n}|E_n|\varepsilon_n)
\end{align*}
Since $|E_n| = n2^n$, we can choose, for example, $\varepsilon_n = 3^{-n}$ to ensure 
\begin{align}
    \label{eq:equivalence_mu_B_n_q_s_n}
    \mu(B_n) \isEquivTo{n \to +\infty} \frac{\pi\alpha}{\sin(\pi\alpha)}q_{s_n},
\end{align}
\noindent and thus
\begin{align*}
     q_{s_n} \isEquivTo{n \to +\infty} \frac{\sin(\pi\alpha)}{\pi\alpha}a(s_n)^{-1}.
\end{align*}
Putting it into \eqref{eq:equivalence_mu_B_n_q_s_n}, we obtain
\begin{align*}
    \gamma(\mu(B_n))^{-1} = a^{\leftarrow}(\mu(B_n)^{-1}) \isEquivTo{n \to +\infty} a^{\leftarrow}\bigg(\frac{\sin(\pi \alpha)}{\pi\alpha} q_{s_n}^{-1}\bigg) \isEquivTo{n \to \infty} a^{\leftarrow}(a(s_n)) \isEquivTo{n \to +\infty} s_n.
\end{align*} 
\end{proof}

    \noindent Now, let $q \geq 1$ and $k \in E_q$ and set $t = 1 + k2^{-q}$. Using Lemma \ref{lem:connection_between_s_n_and_mu_B_n}, we obtain 
    \begin{align*}
        \mu_{B_n}\big( \gamma(\mu(B_n)r_{[0]} \geq t\big) &= \mu(B_n)^{-1} \sum_{k \geq t/\gamma(\mu(B_n))} c_{k,n}p_k\\ 
        & = \frac{\sin(\pi\alpha)}{\pi\alpha} q_{s_n}^{-1}(1+o(1)) \bigg(\sum_{k \geq ts_n} c_{k,n}p_k + O\big(\big|q(t\gamma(B_n)^{-1}) - q(ts_n)\big|\big)\bigg)\\
        & = \frac{\sin(\pi \alpha)}{\pi\alpha}q_{s_n}^{-1} \sum_{k \geq ts_n} c_{k,n}p_k + o(1).
    \end{align*}
    \noindent The study of the sum is similar to the computation of $\mu(B_n)$ in Lemma \ref{lem:connection_between_s_n_and_mu_B_n}, the only difference being that the sum starts only at $ts_n$ instead of $0$. The choice of $t = 1 + k2^{-q} = 1 + k_n2^{-n}$ for some $k_n \in E_n$ and $n \geq q$, gives
    \begin{align*}
        \sum_{t \geq ts_n} c_{k,n}p_k &= \sum_{ts_n \leq k \leq M_ns_n} c_{k,n}p_k + o(q_{M_ns_n}) \\
        & = q_{s_n}\sum_{\overset{k \in E_n}{k \geq ts_n}} \eta_{k,n} \Big((1+k2^{-n})^{-\alpha} - (1+(k+1)2^{-n})^{-\alpha}\Big) + O( q_{s_n}|E_n|\varepsilon_n) + o(q_{M_ns_n}) \\
        & = q_{s_n} \frac{\pi\alpha}{\sin(\pi \alpha)}(\overline{F_W}(t) - \overline{F_W}(M_n)) + o(q_{s_n}).
    \end{align*}
    Thus, we obtain
    \begin{align*}
        \mu_{B_n}(\gamma(\mu(B_n)r_{[0]} \geq t) = \overline{F_W}(t) - \overline{F_W}(M_n) + o(1)
    \end{align*}
    proving the convergence for all $t > 0$ such that $t \in \{1 + k2^{-q}, \; q \geq 1 \; \text{and} \; k \in E_q\}$ which is dense in $\mathbb{R}$. Since $\overline{F_W} \in \mathcal{G}_{\alpha}$ is continuous on $\mathbb{R}_{> 0}$, this is enough to prove the convergence of \eqref{eq:convergence_waiting_time_in_construction_everything_possible}. Then, a direct application of Theorem \ref{thm:HTS_infinite_points_that_never_come_back} concludes the proof of Proposition \ref{prop:Every_waiting_time_achievable_when_CMS_well_chosen}.
    
\end{proof}

\section{Proof of results from section \ref{section:all-point_REPP_for_examples}}
\label{section:proofs_all-point_REPP_examples}
\subsection{Another sufficient condition for convergence towards fractional Poisson processes}

Before diving into the proof for the examples, we start by Proposition \ref{prop:sufficient_condition_for_0_delay_time_finitely_recurrent} which gives another sufficient condition for the convergence towards the fractional Poisson process. It will be particularly useful in the $\mathbb{Z}$-extension case. The proof relies on a contradiction argument using the results from Section \ref{subsubsection:Images_and_preimages} and Proposition \ref{prop:only_possible_limits_for_waiting_times_in_G}.

\begin{proof}[Proof (of Proposition \ref{prop:sufficient_condition_for_0_delay_time_finitely_recurrent})]
    By Lemma \ref{lem:tightness_gamma_tau_n}, we know that the sequence of measures $(\gamma(\mu(B_n)) \allowbreak \,r_{[x_0]} \circ T^{j(x)})_*\mu_{[x_0^{n-1}]}$ is tight. By contradiction, if it does not converge weakly towards the Dirac mass at $0$, we can assume that there exists a subsequence (without loss of generality we assume the convergence of the whole sequence) and a random variable $W$ with $\mathbb{P}(W > \varepsilon) \geq \delta > 0$ for some $\varepsilon, \delta > 0$ such that 
    \begin{align*}
        \gamma(\mu([x_0^{n-1}]))\, r_{[x_0]} \circ T^{j(x)} \xRightarrow[n \to +\infty]{\mu_{[x_0^{n-1}]}} W.
    \end{align*}
    Then, for all $m\geq 0$, Proposition \ref{prop:convergence_REPP_images} ensure that we have the convergence of the REPP for $T^mx$ towards the renewal process associated to the delay time $Q_{T^mx, m}(x)^{-1/\alpha}\, W$ and thus by the equivalence in Theorem \ref{thm:HTS_infinite_points_that_never_come_back}, we have 
    \begin{align*}
        \gamma(\mu[x_m^{m+n-1}])\, r_{[x_m]} \circ T^{j(T^mx)} \xRightarrow[n \to +\infty]{\mu_{[x_m^{m+n-1}]}} e^{-\alpha^{-1}S_m\phi_*(x)} W = W'_m.
    \end{align*}
    \noindent Thus, we have 
    \begin{align}
        \label{eq:pas_das_G_contradiction}
        \mathbb{P}(W'_m > e^{-S_m\phi_*(x)/\alpha}\varepsilon) \geq \delta. 
    \end{align}
    However, by Proposition \ref{prop:only_possible_limits_for_waiting_times_in_G}, $W'_m \in \mathcal{G}_{\alpha}$ for all $m \geq 0$. This contradicts \eqref{eq:pas_das_G_contradiction} because $e^{-\alpha^{-1}S_m\phi_*(x)} \allowbreak \to +\infty$ when $m\to +\infty$ by hypothesis. Thus, $W = 0$ and, by Theorem \ref{thm:HTS_infinite_points_that_never_come_back}, the REPP associated to $x$ exhibits a fractional Poisson behavior. 
\end{proof}

\subsection{Proof for the examples}

\paragraph{House of Cards.} We start with the proof for the House of Cards type null-recurrent CMS and prove Theorem \ref{thm:all_points_HoC_structure} which establishes the \textit{all-point REPP} property for those systems and identifies the limiting point processes.

\begin{proof}[Proof (of Theorem \ref{thm:all_points_HoC_structure})]
    If $x \notin \mathcal{D}$, this is a direct application of Theorems \ref{thm:HTS_infinite_FPP_infinitely_recurrent} and \ref{thm:HTS_infinite_CFPP_periodic_points}. For points in $\mathcal{D}$, we start by analyzing $x^{up}$. Fortunately, due to the House of Cards structure, we have a nice characterization of $B_n = [(x^{up})_0^{n-1}] = [0 \cdots (n-1)]$ and
    \begin{align*}
        B_n = [0] \cap \{r_{[0]} \geq n\}.
    \end{align*}
    Thus, the limit law of $\gamma(\mu(B_n)) r_{[0]}$ is a direct consequence of the regular variation hypothesis as for all $t \geq 0$, we have
    \begin{align*}
        \mu_{B_n}(\gamma(\mu(B_n))\,r_{[0]} \geq t) & = \mu(B_n)^{-1} \mu(B_n \cap \{\gamma(\mu(B_n))\,r_{[0]}\geq t\})\\
        & = \mu(B_n)^{-1} \mu([0] \cap \{r_{[0]} \geq t/\gamma(\mu(B_n)) \vee n\})
    \end{align*}
    However, by definition of $\gamma$ and Lemma \ref{lem:equivalence_queue_renormalisation_gamma}, we have
    \begin{align*}
        \mu_{B_n}(\gamma(\mu(B_n))\,r_{[0]} \geq t) \xrightarrow[n \to +\infty]{} 1 \wedge \frac{\sin(\pi\alpha)}{\pi\alpha}t^{-\alpha} = \mathbb{P}(W \geq t).
    \end{align*}
    Hence, Theorem \ref{thm:HTS_infinite_points_that_never_come_back} gives the result for $x^{up}$. \\

    \noindent The remaining points can be treated with Proposition \ref{lem:preimages_CMS_delay_time}. For the preimages of $x^{up}$, this is a direct application of Proposition \ref{prop:convergence_REPP_preimages}. Finally, if $y = T^mx^{up}$, the House of Cards structure gives $T^{-m}\{y\} = \{x^{up}\}$. Since $Q_{y,m}$ is a probability density on $T^{-m}\{y\}$ (see Remark \ref{rem:Q_x,m_is_a_probability_density}), we have $Q_{y,m}(x^{up}) = 1$ and the asymptotic behavior of the REPP for $y$ is the same as the one for $x^{up}$. 
\end{proof}

\paragraph{$\mathbb{Z}$-extensions.}
The goal of this section is to prove Theorem \ref{thm:all_points_Z_extension} establishing the \textit{all-point REPP} property for $\mathbb{Z}$-extension over strong positive recurrent CMS. \\

Let us define the category of TMS associated with a Gibbs measure.

\begin{defn}[Gibbs measure] \index{Gibbs measure}
    Let $(\Omega, \sigma)$ be a topologically mixing TMS and $\phi : X \to \mathbb{R}$ be a Walters potential. We say that a measure $\mu$ on $\Omega$ is Gibbs for $\phi$ if it is $\sigma$-invariant and there exist constants $K > 1$ and $P > 0$ such that for all $x\in \Omega$ and $n\geq 1$,
    \begin{align*}
        K^{-1} \leq \frac{\mu[x_0^{n-1}]}{\exp(S_n\phi(x) - nP)} \leq K.
    \end{align*}
\end{defn}

The first result proving the existence of Gibbs measure for subshift of finite type (with a Hölder potential) comes from R. Bowen \cite{Bowen75}. Now, for Walter's potentials, it is possible to give necessary and sufficient conditions for the existence of Gibbs measures. For that we first need to define the big image and preimage property.

\begin{defn}[BIP property] \index{BIP property}
    \label{defn:BIP_property}
    Let $(\Omega, \sigma)$ be a topologically mixing TMS. We say that it has the big images and preimages (BIP) property if there exists there exists a finite set of states $(v_i)_{1\leq i \leq N} \subset V^N$ such that for all $v \in V$, there exists $1\leq i,j\leq N$ such that $v_i \to v$ and $v\to v_j$.
\end{defn}

Then, it is known that there exists necessary and sufficient conditions for a topologically mixing TMS to have Gibbs measures.

\begin{thm}[\textup{\cite[Theorem 1]{Sarig03_Gibbs}, see also \cite[Theorem 4.9]{tdfsurvey}}]
    \label{thm:equivalence_Gibbs_measure}
    Let $(\Omega, \sigma)$ be a topologically mixing TMS and let $\phi$ be a Walters potential. Then, we have the following equivalence between:
    \begin{itemize}
        \item[$(i)$] $(\Omega, \sigma)$ has the BIP property, $P_G(\phi) < +\infty$ and $\var_1(\phi) <+\infty$.
        \item[$(ii)$] $(\Omega, \sigma)$ admits a Gibbs measure.
    \end{itemize}
    In such case, $\phi$ is positive recurrent, its associated measure is $\mu$ and $P = P_G(\phi)$ in the definition of Gibbs measure.
\end{thm}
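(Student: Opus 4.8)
The plan is to treat this as an equivalence and prove the two implications separately, in the spirit of Sarig's construction. For $(i)\Rightarrow(ii)$, the first step is to promote the hypotheses into a recurrence statement: under BIP, $\var_1(\phi)<+\infty$ and $P_G(\phi)<+\infty$ one checks that $\phi$ is positive recurrent in the sense of Definition \ref{defn:CMS_classification_of_potentials}, the finite BIP set providing the uniform comparison of partition functions that forces $\sum_n\lambda_\phi^{-n}Z_n(\phi,v)=+\infty$ while keeping $\sum_n n\lambda_\phi^{-n}Z_n^*(\phi,v)<+\infty$. Applying the Generalized Ruelle--Perron--Frobenius Theorem (Theorem \ref{thm:GRPF}) then produces $\lambda_\phi=e^{P_G(\phi)}$, a continuous strictly positive eigenfunction $h_\phi$ with $L_\phi h_\phi=\lambda_\phi h_\phi$, and a conformal measure $\nu_\phi$ with $L_\phi^*\nu_\phi=\lambda_\phi\nu_\phi$. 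The decisive extra input is to upgrade the cylinder-wise control of $h_\phi$ from Remark \ref{rem:regularity_log_h} to a bound $h_\phi\asymp 1$ \emph{uniformly on $\Omega$}: writing $h_\phi$ as a normalized Ces\`aro limit of $\lambda_\phi^{-n}L_\phi^{n}\mathbf{1}$ and using that BIP lets one compare $L_\phi^{n}\mathbf{1}$ at any two points up to a multiplicative constant depending only on the finite BIP set, $\var_1(\phi)$ and $W_1(\phi)$. One then sets $\mu:=h_\phi\nu_\phi$ (normalized so that it is a probability), which is $\sigma$-invariant, and verifies the Gibbs inequality: by conformality of $\nu_\phi$ and the Walters bound on $S_n\phi$ one has $\nu_\phi[x_0^{n-1}]\asymp\lambda_\phi^{-n}e^{S_n\phi(x)}$ uniformly in $x\in[x_0^{n-1}]$, and multiplying by the two-sided bounds on $h_\phi$ gives $\mu[x_0^{n-1}]\asymp e^{S_n\phi(x)-nP}$ with $P=\log\lambda_\phi=P_G(\phi)$, which also settles the last assertion.

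For $(ii)\Rightarrow(i)$, assume $\mu$ is Gibbs with constants $K>1$ and $P$. Taking $n=1$ in the Gibbs inequality, for $x,y$ with $x_0=y_0$ one gets $e^{\phi(x)-\phi(y)}\in[K^{-2},K^{2}]$, hence $\var_1(\phi)\le 2\log K<+\infty$. For the pressure, fix a state $v$: distinct periodic points $x$ with $\sigma^{n}x=x$ and $x\in[v]$ determine pairwise disjoint $n$-cylinders $[x_0^{n-1}]\subset[v]$, so $\sum_{\sigma^{n}x=x,\,x\in[v]}\mu[x_0^{n-1}]\le\mu[v]\le 1$; combining with $e^{S_n\phi(x)}\le Ke^{nP}\mu[x_0^{n-1}]$ gives $Z_n(\phi,v)\le Ke^{nP}$, so $P_G(\phi)\le P<+\infty$ (the reverse bound, needed for $P=P_G(\phi)$, follows by using topological mixing and the Gibbs bounds to guarantee that the cylinder sum over admissible loops of length $n$ decays only sub-exponentially). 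It remains to extract the BIP property: one shows that a Gibbs measure forces $L_\phi\mathbf{1}$ to be bounded above and below on $\Omega$ (using $\mu$-invariance, the Gibbs bounds on $1$- and $2$-cylinders, and $\var_1(\phi)<+\infty$), and that this uniform two-sided control, read off the transition graph via $L_\phi\mathbf{1}(x)=\sum_{b\to x_0}e^{\phi(bx)}$ and its analogue for the conjugate operator, is exactly what yields a finite set of states dominating both images and preimages. Positive recurrence of $\phi$ and the identification of $\mu$ with the GRPF measure then follow from Theorem \ref{thm:GRPF} and uniqueness of the eigendata.

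The main obstacle I anticipate lies in the two ``non-quantitative to quantitative'' passages: producing the \emph{uniform} (rather than cylinder-wise) bounds on $h_\phi$ in $(i)\Rightarrow(ii)$, since Theorem \ref{thm:GRPF} alone gives only positivity and local regularity, and conversely extracting the combinatorial BIP condition from the purely measure-theoretic Gibbs inequality in $(ii)\Rightarrow(i)$. The efficient way to handle both simultaneously is to run the Ruelle operator machinery on the Banach space of functions with finite $\var$-seminorm, where BIP is precisely the ingredient making $L_\phi$ quasi-compact with a spectral gap; the uniform bounds on $h_\phi$ and the reverse implication then both appear as consequences of that spectral picture, as carried out by Sarig, and one cites \cite{Sarig03_Gibbs} for the full details.
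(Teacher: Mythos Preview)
The paper does not prove this theorem at all: it is stated as a known result and attributed to \cite{Sarig03_Gibbs} (see also \cite[Theorem 4.9]{tdfsurvey}), with no argument given. Your proposal is a reasonable outline of how Sarig's proof proceeds, but since the paper's ``proof'' consists solely of the citation, there is nothing to compare against here beyond noting that your sketch is consistent with the cited source.
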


The BIP property and condition $\var_1(\phi) < +\infty$ allows to strengthen the bounded distortion estimates Lemmas \ref{lem:bounded_distortion_CMS_living_in_compact} and \ref{lem:bounded_distortion_CMS}. This will be key to prove Theorem \ref{thm:all_points_Z_extension} below. The proof is similar to the proof of Lemma \ref{lem:bounded_distortion_CMS} but we take advantage of the additional property $\var_1(\phi) < +\infty$.

\begin{lem}
\label{lem:improved_bounded_distortion_CMS}
    Let $(\Omega, \sigma, \mu)$ be a topologically mixing TMS associated with a Walters potential and a Gibbs measure $\mu$. Then, there exists some constant $C$ such that for all admissible word $(a_0^{n-1}b_0^{j-1})$, we have 
    \begin{align*}
        \mu[a_0^{n-1}b_0^{j-1}] \leq C\mu[a_0^{n-1}]\mu[b_0^{j-1}].
    \end{align*}
\end{lem}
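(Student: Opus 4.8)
The plan is to mimic the proof of Lemma~\ref{lem:bounded_distortion_CMS} but to exploit the extra hypothesis $\var_1(\phi) < +\infty$, which in the BIP setting forces the Gibbs property with a \emph{uniform} constant $K$ (Theorem~\ref{thm:equivalence_Gibbs_measure}). Recall the Gibbs inequality: there are $K>1$ and $P = P_G(\phi)$ with $K^{-1} \le \mu[x_0^{m-1}] / \exp(S_m\phi(x) - mP) \le K$ for every $x$ and $m\ge 1$. The point is that the ratio $\mu[a_0^{n-1}b_0^{j-1}] / (\mu[a_0^{n-1}]\mu[b_0^{j-1}])$ can be estimated directly from this, without going through the transfer operator, using additivity of Birkhoff sums along the concatenated word.

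Concretely, first I would fix an admissible word $(a_0^{n-1}b_0^{j-1})$ and pick a point $z \in [a_0^{n-1}b_0^{j-1}]$; then $\sigma^n z \in [b_0^{j-1}]$ and $z \in [a_0^{n-1}]$. Write $S_{n+j}\phi(z) = S_n\phi(z) + S_j\phi(\sigma^n z)$. Applying the Gibbs bounds three times — once to $[a_0^{n-1}b_0^{j-1}]$ at $z$ (length $n+j$), once to $[a_0^{n-1}]$ at $z$ (length $n$), once to $[b_0^{j-1}]$ at $\sigma^n z$ (length $j$) — gives
\begin{align*}
    \frac{\mu[a_0^{n-1}b_0^{j-1}]}{\mu[a_0^{n-1}]\,\mu[b_0^{j-1}]} \le K^3\, \frac{e^{S_{n+j}\phi(z) - (n+j)P}}{e^{S_n\phi(z) - nP}\, e^{S_j\phi(\sigma^n z) - jP}} = K^3,
\end{align*}
since the exponents cancel exactly. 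Thus $C := K^3$ works. Note that in contrast with Lemma~\ref{lem:bounded_distortion_CMS}, no extra factor $\mu[b_0]^{-1}$ appears, because the Gibbs constant $K$ does not depend on the first symbol $b_0$ — this is precisely what $\var_1(\phi)<+\infty$ buys us, via the characterization of Theorem~\ref{thm:equivalence_Gibbs_measure}.

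There is really no substantial obstacle here; the only care needed is to make sure the Gibbs property is invoked with the same point on both sides of the cancellation, which is legitimate since all three cylinders contain the relevant iterates of $z$. One should also remark (or it follows immediately by the same computation, bounding below instead of above) that the reverse inequality $\mu[a_0^{n-1}b_0^{j-1}] \ge C^{-1}\mu[a_0^{n-1}]\mu[b_0^{j-1}]$ holds with the same $C = K^3$, so the estimate is two-sided, although only the upper bound is stated in the lemma. The main subtlety, if any, is purely bookkeeping: ensuring that the chosen representative $z$ lies in all three cylinders simultaneously, which is automatic by construction.
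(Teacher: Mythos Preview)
Your proof is correct and is actually more direct than the paper's. You exploit the Gibbs inequality itself, pick a single point $z\in[a_0^{n-1}b_0^{j-1}]$, and use the exact additivity $S_{n+j}\phi(z)=S_n\phi(z)+S_j\phi(\sigma^n z)$ to cancel the exponentials, yielding $C=K^3$.

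The paper instead mirrors the proof of Lemma~\ref{lem:bounded_distortion_CMS}: it first shows $W_0(\phi_*)=\sup_{n\ge1}\var_n(S_n\phi_*)<+\infty$ from $\var_1(\phi)<+\infty$, then controls $\widehat{T}^n\mathbf{1}_{[a_0^{n-1}]}$ on $T[a_{n-1}]$ up to a factor $e^{\pm W_0(\phi_*)}$, and finally uses the BIP property to bound $\mu(T[a_{n-1}])$ below by $\kappa:=\min_i\mu[v_i]$, obtaining $C=\kappa^{-1}e^{W_0(\phi_*)}$. This route stays closer to the transfer-operator machinery used elsewhere in the paper and makes transparent exactly where the improvement over Lemma~\ref{lem:bounded_distortion_CMS} comes from (namely, $\mu[b_0]^{-1}$ is replaced by the uniform $\kappa^{-1}$ thanks to BIP). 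Your argument is shorter because it takes the Gibbs constant $K$ as a black box; the paper effectively re-derives the relevant piece of it. Both are perfectly valid, and your observation that the two-sided bound $C^{-1}\le\mu[a_0^{n-1}b_0^{j-1}]/(\mu[a_0^{n-1}]\mu[b_0^{j-1}])\le C$ follows for free is a nice bonus.
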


\begin{proof}[Proof (of Lemma \ref{lem:improved_bounded_distortion_CMS})]
    Condition $\var_1(\phi) < +\infty$ ensures that $W_0(\phi) = \sup_{n\geq 1} [\var_n S_n \phi] < +\infty$. Indeed, let $n\geq 1$ and $x, y \in \Omega$ such that $x_0^{n-1} = y_0^{n-1}$. Then, we have 
    \begin{align*}
        S_n\phi(x) - S_n\phi(y) &= S_{n-1}\phi(x) - S_{n-1}\phi(y) + \phi(T^{n-1}(x)) - \phi(T^{n-1}(y)) \\
        & \leq \var_{n} S_{n-1}\phi + \var_1\phi \leq W_1(\phi) + \var_1(\phi).
    \end{align*}
    
    Note that $\phi_* = \phi + \log h - \log h\circ T  - P_G(\phi)$ also satisfies $W_0(\phi_*) < +\infty$ because $\var_1\log h  < +\infty$ (see Remark \ref{rem:regularity_log_h}).\\
    
    For all $n\geq 1$, $(a_0^{n-1})$ admissible and $y,z \in T[a_{n-1}]$, we have 
    \begin{align*}
        \widehat{T}^n \mathbf{1}_{[a_0^{n-1}]}(y) & = L_{\phi_*}^n \mathbf{1}_{[a_0^{n-1}]}(y) = e^{S_n \phi_* (a_0^{n-1}y)}\\
        & = e^{\pm W_0(\phi_*)}e^{S_n\phi_*(a_0^{n-1}z)} = e^{W_0(\phi_*)} \widehat{T}^n \mathbf{1}_{[a_0^{n-1}]}(z).
    \end{align*}
    In particular, it yields
    \begin{align*}
        \widehat{T}^n \mathbf{1}_{[a_0^{n-1}]}(y) = e^{\pm W_0(\phi_*)} \frac{1}{\mu(T[a_{n-1}])} \int_{T[a_{n-1}]} \mathbf{1}_{[a_0^{n-1}]}(z)\,\dd\mu = e^{\pm W_0(\phi_*)} \frac{\mu[a_0^{n-1}]}{\mu(T[a_{n-1}])}. 
    \end{align*}
    
    Denote $\kappa := \inf_{1 \leq i \leq N} \mu[v_i]$ where $(v_i)_{1\leq i\leq N}$ is given by the BIP property. Thus, for all $(a_0^{n-1}b_0^{j-1})$ admissible, we obtain
    \begin{align*}
        \mu[a_0^{n-1}b_0^{j-1}] & = \int \widehat{T}^n\mathbf{1}_{[a_0^{n-1}]} \cdot \mathbf{1}_{[b_0^{j-1}]}\,\dd\mu \leq e^{\pm W_0(\phi_*)} \frac{\mu[a_0^{n-1}]\mu[b_0^{j-1}]}{\mu(T[a_{n-1}])} \\
        & \leq \kappa^{-1}e^{\pm W_0(\phi_*)}\mu[a_0^{n-1}]\mu[b_0^{j-1}]. 
    \end{align*}
\end{proof}

From Definition \ref{defn:Z_extension}, it is not clear that a $\mathbb{Z}$-extension over a countable Markov shift can itself be seen as a countable Markov shift. When the jump function $h$ is locally Lipschitz, we show the folklore result that it is indeed the case an we present a construction.\\

Thus, we consider the special case where $(\Omega, \sigma, \nu)$ is a topologically mixing TMS over the alphabet $\mathcal{A}$ and such that $\nu$ is a Gibbs measure for a Walters potential $\phi$ (in particular $P_G(\phi) <+\infty$ by Theorem \ref{thm:equivalence_Gibbs_measure}). Furthermore, we consider a locally Lipschitz jump function $h$ such that $\int h\,\dd\nu = 0$. To simplify the study, we also assume that $h$ is aperiodic. In particular, by uniform continuity and since $h$ only take discrete values, there exist a depth $L$ such that $h$ is constant on every element of $\mathcal{C}^{\Omega}(L)$ the (at most countable) set of $L$-cylinders for the TMS $(\Omega, \sigma, \nu)$. We call $(X, T,\mu)$ the system thus defined as a $\mathbb{Z}$-extension.\\

We build a CMS (with a countable alphabet, even in the case $\mathcal{A}$ finite) that is topologically conjugated to $(X, T)$.\\

We define the CMS $X'$ over the alphabet $V := \mathcal{C}^{\Omega}(L) \times \mathbb{Z}$ (this is a countable set because $\mathcal{C}^{\Omega}(L)$ is at most countable) and with transition matrix
\begin{align*}
    A( (v_0^{L-1}, z), ((v')_0^{L-1}, z')) = 1 \quad \text{if and only if} \quad v_1^{L-1} = (v')_0^{L-2} \;\text{and}\; z' - z = h|_{[v_0^{L-1}]}.  
\end{align*}

\begin{rem}
    Since $L$ is chosen such that $h$ is constant on elements of $\mathcal{C}^{\Omega}(L)$, we write, with a small abuse of notation, $h|_{[v_0^{L-1}]}$ for the unique value of $h$ taken on $[v_0^{L-1}]$.
\end{rem}

\begin{lem}
    \label{lem:topological_conjugation_CMS_Z_extension}
    The dynamical systems are $(X', T')$ and $(X, T)$ are topologically conjugated by a bi-continuous map $\Pi$.
\end{lem}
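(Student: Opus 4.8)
The plan is to construct the conjugating map $\Pi : X' \to X$ explicitly from the symbolic description of $X'$ and verify it is a bicontinuous bijection intertwining $T'$ and $T$. First I would recall that a point of $X' \subset V^{\mathbb{N}}$ is a sequence $\big((v^{(k)}, z_k)\big)_{k \geq 0}$ with $v^{(k)} \in \mathcal{C}^{\Omega}(L)$, subject to the overlap condition $v^{(k)}_1{}^{L-1} = v^{(k+1)}_0{}^{L-2}$ and the cocycle condition $z_{k+1} - z_k = h|_{[v^{(k)}]}$. The overlap condition means exactly that the blocks $v^{(k)}$ fit together to define a single point $\omega = \omega\big((v^{(k)})_k\big) \in \Omega$ with $\omega_k{}^{k+L-1} = v^{(k)}$ for all $k$; this is the standard identification of a subshift with its "$L$-block recoding", and it is a homeomorphism onto $\Omega$ since $\Omega$ is an $L$-step Markov shift in the obvious sense (or one enlarges the alphabet — but here $\phi$ Walters and the cylinder structure make this direct). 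The cocycle condition then forces $z_k = z_0 + S_k h(\omega)$, since $h$ is constant on $L$-cylinders and $h|_{[v^{(k)}]} = h(\sigma^k \omega)$. Hence I would \emph{define} $\Pi\big((v^{(k)}, z_k)_k\big) := (\omega, z_0)$, and record that its inverse is $(\omega, z) \mapsto \big((\omega_k{}^{k+L-1}, z + S_k h(\omega))\big)_{k \geq 0}$, which indeed lands in $X'$ by construction.

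Next I would check the intertwining relation $\Pi \circ T' = T \circ \Pi$. Applying $T'$ shifts the sequence $(v^{(k)}, z_k)_k$ to $(v^{(k+1)}, z_{k+1})_k$, whose image under $\Pi$ is $(\sigma\omega, z_1) = (\sigma\omega, z_0 + h(\omega))$; on the other hand $T(\omega, z_0) = (\sigma\omega, z_0 + h(\omega))$ by Definition~\ref{defn:Z_extension}. These agree, so $\Pi$ is a conjugacy of the set-theoretic dynamics. Bijectivity follows from the explicit two-sided formula above. For continuity of $\Pi$: if two points of $X'$ agree on their first $N$ coordinates (in the metric $d_\eta$ on $V^{\mathbb{N}}$), then the associated $\omega$'s agree on the first $N + L - 1$ symbols and the $z_0$-coordinates coincide, so the images are close in the product metric on $X = \Omega \times \mathbb{Z}$ (with $\mathbb{Z}$ discrete). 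Continuity of $\Pi^{-1}$ is the same computation in reverse, using that agreement of $(\omega, z)$ on the first $N$ symbols and on the $\mathbb{Z}$-coordinate yields agreement of the recoded sequence on roughly the first $N - L$ coordinates (the $z_k = z + S_k h(\omega)$ are then determined). Both maps send cylinders to cylinders up to a bounded shift in depth, so bicontinuity is immediate.

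The one genuinely delicate point — and the main obstacle — is to confirm that $X'$ as defined is itself topologically mixing (so that the earlier machinery applies to it), which is exactly where the hypothesis that $h$ is non-arithmetic is used: the $\mathbb{Z}$-coordinate must be able to reach every value along admissible paths of every sufficiently large length, and aperiodicity of $h$ prevents the reachable set of $\mathbb{Z}$-displacements from collapsing to a proper sublattice. I would argue this by combining topological mixing of $(\Omega, \sigma)$ (to connect any two $L$-blocks by admissible words of all large lengths) with a standard lattice argument: the group generated by $\{S_n h(\omega) : \sigma^n \omega = \omega\}$ (Birkhoff sums over periodic orbits) is all of $\mathbb{Z}$ precisely when $h$ is not cohomologous to a sublattice-valued function, and a gluing/concatenation argument upgrades this to the required uniform reachability for all large path-lengths. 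This verification should be stated as part of the proof (or cited as folklore), but the bulk of Lemma~\ref{lem:topological_conjugation_CMS_Z_extension} itself is the routine recoding bookkeeping described above.
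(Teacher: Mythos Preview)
Your proposal is correct and follows essentially the same approach as the paper: both construct the conjugacy via the $L$-block recoding $(\omega,z) \leftrightarrow \big((\omega_k^{k+L-1}, z+S_k h(\omega))\big)_{k\geq 0}$, verify it is a bijection, check bi-continuity through the cylinder structure, and confirm the intertwining relation—the only cosmetic difference is that your $\Pi$ goes $X'\to X$ while the paper's goes $X\to X'$. Your closing paragraph on topological mixing of $X'$ via non-arithmeticity of $h$ is a correct and useful remark but lies outside the scope of this lemma, which asserts only the conjugacy; the paper handles that point separately (and simply assumes it elsewhere).
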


\begin{proof}[Proof (of Lemma \ref{lem:topological_conjugation_CMS_Z_extension})]
    We define the map
    \begin{align*}
        \begin{array}{cccc}
            \Pi :& X & \to & X'  \\
             & (\omega, z) & \mapsto & ((\omega_i^{i + L - 1}), z + S_i h(\omega))_{i\geq 0}.
        \end{array}
    \end{align*}
    It is well defined since for all $i \geq 0$, $\omega_{i+1}^{i+L-1} = \omega_{i + 1}^{(i+1) + L - 2}$ and $S_{i+1}h(\omega) - S_{i}h(\omega) = h \circ T^{i}(\omega) = h|_{[\omega_i^{i+L-1}]}$. The map is clearly one to one. Conversely, if $x' = ( ((a^{(i)})_0^{L-1}), z_i)_{i\geq 0} \in X'$, we consider the point $\omega = (a^{(0)})_0^{L-1}(a^{(L)})_0^{L-1}\cdots (a^{(kL)})_0^{L-1}$ and $z = z_0$. We show that $(\omega, z) \in X$ and $\Pi(\omega, z) = x'$. Since $x' \in X'$, for all $i\geq 0$, we have $(a^{(i)})_0^{L-1} \in C^{\Omega}(n)$ and thus $\omega_{i} = (a^{(i)})_0 \to (a^{(i)})_1 = w_{i+1}$ so $\omega \in \Omega$ and $(\omega,z) \in X$. Furthermore, we have $z_{i+1} - z_i = h|_{[(a^{(i)})_0^{L-1}]} = h|_{[\omega_i^{i+L-1}]} = h\circ T^i(\omega)$ and thus, since $z_0 = z$, we have $z + S_ih(\omega) = z_i$ which ensure that $\Pi(\omega, z) = x'$.\\

    \noindent We prove that $\Pi$ is bi-continuous. Let $x = (\omega, z)$ and $x' = (\omega', z)$ be such that $d(\omega, \omega')\leq \eta^{L + p}$ (\textit{i.e.}, $\omega_0^{L+ p-1} = (\omega')_0^{L+ p-1}$). Then, we have
    \begin{align*}
        (\omega_i^{i+L-1}, z + S_ih(\omega)) = ((\omega')_i^{i+L-1}, z + S_ih(\omega')) \quad \forall i \leq p.
    \end{align*}
    and, hence, $d(\Pi(x),\Pi(x')) \leq \eta^{p}$. This is even an equivalence, thus proving the bi-continuity.\\

    \noindent Finally, we show the dynamics compatibility, \textit{i.e.}, $\Pi \circ T = T' \circ \Pi$. Indeed, we have 
    \begin{align*}
        \Pi \circ T (\omega, z) &= \Pi (\sigma\omega, z + h(\omega))\\
        & = ( \omega_{i+1}^{i+L}, z + h(\omega) + S_ih(\sigma\omega))_{i\geq 0}\\
        & = (\omega_{i+1}^{i+L}, z + S_{i+1}(\omega))_{i\geq 0}\\
        & = T' \circ \Pi(\omega, z).
    \end{align*}
\end{proof}

\noindent We set $\mu' := \Pi_*\mu$ which is invariant under $T'$. We show that there exist a potential $\phi' : X' \to \mathbb{R}$ such that $\mu' = \mu_{\phi}$. For that, we need to compute the transfer operator associated to $\mu'$. It is associated with the transfer operator of $(X, T, \mu)$ that we first compute.

\begin{lem}
    \label{lem:computation_transfer_operator_Z_extension}
    The transfer operator $\widehat{T}$ is such that for all $f\in L^1(\mu)$ and $x\in X$, we have
    \begin{align*}
        \widehat{T}f(x) = \sum_{Ty = x} e^{\phi_* (\pr_{\Omega}(y))} f(y),
    \end{align*}
    where $\pr_{\Omega} : (\omega, z) \in X \mapsto \omega \in \Omega$.
\end{lem}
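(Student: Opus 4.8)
\textbf{Proof plan for Lemma~\ref{lem:computation_transfer_operator_Z_extension}.} The statement asserts that the transfer operator $\widehat{T}$ of the $\mathbb{Z}$-extension $(X,T,\mu)$, once transported to the conjugate CMS $(X',T',\mu')$ via $\Pi$, is exactly the Ruelle operator $L_{\phi'}$ associated with a Markovian-type potential, and that in the original coordinates this reads $\widehat{T}f(x) = \sum_{Ty=x} e^{\phi_*(\pr_\Omega(y))} f(y)$, where $\phi_*$ is the normalized potential of the base TMS (so $L_{\phi_*}1 = 1$ on $\Omega$ by Remark~\ref{rem:definition_phi_*}, using here that $\phi$ on the base is positive recurrent with Gibbs measure $\nu$, hence $\widehat{\sigma}_\nu = L_{\phi_*}$). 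The plan is to verify the defining identity of the transfer operator directly: show that for all $f \in L^1(\mu)$ and $g \in L^\infty(\mu)$ one has $\int f \cdot (g\circ T)\,\dd\mu = \int \big(\sum_{Ty=\cdot} e^{\phi_*(\pr_\Omega(y))} f(y)\big) g \,\dd\mu$, which by uniqueness of the transfer operator forces the claimed formula.

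\textbf{Main steps.} First I would use the product structure $\mu = \nu \otimes \mathfrak{m}$ and the explicit dynamics $T(\omega,z) = (\sigma\omega, z+h(\omega))$. For fixed $g\in L^\infty(\mu)$ and $f\in L^1(\mu)$, write
\begin{align*}
    \int_X f\cdot (g\circ T)\,\dd\mu = \sum_{z\in\mathbb{Z}} \int_\Omega f(\omega,z)\, g(\sigma\omega, z+h(\omega))\,\dd\nu(\omega).
\end{align*}
Now apply the base transfer operator identity $\int_\Omega u\cdot(v\circ\sigma)\,\dd\nu = \int_\Omega (L_{\phi_*}u)\cdot v\,\dd\nu$ with $u = f(\cdot,z)$ and $v = g(\cdot, z+h(\cdot))$ — but care is needed because $v$ depends on $h(\omega)$, not on $\sigma\omega$; one resolves this by noting that the preimages $\sigma\omega' = \omega$ carry the value $h(\omega')$, so after the substitution the $z$-shift is indexed by the preimage. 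Concretely, partitioning the base sum over preimages $\omega'$ with $\sigma\omega' = \omega$ and weight $e^{\phi_*(\omega')}$, and reindexing the $\mathbb{Z}$-sum $z \mapsto z + h(\omega')$, one obtains
\begin{align*}
    \int_X f\cdot(g\circ T)\,\dd\mu = \sum_{z\in\mathbb{Z}}\int_\Omega \Big(\sum_{\sigma\omega'=\omega} e^{\phi_*(\omega')} f(\omega', z - h(\omega')) \Big) g(\omega, z)\,\dd\nu(\omega).
\end{align*}
The inner sum is precisely $\sum_{Ty = (\omega,z)} e^{\phi_*(\pr_\Omega(y))} f(y)$, since $Ty = (\omega,z)$ means $y = (\omega', z-h(\omega'))$ for some preimage $\omega'$ of $\omega$ under $\sigma$. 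This identifies the right-hand side as $\int_X (\widehat{T}f)\cdot g\,\dd\mu$, completing the verification. Then I would record, via the conjugacy $\Pi$ of Lemma~\ref{lem:topological_conjugation_CMS_Z_extension}, that this operator is the Ruelle operator for the potential $\phi' := \phi_*\circ\pr_\Omega\circ\Pi^{-1}$ on $X'$, which is Walters (since $\phi_*$ is and $\pr_\Omega\circ\Pi^{-1}$ only reads finitely many more coordinates, thanks to the locally Lipschitz/finite-depth-$L$ assumption on $h$), has $L_{\phi'}1 = 1$, and hence $P_G(\phi') = 0$; thus $\mu' = \mu_{\phi'}$ by the GRPF Theorem~\ref{thm:GRPF}.

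\textbf{Main obstacle.} The delicate point is the bookkeeping of the $\mathbb{Z}$-coordinate shift under taking preimages: the jump $h$ is evaluated at the preimage $\omega'$, not at the image $\omega$, so the naive application of the base transfer-operator duality must be done at the level of the preimage-indexed sum rather than after projecting to $\Omega$. Getting the substitution $z \mapsto z - h(\omega')$ aligned correctly with which summand of $L_{\phi_*}$ it belongs to is where a sign or index error is easiest to make; everything else (Fubini–Tonelli to interchange the $\mathbb{Z}$-sum and the $\Omega$-integral, justified by $f\in L^1$ and $g$ bounded, and the fact that $h$ takes values in $\mathbb{Z}$ so the fiber sums are genuine countable sums) is routine. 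A secondary minor point is confirming that $\phi'$ inherits the Walters property so that Theorem~\ref{thm:GRPF} applies; this follows because composing with $\pr_\Omega\circ\Pi^{-1}$ shifts coordinate indices by the bounded amount $L$, so $\var_n$ estimates are preserved up to this shift.
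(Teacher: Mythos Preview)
Your proposal is correct and follows essentially the same route as the paper: both verify the defining duality $\int f\cdot(g\circ T)\,\dd\mu = \int (\widehat{T}f)\cdot g\,\dd\mu$ by expanding $\mu = \nu\otimes\mathfrak{m}$ as a sum over $\mathbb{Z}$, applying the base identity $\widehat{\sigma}_\nu = L_{\phi_*}$, and reindexing the fiber sum so that preimages $(\omega',z-h(\omega'))$ appear explicitly. The paper makes the decoupling of $h(\omega)$ from $\sigma\omega$ explicit by inserting $\sum_{q'}\mathbf{1}_{\{h(\omega)=q'\}}$ before applying $L_{\phi_*}$, which is exactly the bookkeeping device you flag as the main obstacle; your discussion of $\phi'$ and its Walters regularity is handled in the paper immediately after the lemma rather than inside its proof, but the content is the same.
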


\begin{proof}[Proof (of Lemma \ref{lem:computation_transfer_operator_Z_extension})]
    For all $f \in L^1(\mu)$ and $g\in L^{\infty}(\mu)$, we have
    \begin{align*}
    \int f\cdot g\circ T\,\dd\mu & = \int f\cdot g\circ T\,\dd(\nu\otimes\mathfrak{m}) \\
    & = \sum_{p \in \mathbb{Z}} \int f(\omega, p)g(\sigma \omega, p + h(\omega))\,\dd\nu(\omega) \\
    & = \sum_{p\in \mathbb{Z}} \sum_{q' \in \mathbb{Z}} \int f(\omega, p)\mathbf{1}_{\{h(\omega) = q'\}} g(\sigma\omega, p+q')\,\dd\nu(\omega) \\
    & = \sum_{p\in \mathbb{Z}} \sum_{q \in \mathbb{Z}} \int f(\omega, p)\mathbf{1}_{\{h(\omega) = q - p\}} g(\sigma\omega, q)\,\dd\nu(\omega) \\
    & = \sum_{q\in \mathbb{Z}} \sum_{p\in \mathbb{Z}} \int L_{\phi_*}(f(\cdot, p)\mathbf{1}_{\{h(\cdot) = q - p\}})(\omega)\cdot g(\omega, q)\,\dd\nu(\omega)\\
    & = \sum_{q \in \mathbb{Z}} \int \bigg(\sum_{p\in \mathbb{Z}} \sum_{\sigma\omega' = \omega} e^{\phi_*(\omega')}f(\omega', p)\mathbf{1}_{\{h(\omega') = q' - p\}}\bigg)\, g(\omega, q)\,\dd\nu(\omega)\\
    & = \sum_{q\in \mathbb{Z}} \int \bigg(\sum_{Ty = (\omega, q)} e^{\phi_*(\pr_{\Omega}(y))} f(y)\bigg) g(\omega, q) \,\dd\nu(\omega)\\
    & = \int \bigg( \sum_{Ty = x} e^{\phi_* (\pr_{\Omega}(y))} f(y)\bigg) \, g(x) \,\dd\mu(x).
\end{align*}
    The identity characterizes the transfer operator and we deduce its formula.
\end{proof}

\noindent Thus, for all $f \in L^1(\mu')$ and $x \in X'$, we have 
\begin{align*}
    \widehat{T'}f(x') & = \widehat{T}(f \circ \Pi) \circ \Pi^{-1}(x') \\
    & = \sum_{T'y' = x'} e^{\phi_*(\pr_{\Omega}(\Pi^{-1}(y')))}f(y')\\
    & = L_{\phi'}f(x')
\end{align*}
where $\phi' = \phi_{*} \circ \pr_{\Omega} \circ \Pi^{-1}$.\\

\noindent Furthermore, $\phi'$ recovers the regularity properties of $\phi$ (with a shift of size $L$) as for all $x', y'\in X'$ with $d(x', y') \leq \eta^p$, we have $d(\pr_{\Omega}(\Pi^{-1}(x')), \pr_{\Omega}(\Pi^{-1}(y'))) \leq \eta^{p+L-1}$ and thus 
\begin{align*}
    \var_{n+k} S_n\phi' = \var_{n + k + L} S_n\phi_* \leq W_{k + L}(\phi_*).
\end{align*}

\noindent A ball around a point $x = (\omega, z) \in X$ corresponds to a cylinder (around $x' = \Pi(x)$) in $X'$ (again with a shift $L$) as we have 
\begin{align}
    \label{eq:cylinder_Z_extension_to_CMS_associated}
    B_X(x, \eta^{n + L - 1}) & =  [\omega_0^{n+L-1}] \times \{z\} \nonumber \\
    & = \Pi\Big( [(\omega_0^{L-1}, z)\cdots (\omega_{n}^{n+L-1}, z + S_nh(\omega))]\Big) 
\end{align}

\noindent Finally, we recall some regular variation properties for $\mathbb{Z}$-extension over TMS $(\Omega, T, \mu)$ associated to a weakly Hölder positive recurrent potential $\phi$ and with a Gibbs measure. It depends on the jump function $h$.

\begin{prop}[\textup{see \cite[Table 3.1]{Thomine_These}}]
    Let $h$ be a Lispchitz jump function such that $\int h\,\dd\nu = 0$. Then, we have
    \begin{itemize}
        \item[$(i)$] If $h \in L^2(\nu)$, then the $\mathbb{Z}$-extension $(X, T, \mu)$ is PDE with normalizing sequence $(a_n)_{n\geq 1} \in \RV(\alpha)$ with $\alpha = 1/2$.
        \item[$(ii)$] If $t \mapsto \nu(|h| > t) \in \RV(-\beta)$ with $\beta \in (1, 2]$, then the $\mathbb{Z}$-extension $(X, T, \mu)$ is PDE with normalizing sequence $(a_n)_{n\geq 1} \in \RV(\alpha)$ with $\alpha = 1 - 1/\beta$.
    \end{itemize}
    
\end{prop}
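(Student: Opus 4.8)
The statement is a classical fact from the operator–renewal theory of infinite measure preserving systems; I would deduce it by combining the spectral analysis of the twisted transfer operators on the base fibre with Aaronson's criterion for pointwise dual ergodicity, so the proposal records the main steps rather than the routine estimates.

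First I would reduce the claim to a statement about the base $B := \Omega \times \{0\}$, which under the conjugacy $\Pi$ of Lemma~\ref{lem:topological_conjugation_CMS_Z_extension} is the union of the one–cylinders $[(v_0^{L-1},0)]$ in $X'$, hence a set of finite $\mu$–measure ($\mu(B)=1$). It suffices to prove that $B$ is a uniform (Darling–Kac) set with a normalizing sequence $(a_n)_{n\ge1}$ that is regularly varying of the announced index: once this is done, conservativity of $(X,T,\mu)$ follows from Atkinson's recurrence theorem for the mean–zero $\mathbb{Z}$–valued cocycle $S_n h$ together with $\bigcup_{k\ge0}T^{-k}B = X$, ergodicity follows from the non–arithmeticity (aperiodicity) of $h$, and then pointwise dual ergodicity with normalizing sequence $(a_n)$ is exactly the implication ``uniform set $\Rightarrow$ PDE'' recalled after the definition of uniform sets (\cite[Proposition 3.7.5]{Aar97}), the normalizing sequence of a uniform set being the normalizing sequence of the PDE system.

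The heart of the argument is a Fourier decomposition in the fibre. Using the formula $\widehat{T}f(x) = \sum_{Ty=x} e^{\phi_*(\pr_\Omega(y))} f(y)$ of Lemma~\ref{lem:computation_transfer_operator_Z_extension}, for $f$ supported on $B$ one checks that the ``fibre $0$ to fibre $0$'' part of $\widehat{T}^n$ is represented by $\tfrac1{2\pi}\int_{-\pi}^{\pi}\mathcal{L}_t^{\,n}f\,\dd t$, where $\mathcal{L}_t := L_{\phi_*}(e^{ith}\,\cdot\,)$ is the twisted Ruelle operator on $\Omega$ (this uses that $\mathbf 1_{\{S_nh=0\}}=\tfrac1{2\pi}\int_{-\pi}^\pi e^{itS_nh}\,\dd t$ and that $e^{ith}$ is a multiplicative weight, so $L_{\phi_*}^{\,n}(e^{itS_nh}f)=\mathcal{L}_t^{\,n}f$). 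Because $(\Omega,\sigma)$ has the BIP property, $\var_1(\phi)<+\infty$ and $\phi$ is weakly Hölder with $P_G(\phi)<+\infty$, the operator $\mathcal{L}_0 = L_{\phi_*}$ has a spectral gap on the space of (locally) Lipschitz densities: $1$ is a simple isolated eigenvalue (eigenfunction $\mathbf 1$, eigenmeasure $\nu$), the remaining spectrum lying in a disc of radius $<1$; here one must invoke Sarig's theory for countable alphabets rather than the compact Ruelle–Perron–Frobenius theorem, and the improved distortion of Lemma~\ref{lem:improved_bounded_distortion_CMS} is exactly the gain BIP brings. By the Nagaev–Guivarc'h perturbation method and continuity of $t\mapsto\mathcal{L}_t$ (guaranteed by $h$ being locally Lipschitz and constant on $L$–cylinders), for $t$ near $0$ one has $\mathcal{L}_t^{\,n}=\lambda_t^{\,n}\Pi_t+N_t^{\,n}$ with $\|N_t^{\,n}\|\le C\rho^{\,n}$ uniformly; since $\int h\,\dd\nu=0$ the first–order term in $\lambda_t$ vanishes. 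In case $(i)$, $h\in L^2(\nu)$ gives $\lambda_t = 1-\tfrac{\sigma^2}{2}t^2+o(t^2)$ with $\sigma^2$ the asymptotic variance of $S_nh$, strictly positive by non–arithmeticity; in case $(ii)$, $\nu(|h|>t)\in\RV(-\beta)$ places $h$ in the domain of attraction of a $\beta$–stable law and the standard stable–domain expansion yields $1-\lambda_t \isEquivTo{t\to0} c\,|t|^{\beta}\ell(1/|t|)$ with $\ell$ the slowly varying companion of the tail. For $t$ bounded away from $0$, aperiodicity of $h$ forces the spectral radius of $\mathcal{L}_t$ to be $<1$, so these contributions are exponentially small.

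Inverting the Fourier integral with these estimates gives, uniformly on $B$, $\widehat{T}^{n}(f\otimes\delta_0)\big|_{B} \isEquivTo{n\to+\infty} \dfrac{c_0}{b_n}\Big(\int_B f\,\dd\mu\Big)\mathbf 1_B$, where $b_n$ is the stable normalization ($b_n\asymp\sqrt{n}$ in case $(i)$, $b_n = n^{1/\beta}L_*(n)$ in case $(ii)$). Summing over $k<n$ and applying Karamata's theorem, $\tfrac1{a_n}\sum_{k=0}^{n-1}\widehat{T}^{k}(f\otimes\delta_0)\to\int f\,\dd\mu$ uniformly on $B$ with $a_n\sim\sum_{k=1}^{n}c_0/b_k$, whence $a_n\in\RV(1/2)$ in case $(i)$ and $a_n\in\RV(1-1/\beta)$ in case $(ii)$; this is precisely the assertion that $B$ is a uniform set with the stated regular variation, and the proof is complete by the reduction above. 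The main obstacle is the spectral input in case $(ii)$: establishing the uniform spectral gap for the countable–alphabet twisted operators, where the BIP hypothesis and $\var_1(\phi)<+\infty$ are indispensable, and pinning down the exact slowly varying correction in the expansion of $1-\lambda_t$; by contrast the Atkinson recurrence, the Fourier inversion with tail control, and the Karamata summation are routine.
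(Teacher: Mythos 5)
Your sketch is correct, but note that the paper does not prove this proposition at all: it is quoted from the literature (Table 3.1 of Thomine's thesis), so there is no internal proof to compare against. What you have written is essentially the standard operator-renewal/Nagaev--Guivarc'h argument that underlies that reference: reduce to showing the base fibre $\Omega\times\{0\}$ is a uniform set, write the fibre-$0$ part of $\widehat{T}^n$ as $\frac1{2\pi}\int_{-\pi}^{\pi}\mathcal{L}_t^n\,\dd t$ with $\mathcal{L}_t=L_{\phi_*}(e^{ith}\cdot)$ (consistent with Lemma~\ref{lem:computation_transfer_operator_Z_extension}), use the BIP spectral gap and the eigenvalue expansion $1-\lambda_t\sim \sigma^2t^2/2$ or $c|t|^\beta\ell(1/|t|)$ to get a local limit theorem with normalization $b_n$, and then Karamata summation to get $a_n\in\RV(1/2)$ or $\RV(1-1/\beta)$; this buys a self-contained justification where the paper only cites. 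Two points you lean on implicitly deserve a word: (1) aperiodicity (non-arithmeticity) of $h$ is what gives both $\sigma^2>0$, the spectral radius bound for $t$ away from $0$, and ergodicity of the skew product via the essential-values argument — this hypothesis is indeed in force in the paper's setting (it is assumed just before the proposition), but it is not in the proposition's statement, so you should flag that the conclusion uses it; (2) to conclude that the base is a uniform set you need the local limit estimate to hold uniformly on $B$ for some fixed nonnegative $f\in L^1$, which you get by taking $f$ locally Lipschitz and supported on $B$ so that the perturbed-operator expansion applies in the Banach space where the gap holds. With those caveats made explicit, the argument is sound and matches the route of the cited source.
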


We are now ready to prove Theorem \ref{thm:all_points_Z_extension}.

\begin{proof}[Proof (of Theorem \ref{thm:all_points_Z_extension})]
    We always consider $n \geq L$ so that $B_n = [\omega_{0}^{n-1}] \times \{z\}$ is a cylinder around the point $x' = \Pi(x)$ because $\Pi(B_n) = [(x')_0^{n-L}]$ by \eqref{eq:cylinder_Z_extension_to_CMS_associated}. Thus, for $x$ periodic, this is a direct application of Theorem \ref{thm:HTS_infinite_CFPP_periodic_points} and the extremal index $\theta$ is 
    \begin{align*}
        \theta = e^{S_p\phi'(x')} = e^{S_p \phi_*(\pr_{\Omega}(x))} = e^{S_p\phi_*(\omega)} = e^{S_p\phi(\omega) - pP_G(\phi)}.
    \end{align*}
    
    \noindent If $x$ is non periodic and such that there exists an admissible word $(a_0^{L-1}) \in \mathcal{A}^L$ and $q \in \mathbb{Z}$ such that $|\mathcal{O}(x) \cap ([a_0^{L-1}] \times \{q\})| = +\infty$, then this is a direct consequence of Theorem \ref{thm:HTS_infinite_FPP_infinitely_recurrent}. \\

    \noindent Finally, it remains to show that for the remaining points we still recover the fractional Poisson behavior. This is will be an application of Proposition \ref{prop:sufficient_condition_for_0_delay_time_finitely_recurrent}. By Remark \ref{rem:another_sufficient_condition_delay_time_0_imeges}, this is enough to show that 
    \begin{align}
        \label{eq:in_proof_Z_extension_condition_to_0}
        \lim_{m \to +\infty} \lim_{n \to +\infty} \frac{\mu'[(x')_0^{n-1}]}{\mu'[(x')_m^{n-1}]} = 0.
    \end{align}
    Yet, for all $n > 0$, we have 
    \begin{align*}
        \frac{\mu'[(x')_0^{n+L}]}{\mu'[(x')_m^{n+L}]} & = \frac{\mu([\omega_0^{n-1}] \times \{z\})}{\mu([\omega_m^{n-1}] \times \{z + S_{n-1}h(\omega)\})} \\
        & = \frac{\nu[\omega_0^{n-1}]}{\nu[\omega_m^{n-1}]} \leq C \frac{\nu[\omega_0^{m}]\nu[\omega_m^{n-1}]}{\nu[\omega_m^{n-1}]} \leq C\nu[\omega_0^{m-1}]
    \end{align*}
    by bounded distortion Lemma \ref{lem:improved_bounded_distortion_CMS} for Gibbs CMS. Taking the limit $m\to +\infty$ gives \eqref{eq:in_proof_Z_extension_condition_to_0} and we can conclude.
\end{proof}

\paragraph{Multiple towers.} Here we prove Proposition \ref{prop:all_points_multiple_tower_structure}, which is the same as for Theorem \ref{thm:all_points_HoC_structure}.

\begin{proof}[Proof (of Proposition \ref{prop:all_points_multiple_tower_structure})]
    The proof is similar to the House of Cards case and Theorem \ref{thm:all_points_HoC_structure}. The only change is to study the returns from the points $x^{(i), up}$. However, in this case, we have $B_n = [0]\cap \{\ell_i \geq n\}$ and \eqref{eq:hypothesis_regular_variation_on_each_tower} give the additional scaling factor $p_i$. When $p_i = 0$, the process is the fractional Poisson process (of parameter $\Gamma(1+\alpha)$). 
\end{proof}

\paragraph{Tree house of cards.} Now, we prove Proposition \ref{prop:dichotomy_tree_HoC} establishing the \textit{all-point REPP} property with a dichotomy for null-recurrent Markov chain on the tree house of cards structure.

\begin{proof}[Proof (of Proposition \ref{prop:dichotomy_tree_HoC})]
    Consider the set of points 
    \begin{align*}
        \mathcal{D} := \{ x \in [0] \;|\; x_i \neq 0 \; \forall i\geq 1\}
    \end{align*}
    \textit{i.e.}, $\mathcal{D}$ is the set of points that keep climbing. For every point $x\in \Omega$ that is not in $\bigcup_{k\geq 1} T^k \mathcal{D} \cup \bigcup_{j \geq 0} T^{-j}\mathcal{D}$, the result is a direct consequence of Theorems \ref{thm:HTS_infinite_FPP_infinitely_recurrent} and \ref{thm:HTS_infinite_CFPP_periodic_points}. Furthermore, the extremal index follows from the formula $\theta = 1 - \exp(S_q\phi(x) - qP_G(\phi))$ and the definition of the potential $\phi$ for Markov chains (see Section \ref{section:Connection_with_Markov_Chains}). \\
    For the remaining points, we first show that every point in $\mathcal{D}$ has a $0$ delay limit. Fix some $x \in \mathcal{D}$. Thank to the symmetry of the transition kernel, for all $m\geq n$, we have
    \begin{align*}
        \mu([0] \cap \{r_{[0]}\geq m\}) = 2^n \mu([x_0^{n-1}]\cap \{r_{[0]}\geq m\}).
    \end{align*}
    Thus, for all $t > 0$, we have
    \begin{align*}
        \mu(B_n\cap \{r_{[0]}\geq t/\gamma(\mu(B_n))\}) &= 2^{-n} \mu([0] \cap \{r_{[0]}\geq t/\gamma(\mu(B_n))\}) \\
        & \isEquivTo{n \to +\infty} 2^{-n} t^{-\alpha}\mu(B_n) \quad \text{by Lemma \ref{lem:equivalence_queue_renormalisation_gamma}}\\
        & = o(\mu(B_n))
    \end{align*}
    Thus, the rescaled delay vanishes in the limit and thus point in $\mathcal{D}$ have the fractional Poisson process as a limit by Theorem \ref{thm:HTS_infinite_points_that_never_come_back}. We conclude the proof for the remaining points by Propositions \ref{prop:convergence_REPP_preimages} and \ref{prop:convergence_REPP_images}.
\end{proof}

\section{Proof of Theorem \ref{thm:REPP_subshift_CFPP}}
\label{section:proof_embedded_SFT}

This section is devoted to the proof of Theorem \ref{thm:REPP_subshift_CFPP}. Before diving into the proof, we start by computing the extremal index candidate.\\

As usual we consider $\phi_*$ the normalized potential such that $\widehat{T}_{\mu} = L_{\phi_*}$ (recall that, in particular, $L_{\phi_*}1 = 1$ and $P_G(\phi_*) = 0$). We denote $\phi_{*\Delta}$ the restriction of $\phi_*$ to $\Omega_{\Delta}$. 

\begin{lem}
    \label{lem:measure_Delta_n}
    We have
    \begin{align*}
        \mu(B_{n+1}) / \mu(B_n) \xrightarrow[n \to +\infty]{} e^{P_*}.
    \end{align*}
\end{lem}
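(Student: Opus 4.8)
## Proof plan

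The statement to prove is Lemma \ref{lem:measure_Delta_n}, namely that $\mu(B_{n+1})/\mu(B_n) \to e^{P_*}$ where $B_n = [\Delta^n]$ and $P_* = P_\Delta - P_G(\phi) < 0$. The plan is to express $\mu(B_n)$ as a sum over admissible $\Delta$-words of length $n$ of the RPF-weights, and then compare the sum defining $\mu(B_{n+1})$ to the one defining $\mu(B_n)$ using the bounded distortion estimates together with the Gibbs-type property of the restricted potential on the finite subshift $\Omega_\Delta$.

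First I would write, for each $n$, using the fact that $B_n = \bigsqcup_{(v_0^{n-1}) \in \Delta^n \text{ admissible}} [v_0^{n-1}]$ and the identity $\mu[v_0^{n-1}] = \int \widehat T^{\,0}\mathbf 1_{[v_0^{n-1}]}\,\dd\mu$, that
\begin{align*}
    \mu(B_n) = \sum_{\substack{(v_0^{n-1}) \in \Delta^n\\ \text{admissible}}} \mu[v_0^{n-1}].
\end{align*}
By Remark \ref{rem:definition_phi_*} we have $\widehat T_{\mu_\phi} = L_{\phi_*}$ with $P_G(\phi_*)=0$, and by the bounded distortion Lemma \ref{lem:bounded_distortion_CMS_living_in_compact} (applied as in its proof), $\mu[v_0^{n-1}]$ is comparable, up to a multiplicative constant $e^{\pm W_1(\phi_*)}$ depending only on the fixed finite alphabet $\Delta$, to $e^{S_{n-1}\phi_*(x)}$ for any $x \in [v_0^{n-1}]$; equivalently to $e^{S_{n-1}\phi(x) - (n-1)P_G(\phi)}$ up to the telescoping term $h_\phi$, which is bounded above and below on the finite union $[\Delta] = \bigcup_{v\in\Delta}[v]$. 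Hence there is a constant $C_\Delta>1$ with
\begin{align*}
    C_\Delta^{-1}\, e^{-(n-1)P_G(\phi)} Z_{n-1}^{\Omega_\Delta}(\phi_\Delta) \leq \mu(B_n) \leq C_\Delta\, e^{-(n-1)P_G(\phi)} Z_{n-1}^{\Omega_\Delta}(\phi_\Delta),
\end{align*}
where $Z_m^{\Omega_\Delta}(\phi_\Delta) := \sum_{(v_0^{m}) \in \Delta^{m+1}\text{ adm.}} \sup_{[v_0^m]} e^{S_m\phi}$ is the standard partition-function-type quantity for the finite subshift $\Omega_\Delta$ with the Walters potential $\phi_\Delta$; by the thermodynamic formalism for topologically mixing SFTs, $m^{-1}\log Z_m^{\Omega_\Delta}(\phi_\Delta) \to P_\Delta$, and moreover (this is the sharper input I would invoke, the submultiplicativity/quasi-multiplicativity of $Z_m^{\Omega_\Delta}$ coming from the BIP/finiteness and the bounded distortion on $\Omega_\Delta$) the ratios $Z_{m+1}^{\Omega_\Delta}(\phi_\Delta)/Z_m^{\Omega_\Delta}(\phi_\Delta)$ converge to $e^{P_\Delta}$.

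Given that, the ratio is
\begin{align*}
    \frac{\mu(B_{n+1})}{\mu(B_n)} \in \Big[ C_\Delta^{-2}\, e^{-P_G(\phi)}\frac{Z_n^{\Omega_\Delta}(\phi_\Delta)}{Z_{n-1}^{\Omega_\Delta}(\phi_\Delta)},\ C_\Delta^{2}\, e^{-P_G(\phi)}\frac{Z_n^{\Omega_\Delta}(\phi_\Delta)}{Z_{n-1}^{\Omega_\Delta}(\phi_\Delta)}\Big],
\end{align*}
which only gives convergence up to the constant $C_\Delta^2$; to remove it I would instead argue directly with the ratio rather than with two-sided bounds. Concretely, writing $B_{n+1} = B_n \cap T^{-n}[\Delta]$ and using $\mu(B_{n+1}) = \int \mathbf 1_{B_n}\cdot \mathbf 1_{[\Delta]}\circ T^n \, \dd\mu = \int \widehat T^n \mathbf 1_{B_n} \cdot \mathbf 1_{[\Delta]}\,\dd\mu$, the bounded distortion on $[\Delta]$ (Lemma \ref{lem:bounded_distortion_CMS_living_in_compact}) shows that $\widehat T^n \mathbf 1_{B_n}$ restricted to each cylinder $[w]$, $w\in\Delta$, is within a factor $e^{\pm W_j(\phi_*)}$ — with $j\to\infty$ as the inner cylinders get refined — of a constant; combining this with the quasi-multiplicativity of the $\Omega_\Delta$-weights lets the distortion constant be taken arbitrarily close to $1$ in the limit, yielding $\mu(B_{n+1})/\mu(B_n) \to e^{-P_G(\phi)} e^{P_\Delta} = e^{P_*}$.

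The main obstacle is precisely this last point: getting the \emph{exact} limit $e^{P_*}$ rather than a bounded ratio. The rough two-sided comparison is immediate, but pinning the constant requires the quasi-multiplicativity (near-submultiplicativity) of the SFT partition functions $Z_m^{\Omega_\Delta}$, i.e.\ that $Z_{m}^{\Omega_\Delta}/Z_{m-1}^{\Omega_\Delta} \to e^{P_\Delta}$, which for a topologically mixing SFT with a Walters potential follows from the RPF theorem on $\Omega_\Delta$ (existence of a conformal measure and a bounded strictly positive eigenfunction for $L_{\phi_\Delta}$, giving $Z_m^{\Omega_\Delta} \asymp e^{mP_\Delta}$ with multiplicative constants that cancel in the ratio in the limit); alternatively, one can apply the eigenfunction directly: $\mu(B_{n+1})/\mu(B_n)$ can be rewritten as an integral of $L_{\phi_\Delta}^{\,?}$ of the eigenfunction restricted to $\Omega_\Delta$ against the appropriate density, and the spectral gap / mixing of $L_{\phi_{*\Delta}}$ forces the ratio to the leading eigenvalue $e^{P_\Delta - P_G(\phi)}$. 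I expect the clean write-up to route through the eigenfunction $h_{\phi_\Delta}$ of $L_{\phi_\Delta}$ on $\Omega_\Delta$, which is the standard device making the constants disappear.
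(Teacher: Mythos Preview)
Your proposal correctly identifies the essential ingredients --- bounded distortion on the ambient CMS together with the RPF/eigenfunction machinery on the finite subshift $\Omega_\Delta$ --- and you rightly diagnose that your first two-sided bound $\mu(B_n)\asymp e^{-(n-1)P_G(\phi)}Z_{n-1}^{\Omega_\Delta}(\phi_\Delta)$ only traps the ratio up to a fixed constant $C_\Delta^{\pm 2}$, not the exact limit. Your final paragraph points to the right fix (route through the eigenfunction on $\Omega_\Delta$), and this is indeed what the paper does.

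The genuine gap is the mechanism by which the distortion constant is made to vanish. Your phrase ``with $j\to\infty$ as the inner cylinders get refined'' is the correct intuition but is not implemented: in your identity $\mu(B_{n+1}) = \int \widehat T^{\,n}\mathbf 1_{B_n}\cdot \mathbf 1_{[\Delta]}\,\dd\mu$ the second factor lives on the depth-$1$ set $[\Delta]$, so the distortion available on $\widehat T^{\,n}\mathbf 1_{B_n}|_{[w]}$ is only $e^{\pm W_1(\phi_*)}$, which does not tend to $1$. The paper's device is a \emph{double limit}: fix $p\geq 1$ and write instead
\begin{align*}
    \mu(B_{n+p}) = \int \widehat T^{\,n}\mathbf 1_{[\Delta^n]}\cdot \mathbf 1_{[\Delta^p]}\,\dd\mu,\qquad
    \mu(B_{n+p+1}) = \int \widehat T^{\,n+1}\mathbf 1_{[\Delta^{n+1}]}\cdot \mathbf 1_{[\Delta^p]}\,\dd\mu.
\end{align*}
For $x\in[\Delta^p]$, comparing with any $x'\in\Omega_\Delta\cap[x_0^{p-1}]$ gives $\widehat T^{\,n}\mathbf 1_{[\Delta^n]}(x) = e^{\pm W_p(\phi_*)}\,L_{\phi_{*\Delta}}^{\,n}1(x')$. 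The RPF theorem on the compact SFT $\Omega_\Delta$ yields $e^{-nP_*}L_{\phi_{*\Delta}}^{\,n}1\to h_{\phi_{*\Delta}}$ uniformly, so both $\mu(B_{n+p})$ and $\mu(B_{n+p+1})$ become the \emph{same} finite sum over $a_0^{p-1}\in\Delta^p$, up to the factor $e^{P_*}$ and the distortion $e^{\pm 2W_p(\phi_*)}$. Sending first $n\to\infty$ at fixed $p$, then $p\to\infty$ (using $W_p(\phi_*)\to 0$ from Walters' condition) eliminates the constant and yields the exact limit $e^{P_*}$. This two-scale argument is precisely the step your sketch gestures at but does not carry out; once you insert it, your approach and the paper's coincide.
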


\begin{proof}[Proof (of Lemma \ref{lem:measure_Delta_n})]
    Let $p\geq 1$. For all $x \in [\Delta^p]$ (\textit{i.e.}, $x_0^{p-1} \in \Delta^p$), we have
    \begin{align*}
        \widehat{T}^n \mathbf{1}_{[\Delta^n]}(x) & = L_{\phi_*}^n \mathbf{1}_{[\Delta^n]}(x)\\
        & = \sum_{T^n y = x} e^{S_n\phi_*(y)}\mathbf{1}_{[\Delta^n]}(y)\\
        & = \sum_{T^n y' = x'} e^{\pm \var_{n + p} S_n\phi_*} e^{S_n\phi(x')} \mathbf{1}_{[\Delta^n]}(y') \quad \forall x' \in \Omega_{\Delta} \cap [x_0^{p-1}] \\
        & = e^{\pm W_p(\phi_*)} L_{\phi_{*\Delta}}^n 1(x') \quad \forall x' \in \Omega_{\Delta} \cap [x_0^{p-1}].
    \end{align*}
    Since $\Omega_{\Delta}$ is a Subshift of Finite Type, the potential $\phi_{*\Delta} : \Omega \to \mathbb{R}$ is positive recurrent ($P_G(\phi_{*\Delta}) < P_G(\phi_*) <+\infty$ and $\phi_{*\Delta}$ inherits Walters property from $\phi_*$). By the Generalized Ruelle-Perron-Frobenius Theorem \ref{thm:GRPF}, there exist a measure $\nu_{\phi_{*\Delta}}$ and a density $h_{\phi_{*\Delta}}$ such that
    \begin{align*}
        e^{-nP_G(\phi_{*\Delta})} L_{\phi_{*\Delta}}^n 1 \xrightarrow[n \to +\infty]{} h_{\phi_{*\Delta}} \quad \text{pointwise and uniformly on compact sets}.
    \end{align*}
    
    Thus, since $\Omega_{\Delta}$ is compact, we obtain
    \begin{align*}
        \widehat{T}^n \mathbf{1}_{[\Delta^n]}(x) & = e^{\pm W_{p}(\phi_*)} \frac{e^{nP_G(\phi_{*\Delta})}}{\nu_{\phi_{*\Delta}}[x_0^{p-1}]} \int e^{-nP_G(\phi_{*\Delta})}L_{\phi_{*\Delta}}^n 1 \cdot \mathbf{1}_{[x_0^{p-1}]}\,\dd\nu_{\phi_{*\Delta}} \\
        & = e^{\pm W_{p}(\phi_*)} e^{nP_G(\phi_{*\Delta})} \frac{\mu_{\phi_{*\Delta}}[x_0^{p-1}]}{\nu_{\phi_{*\Delta}}[x_0^{p-1}]}(1 + o_n(1))
    \end{align*}
    and hence,  
    \begin{align*}
        \mu(B_{n+p}) = \int \mathbf{1}_{[\Delta^{n+p}]}\,\dd\mu & = \int \widehat{T}^n \mathbf{1}_{[\Delta^{n}]}\cdot \mathbf{1}_{[\Delta^p]}\,\dd \mu \\
        & = \sum_{a_0^{p-1} \in \Delta^p} \int \widehat{T}^n \mathbf{1}_{[\Delta^{n}]}\cdot \mathbf{1}_{[a_0^{p-1}]}\,\dd \mu\\
        & = \sum_{a_0^{p-1} \in \Delta^p}  e^{\pm W_{p}(\phi_*)} e^{nP_G(\phi_{*\Delta})} \frac{\mu_{\phi_{*\Delta}}[a_0^{p-1}]}{\nu_{\phi_{*\Delta}}[a_0^{p-1}]}(1 + o_n(1)) \mu[a_0^{p-1}],
    \end{align*}
    while 
    \begin{align*}
        \mu(B_{n+p+1}) & = \int \widehat{T}^{n+1} \mathbf{1}_{[\Delta^{n+1}]}\cdot \mathbf{1}_{[\Delta^{p}]}\,\dd \mu \\
        & = \sum_{a_0^{p-1} \in \Delta^p}  e^{\pm W_{p}(\phi_*)} e^{(n+1)P_G(\phi_{*\Delta})} \frac{\mu_{\phi_{*\Delta}}[a_0^{p-1}]}{\nu_{\phi_{*\Delta}}[a_0^{p-1}]}(1 + o_n(1)) \mu[a_0^{p-1}].
    \end{align*}
    
     Thus, it yields that 
     \begin{align*}
         e^{- 2W_{p}(\phi_*)}e^{P_G(\phi_{*\Delta})} \leq \liminf_{n\to +\infty} \frac{\mu(B_{n+1})}{\mu(B_n)} \leq \limsup_{n\to +\infty} \frac{\mu(B_{n+1})}{\mu(B_n)} \leq e^{2W_{p}(\phi_*)} e^{P_G(\phi_{*\Delta})}
     \end{align*}
     and when we take the limit $p \to +\infty$, we get
     \begin{align*}
         \frac{\mu(B_{n+1})}{\mu(B_n)} \xrightarrow[n \to +\infty]{} e^{P_G(\phi_{*\Delta})}.
     \end{align*}
     Finally, $\phi_{*\Delta}$ is the restriction of $\phi_{*}$ to $\Omega_{\Delta}$, \textit{i.e.},
     \begin{align*}
         \phi_{*\Delta} = \phi_{\Delta} + \log h|_{\Omega_{\Delta}} - \log h\circ T|_{\Omega_{\Delta}} - P_G(\phi).
     \end{align*}
     and thus $P_G(\phi_{*\Delta}) = P_*$.
\end{proof}

We are now ready to prove Theorem \ref{thm:REPP_subshift_CFPP}.

\begin{proof}[Proof (of Theorem \ref{thm:REPP_subshift_CFPP})]
    As usual, we are going to apply Theorem \ref{thm:sufficient_conditions_convergence_compound_FPP} and take advantage of the fact that $B_n$ is a union of $n$-cylinders. The set $B_1 = [\Delta] := \bigcup_{v \in \Delta}[v]$ is uniform (in fact Darling-Kac) because the induced map on $[\Delta]$ is $\psi$-mixing and by \cite[Lemma 3.7.4]{Aar97} so $B_n$ remains in a fixed uniform set for all $n\geq 1$. We set $U(B_n) = B_{n+1}$ and $Q(B_n) = B_n \backslash B_{n+1}$. Lemma \ref{lem:measure_Delta_n} gives \ref{cond_CFPP:extremal_index} with $\theta := e^{P_*}$.\\

    \noindent We can show that \ref{cond_CFPP:Compatibility_Geometric_law} is satisfied. Let $B_n := \bigcup_{(a_0^{n-1}) \in\Delta^n} [a_0^{n-1}]$. We have
    \begin{align*}
        \widehat{T_{B_n}}\mathbf{1}_{B_{n+1}} &= \widehat{T}\mathbf{1}_{B_{n+1}}  = L_{\phi_*}(\mathbf{1}_{B_{n+1}}) = \sum_{(a_0^{n}) \in \Delta^{n+1}} L_{\phi_*}(\mathbf{1}_{[a_0^{n}]}) = \sum_{(a_0^{n}) \in \Delta^{n+1}} e^{\pm W_n(\phi_*)} L_{\phi_*}(\mathbf{1}_{[a_0^{n}]}) \\
        & = e^{\pm W_{n}(\phi_*)} L_{\phi_*}(\mathbf{1}_{B_{n+1}}).
    \end{align*}
    Thus, since the support of $L_{\phi_*}(\mathbf{1}_{B_{n+1}})$ is $B_n$, we obtain \ref{cond_CFPP:Compatibility_Geometric_law}. \\

    \noindent Take $\tau_n := r_{[\Delta]} \circ T^{n-1} + n - 1$. With this choice, we immediately obtain
    \begin{align*}
        \mu_{U(B_n)}(\tau_n < r_{B_n}) = \mu_{U(B_n)}(\tau_n < 1) = 0
    \end{align*}
    and thus \ref{cond_CFPP:cluster_compatible_tau_n_cluster_from_U}.\\
    It remains to check \ref{cond_CFPP:good_density_after_tau_n}, \ref{cond_CFPP:tau_n_small_enough} and \ref{cond_CFPP:cluster_compatible_tau_n_no_cluster_from_Q}. The proof is similar to the one use for shrinking cylinders in Theorem \ref{thm:HTS_infinite_CFPP_periodic_points}.\\

    \noindent For \ref{cond_CFPP:good_density_after_tau_n}, the proof is also similar to the one use for shrinking cylinders, the compact set should depend on the symbol $v \in \Delta$ that appears at time $r_{[\Delta]} \circ T^{n-1} + n - 1$ for the cylinder considered. However, since $|\Delta| < +\infty$, the set $\bigcup_{v \in \Delta} \mathcal{U}_{K_v, M_v}(v)$ is also compact and thus so is the closure of its convex hull and it is enough to check \ref{cond_CFPP:good_density_after_tau_n}.\\

    \noindent For \ref{cond_CFPP:tau_n_small_enough}, consider $t > 0$. Lemma \ref{lem:measure_Delta_n} implies that $\mu(B_n) \leq K\kappa^{n}$ for some $0 <\kappa < 1$ and $K > 0$ (in fact this is true for every $\kappa > e^{P_*})$. Thus we have 
    \begin{align*}
        \mu_{B_n}(\gamma(\mu(B_n))\tau_n \geq t) & = \mu_{B_n}(\gamma(\mu(B_n))(r_{[\Delta]} \circ T^{n-1} + n-1)\geq t)\\
        & \lesssim \mu_{B_n}\big(r_{[\Delta]} \circ T^{n-1} \geq t/\gamma(\mu(B_n))\big) \\
        & \lesssim C \mu_{[\Delta]}\big(r_{[\Delta]} \geq t/\gamma(\mu(B_n))\big) \xrightarrow[n\to +\infty]{} 0,
    \end{align*}
    where we used the bounded distortion estimate Lemma \ref{lem:bounded_distortion_CMS}.\\

    \noindent For \ref{cond_CFPP:cluster_compatible_tau_n_no_cluster_from_Q}, the proof is the same as in Theorem \ref{thm:HTS_infinite_CFPP_periodic_points} and we do not do it again here.
\end{proof}

\end{appendices}

\end{document}